\pdfoutput=1
\documentclass[11pt]{article} 
\usepackage[letterpaper,hmargin=2.5cm]{geometry} 
\usepackage{amsmath,amssymb,latexsym}
\usepackage{theorem} 
\usepackage{overpic}
\pagestyle{plain}
\numberwithin{equation}{section}
\newtheorem{theorem}{Theorem}[section]
\newtheorem{lemma}[theorem]{Lemma}
\newtheorem{conjecture}[theorem]{Conjecture}
\newtheorem{corollary}[theorem]{Corollary}
\newtheorem{proposition}[theorem]{Proposition}
{\theorembodyfont{\rmfamily} \newtheorem{definition}[theorem]{Definition}
\newtheorem{remark}[theorem]{Remark}
\newtheorem{rmk}[theorem]{Remark}
\newtheorem{example}[theorem]{Example}}
{\theorembodyfont{\slshape} }
%
\newcommand{\field}[1]{\mathbb{#1}}
\newcommand{\R}{\field{R}}
\newcommand{\Z}{\field{Z}}
\newcommand{\N}{\field{N}}
\newcommand{\C}{\field{C}}
\renewcommand{\P}{\field{P}}

\newcommand{\EE}{{\mathcal E}}
\newcommand{\MM}{{\mathcal M}}

\renewcommand{\AA}{{\mathcal A}}
\newcommand{\sgn}{\mathop{\rm sgn}}

\newcommand{\cp}{\mathrm{cap}}
\newcommand{\lebesgue}{mes}
\newcommand{\supp}{\mathop{\rm supp}}
\newcommand{\vt}[1]{\boldsymbol{#1}}

\newcommand{\const}{{\rm const}}

\renewcommand{\Re}{\mathop{\rm Re}} 
\renewcommand{\Im}{\mathop{\rm Im}}
\newcommand{\dist}{\mathop{\rm dist}}
\newcommand{\isdef}{\stackrel{\text{\tiny def}}{=}}
\newcommand{\weakto}{\stackrel{\ast}{\longrightarrow}}
\newcommand{\grad}{\mathop{\rm grad}}

\DeclareRobustCommand{\qed}{%
\ifmmode 
\else \leavevmode\unskip\penalty9999 \hbox{}\nobreak\hfill \fi
\quad\hbox{\qedsymbol}}
\newcommand{\openbox}{\leavevmode
\hbox to.77778em{%
\hfil\vrule
\vbox to.675em{\hrule width.6em\vfil\hrule}%
\vrule\hfil}}
\newcommand{\qedsymbol}{\openbox}
\newcommand{\proofname}{Proof}
\newenvironment{proof}[1][\proofname]{\par
\normalfont \trivlist
\item[\hskip\labelsep   \itshape #1. ]
\ignorespaces
}{%
\qed\endtrivlist }

\begin{document}

\title{Critical measures, quadratic differentials, and weak limits of zeros of Stieltjes polynomials}

\author{A.~Mart\'{\i}nez-Finkelshtein, and E.~A.~Rakhmanov}


\maketitle

\begin{abstract}
We investigate the asymptotic zero distribution of Heine-Stieltjes polynomials -- polynomial solutions of a second order differential equations with complex polynomial coefficients. In the case when all zeros of the leading coefficients are all real, zeros of the  Heine-Stieltjes polynomials were interpreted by Stieltjes as discrete distributions minimizing an energy functional. In a general complex situation one deals instead with a critical point of the energy.
We introduce the notion of discrete and continuous critical measures (saddle points of the weighted logarithmic energy on the plane), and prove that a weak-* limit of a sequence of discrete critical measures is a continuous critical measure. Thus,  the limit zero distributions of the Heine-Stieltjes polynomials are given by continuous critical measures. We give a detailed description of such measures, showing their connections with quadratic differentials. In doing that, we obtain some results on the global structure of rational quadratic differentials on the Riemann sphere that have an independent interest.

The problem has a rich variety of connections with other fields of analysis, some of them are briefly mentioned in the paper.
\end{abstract}

\tableofcontents

\section{Generalized Lam\'{e} equation}

Let us start with a classical problem more than 125 year old. Given a set of pairwise distinct points fixed on the complex plane $\C$,
\begin{equation}\label{A}
\mathcal A=\{a_0, a_1, \dots, a_p\},
\end{equation}
($p\in \N$), and two polynomials,
\begin{equation}\label{defAandB}
    A(z)=\prod_{i=0}^p (z-a_i)\,, \qquad B(z)=\alpha z^p+\text{lower
degree terms} \in \P_p\,, \quad \alpha \in \C ,
\end{equation}
where we denote by $\P_n$ the set of all algebraic polynomials of degree $\leq n$, we are interested in the polynomial solutions of the
\emph{generalized Lam\'{e} differential} equation (in algebraic form),
\begin{equation}
\label{DifEq}
A(z)\, y''(z)+ B(z)\, y'(z)- n(n+\alpha -1) V_n(z)\, y(z)=0,
\end{equation}
where $V_n$ is a polynomial (in general, depending on $n$) of degree $\leq p-1$; if $\deg V=p-1$, then $V$ is monic.
An alternative perspective on the same problem can be stated in terms of the second order differential operator
$$
\mathcal L[y](z) \isdef A(z)\, y''(z)+ B(z)\, y'(z),
$$
and the associated generalized spectral problem (or multiparameter eigenvalue problem, see \cite{Volkmer1988}),
\begin{equation}\label{ODEclassic}
   \mathcal L[y](z)=n(n+\alpha -1) V_n(z)\, y(z)\,, \quad  n \in \N,
\end{equation}
where $V_n \in \P_{p-1}$ is the ``spectral polynomial''.

Special instances of equation \eqref{DifEq} are well known.  For instance, $p=1$ corresponds to the hypergeometric differential equation. Case $p=2$ was studied by Lam\'{e} in the 1830's in the special setting $B=A'/2$ ,  $a_j\in \R$, and $a_0 + a_1 + a_2 = 0$, in connection with the separation of variables in
the Laplace equation using elliptical coordinates (see e.g.~\cite[Ch.~23]{Whittaker96}). For the general situation of $p=2$ we get the Heun's equation, which still attracts interest and poses open questions (see \cite{Ronveaux95}).

Recently, equation \eqref{DifEq}  has also found other applications in studies as diverse as the construction of ellipsoidal and
sphero-conal $h$-harmonics of the Dunkl–Laplacian \cite{Volkmer2006}, \cite{Volkmer2008},  the
quantum asymmetric top \cite{Agnew2008,Bourget:2009ge,Grosset:2005hn}, or certain quantum completely integrable system called the generalized Gaudin spin chains \cite{Harnad1995}, and their thermodynamic limits.

Heine \cite{Heine1878} proved that for every $n \in \N$ there exist
at most
\begin{equation}\label{sigma}
\sigma(n)=\binom{n+p-1}{n}
\end{equation}
different polynomials $V_n$ such that \eqref{DifEq} (or \eqref{ODEclassic}) admits a polynomial solution $y=Q_n \in \P_{n}$. These particular $V_n$ are called \emph{Van Vleck polynomials}, and the corresponding
polynomial solutions $y=Q_n$ are known as \emph{Heine-Stieltjes} (or simply Stieltjes) polynomials.

Heine's theorem states that if the polynomials $A$ and $B$ are algebraically independent (that is, they do not satisfy any algebraic equation with integer coefficients) then for any $n\in \N$ there exist \emph{exactly} $\sigma(n)$ Van Vleck polynomials $V_n$, their degree is \emph{exactly} $p-1$, and for each $V_n$ equation \eqref{DifEq} has a unique (up to a constant factor) solution $y$ of degree $n$. The condition of algebraic independence of $A$ and $B$ is sufficient but not necessary. It should be noted that the original argument of Heine is far from clear, and even Szeg\H{o} \cite{szego:1975} cites his result in a rather ambiguous form. Recently a significant research on the algebraic aspects of this theory has been carried out by B.~Shapiro in \cite{Shapiro2008a}, and we refer the reader to his work for further details. In particular, it has been proved in \cite{Shapiro2008a} that for any polynomials $A$ and $B$ like in \eqref{defAandB} there exists $N\in \N$ such that for any $n\geq N$, there exist $\sigma(n)$ Van Vleck polynomials $V_n$ of degree exactly $p-1$ such that \eqref{DifEq} has a polynomial solution of degree exactly $n$.

Stieltjes discovered an electrostatic interpretation of zeros of the polynomials discussed in \cite{Heine1878}, which attracted common attention to the problem.
He studied the problem \eqref{DifEq} in a particular setting, assuming that $\AA \subset \R$ and that all residues $\rho_k$ in \begin{equation}\label{BoverA}
    \frac{B(x)}{A(x)}=
  \sum_{k=0}^p \frac{\rho_k}{x-a_k}
\end{equation}
are strictly positive (which is equivalent to the assumption that the zeros of $A$
alternate with those of $B$ and that the leading coefficient of
$B$ is positive).
He proved in \cite{Stieltjes1885} (see also \cite[Theorem
6.8]{szego:1975}) that in this case for each $n \in \N$ there are \emph{exactly}
$\sigma(n)$ different Van Vleck polynomials of degree $p-1$ and
the same number of corresponding Heine-Stieltjes polynomials $y$ of
degree $n$, given by all possible ways how the $n$ zeros of $y$
can be distributed in  the $p$ open intervals defined by $\AA$ (see Section \ref{sec:stieltjesmodel}).
Further generalizations of the work of Heine and Stieltjes 
followed several paths; we will mention only some of them. First,
under Stieltjes' assumptions ($\AA\subset \R$ and $\rho_k>0$), Van Vleck \cite{Vleck1898} and B\^{o}cher \cite{Bocher97} proved that the zeros of each $V_n$ belong to the convex hull of $\AA$ (see also the work of Shah 
\cite{Shah:68}, \cite{Shah:69b}, \cite{Shah:69}, \cite{Shah:70}). P\'{o}lya \cite{Polya12} 
showed that this is true for $\AA\subset \C$ if we keep the assumption of positivity of the residues $\rho_k$.  Marden
\cite{Marden66}, and later, Al-Rashed, Alam and Zaheer (see
\cite{Al-Rashed85}--\cite{Alam79},
\cite{Zaheer76}, \cite{Zaheer77}) established further results on
location of the zeros of the Heine-Stieltjes polynomials under
weaker conditions on the coefficients $A$ and $B$ of
\eqref{DifEq}. An electrostatic interpretation of these zeros in
cases when $\AA \subset \R$ and some residues $\rho_k$  are negative
has been studied by Gr\"{u}nbaum \cite{Grunbaum98}, and by Dimitrov and
Van Assche \cite{Dimitrov:00}. For some interlacing properties, see e.g.~\cite{Bourget:2009ry}.

We are interested in the asymptotic regime (so called semiclassical asymptotics) when $n$ (the degree of the Heine-Stieltjes polynomials) tends to infinity. First general result in this direction, based precisely on the Stieltjes model, is due to Mart\'{\i}nez-Finkelshtein and Saff \cite{MR2003j:33031}. There the limit distribution of zeros of Heine-Stieltjes polynomials has been established in terms of the traditional extremal problem for the weighted logarithmic energy on a compact set of the plane.

The main goal of this paper is to consider the weak-* asymptotics of the Heine-Stieltjes and Van Vleck polynomials in the general setting of $\AA\subset \C$ and $\rho_k\in \C$, which leads to a very different electrostatic problem - equilibrium problem in the conducting plane (with a finite exceptional set of points). It is essentially known that zeros of Heine-Stieltjes polynomials present a discrete critical measure -- saddle point of the discrete energy functional. A continuous  analogue of this notion leads to a concept of ``continuous'' critical measure, i.e.~critical point of the usual energy functional defined on Borel measures with respect to a certain class of local variations. 

We prove (Section \ref{sec:weakLimits}) that weak limit of discrete critical measures is a continuous critical measure (as the number of atoms or mass points tends to infinity). Thus, discrete critical measures are limit distributions of zeros of the Heine-Stieltjes polynomials.

To complete the description of the limit zero distributions of these polynomials we have to study more deeply the set of continuous critical measures. The problem, rather complex, is connected to many other classical problems of analysis, and has potentially a large circle of applications. In Section \ref{sec:criticalmeasuresandextremal} we mention a few connections, in particular, to minimal capacity problem and its generalizations.

In Section \ref{sec:qdandcriticalmeasures} we characterize critical measures in terms of trajectories of a (closed) rational quadratic differential on the Riemann sphere; for completeness of the reading we summarize basic results on quadratic differentials in Section \ref{sec:qd}. Further investigation of such differentials in carried out in Sections \ref{sec:p=2} (case $p=2$) and \ref{sec:generalP} (general case).

In the following two sections, \ref{sec:stieltjesmodel} and \ref{sec:criticalmeasures}, we discuss in some detail the concepts of the discrete and continuous equilibrium.

\section{Discrete  and continuous extremal measures}\label{sec:stieltjesmodel}

\subsection{Stieltjes electrostatic model: discrete equilibrium}

We denote by $\mathfrak M_n$ the class of uniform discrete measures on $\C$,
$$
\mathfrak M_n \isdef \left \{ \sum_{k=1}^n \delta_{z_k}, z_k\in \C \right\}\,, \quad \text{and} \quad \mathfrak M  \isdef \bigcup_{n\geq 1} \mathfrak M_n\,,
$$
where $\delta_x$ is a unit mass (Dirac delta) at $x$. With any polynomial $P(z)= \prod_{j=1}^n (z-\zeta_j)$ we associate its \emph{zero counting measure}
\[
\nu(P) =\sum_{j=1}^n   \delta_{\zeta_j}\in \mathfrak M_n\,,
\]
where the zeros are counted according to their multiplicity.

For  $\mu=\sum_{k=1}^n  \delta_{\zeta _k}\in \mathfrak M$ we define its (discrete) energy
$$
\mathcal E(\mu)   \isdef
  \sum_{i\neq j}     \log \frac{1}{|\zeta _i-\zeta _j|}\,,
  $$
 (if two or more $\zeta_j$'s coincide, then $\mathcal E(\mu)=+\infty$).
Additionally, given a real-valued function (\emph{external field})  $\varphi $, finite at $\supp(\mu)$, we consider the weighted energy
\begin{equation}\label{defWeightedEnergy}
    \EE_\varphi(\mu) \isdef \EE (\mu)+ 2 \sum_{k=1}^n \varphi(\zeta _k)  \,.
\end{equation}

In the above mentioned paper  \cite{Stieltjes1885} Stieltjes introduced the following extremal problem. For fixed subset  $\AA =\{ a_0, \dots, a_{p} \} \subset \R$, $a_{0}< \dots< a_{p}$, values $\rho_k\geq 0$, $k=0, 1, \dots, p$,  and an arbitrary vector $\vt{n}=(n_1, \dots, n_p)\in \Z_+^{p}$ (where $\Z_+=\N \cup \{0\}$), define $|\vt{n}|=n_1+\dots+n_p$, $\Delta_{j}\isdef [a_{j-1}, a_{j}]$, $j=1, \dots, p$, and $\Delta=\cup_{j=1}^p \Delta_{j}= [a_{0}, a_{p}]$. Consider the class of discrete measures
\begin{equation}
\label{discreteforStieltjes}
\mathfrak M_{|\vt{n}|}(\Delta, \vt{n})\isdef \left\{ \mu\in \mathfrak M_{|\vt{n}|}:\, \supp(\mu)\subset \Delta, \,\mu(\Delta_{j})=n_{j},\,  j=1, \dots, p\right \},
\end{equation}
and  the external field
\begin{equation}\label{admisibleField}
\varphi(x)=\Re\left( \Phi(x)\right), \quad \Phi(x)= \sum_{j=0}^p \frac{\rho_j}{2} \, \log \frac{1}{x-a_j}\,.
\end{equation}
We seek a measure $\mu^*=\mu^*(\vt{n})$ minimizing the weighted energy \eqref{defWeightedEnergy} in the class $\mathfrak M_{|\vt{n}|}(\Delta, n)$:
\begin{equation}
\label{mindiscreteStieltjes}
 \EE_\varphi(\mu^*)=\min \left\{  \EE_\varphi(\mu) :\, \mu  \in \mathfrak M_{|\vt{n}|}(\Delta, n) \right\} .
\end{equation}
In other words, we place $n_{j}$ unit electric charges on the conductor $\Delta_{j}$ and look for the equilibrium position of such a system of charges in the external field $\varphi$, if the interaction obeys the logarithmic law.

Stieltjes proved that the global minimum \eqref{mindiscreteStieltjes} provides the only equilibrium position, and that the zeros of the solution $y=Q_{\vt{n}}$ of \eqref{DifEq} are exactly points of the support of the extremal measure $\mu^*$ in
\eqref{mindiscreteStieltjes}: $\nu(Q_{\vt{n}})=\mu^*$. Actually, $\mu^*$ provides also the unique component-wise or point-wise minimum of $ \EE_\varphi$ (``Nash-type'' equilibrium).

The Stieltjes equilibrium problem \eqref{mindiscreteStieltjes} is a constrained one: the constraints are embedded in the definition of the class $\mathfrak M_{|\vt{n}|}(\Delta, n)$. A classical non-constrained version of the same problem leads to the (weighted) Fekete points. Given a compact $\Delta\subset \C$ and $n\in \N$, we want to find $\mu^*\in \mathfrak M_{n}(\Delta)\isdef \left\{ \mu\in \mathfrak M_{n}:\, \supp(\mu)\subset \Delta\right\}$ with
$$
 \EE_\varphi(\mu^*)=\min \left\{  \EE_\varphi(\mu) :\, \mu  \in \mathfrak M_{n}(\Delta) \right\} .
$$

Stieltjes' model for the hypergeometric case  ($p=1$) provides the well known electrostatic interpretation of the Jacobi polynomials. Zeros of the Jacobi polynomials $P_{n}^{(\alpha, \beta)}$ are also weighted Fekete points for $\Delta=[-1,1]$ and $\varphi(x)=\frac{\alpha+1}{2}\, \log\frac{1}{|x-1|}+\frac{\beta+1}{2}\, \log\frac{1}{|x+1|}$. Similarly, zeros of Laguerre and Hermite polynomials are weighted Fekete points for $\Delta=[0,+\infty)$,  $\varphi(x)=\frac{\alpha+1}{2}\, \log\frac{1}{|x|}+\frac{x}{2}$ and $\Delta=\R$,  $\varphi(x)=\frac{x^2}{2}$, respectively. It was pointed out in \cite{Ismail2000} that zeros of general orthogonal polynomials with respect to a measure on $\R$ may be interpreted as weighted Fekete points with an external field $\varphi=\varphi_{n}$ in general depending on the degree $n$.

Besides its elegance, the electrostatic model just described allows to establish monotonicity properties of the zeros of the Heine-Stieltjes polynomials as function of the parameters $\rho_k$. Furthermore, the minimization problem for the discrete energy it is based upon, admits substantial generalizations (one of them is subject of the present paper). The problem of the limit distribution of the discrete extremal points as $n\to \infty$ leads to the corresponding continuous energy problems.

\subsection{Extremal problem for Borel measures: continuous equilibrium}

We denote by $\MM$ (resp., $\MM_\R$) the set of all finite positive (resp., real) Borel measures $\mu$ with compact support $\supp (\mu)\subset \C$. Hereafter, $|\mu |$ stands for the total variation of $\mu \in \MM_\R$, and $\|\mu\|=|\mu|(\C)$.
For $n\in \N$,  let $ \MM_n\isdef \left \{ \mu \subset \MM:\, \|\mu \|=n\right\} $ be the set of positive Borel measures with total mass $n$ on $\C$.

With every measure $\mu \in \MM_\R$  we can associate its (continuous) logarithmic energy
\begin{equation}\label{defEnergyContinuous}
    E(\mu) \isdef  \iint \log \frac{1}{|x-y|}\, d\mu(x) d\mu(y)\,.
\end{equation}
Given the \emph{external field} $\varphi\in L^1(|\mu|)$, we consider also the weighted energy
\begin{equation}\label{defWeightedEnergyCont}
    E_\varphi(\mu) \isdef E (\mu)+ 2 \int \varphi\, d\mu  \,.
\end{equation}

If $\Gamma$ is a subset of $\C$, we denote by $\MM(\Gamma)$ (resp., $\MM_\R(\Gamma)$) the restriction of the corresponding families to measures supported on $\Gamma$. Again, a standard extremal problem of the potential theory is to seek for a global minimizer $ \lambda_{\Gamma,\varphi} \in \MM_{1}(\Gamma)$ such that
\begin{equation}\label{def_capacity}
   E_\varphi( \lambda_{\Gamma,\varphi})= \rho\isdef \min \left\{  E_\varphi(\mu):\,\mu  \in \MM_{1}(\Gamma) \right\}.
\end{equation}
It is well known that under certain conditions on $\varphi$ this minimizer $ \lambda_{\Gamma,\varphi}$ exists and is unique; it is called the \emph{equilibrium measure} of $\Gamma $ in the external field $\varphi$, see e.g.~\cite{Saff:97} for further details.  For $\varphi\equiv 0$, measure $\lambda _\Gamma =\lambda _{\Gamma , 0}$ is also known as the \emph{Robin measure} of $\Gamma $.

In terms of the extremal constant $\rho$ we can also define the \emph{weighted (logarithmic) capacity} of $\Gamma $,
$  \cp_\varphi (\Gamma ) =e^{-\rho}$. For $\varphi \equiv 0$ we simplify notation writing $\cp  (\Gamma)$ instead of $\cp_0 (\Gamma)$. If $\cp (\Gamma)=0$ then $\Gamma $ is a \emph{polar set}. Observe that $E(\mu)=+\infty$ for any $\mu\in \mathfrak M$, so that any finite set is polar.

There is a number of  properties characterizing the equilibrium measure $\lambda_{\Gamma,\varphi} $. For instance, if we define the logarithmic potential of $\mu\in \MM_\C$ by
$$
U^\mu(z)  \isdef  \int \log\frac{1}{|z-t|}\, d\mu(t)\,,
$$
then up to a polar subset of $ \Gamma $,
\begin{equation}\label{equilibriumSection6}
 U^{\lambda_{\Gamma,\varphi}}(z) +\varphi(z) \begin{cases} = \rho^*, & \text{if } z \in \supp (\lambda_{\Gamma,\varphi} ), \\
\geq \rho^*, & \text{if } z \in \Gamma ,
\end{cases}
\end{equation}
where $\rho^*$ is a constant related to $\rho$ and $\varphi$. Furthermore,
if $\Gamma $ and $\varphi$ are sufficiently regular,
\begin{equation}\label{maximinEqMeasure}
    \min_{z\in \Gamma } \left( U^{\lambda_{\Gamma,\varphi} }(z)+\varphi(z) \right)= \max_{ \mu \in \MM_{1}(\Gamma ) }\,  \min_{z\in \Gamma } \left( U^{\mu}(z) +\varphi(z)\right).
\end{equation}
This max-min property is a basis for applications of the equilibrium measure in the asymptotic theory of extremal (in particular, orthogonal) polynomials, see \cite{Gonchar:84}, \cite{Mhaskar/Saff:84},  \cite{Rakhmanov:84}, and also the monograph \cite{Saff:97}.

Like for the discrete measures, we will consider general external fields of the form
$$
\varphi(z)=\Re \Phi(z),
$$
where $\Phi$ is analytic, but in general multivalued. What we require in the sequel is that $\Phi'$ is holomorphic in $\C\setminus \AA$, allowing further construction below.

\begin{remark} \label{remark:vector}
Further generalizations of this construction can be obtained either considering several measures on respective sets interacting according to a certain law (vector equilibrium) \cite{Gonchar:85}, or including additional constraints. For instance, prescribing an upper bound on the density of the extremal measure on $\Gamma $ in \eqref{def_capacity} we obtain the so-called constrained equilibrium \cite{Dragnev97}, \cite{Rakhmanov:96}, relevant for the asymptotic description of polynomials of discrete orthogonality. Another way is to impose in \eqref{def_capacity} the size of $\mu$ on each component of $\Gamma $, such as it was done in \cite{MR2003j:33031}: if $\AA =\{ a_0, \dots, a_{p} \} \subset \R$, $a_{0}< \dots< a_{p}$,  $\Gamma_{j}\isdef [a_{j-1}, a_{j}]$, $j=1, \dots, p$,  $\Gamma=\cup_{j=1}^p \Gamma_{j}= [a_{0}, a_{p}]$, and
$\mathcal{N}$ is the standard simplex in $\R^{p-1}$,
\[
\mathcal{N}=\left\{\vt{\theta}=(\theta_1, \dots, \theta_p):\,
\theta_i \geq 0, \, i=1, \dots, p, \text{ and } \sum_{i=1}^p
\theta_i =1 \right\}\,,
\]
then for each  $\vt{\theta}=(\theta_1, \dots, \theta_p) \in \mathcal{N}$ we can consider the global minimum of the weighted energy $E_{\varphi}(\cdot)$ restricted to the class
$$
\mathcal M_{1}(\Gamma, \vt{\theta})\isdef \left\{ \mu \in \mathcal M_{1}:\, \supp(\mu)\subset \Gamma,\, \mu(\Gamma_{j})=\theta_{j},\, j=1, \dots, p-1\right \}.
$$ 
Again, for $\varphi$ like in \eqref{admisibleField} with $\rho_{j}\geq 0$ there exists a unique minimizing energy,  $\lambda_{\Gamma,\varphi} (\vt{\theta})$.
\end{remark}

\begin{remark}
It should be mentioned that a characterization of the weighted Fekete points on the real line and its continuous limit were used in \cite{MR2000j:31003} to prove new results on the support of an equilibrium (i.e.~extremal) measure in an analytic external field on $\R$.
\end{remark}

\subsection{Relation between discrete and continuous equilibria}

The transfinite diameter of a compact set $\Gamma $ is defined by the limit process when the number of Fekete points tends to infinity. It was P\'{o}lya who proved the remarkable fact that  the transfinite diameter of $\Gamma $  is equal to its capacity. Fekete observed further that the normalized counting measure of Fekete points converges to the equilibrium (Robin) measure of $\Gamma $. For the weighted analogue of this result, see \cite[Ch.~III]{Saff:97}.

The connection between the discrete and continuous equilibria allowed to use the Stieltjes model in \cite{MR2003j:33031} in order to obtain in this situation the limit distribution of zeros of Heine-Stieltjes polynomials. Namely, if for each vector $\vt{n}=(n_1, \dots, n_p)\in \Z_+^{p}$ we  consider the discrete extremal measure $\mu^*(\vt{n})$ introduced in \eqref{mindiscreteStieltjes}, and assume that $|\vt{n}|\to \infty$ in such a way that each fraction $n_k/ |\vt{n}|$ has a limit,
$$
\lim_{|\vt{n}|\to \infty} \frac{n_{j}}{|\vt{n}|}=\theta_{j}, \quad j=1, \dots, p,
$$
then $\mu^*(\vt{n})/ |\vt{n}|$ weakly converges to the equilibrium measure $\lambda_{\Gamma,0} (\vt{\theta})$,  $\Gamma =[a_{0}, a_{p}]$, $\vt{\theta}=(\theta_1, \dots, \theta_p)$, defined in the previous section. In a certain sense, this can be regarded as a generalization of the just mentioned classical result of Fekete.

\section{Discrete and continuous critical measures} \label{sec:criticalmeasures}

According to a well-known result of Gauss, there are no stable equilibrium configurations (i.e.~local minima of the energy) in an a conducting open set under a harmonic external field. Unstable equilibria usually do not attract much attention from a point of view of Physics. However, as we will show further, they constitute a rich and relevant object that appear naturally in many fields of analysis.

We introduce now the concept that plays the leading role in this paper: the family of measures providing saddle points for the logarithmic energy on the plane, with a separate treatment of the discrete and continuous cases.

\subsection{Discrete critical measures}

We start with the following definition:
\begin{definition} \label{def:AcriticalDisc}
Let $\Omega $ be a domain on $\C$, $\AA \subset \Omega$ a subset of zero capacity, and $\varphi$ be a $C^1$ real-valued function in $\Omega\setminus \AA$. A measure
\begin{equation}\label{defMucritDiscrete}
    \mu=\sum_{k=1}^n \delta_{\zeta_k} \in \mathfrak M_n, \quad \zeta_i \neq \zeta_j \text{ for } i\neq j,
\end{equation}
is a discrete \emph{$(\mathcal A, \varphi)$-critical measure} in $\Omega$, if $\supp(\mu)\subset \C\setminus \AA$, and for the weighted discrete energy $\EE_\varphi(\mu)=\EE_\varphi(\zeta _1, \dots, \zeta _n)$ we have
\begin{equation}\label{gradient}
    \grad \EE_\varphi(\zeta_1, \dots, \zeta_k)=0\,,
\end{equation}
or equivalently,
$$
\frac{\partial}{\partial z} \, \EE_\varphi(\zeta_1, \dots, z, \dots \zeta_n)\big|_{z=\zeta_k} =0, \quad k = 1, \dots, n, \qquad \frac{\partial}{\partial z} = \frac{1}{2}\, \left(\frac{\partial}{\partial x} - i \frac{\partial}{\partial y}\right).
$$
More generally, if $\varphi=\Re \Phi$, where $\Phi$ is an analytic (in general, multivalued) function in $\Omega$ with a single-valued derivative $\Phi'$, then this definition does not need any modification.
\end{definition}
In the sequel we omit the mention to $\Omega$ if $\Omega =\C$. 

The following proposition is just a reformulation of Eq.~\eqref{DifEq} in this new terminology:
\begin{proposition}
\label{characterizationHS}
Assume that $\AA=\{a_0, a_1, \dots, a_p \}$, $p\in \N$, is a set of pairwise distinct points on $\C$, and the external field $\varphi$ is given by \eqref{admisibleField}.   
Then
\begin{equation}\label{muDiscrete}
\mu=\sum_{k=1}^n \delta_{\zeta _k} \in \mathfrak M_n, \quad \zeta _i \neq \zeta _j \text{ for } i\neq j,
\end{equation}
supported on $\C\setminus \AA$, is a discrete $(\mathcal A, \varphi)$-critical measure if and only if there exists a polynomial $V_n \in \P_{p-1}$ such that  $y(z)=y_n(z)=\prod_{k=1}^n (z-\zeta _k)$ is a solution of the differential equation \eqref{DifEq}, with
$$
 \frac{B(x)}{A(x)}=
  \sum_{k=0}^p \frac{\rho_k}{x-a_k}.
$$
\end{proposition}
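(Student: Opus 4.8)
The plan is to show that both sides of the equivalence are encoded by the very same system of algebraic relations on the zeros $\zeta_1,\dots,\zeta_n$ — the classical Stieltjes relations — so that the proposition reduces to two elementary computations meeting in the middle. First I would unwind the critical-measure condition. Since $\varphi=\Re\Phi$ with $\Phi'$ single-valued, and $\frac{\partial}{\partial z}\log|z-a|=\tfrac12(z-a)^{-1}$ (whence $\frac{\partial}{\partial z}\Re\Phi=\tfrac12\Phi'$), differentiating $\EE_\varphi(\zeta_1,\dots,z,\dots,\zeta_n)$ in the $k$-th slot and evaluating at $z=\zeta_k$ collapses \eqref{gradient} to
\[
\sum_{j\neq k}\frac{1}{\zeta_k-\zeta_j}=\Phi'(\zeta_k),\qquad k=1,\dots,n.
\]
With $\Phi$ as in \eqref{admisibleField} one has $\Phi'(z)=-\tfrac12\sum_j \rho_j/(z-a_j)=-\tfrac12\,B(z)/A(z)$, so the critical condition reads exactly $\sum_{j\neq k}(\zeta_k-\zeta_j)^{-1}=-\tfrac12\,B(\zeta_k)/A(\zeta_k)$. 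The hypothesis $\supp(\mu)\subset\C\setminus\AA$ guarantees $A(\zeta_k)\neq0$, so every quantity here is finite.

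Second I would connect this system to \eqref{DifEq} through $y(z)=\prod_k(z-\zeta_k)$. The one identity needed is that, at a simple zero $\zeta_k$, writing $y=(z-\zeta_k)g_k$ with $g_k=\prod_{j\neq k}(z-\zeta_j)$ gives $y'(\zeta_k)=g_k(\zeta_k)$ and $y''(\zeta_k)=2g_k'(\zeta_k)$, hence
\[
\frac{y''(\zeta_k)}{y'(\zeta_k)}=2\,\frac{g_k'(\zeta_k)}{g_k(\zeta_k)}=2\sum_{j\neq k}\frac{1}{\zeta_k-\zeta_j}.
\]
This turns the Stieltjes relations into $A(\zeta_k)y''(\zeta_k)+B(\zeta_k)y'(\zeta_k)=0$ for every $k$.

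For the two implications I would then argue as follows. If $y$ solves \eqref{DifEq}, evaluating the equation at each $\zeta_k$ eliminates the $V_n y$ term and leaves precisely $A(\zeta_k)y''+B(\zeta_k)y'=0$; dividing by $y'(\zeta_k)\neq0$ and using the identity above recovers the critical relations, so $\mu$ is $(\AA,\varphi)$-critical. Conversely, given a critical $\mu$, I set $H:=Ay''+By'$, a polynomial of degree $\leq n+p-1$, and the same computation shows $H(\zeta_k)=0$ for all $k$; since the $\zeta_k$ are distinct, $y$ divides $H$ and $W:=H/y$ has degree $\leq p-1$. Matching leading coefficients ($A$ monic of degree $p+1$, $B$ with leading coefficient $\alpha$, $y$ monic of degree $n$), the leading coefficient of $H$ equals $n(n-1)+\alpha n=n(n+\alpha-1)$, so $V_n:=W/(n(n+\alpha-1))$ lies in $\P_{p-1}$ (and is monic when $\deg V_n=p-1$), giving $H=n(n+\alpha-1)V_n y$, i.e.\ \eqref{DifEq}.

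The computations are routine; the only genuine points of care are the bookkeeping of the Wirtinger derivative of the (possibly multivalued) external field — which is precisely why single-valuedness of $\Phi'$ is assumed — and the divisibility/degree argument, which relies essentially on the zeros being simple (so that $y\mid H$ follows from $H$ vanishing at each $\zeta_k$) and on $\supp(\mu)\cap\AA=\emptyset$ (so that $A(\zeta_k)\neq0$ and the relations are non-degenerate). A minor caveat is the normalization: the identification $V_n=W/(n(n+\alpha-1))$ presumes $n(n+\alpha-1)\neq0$, which holds in the regime of interest.
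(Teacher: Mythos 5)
Your proposal is correct and follows essentially the same route as the paper: reduce the gradient condition to the Stieltjes relations $2\sum_{j\neq k}(\zeta_k-\zeta_j)^{-1}+B(\zeta_k)/A(\zeta_k)=0$, identify the sum with $y''/y'$ at a simple zero, and conclude divisibility of $Ay''+By'$ by $y$. You are in fact slightly more careful than the paper in spelling out both directions and the leading-coefficient normalization $n(n+\alpha-1)$, which the paper leaves implicit.
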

In other words, discrete $(\mathcal A, \varphi)$-critical measures with external field generated by complex charges fixed at $\AA$  correspond precisely to zeros of Heine-Stieltjes polynomials.
\begin{proof}
A straightforward computation shows that for $z\neq w$,
$$
2 \, \frac{\partial}{\partial z}\, \log |z-w| =\frac{1}{z-w}\,.
$$
Hence,
$$
2 \, \frac{\partial}{\partial \zeta_k} \, \EE (\zeta_1,   \dots, \zeta_n)  = - 2 \, \frac{\partial}{\partial \zeta_k } \, \sum_{i\neq j} \log |\zeta _i- \zeta _j| = -  \sum_{j\neq k} \frac{1}{\zeta _k - \zeta _j}.
$$
On the other hand, the multivalued function $\varphi$ has a single-valued derivative given by (see \eqref{admisibleField})
$$
2 \, \frac{\partial}{\partial z}\, \varphi(z)= -\sum_{j=0}^p \frac{\rho_j}{2} \, \frac{1}{z - a _j}=  \Phi'(z).
$$
Thus, using the notation from \eqref{BoverA}, we can rewrite condition \eqref{gradient} as
\begin{equation}\label{almostLame1}
2 \, \left( \sum_{j\neq k} \frac{1}{\zeta _k - \zeta _j} -  2 \Phi'(\zeta _k)\right) = 2 \, \sum_{j\neq k} \frac{1}{\zeta _k - \zeta _j} +  \frac{B}{A}(\zeta _k) =0\,, \quad k = 1, \dots, n,
\end{equation}
and with $ y(z)\isdef \prod_{i=1}^n (z-\zeta _i)$ this identity takes the form
\begin{equation}\label{almostLame}
\left(\frac{y''}{y'} +  \frac{B }{ A }\right)(\zeta_k)=0\,, \quad k=1, \dots, n\,.
\end{equation}
As a consequence, polynomial
$$
A(z)\, y ''(z) +B(z) y '(z)\in \P_{n+p-1}\,,
$$
is divisible by $y$, so there exists a polynomial $\widetilde V_{n}\in \P_{p-1}$ such that
$$
A(z)\, y ''(z) + B(z) y '(z)= \widetilde V_n(z) y(z)\,,
$$
which concludes the proof.
\end{proof}

In the sequel we will make use of the following uniform boundedness of the supports of the discrete critical measures, corresponding to a sequence of external fields of the form
\begin{equation}\label{extFieldMoreBis}
\varphi_n=\Re \Phi_n, \quad   \Phi_n(z)=-\sum_{k=0}^p \frac{\rho_k(n)}{2} \, \log (z-a_k )\,,
\end{equation}
where $\rho_k(n)\in \C$.

\begin{proposition} \label{prop:boundedness}
Let $\mu_n\in \mathfrak M_n$, $n\in \N$, be a discrete $(\AA, \varphi_n)$-critical measure corresponding to an external field \eqref{extFieldMoreBis}. If
\begin{equation}\label{liminfcondition}
    \liminf_n  \Re \sum_{k=0}^p \frac{\rho_k(n) }{n} > -\frac{1}{2},
\end{equation}
then $\bigcup_n \supp(\mu_n)$ is bounded in $\C$.
\end{proposition}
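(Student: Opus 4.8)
The plan is to reduce the statement to the critical-point equations and then run a virial estimate at the node of largest modulus. By Proposition~\ref{characterizationHS}, or directly from \eqref{almostLame1}, a discrete $(\AA,\varphi_n)$-critical measure $\mu_n=\sum_{k=1}^n\delta_{\zeta_k}$ satisfies
\[
2\sum_{j\neq k}\frac{1}{\zeta_k-\zeta_j}=-\frac{B}{A}(\zeta_k)=-\sum_{l=0}^p\frac{\rho_l(n)}{\zeta_k-a_l},\qquad k=1,\dots,n .
\]
Fix $n$, put $R_n=\max_k|\zeta_k|$, and choose $m$ with $|\zeta_m|=R_n$. First I would multiply the $m$-th equation by $\zeta_m$ and take real parts; the point is that, because $\zeta_m$ has maximal modulus, each term on the left is then bounded below.

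Concretely, for $|\zeta_j|\le|\zeta_m|$ with $\zeta_j\neq\zeta_m$ one has the elementary identity
\[
\Re\frac{\zeta_m}{\zeta_m-\zeta_j}=\frac12+\frac12\,\frac{|\zeta_m|^2-|\zeta_j|^2}{|\zeta_m-\zeta_j|^2}\;\ge\;\frac12 ,
\]
so summing over $j\neq m$ gives $\Re\bigl(\zeta_m\sum_{j\neq m}(\zeta_m-\zeta_j)^{-1}\bigr)\ge (n-1)/2$. Multiplying the critical equation by $\zeta_m$, taking real parts, and using this bound yields
\[
n-1\;\le\;-\Re\!\left(\zeta_m\,\frac{B}{A}(\zeta_m)\right)
=-\Re\sum_{l=0}^p\rho_l(n)-\Re\sum_{l=0}^p\frac{\rho_l(n)\,a_l}{\zeta_m-a_l} ,
\]
where I used $\zeta_m/(\zeta_m-a_l)=1+a_l/(\zeta_m-a_l)$. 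The first sum is exactly the quantity controlled by \eqref{liminfcondition}; the second is a ``field at infinity'' correction that should be negligible once $R_n$ is large.

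The main step is estimating that correction. With $M=\max_l|a_l|$, for $R_n\ge 2M$ one has $|\zeta_m-a_l|\ge R_n/2$, hence $\bigl|\Re\sum_l \rho_l(n)a_l/(\zeta_m-a_l)\bigr|\le (2M/R_n)\sum_l|\rho_l(n)|$. Rearranging then gives, whenever its denominator is positive,
\[
R_n\;\le\;\frac{2M\sum_{l=0}^p|\rho_l(n)|}{(n-1)+\Re\sum_{l=0}^p\rho_l(n)} .
\]
By \eqref{liminfcondition} there are $\varepsilon>0$ and $N$ with $\Re\sum_l\rho_l(n)\ge(-\tfrac12+\varepsilon)n$ for $n\ge N$, so the denominator exceeds $(\tfrac12+\varepsilon)n-1>0$; as long as the total charge is of order $n$, i.e. $\sum_l|\rho_l(n)|=O(n)$ (the regime in which \eqref{liminfcondition} is a genuine restriction --- if the $\rho_l(n)$ were bounded it would hold automatically), the right-hand side stays bounded uniformly for $n\ge N$. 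Since the finitely many $\mu_n$ with $n<N$ have finite support, this bounds $\bigcup_n\supp(\mu_n)$.

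I expect the delicate point to be precisely this control of the external-field term: the hypothesis only constrains the signed sum $\Re\sum_l\rho_l(n)$, so one must ensure the unsigned sum $\sum_l|\rho_l(n)|$ cannot overwhelm the repulsion gain $(n-1)+\Re\sum_l\rho_l(n)$. Geometrically, the computation says that a node lying far from every fixed point $a_l$ feels an external field close to $\Re\sum_l\rho_l(n)$, and \eqref{liminfcondition} guarantees this cannot cancel the logarithmic repulsion of the remaining $n-1$ unit charges.
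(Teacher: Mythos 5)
Your argument is essentially the paper's own proof: both evaluate the critical equations \eqref{almostLame1} at a node of maximal modulus, multiply by that node, and use $\Re\,(1-w)^{-1}\ge\tfrac12$ for $|w|\le1$ to extract the repulsion of the remaining $n-1$ charges. The only divergence is at the end: the paper keeps the inequality in the form $\frac{1}{n-1}\Re\sum_k\rho_k(n)\,\zeta_1(n)/(\zeta_1(n)-a_k)\le-\tfrac12$ and passes to the limit along a subsequence with $\zeta_1(n)\to\infty$, whereas you make that limit quantitative via $\zeta_m/(\zeta_m-a_l)=1+a_l/(\zeta_m-a_l)$.

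The caveat you flag is genuine: your closing bound needs $\sum_l|\rho_l(n)|=O(n)$, and \eqref{liminfcondition} does not supply it. But this is not a defect of your route relative to the paper's: the paper's limit interchange silently requires $\rho_l(n)/(n\,\zeta_1(n))\to0$, which is the same kind of control, and the restriction is not merely technical --- balancing the two sides of \eqref{almostLame1} shows the nodes can sit at distance of order $\max_l|\rho_l(n)|/n$ from $\AA$ (already for two nodes with $\rho_0=ic=-\rho_1$ one finds explicit critical pairs $\zeta_{1,2}=\bigl(1+ic\pm\sqrt{1+c^2}\bigr)/2$ escaping as $c\to\infty$ while $\Re(\rho_0+\rho_1)=0$), so some a priori bound on the $|\rho_l(n)|$ is unavoidable. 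The sentence immediately following the proposition (``$B_n/n$ is bounded'') and the application in Theorem \ref{the:weakAsymptoticsWithField}, where $\rho_k(n)/n$ is assumed to converge, confirm that $\rho_k(n)=O(n)$ is the intended standing hypothesis; under it your proof is complete. As a final observation, your correct bookkeeping of the factor $2$ in \eqref{almostLame1} gives $n-1\le-\Re\sum_l\rho_l(n)+o(n)$ and hence the sharper threshold $-1$ in place of $-\tfrac12$ in \eqref{liminfcondition}; this is consistent with Remark \ref{rmk:discreteunbounded}, whose unbounded example has $\Re\rho_0(n)/n\to-1$, and the paper's weaker constant traces back to the omitted factor $2$ in the first display of its proof.
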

In other words, if we assume that in \eqref{DifEq} the coefficient $B=B_n$ may depend on $n$, but $B_n/n$ is bounded (in such a way that \eqref{liminfcondition} holds), then the zeros of the Heine-Stieltjes polynomials are also uniformly bounded.

\begin{proof}
Let $\mu_n=\sum_{k=1}^n \delta_{\zeta _k(n)} \in \mathfrak M_n$, and assume that $|\zeta _1(n)|\geq \dots \geq |\zeta _n(n)|$. Since $|\zeta _1(n)|>0$, by \eqref{almostLame1},
$$
\sum_{j=2}^n  \frac{1}{1 - \zeta _j(n)/\zeta _1(n)} =  -\sum_{k=0}^p \rho_k(n) \,\frac{\zeta _1(n)}{\zeta _1(n)-a_k} \,.
$$
But  
$$
|\zeta _j(n)/\zeta _1(n)|\leq 1 \quad \Rightarrow \quad \Re\left(\frac{1}{1 - \zeta _j(n)/\zeta _1(n)} \right)\geq 1/2,
$$
so that
$$
\frac{1}{n-1}\, \Re \sum_{k=0}^p \rho_k(n) \,\frac{\zeta _1(n)}{\zeta _1(n)-a_k} \leq -\frac{1}{2}.
$$
Hence, if $\zeta _1(n)\to \infty$ along a subsequence of $\N$, then
$$
\liminf_n  \Re \sum_{k=0}^p \frac{\rho_k(n) }{n} \leq -\frac{1}{2},
$$
which contradicts our assumptions.
\end{proof}
\begin{rmk}
It was proved in \cite{Shapiro2008a} that for a fixed $\varphi$ of the form \eqref{extFieldMoreBis} (that is, $\rho_k(n)\equiv \rho_k$, $k=0, \dots, p$), the zeros of the Heine-Stieltjes polynomials accumulate on the convex hull of $\AA$.
\end{rmk}
\begin{rmk}\label{rmk:discreteunbounded}
Condition  \eqref{liminfcondition} is in general necessary for the assertion of Proposition \ref{prop:boundedness}. Indeed, for $p=0$, $a_0=0$, and
$$
\varphi_n(z)=\frac{n-1}{2}\, \log|z|,
$$
any discrete uniform measure supported at the scaled zeros of unity, that is,
$$
\mu_n=\sum_{k=1}^n \delta_{\zeta _k(n)} \in \mathfrak M_n, \quad \zeta_k(n)=\zeta_n\, e^{2\pi i k /n}, \quad \zeta_n \in \C\setminus \{0\},
$$
is $(\AA, \varphi_n)$-critical, which is easily established using \eqref{almostLame1} and \eqref{almostLame}. Obviously, for $\zeta_n\to \infty$ the support of $\mu_n$ is not uniformly bounded in $n$.
\end{rmk}

\subsection{Continuous critical measures}

Unlike in the discrete case, we provide now a variational definition for the continuous critical measure.

Any smooth complex-valued function $h$ in the closure  $\overline \Omega $ of a domain $\Omega$ generates a local variation of $\Omega$ by $ z
\mapsto z^t=z+ t \, h(z)$, $t\in \C$. It is easy to see that $ z \mapsto z^t $ is injective for small
values of the parameter $t$. The transformation above induces a variation of sets $ e \mapsto
e^t \isdef \{z^t:\, z \in e\}$, and (signed) measures: $ \mu \mapsto
\mu^t$, defined by $\mu^t(e^t)=\mu(e) $; in the differential form, the pullback measure $\mu^t$
can be written as $d\mu^t (x^t)=d\mu(x)$.

\begin{definition} \label{def:AcriticalCont}
Let $\Omega $ be a domain on $\C$, $\AA \subset \Omega$ a subset of zero capacity, and $\varphi$ be a  $C^1$  real-valued function in $\Omega\setminus \AA$. We say that a signed measure $\mu\in \MM_\R(\Omega)$ is a continuous \emph{$(\mathcal A, \varphi)$-critical} if for any $h$ smooth in $\Omega\setminus \AA$ such that $h\big|_\AA \equiv 0$,
\begin{equation}\label{derivativeEnergy}
    \frac{d}{dt}\, E_\varphi(\mu^t)\big|_{t=0} = \lim_{t\to 0} \frac{E_\varphi(\mu^t)- E_\varphi(\mu)}{t}=0.
\end{equation}
Furthermore, if $\varphi=\Re \Phi$, where $\Phi$ is an analytic (in general, multivalued) function in $\Omega$ with a single-valued derivative $\Phi'$, then this definition does not need any modification.
\end{definition}
In what follows we will always mean by an $(\mathcal A, \varphi)$-critical measure the continuous one, satisfying Definition \ref{def:AcriticalCont}. Furthermore, in order to simplify notation, we speak about an \emph{$\mathcal A$-critical measure} meaning a continuous $(\mathcal A, \varphi)$-critical measure with the external field $\varphi \equiv 0$. Observe that if $\AA \neq \emptyset$, this notion is nontrivial.

A particularly interesting case is treated in the following Lemma:
\begin{lemma}\label{lemma:condCrit}
If  $\varphi=\Re \Phi$, and $\Phi$  is analytic in a simply connected domain $\Omega$, then condition \eqref{derivativeEnergy} is equivalent to
\begin{equation}\label{condicionCrit}
 f_\varphi(\mu; h) =0\,,
\end{equation}
with
\begin{equation}\label{defFvar}
f_\varphi(\mu; h)\isdef   \iint  \frac{h(x)-h(y) }{x -y }   \, d\mu (x ) d\mu
(y ) - 2 \, \int    \Phi'(x)\, h(x) \, d\mu(x)\,.
\end{equation}
\end{lemma}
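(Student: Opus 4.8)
The plan is to compute the first-order expansion of $E_\varphi(\mu^t)$ in the parameter $t$ directly from the definition and read off the coefficient. First I would use the defining relation $d\mu^t(x^t)=d\mu(x)$ to transport all integrals back to the fixed measure $\mu$, writing $E(\mu^t)=\iint \log\frac{1}{|x^t-y^t|}\,d\mu(x)d\mu(y)$ and $\int\varphi\,d\mu^t=\int\varphi(x^t)\,d\mu(x)$, where $x^t=x+th(x)$. The key algebraic step is the factorization $x^t-y^t=(x-y)\bigl(1+t\,w(x,y)\bigr)$ with $w(x,y)\isdef\frac{h(x)-h(y)}{x-y}$; since $h$ is $C^1$ on the compact set $\supp(\mu)$, the quotient $w$ is bounded by the Lipschitz constant of $h$, so $\log|x^t-y^t|=\log|x-y|+\log|1+t\,w(x,y)|$ with the second term uniformly small for $|t|$ small. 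This already disposes of the logarithmic singularity: the difference $E(\mu^t)-E(\mu)=-\iint\log|1+t\,w|\,d\mu\,d\mu$ involves only a bounded integrand and is finite irrespective of whether $E(\mu)$ itself is finite.

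Next I would Taylor-expand to first order. Using $\log|1+t w|=\Re\log(1+tw)=\Re(t w)+O(|t|^2|w|^2)$ and, since $\Phi$ is holomorphic (here simple connectivity guarantees $\Phi$ is single-valued, so $\varphi=\Re\Phi$ is a genuine single-valued harmonic field), $\varphi(x^t)-\varphi(x)=\Re\bigl(\Phi(x^t)-\Phi(x)\bigr)=\Re\bigl(t\,\Phi'(x)h(x)\bigr)+O(|t|^2)$, one finds
$$E_\varphi(\mu^t)-E_\varphi(\mu)=-\Re\!\Bigl(t\Bigl[\iint w\,d\mu\,d\mu-2\int\Phi'h\,d\mu\Bigr]\Bigr)+O(|t|^2)=-\Re\bigl(t\,f_\varphi(\mu;h)\bigr)+O(|t|^2).$$
The remainder is genuinely $O(|t|^2)$ because on $\supp(\mu)$ (compact) $w$ is bounded and $\Phi'$, $h$ are bounded, so the error terms are dominated by a bounded, hence $\mu\times\mu$-integrable, function; dominated convergence justifies passing the limit inside both integrals.

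Finally I would extract the equivalence from this expansion. Dividing by $t$, $\frac{E_\varphi(\mu^t)-E_\varphi(\mu)}{t}=-\frac{\Re\bigl(t\,f_\varphi(\mu;h)\bigr)}{t}+O(|t|)$. Writing $c=f_\varphi(\mu;h)$ and $t=|t|e^{i\theta}$, one has $\frac{\Re(tc)}{t}=|c|\cos(\theta+\arg c)\,e^{-i\theta}$, which has a limit as $t\to0$ in $\C$ if and only if $c=0$; hence the complex limit in \eqref{derivativeEnergy} exists and equals $0$ exactly when $f_\varphi(\mu;h)=0$, which is \eqref{condicionCrit}. Equivalently, restricting to real $t$ gives $\frac{d}{dt}E_\varphi(\mu^t)|_{t=0}=-\Re f_\varphi(\mu;h)$, and since $f_\varphi(\mu;\cdot)$ is $\C$-linear in $h$, replacing $h$ by $ih$ turns $\Re f_\varphi$ into $-\Im f_\varphi$, so the vanishing of the real directional derivative for all admissible $h$ forces $f_\varphi(\mu;h)\equiv0$.

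The main obstacle is not the formal computation but its analytic justification: one must control the integrand uniformly near the diagonal $x=y$, where a priori $w$ is of the indeterminate form $0/0$. The $C^1$ (Lipschitz) regularity of $h$ is exactly what makes $w$ bounded and the first-order expansion uniform, and one needs the diagonal to be $\mu\times\mu$-null (automatic when $\mu$ has finite energy, i.e.\ no atoms) so that its contribution is irrelevant. A secondary subtlety worth flagging is the reading of the complex limit in \eqref{derivativeEnergy}: as the computation shows, for a non-critical $\mu$ the difference quotient has no limit as $t\to0$ over $\C$, so the condition ``the limit is $0$'' must be understood as ``the limit exists and equals $0$,'' and it is precisely this existence that encodes criticality.
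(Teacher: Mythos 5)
Your proposal is correct and follows essentially the same route as the paper's proof: pulling the integrals back to $\mu$ via $d\mu^t(x^t)=d\mu(x)$, factoring $x^t-y^t=(x-y)\bigl(1+t\,\frac{h(x)-h(y)}{x-y}\bigr)$, and expanding $\log(1+t\,w)$ and $\Phi(x+th(x))$ to first order to obtain $E_\varphi(\mu^t)-E_\varphi(\mu)=-\Re\bigl(t\,f_\varphi(\mu;h)\bigr)+\mathcal O(t^2)$. Your additional care about the boundedness of the difference quotient near the diagonal and about how the existence of the complex limit in $t$ forces $f_\varphi(\mu;h)=0$ fills in details the paper leaves implicit, but it is the same argument.
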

\begin{proof}
It is sufficient to show that
$$
E_\varphi(\mu^t)-E_\varphi(\mu) =-\Re \left\{t\, f(\mu; h)
+\mathcal O (t^2) \right\}\,.
$$

We have
\begin{align*}
E(\mu^t) &=   \iint \log \frac{1}{|x^t-y^t |}\, d\mu^t(x^t) d\mu^t(y^t)   
= \iint \log \frac{1}{|(x -y )+t\, (h(x)-h(y)) |}\, d\mu (x
) d\mu (y )\,,
\end{align*}
so that
\begin{align*}
E(\mu^t)   -E(\mu)   & = -
\iint  \log \left| 1+ t\, \frac{h(x)-h(y)
}{x -y }\right| \, d\mu (x ) d\mu (y ) \\
& = -  \Re  \iint
\left\{ \log \left( 1+ t\, \frac{h(x)-h(y) }{x -y }\right)\right\}
\, d\mu (x ) d\mu (y ) \,.
\end{align*}
On the other hand,
\begin{align*}
\int_{\Omega^t} \varphi(x^t) \, d\mu^t(x^t)   - \int_{\Omega}
\varphi(x) \, d\mu(x) &= \int_{\Omega } \varphi(x+ t h(x)) \, d\mu
(x )- \int_{\Omega} \varphi(x) \, d\mu(x)  \\ &= \Re \int \left(
\Phi(x+ t h(x)) - \Phi(x)\right)\, d\mu(x)\,.
\end{align*}
Taking into account the behavior of $\log(1+x)$ for small $x$, we conclude that as $t\to 0$,
\begin{align*}
E_\varphi(\mu^t)- E_\varphi(\mu)& =-  \Re  \iint
\left( t\, \frac{h(x)-h(y) }{x -y }+\mathcal O (t^2) \right) \,
d\mu (x ) d\mu (y ) \\ & + 2\, \Re \int \left( t \, \Phi'(x)\,
h(x) + \mathcal O(t^2)\right)\, d\mu(x) ,
\end{align*}
and the statement follows.
\end{proof}

\begin{rmk}\label{remarkdiscrete}
For a finite set $\AA$ and the external field given by \eqref{admisibleField}, the discrete $(\AA , \varphi)$-critical measures fit into the same variational definition as their continuous counterparts, as long as we replace in \eqref{derivativeEnergy} the continuous energy $E_\varphi(\mu)$ by $\EE_\varphi(\mu)$.

Indeed, arguments similar to those used in the proof of Lemma \ref{lemma:condCrit} show that for $\mu$ in \eqref{muDiscrete}, the condition \begin{equation}\label{discreteLocalVar}
    \frac{d}{dt}\, \EE_\varphi(\mu^t)\big|_{t=0} =0  ,
\end{equation}
written for
$$
h(\zeta )=\frac{A(\zeta )}{\zeta -z}\,, \quad z\notin \AA,
$$
yields
\begin{equation*}
    \sum_{i \neq j} \frac{1}{(\zeta _i-z)(\zeta _j-z)}   + \frac{B(z)}{A(z)}\,
 \sum_{i=1}^n \frac{1}{\zeta _i-z}    =  \frac{D(z)}{ A(z)}
 \,,
\end{equation*}
where $D$ is a polynomial. In particular, the residue of the left hand side (as a function of
$z$) is $0$ at $w=\zeta _k$, $k=1, \dots, n$; setting $ y(z)\isdef \prod_{i=1}^n (z-\zeta _i)$, we arrive again at the system \eqref{almostLame}.

And viceversa, using the chain rule it is easy to show that the condition \eqref{gradient} implies \eqref{discreteLocalVar}.
\end{rmk}

Critical measures constitute an important object; for a finite set $\AA$ the natural description of their structure is in terms of the trajectories of quadratic differentials.
In the next section we give an abridged introduction to quadratic differentials on the Riemann sphere in the form needed for our purposes. For a comprehensive account on this theory see for instance \cite{MR0096806}, \cite{Pommerenke75}, \cite{Strebel:84}, \cite{MR1929066}.

\section{Rational quadratic differentials on the Riemann sphere in a nutshell} \label{sec:qd}

Let $A$ and $V$ be monic polynomials of degree $p+1$ and $p-1$, respectively, with $A$ given by \eqref{defAandB} with all $a_k$'s pairwise distinct. The rational function $V/A$ defines on the Riemann sphere $\overline \C$ the quadratic differential
\begin{equation}\label{quadDiff}
\varpi  (z)=- \frac{V(z)}{  A(z) }\, (dz)^2.
\end{equation}
The only singular points of $\varpi$ (assuming that the zeros of $V$ and $A$ are disjoint) are:
\begin{itemize}
\item the points $a_k\in \AA$, where $\varpi$ has simple poles (critical points of order $-1$);
\item the zeros of $V$ of order $k\geq 1$, where $\varpi$ has zeros of the same order;
\item the infinity, where $\varpi$ has a double pole (critical point of order $-2$) with the residue $-1$.
\end{itemize}
The rest of the points in $\C$ are the regular points of $\varpi$, and their order is $0$. All singular points of order $\geq -1$ are called \emph{finite} critical points of $\varpi$.

In a neighborhood of any regular point $z_0 $ we can introduce a local parameter
\begin{equation}\label{transf1Xi}
    \xi =\xi (z)=\int^z  \sqrt{\varpi}  =\int^z  \sqrt{-\frac{V(t)}{  A(t) }}\, dt\,,
\end{equation}
in terms of which the representation of $\varpi$ is identically equal to one. This parameter is not uniquely determined: any other parameter $\widetilde \xi$ with this property satisfies $\widetilde \xi =\pm \xi + \const$. Function $\xi $ is called the \emph{distinguished} or \emph{natural parameter} near $z_0$.

Following \cite{Pommerenke75} and \cite{Strebel:84}, a smooth curve $\gamma$ along which
$$
-V (z)/A(z)\, (dz)^2>0 \quad \Leftrightarrow \quad \Im \xi(z)=\const
$$
is a \emph{horizontal arc} of the quadratic differential $\varpi $. More precisely, if $\gamma$ given by a parametrization $z(t)$, $t\in (\alpha ,\beta )$, then
$$
-\frac{V}{A}\, (z(t))\, \left(\frac{dz}{dt}\right)^2>0, \quad t
\in (\alpha ,\beta )\,.
$$
A maximal horizontal arc is called a \emph{horizontal trajectory} (or simply a \emph{trajectory}) of $\varpi $.
Analogously, trajectories of $-\varpi$ are called \emph{orthogonal} or  \emph{vertical trajectories} of  $\varpi$; along these curves
$$
V(z)/A(z)\, (dz)^2>0 \quad \Leftrightarrow \quad \Re \xi(z)=\const\,.
$$

Any simply connected domain $D$ not containing singular points of $\varpi$ and bounded by two vertical and two horizontal arcs is called a \emph{$\varpi$-rectangle}. In other words, if $\xi$ is any distinguished parameter in $D$, then $\xi(D)$ is a (euclidean) rectangle, and $D\mapsto \xi(D)$ is a one-to-one conformal mapping. Obviously, this definition is consistent with the freedom in the selection of the natural parameter $\xi$.

We can define a conformal invariant metric associated with the quadratic differential $\varpi$, given by the length element $|d \xi|=|\sqrt{V/A}|(z) |dz|$; the $\varpi$-length of a curve $\gamma$ is
$$
\|\gamma\|_{\varpi }= \frac{1}{\pi}\, \int_{\gamma}
\sqrt{\left|\frac{V}{A}\right| } \,(z)\, |dz|\,;
$$
(observe that this definition differs by a normalization constant from the definition
5.3 in \cite{Strebel:84}). Furthermore, if $D$ is a simply connected domain not containing singular points of $\varpi$, we can introduce the $\varpi$-distance by
$$
\dist(z_1, z_2; \varpi, D)=\inf \{\|\gamma\|_{\varpi  }:
\, z_1, z_2 \in \bar \gamma, \; \gamma \subset D \}\,.
$$
Trajectories and orthogonal trajectories are in fact geodesics (in the $\varpi$-metric) connecting any two of its
points. Indeed, according to \cite[Thm. 8.4]{Pommerenke75},  in any  simply connected domain $D$ not containing singular points of $\varpi$, a trajectory arc $\gamma$  joining $z_1$ with $z_2$ is the shortest: if $L_1$, $L_2$ are the orthogonal trajectories through $z_1$ and $z_2$, respectively, then any rectifiable curve  $\widetilde \gamma$  that connects $L_1$ with $L_2$ in $D$ satisfies
$$
\| \gamma\|_{\varpi}\leq \| \widetilde \gamma\|_{\varpi}\,.
$$

\begin{figure}[htb]
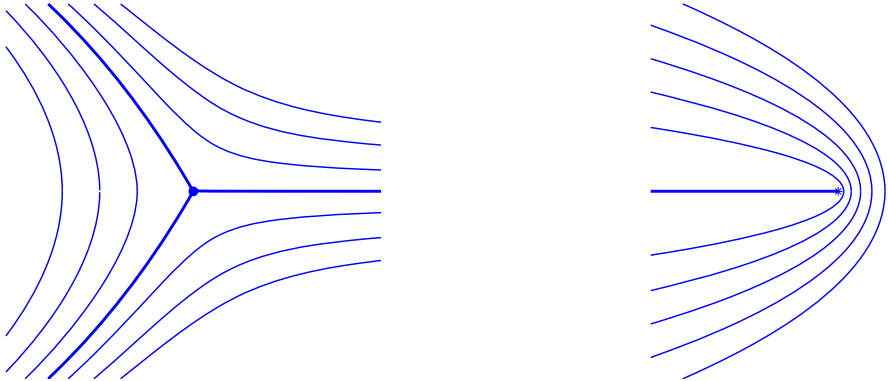

\centering \begin{tabular}{ll} \hspace{0cm}\mbox{\begin{overpic}[scale=0.55]%
{localZero}%
\end{overpic}} &
\mbox{\begin{overpic}[scale=0.55]%
{localSimplePole}%
\end{overpic}}
\end{tabular}
\caption{The local trajectory structure near a simple zero (left) or a simple pole.}
\label{fig:localstructure}
\end{figure}

The local structure of the trajectories is well known (see the references cited at the end of the previous Section). For instance, at any regular point trajectories look locally as simple analytic arcs passing through this point, and through every regular point of $\varpi$ passes a uniquely determined horizontal and uniquely determined vertical trajectory of $\varpi$, that are locally orthogonal at this point \cite[Theorem 5.5]{Strebel:84}.
 If $z$ is a finite critical point of $\varpi$ of order $k\geq -1$, then from $z$ emanate $k+2$ trajectories under equal angles $2\pi/(k+2)$ (see Figure \ref{fig:localstructure}).

\begin{figure}[htb]
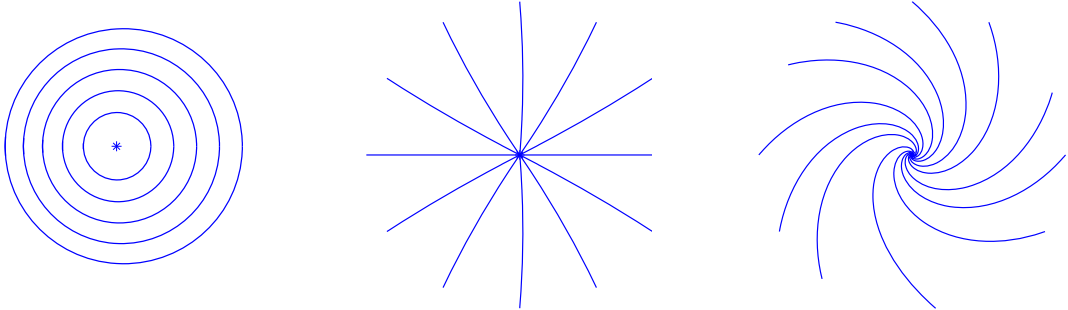

\centering \begin{tabular}{lll} \hspace{-1.5cm}\mbox{\begin{overpic}[scale=0.47]%
{localDoublePoleNegative}%
\end{overpic}} &
\hspace{-2.0cm}
\mbox{\begin{overpic}[scale=0.45]%
{localDoublePolePositive}%
\end{overpic}}&
\hspace{-2.0cm}
\mbox{\begin{overpic}[scale=0.45]%
{localDoublePoleComplex}%
\end{overpic}}
\end{tabular}
\caption{The local trajectory structure near a double pole with a negative (left), positive (center) or non-real residue.}
\label{fig:localstructure2}
\end{figure}

In the case of a double pole, the trajectories have either the radial, the circular or the spiral form, depending whether the residue at this point is negative, positive or non-real, see Figure \ref{fig:localstructure2}. In particular, with the assumptions on $A$ and $V$ above all trajectories of the quadratic differential \eqref{quadDiff} in a neighborhood of infinity are topologically identical to circles.

The global structure of the trajectories is much less clear. The trajectories and orthogonal trajectories of a given differential  $\varpi$ produce a transversal foliation of the Riemann sphere $\overline \C$. The main source of troubles is the existence of the so-called recurrent trajectories, whose closure may have a non-zero plane Lebesgue measure.
We refer the reader to \cite{Strebel:84} for further details.

A trajectory $\gamma$ is \emph{critical} or \emph{short} if it joints two (not necessarily different) finite critical points of $\varpi$. The set of critical trajectories of $\varpi$ together with their endpoints (critical points of $\varpi$) is the \emph{critical graph} of $\varpi$. Critical and closed trajectories are the only trajectories of $\varpi$ with finite $\varpi$-length. The quadratic differential $\varpi$ is called \emph{closed} if all its trajectories are either critical  or closed (i.e.~all its trajectories have a finite $\varpi$-length). In this case the trajectories of $\varpi$ that constitute closed Jordan curves cover the whole plane, except a set of critical trajectories of a plane Lebesgue measure zero; see e.g.~Figure \ref{fig:intersectionNegative} for a typical structure of such trajectories.

If the quadratic differential \eqref{quadDiff} with $A$ given by \eqref{defAandB} is closed, there exists a set $\Gamma$ of at most $p$ critical trajectories of $\varpi$ such that the complement to $\Gamma $ is connected, and $\sqrt{V/A}$ has a single-valued branch in $\C\setminus \Gamma$.

\section{Critical measures in the field of a finite system of fixed charges} \label{sec:qdandcriticalmeasures}

In what follows we fix the set of $p+1$ distinct points $\AA=\{a_0, \dots, a_p \} \subset \C$ and consider the basic domain $\Omega=\C\setminus \AA$, $\AA=\{a_0, a_1, \dots, a_p\}$, and an external field $\varphi$ of the form
\begin{equation}\label{Phiforcharges}
 \varphi=\Re \Phi, \quad   \Phi(z)=-\sum_{k=0}^p \frac{\rho_k}{2} \, \log (z-a_k )\,, \quad \Phi'(z)= -\sum_{k=1}^p   \frac{ \rho_k/2}{ z-a_k  }=-
\frac{B(z)}{2 A(z)}\,,
\end{equation}
where we have used notation from \eqref{BoverA}. If $\{\rho_0, \dots, \rho_p\}\subset \R$, then this external field corresponds to the potential of a discrete signed measure supported on $\AA$:
\begin{equation}\label{potentialDiscrete}
    \varphi(z) = U^\sigma(z), \qquad \sigma =  \sum_{k=0}^p \frac{\rho_k}{2}\, \delta_{a_k}\in \mathfrak M_{p+1}.
\end{equation}
However, if any $\rho_k\in \C\setminus \R$, then $\varphi$ is not single-valued in $\C\setminus \AA$; nevertheless, the notion of an $(\AA, \varphi)$-critical measures for this case has been discussed in Definition \ref{def:AcriticalCont}. In particular, Lemma \ref{lemma:condCrit} applies.

In this section we state and prove the main structural theorem for $(\AA, \varphi)$-critical measures, which asserts that the support of any such a measure is a union of analytic curves made of trajectories of a rational quadratic differential. On each arc of its support the measure has an analytic density with respect to the arc-length measure. Finally, we describe the Cauchy transform and the logarithmic potential of an $(\AA, \varphi)$-critical measure.

\subsection{The main theorem}  

According to \eqref{Phiforcharges}, $\AA$ is exactly the set of singularities of the external field $\varphi$, except for the case when $\rho_k=0$ for some $k\in\{0, \dots, p\}$. In such a case we do not drop the corresponding $a_k$ from the set $\AA$; it remains as a fixed point of the class of variations (Definition \ref{def:AcriticalCont}). However, the status of the point $a_k\in \AA$ with $\rho_k=0$ is different from the case $\rho_k\neq 0$, see next theorem.

\begin{theorem}
\label{main_thm}
Let $\AA=\{a_0, a_1, \dots, a_p\}$ and $\varphi $ given by \eqref{Phiforcharges}. Then for any continuous $(\AA, \varphi)$-critical measure $\mu$ there exists a rational function $R$ with poles at $\AA$ and normalized by
\begin{equation}\label{normalizationR}
    R(z)=\left(\frac{ \kappa}{z}\right)^2 +\mathcal O\left(\frac{1}{z^3}\right), \quad z\to \infty, \qquad \kappa\isdef \mu(\C) +\frac{1}{2}\, \sum_{j=0}^p \rho_j,
\end{equation}
such that the support $\supp (\mu)$ consists of a union of trajectories of the quadratic differential $\varpi(z)=-R(z) dz^2$. If all $\rho_j\in \R$, then $\varpi$ is closed, and $\supp (\mu)$ is made of a finite number of trajectories of $\varpi$.

If in the representation \eqref{Phiforcharges}, $\rho_j= 0$, $j\in \{0, 1, \dots, p\}$, then $a_j$ is either a simple pole or a regular point of $R$; otherwise $R$ has a double pole at $a_j$.
\end{theorem}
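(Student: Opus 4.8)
The plan is to convert the variational criticality of $\mu$ (Lemma~\ref{lemma:condCrit}) into an algebraic equation for its Cauchy transform $\CC^\mu(z)\isdef\int \frac{d\mu(t)}{z-t}$, and then read the geometry off that equation. First I would substitute into \eqref{condicionCrit} the family of admissible test fields $h_z(x)=A(x)/(x-z)$, indexed by $z\in\C\setminus(\supp(\mu)\cup\AA)$; these vanish on $\AA$ (since $A|_\AA=0$) and are holomorphic near $\supp(\mu)$, so they are admissible variations. Writing $A(x)/(x-z)=A(z)/(x-z)+Q_z(x)$ with $Q_z$ polynomial, the divided difference in \eqref{defFvar} splits into $-A(z)/[(x-z)(y-z)]$ plus a polynomial in $(x,y,z)$; using $\Phi'A=-B/2$ and $\int d\mu(x)/(x-z)=-\CC^\mu(z)$, the equation $f_\varphi(\mu;h_z)=0$ collapses, for every such $z$, to
\[
A(z)\,\big(\CC^\mu(z)\big)^2 + B(z)\,\CC^\mu(z) = P(z),
\]
with $P$ a polynomial (the contribution of the polynomial remainders). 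Completing the square by means of $\Phi'=-B/(2A)$ then yields
\[
\big(\CC^\mu(z)-\Phi'(z)\big)^2=\frac{P(z)}{A(z)}+\big(\Phi'(z)\big)^2 =: R(z),
\]
a rational function whose poles lie in $\AA$. This is the $R$ of the theorem.

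Next I would extract the pointwise content. Symmetrizing the double integral in \eqref{defFvar} rewrites \eqref{condicionCrit} as $\int h(x)\,\big(\widehat{\CC^\mu}(x)-\Phi'(x)\big)\,d\mu(x)=0$ for all admissible $h$, where $\widehat{\CC^\mu}$ is the principal-value transform; since $h$ is essentially free this forces the $S$-property $\widehat{\CC^\mu}=\Phi'$ $\mu$-a.e., i.e. the boundary values satisfy $\CC^\mu_+ + \CC^\mu_- = 2\Phi'$ on $\supp(\mu)$. Two consequences follow: the two boundary values of $(\CC^\mu-\Phi')^2$ agree across $\supp(\mu)$, so $R$ continues analytically and is globally rational (confirming the first step); and $\mu$ carries no area, because an absolutely continuous part on an open $U$ would make $\CC^\mu=\Phi'$ on $U$, hence $\bar\partial\CC^\mu=\bar\partial\Phi'=0$ there, contradicting $\bar\partial\CC^\mu=\pi\mu\ne0$. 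On a smooth subarc of $\supp(\mu)$ the density with respect to arclength is proportional to $\tau\,\sqrt{R}$ ($\tau$ the unit tangent), and its reality is equivalent to $-R(z)\,(dz)^2>0$ along the arc. Thus every such arc is a horizontal trajectory of $\varpi=-R\,(dz)^2$, the asserted description of $\supp(\mu)$.

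The remaining assertions are then local or global bookkeeping on $R$. The normalization \eqref{normalizationR} follows from $\CC^\mu(z)=\mu(\C)/z+\mathcal O(z^{-2})$ and $\Phi'(z)=-\tfrac12\big(\sum_j\rho_j\big)/z+\mathcal O(z^{-2})$: subtracting gives $\CC^\mu-\Phi'=\kappa/z+\mathcal O(z^{-2})$, and squaring gives $R=(\kappa/z)^2+\mathcal O(z^{-3})$ (which in passing pins $\deg P\le p-1$). The behaviour at $a_j$ is read from $R=P/A+(\Phi')^2$: if $\rho_j\ne0$ the term $(\Phi')^2$ contributes a double pole (coefficient $\rho_j^2/4$) while $P/A$ has only a simple pole, so $R$ has a double pole; if $\rho_j=0$ that term is regular at $a_j$ and $R$ inherits at most the simple pole of $P/A$, or none. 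For the real case, when all $\rho_j\in\R$ and $\mu\in\MM_\R$, every period is purely imaginary,
\[
\oint_\gamma\big(\CC^\mu-\Phi'\big)\,dz = 2\pi i\Big(\mu(\mathrm{int}\,\gamma)-\tfrac12\!\!\sum_{a_k\in\,\mathrm{int}\,\gamma}\!\!\rho_k\Big)\in i\R,
\]
so $\Re\int^z\sqrt{R}\,dz=\Im\int^z\sqrt{\varpi}$ is single-valued; by the structure theory recalled in Section~\ref{sec:qd} this excludes recurrent trajectories, making $\varpi$ closed, and its critical graph is then a finite union of critical trajectories on which $\supp(\mu)$ must lie.

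I expect the genuine difficulty to be the global/regularity step rather than the algebra. Producing $(\CC^\mu-\Phi')^2=R$ and the $S$-property is essentially formal, and ruling out an area part is short; but upgrading ``$\supp(\mu)$ lies in $\{-R\,(dz)^2>0\}$'' to ``$\supp(\mu)$ is a union of (in the real case finitely many) analytic trajectory arcs terminating at the finite critical points of $\varpi$'' requires real control of the global trajectory structure: excluding recurrent or merely accumulating trajectories, and matching the zeros of $R$ (where the density vanishes) to the endpoints of the arcs. This is precisely where the period computation forcing closedness and the finiteness of the critical graph carry the argument.
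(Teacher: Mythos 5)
Your overall strategy --- Schiffer-type variations $h_z(x)=A(x)/(x-z)$ producing a quadratic equation for the Cauchy transform, then reading the trajectory structure off $(\CC^\mu-\Phi')^2=R$ --- is the paper's strategy, and your computations of $R=P/A+(\Phi')^2$, of the normalization $\kappa/z$, of the pole orders at the $a_j$, and of the purely imaginary periods forcing closedness in the real case all match the paper. But there is a genuine gap at the step you yourself flag, and it is not quite where you locate it. You establish the identity only for $z\notin\supp(\mu)\cup\AA$, and your route to the support structure is then: (a) a formal symmetrization giving $\widehat{\CC^\mu}=\Phi'$ $\mu$-a.e.; (b) ruling out an area part; (c) checking that a smooth subarc of $\supp(\mu)$ with real arclength density must be a horizontal trajectory. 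Step (a) already needs justification: the $\mu$-a.e.\ existence of the principal value and the Fubini interchange behind the symmetrization are not free for a measure about which nothing is yet known (the paper proves the on-support identity \eqref{zeroOnSupp} only \emph{after} the arc structure is in hand, via Sokhotsky--Plemelj, and merely remarks that a direct variational proof is possible). More seriously, (b) and (c) together never show that $\supp(\mu)$ \emph{is} a locally finite union of smooth arcs: a priori a critical measure could live on a Cantor-type or otherwise non-rectifiable set, and nothing in your argument excludes this. The real difficulty is the local regularity of the support, not the recurrence of trajectories (recurrence is dispatched by your period computation).

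The paper closes this gap with two devices you are missing. First, Lemma \ref{thm:rakhmanov} proves $(C^\mu+\Phi')^2=R$ at $\lebesgue_2$-a.e.\ point $z$ --- including points \emph{on} the support where the Cauchy integral converges absolutely --- by replacing $A(\zeta)/(\zeta-z)$ with the cutoff variation $h_\varepsilon(\zeta;r)=\frac{A(\zeta)}{\zeta-z}\,\theta(\zeta,r,\varepsilon)$ and estimating the contributions of the annular transition region; this is where most of the analytic work of the proof lives. Second, Lemma \ref{rulgard} (after Bergkvist--Rullg{\aa}rd) converts this a.e.\ identity into the arc structure: in a $\varpi$-rectangle with distinguished parameter $\xi$ one writes $C^\mu+r=\chi(\xi)\sqrt{R}$ with $\chi=\pm1$ a.e., computes $\partial\chi/\partial\overline{\xi}=-\pi\mu/|R|$, and observes that the vanishing of the derivative in one direction forces $\chi$ to be a $\pm1$-valued function of a single real coordinate, so that $\overline{\partial}\chi$ --- hence $\mu$ --- is carried by finitely many vertical segments in the $\xi$-plane, i.e.\ by trajectory arcs. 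Without some substitute for this $\overline{\partial}$ argument your proof does not reach the conclusion of the theorem.
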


The proof of this theorem reduces to two lemmas below. The first of them deals with the principal value of the Cauchy transform
\begin{equation}\label{principalvalue}
C^\mu(z)\isdef \lim_{\epsilon \to 0+} \int_{|z-x|> \epsilon }  \frac{1}{ x-z }\, d\mu(x)
\end{equation}
of the $(\AA, \varphi)$-critical measure $\mu$.
\begin{lemma}
\label{thm:rakhmanov}
For any $(\AA, \varphi)$-critical measure $\mu$ there exists a rational function $R$ with properties listed in Theorem \ref{main_thm} such that
\begin{equation}\label{charactMeasure}
\left(C^\mu(z) + \Phi'(z)
\right)^2=R(z) \qquad \lebesgue_2-\text{a.e.},
\end{equation}
where $\lebesgue_2$ is the plane Lebesgue measure on $\C$.
\end{lemma}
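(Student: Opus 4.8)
The plan is to extract the algebraic equation for the Cauchy transform from Lemma~\ref{lemma:condCrit} by inserting one explicit one-parameter family of test functions into the variational identity \eqref{condicionCrit}. For a fixed $z\in\C\setminus(\supp(\mu)\cup\AA)$ I would take
\[
h_z(\zeta)=\frac{A(\zeta)}{\zeta-z},
\]
the continuous analogue of the choice already exploited in Remark~\ref{remarkdiscrete}. Since $z\notin\supp(\mu)$, this $h_z$ is holomorphic in a neighbourhood of $\supp(\mu)$, and it vanishes on $\AA$ because $A$ does and $z\notin\AA$; as $f_\varphi(\mu;h)$ in \eqref{defFvar} depends only on the restriction of $h$ to $\supp(\mu)$, one may use $h_z$ (after a harmless cutoff away from $\zeta=z$) as an admissible variation, so that $f_\varphi(\mu;h_z)=0$.

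The computation is then pure partial fractions. Writing $A(\zeta)/(\zeta-z)=Q(\zeta;z)+A(z)/(\zeta-z)$ with $Q(\zeta;z)=(A(\zeta)-A(z))/(\zeta-z)$ a polynomial in $\zeta$, the antisymmetric difference quotient splits as
\[
\frac{h_z(x)-h_z(y)}{x-y}=\frac{Q(x;z)-Q(y;z)}{x-y}-\frac{A(z)}{(x-z)(y-z)}.
\]
Integrated against $d\mu(x)\,d\mu(y)$ the first term produces a polynomial $P_1(z)$ (a divided difference of a polynomial, integrated against a compactly supported measure), while the second factorizes as $-A(z)\,(C^\mu(z))^2$. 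For the linear term I would use $\Phi'=-B/(2A)$ from \eqref{Phiforcharges}, so that $\Phi'(\zeta)h_z(\zeta)=-B(\zeta)/(2(\zeta-z))$ and
\[
-2\int\Phi'(\zeta)\,h_z(\zeta)\,d\mu(\zeta)=\int\frac{B(\zeta)}{\zeta-z}\,d\mu(\zeta)=P_2(z)+B(z)\,C^\mu(z),
\]
with $P_2$ again a polynomial. Collecting, $f_\varphi(\mu;h_z)=0$ becomes $A(z)\,(C^\mu(z))^2-B(z)\,C^\mu(z)-P(z)=0$ with $P:=P_1+P_2$, and completing the square (divide by $A$ and use $\Phi'=-B/(2A)$) gives
\[
\bigl(C^\mu(z)+\Phi'(z)\bigr)^2=\frac{B(z)^2+4A(z)P(z)}{4A(z)^2}=:R(z),
\]
a rational function whose only possible poles lie in $\AA$.

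To match Theorem~\ref{main_thm} I would then fix the normalization and pole orders. Expanding at infinity, $C^\mu(z)=-\mu(\C)/z+\mathcal O(z^{-2})$ and, from \eqref{Phiforcharges}, $\Phi'(z)=-\tfrac12\bigl(\sum_j\rho_j\bigr)/z+\mathcal O(z^{-2})$, so $C^\mu(z)+\Phi'(z)=-\kappa/z+\mathcal O(z^{-2})$ with $\kappa=\mu(\C)+\tfrac12\sum_j\rho_j$; squaring yields $R(z)=(\kappa/z)^2+\mathcal O(z^{-3})$, which is \eqref{normalizationR} (and shows en passant that $\deg(B^2+4AP)<\deg(4A^2)$, so there is no pole at infinity). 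At a point $a_j$, the residue of $B/A$ there equals $\rho_j$, hence $B(a_j)=\rho_j A'(a_j)$ with $A'(a_j)\neq0$; therefore the numerator $B^2+4AP$ is nonzero at $a_j$ precisely when $\rho_j\neq0$, giving a double pole of $R$, whereas for $\rho_j=0$ the numerator vanishes at $a_j$ and only a simple pole or a regular point survives, as asserted.

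The identity above has so far been obtained only for $z$ in the open set $\C\setminus(\supp(\mu)\cup\AA)$, where the principal value \eqref{principalvalue} coincides with the ordinary Cauchy transform. The step I expect to be the real obstacle is the passage to the $\lebesgue_2$-almost-everywhere statement \eqref{charactMeasure}: on $\supp(\mu)$ the principal value of $C^\mu$ need not satisfy the same relation, since its two boundary values differ (generically by a sign), so the almost-everywhere identity is consistent only because $\supp(\mu)$ is $\lebesgue_2$-negligible. One must therefore either invoke the thinness of the support or argue directly, by regularizing $h_z$ near $\zeta=z$ and extracting the principal value in the limit, that \eqref{charactMeasure} persists for $\lebesgue_2$-a.e.\ $z$. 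The algebraic computation is routine; it is this measure-theoretic extension, and the interplay with the boundary behaviour of $C^\mu$ on the support, that carries the genuine weight.
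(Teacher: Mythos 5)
Your algebraic reduction is correct and is exactly the computation the paper performs (the partial-fraction identity behind your splitting of $\frac{h_z(x)-h_z(y)}{x-y}$ is \eqref{identityForA}--\eqref{expression_h}, and your normalization and pole-order analysis at $a_j$ and at infinity match \eqref{normalizationR}). The paper itself remarks that for $z\notin\supp(\mu)$ the standard Schiffer variation $h_z(\zeta)=A(\zeta)/(\zeta-z)$ settles the matter. But that is the easy half, and the part you defer is precisely where the lemma lives.

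The gap is this: you establish \eqref{charactMeasure} only on $\C\setminus(\supp(\mu)\cup\AA)$, and your first proposed remedy --- ``invoke the thinness of the support'' --- is circular. At this stage nothing is known about $\supp(\mu)$; a priori it could have positive plane Lebesgue measure, and the conclusion that it is a union of analytic arcs (hence $\lebesgue_2$-null) is obtained only \emph{after} feeding the $\lebesgue_2$-a.e.\ identity \eqref{charactMeasure} into Lemma~\ref{rulgard}. So the a.e.\ statement cannot be deduced from the off-support identity; it must be proved directly at points that may lie in $\supp(\mu)$. Your second suggestion (regularize $h_z$ near $\zeta=z$ and extract the principal value) is the right idea and is what the paper actually does, but it is not a formality: one fixes $z$ satisfying \eqref{localconditionCauchy} (which holds $\lebesgue_2$-a.e.\ with no assumption on the support), replaces $h_z$ by $h_\varepsilon(\zeta;r)=\frac{A(\zeta)}{\zeta-z}\,\theta(\zeta,r,\varepsilon)$ with a $C^1$ cutoff vanishing on $\mathcal D_{r(1-\varepsilon)}$, and then must control the cross terms $I(\mathcal K_{r,\varepsilon}\times\cdot)$ using the existence of $m'(r)$ for a.e.\ $r$ (estimate \eqref{existenseSimDerivatives}) before letting $\varepsilon\to0$ and then $r\to0$ to recover the principal value $C^\mu(z)$ and the limit $I_2(r)\to0$ via \eqref{localconditionCauchy}. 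Those estimates are the substantive content of the proof, and without them the statement as claimed --- $\lebesgue_2$-a.e.\ --- is not established.
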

\begin{remark}
Formula \eqref{charactMeasure} and its variations for equilibriums measures of compact sets of minimal capacity (see Section \ref{sec:criticalmeasuresandextremal}) are well-known, although occasionally written in terms of quadratic differentials, see e.g.~the work of Nuttall~\cite{MR769985}, Stahl~\cite{Stahl2008}, \cite{Stahl:86}, Gonchar and Rakhmanov  \cite{Gonchar:87}, \cite{Rakhmanov94}, Deift and collaborators \cite{MR2000j:31003}. Notice that in the situation considered here the support of the critical measure is not known a-priori. 
\end{remark}
\begin{remark} \label{rem:sufficientcondition}
Formula \eqref{charactMeasure} is also sufficient for $\mu$ being $(\AA, \varphi)$-critical, so that it in fact characterizes these critical measures. The proof of this statement lies beyond the scope of this already lengthy paper, and we do not go into further details.
\end{remark} 
\begin{proof}
Assume that $\mu$ is an   $(\AA, \varphi)$-critical measure for $\varphi$ like in \eqref{Phiforcharges}. We will actually show that \eqref{charactMeasure} is valid at any point $z\in \C$ where the integral defining $C^\mu$ is absolutely convergent. It is well known that at such a $z$,
\begin{equation}\label{localconditionCauchy}
    \lim_{r\to 0^+} \int_{|x-z|<r} \frac{d\mu(x)}{|x-z|} =0\,,
\end{equation}
and this property holds a.e.\ with respect to $\lebesgue_2$.

At this point we would like to emphasize that the variational arguments we present next would be significantly simpler for points $z\notin \supp (\mu)$; in this case one can use the standard Schiffer variations $h(\zeta)=A(\zeta )/(\zeta -z)$ (see e.g.~the Appendix of \cite{Courant1950}), as it was done in the original paper \cite{Rakhmanov94} (for the logarithmic potentials), and subsequently in \cite[Chapter 8]{Kamvissis2003} and \cite{Kamvissis2005} (for the Green potentials). In the present situation we do not have any a priori information about $\supp(\mu )$. In order to address the problem of a possible pole on the support of $\mu$, we modify the variation by making $h(\zeta )\equiv 0$ in a neighborhood of $z$ (see \eqref{proprtiesH}). The function $\theta(\zeta)$ introduced below is meant to preserve the smoothness of $h$.

Fix $z\in \C$ satisfying \eqref{localconditionCauchy}, and for $r>0$ denote $\mathcal D_r\isdef \{\zeta\in \C:\, |\zeta -z|<r \}$. Function
$ m(r) \isdef \mu \left( \mathcal D_r\right) $
is continuous from the left and monotonically increasing, so that the subset
$$
\Delta \isdef \left\{r\in (0,1):\, m'(r) = \lim_{\varepsilon \to 0} \frac{m(r+\varepsilon)-m(r-\varepsilon)}{2 \varepsilon} \text{ exists} \right\}
$$
has the linear Lebesgue measure 1.

For  $r\in \Delta$ and  $\varepsilon \in (0,1)$ define the ``smooth step'' function
$$
\Lambda (x,\varepsilon)\isdef \begin{cases}
0, & \text{if } 0\leq x <1-\varepsilon, \\
\dfrac{(x-1-\varepsilon)^2 (x-1+2\varepsilon)}{4 \varepsilon^3}, & \text{if } 1-\varepsilon \leq  x < 1+\varepsilon, \\
1, & \text{if }   x \geq 1+\varepsilon.
\end{cases}
$$
It is easy to see that $\Lambda (\cdot, \varepsilon)\in C^1(\R_+)$ and that $|\frac{d}{dx} \, \Lambda (x, \varepsilon)| < 1/\varepsilon$ for all $\varepsilon>0$. Using this function we define on $\C$ the $C^1$ function
$$
\theta (\zeta) =\theta (\zeta, r, \varepsilon) \isdef \Lambda \left( \frac{|\zeta -z|}{r}, \varepsilon\right),
$$
and consider the condition \eqref{condicionCrit} with the following particular choice of $h$:
\begin{equation}\label{h_for_variation}
h(\zeta )= h_\varepsilon(\zeta ;r)=\frac{A(\zeta )}{\zeta -z}\, \theta (\zeta, r, \varepsilon)\,.
\end{equation}
For the sake of brevity we use the notation
$$
\mathcal K_{r,\varepsilon}\isdef   \overline {\mathcal D_{r(1+\varepsilon)}} \setminus \mathcal D_{r(1-\varepsilon)} , \quad \mathcal F_{r,\varepsilon}\isdef \C\setminus \mathcal D_{r(1+\varepsilon)},
$$
so that $\mathcal D_{r(1-\varepsilon)}$, $\mathcal K_{r,\varepsilon}$ and $\mathcal F_{r,\varepsilon}$ provide a partition of $\C$. Furthermore, by construction
\begin{equation}\label{proprtiesH}
    h(\zeta )=\begin{cases}
    0, &\text{if } \zeta \in \mathcal D_{r(1-\varepsilon)}, \\
    \dfrac{A(\zeta )}{\zeta -z}, &\text{if } \zeta \in \mathcal F_{r,\varepsilon}.
    \end{cases}
\end{equation}
Consider first
\begin{align*}
 \iint    \frac{h_\varepsilon(x ;r)-h_\varepsilon(y ;r) }{x -y }   \, d\mu (x ) d\mu (y ) & = I(\mathcal D_{r(1-\varepsilon)} \times \mathcal D_{r(1-\varepsilon)}) +
I(\mathcal K_{r,\varepsilon} \times \mathcal K_{r,\varepsilon}) + I(\mathcal F_{r,\varepsilon} \times \mathcal F_{r,\varepsilon}) \\
 & 2 I(\mathcal D_{r(1-\varepsilon)} \times \mathcal K_{r,\varepsilon}) + 2 I(\mathcal D_{r(1-\varepsilon)} \times \mathcal F_{r,\varepsilon}) + 2 I(\mathcal K_{r,\varepsilon}\times \mathcal F_{r,\varepsilon}),
\end{align*}
where $I(\Omega)$ means the integral in the l.h.s.\ taken over the set $\Omega$. Observe that by \eqref{proprtiesH}, $I(\mathcal D_{r(1-\varepsilon)} \times \mathcal D_{r(1-\varepsilon)}) =0$.

Let $\zeta \in \mathcal K_{r,\varepsilon}$; since
$$
\frac{\partial }{\partial \overline \zeta} \, \theta(\zeta) = \frac{1}{r} \, \Lambda '\left( \frac{|\zeta -z|}{r}, \varepsilon\right)\frac{\partial }{\partial \overline \zeta}  \, |\zeta -z|= \frac{1}{r} \, \Lambda '\left( \frac{|\zeta -z|}{r}, \varepsilon\right)\frac{\zeta -z }{|\zeta -z|}     ,
$$
we have
$$
\frac{1}{2}\, \left\|\grad  \theta(\zeta) \right\|= \left|\frac{\partial }{\partial \overline \zeta} \, \theta(\zeta) \right| \leq \frac{1}{ r \varepsilon }.
$$
In consequence, for $x, y \in \mathcal K_{r,\varepsilon}$,
\begin{equation}\label{boundH}
    \left| \frac{h_\varepsilon(x ;r)-h_\varepsilon(y ;r)  }{x -y }\right| \leq \frac{\const}{r \varepsilon},
\end{equation}
where the constant in the right hand side is independent of  $\varepsilon$. Obviously, by definition of $h$ we have that this inequality is valid (with a different constant) if $x  \in \mathcal K_{r,\varepsilon}$ and $y$ lies on a compact subset of $\C$.

From \eqref{boundH} we conclude that
\begin{equation}\label{intermediatebound1}
    \left| I( \mathcal K_{r,\varepsilon} \times \mathcal K_{r,\varepsilon}) \right|= \left|   \iint _{ \mathcal K_{r,\varepsilon} \times \mathcal K_{r,\varepsilon}  }   \frac{h_\varepsilon(x ;r)-h_\varepsilon(y ;r)  }{x -y }   \, d\mu (x ) d\mu (y )\right|\leq \frac{\const}{r \varepsilon}\, \left(\mu\left( \mathcal K_{r,\varepsilon}\right) \right)^2.
\end{equation}
Taking into account that $r \in \Delta$, we have that
\begin{equation}\label{existenseSimDerivatives}
    \lim_{\varepsilon \to 0+} \frac{\mu\left( \mathcal K_{r,\varepsilon}\right)}{\varepsilon} = 2 r\,  m'(r),
\end{equation}
so by \eqref{intermediatebound1}, $I( \mathcal K_{r,\varepsilon} \times \mathcal K_{r,\varepsilon})=o(1)$ as $\varepsilon\to 0+$.

Consider now $x\in \mathcal K_{r,\varepsilon}$ and $ y\in \mathcal D_{r,\varepsilon}$. Then
$$
\frac{h_\varepsilon(x ;r)-h_\varepsilon(y ;r)  }{x -y } =\frac{h_\varepsilon(x ;r)  }{x -y } = \frac{A(x)  }{(x-z)(x -y) }\, \theta(x) .
$$
Consider two cases. If $|y-z|<r(1-2\varepsilon)$, then
$$
\left|\frac{h_\varepsilon(x ;r)-h_\varepsilon(y ;r)  }{x -y } \right|\leq  \frac{\const  }{r(1-\varepsilon)|x -y| } \leq \frac{\const  }{r(1-\varepsilon)(|x| -|y|) }  .
$$
Hence, with a different constant,
$$
\left| \iint _{ x\in \mathcal K_{r,\varepsilon}, \; |y-z| <r(1-2\varepsilon)  }   \frac{h_\varepsilon(x ;r)-h_\varepsilon(y ;r)  }{x -y }   \, d\mu (x ) d\mu (y )\right|\leq \frac{\const  }{r  } \int_{r(1-\varepsilon)  }^{r(1+\varepsilon) } \int_0^{r(1-2\varepsilon) } \frac{s t}{t-s}\, ds dt;
$$
the double integral in the r.h.s.~is explicit, and it is straightforward to verify that it is $o(1)$ a $\varepsilon\to 0+$.

If on the contrary $r(1-2\varepsilon)<|y-z|<r(1-\varepsilon)$, with $x\in \mathcal K_{r,\varepsilon}$ we can use the estimate \eqref{existenseSimDerivatives}, which yields
$$
\left| \iint _{ x\in \mathcal K_{r,\varepsilon}, \; r(1-2\varepsilon)<|y-z|<r(1-\varepsilon)  }   \frac{h_\varepsilon(x ;r)-h_\varepsilon(y ;r)   }{x -y }   \, d\mu (x ) d\mu (y )\right|\leq  \frac{\const}{r \varepsilon}\,  \mu\left( \mathcal K_{r,\varepsilon}\right) \mu\left( \mathcal K_{r,2 \varepsilon}\right),
$$
and again by \eqref{existenseSimDerivatives}, the r.h.s~is $o(1)$ a $\varepsilon\to 0+$. Gathering the last two estimates we conclude that
$$
I(\mathcal D_{r(1-\varepsilon)} \times \mathcal K_{r,\varepsilon})= o(1), \quad \text{as } \varepsilon\to 0+.
$$
Similar considerations are obviously valid for $I(\mathcal K_{r,\varepsilon}\times \mathcal F_{r,\varepsilon})$.

Summarizing,
\begin{equation}\label{toZero}
    \lim_{\varepsilon \to 0+} \left( I(\mathcal K_{r,\varepsilon} \times \mathcal K_{r,\varepsilon}) + 2 I(\mathcal D_{r(1-\varepsilon)} \times \mathcal K_{r,\varepsilon}) + 2 I(\mathcal K_{r,\varepsilon}\times \mathcal F_{r,\varepsilon}) \right)=0,
\end{equation}
and we conclude that
\begin{equation}\label{firstIntegral}
\begin{split}
 \lim_{\varepsilon \to 0+}    \iint    \frac{h_\varepsilon(x ;r)-h_\varepsilon(y ;r)  }{x -y }   \, d\mu (x ) d\mu (y ) & = \iint _{ |x-z|\geq r, \, |y-z|\geq r   }   \frac{h_0(x ;r)-h_0(y ;r)  }{x -y }   \, d\mu (x ) d\mu (y ) \\ & + 2 \iint _{ |x-z|\geq  r, \, |y-z|< r   }   \frac{h_0(x ;r)-h_0(y ;r)  }{x -y }   \, d\mu (x ) d\mu (y )\\ & =I_1(r) + 2 I_2(r),
\end{split}
\end{equation}
where
$$
h_0(\zeta ;r)=\begin{cases}
    0, &\text{if } |\zeta -z|<r , \\
    \dfrac{A(\zeta )}{\zeta -z}, &\text{if } |\zeta -z|>r.
    \end{cases}
$$
Let us analyze the behavior of each integral as $r\to 0^+$ separately. First,
$$
I_2(r) = \iint _{ |x-z|\geq  r, \, |y-z|< r   }   \frac{A(x  )   }{(x-z) (x -y)}   \, d\mu (x ) d\mu (y ).
$$
Observe that by \eqref{localconditionCauchy},
$$
 \int _{   |x-z|\geq  r  } \left|\frac{A(x  )   }{ x-z  } \right|\left(  \int _{   |y-z|< r   }   \frac{ d\mu (y )  }{ |x -y| } \right)  \, d\mu (x )<+\infty,
$$
so that applying Fubini's theorem we conclude that
$$
I_2(r)=\int _{   |x-z|\geq  r  } \frac{A(x  )   }{ x-z  } \left(  \int _{   |y-z|< r   }   \frac{ d\mu (y )  }{ x -y } \right)  \, d\mu (x )\,.
$$
Using again \eqref{localconditionCauchy} we obtain that
$$
\lim_{r\to 0+, \, r\in \Delta} I_2(r) =0.
$$
On the other hand, by \eqref{proprtiesH},
\begin{equation}\label{secondIntegral}
I_1(r) = \iint _{ |x-z|\geq r, \, |y-z|\geq r   } \left(\frac{A(x)}{(x-z)(x-y)}-
\frac{A(y)}{(y-z)(x-y)}\right) \, d\mu (x ) d\mu (y ).
\end{equation}
The identity
\begin{equation}\label{identityForA}
    A(x)(y-z)-A(y)(x-z)+A(z)(x-y)=(x-y)(x-z)(y-z)D(x,y,z)
\end{equation}
is immediate, where
$$
D(x,y,z)=\alpha _0(x,y)+ \alpha _1(x,y)z +\dots+\alpha _{p-2}(x,y)z^{p-2}+z^{p-1}
$$
is a polynomial of degree $\leq p-1$ in each variable.
Hence,
\begin{align} \label{expression_h}
\frac{A(x)}{(x-z)(x-y)}= \frac{A(y)}{(y-z)(x-y)}-\frac{A(z)}{(x-z)(y-z)}+ D(x,y,z)\,.
\end{align}
Using it in \eqref{secondIntegral} we get that
\begin{align*}
\lim_{r\to 0+, \, r\in \Delta} I_1(r) = D_1(z) - A(z) \, \left( C^\mu(z) \right)^2,
\end{align*}
where
\begin{equation}\label{D1}
D_1(z) = \iint    D(x,y,z) \, d\mu (x ) d\mu (y )\,.
\end{equation}
Thus, by \eqref{firstIntegral},
\begin{equation}\label{part1}
\lim_{r\to 0+, \, r\in \Delta}  \lim_{\varepsilon \to 0+}    \iint    \frac{h_\varepsilon (x; r)-h_\varepsilon (y; r) }{x -y }   \, d\mu (x ) d\mu (y )   = D_1(z) - A(z) \, \left( C^\mu(z) \right)^2.
\end{equation}

In a similar fashion we can analyze
$$
  \int  \Phi'(x)h_\varepsilon (x;r) \, d\mu(x) = \left(   \int_{\mathcal K_{r,\varepsilon}} + \int_{\mathcal F_{r,\varepsilon}}\right) \Phi'(x)h_\varepsilon (x;r) \, d\mu(x).
$$
Again estimates on $\mathcal K_{r,\varepsilon}$ and \eqref{existenseSimDerivatives}  show that
$$
 \lim_{\varepsilon \to 0+} \int  \Phi'(x)h_\varepsilon (x;r) \, d\mu(x) =    \int_{|x-z|\geq r}  \Phi'(x)\,  \frac{A(x)}{x-z} \, d\mu(x).
$$
Taking into account \eqref{Phiforcharges} we can rewrite the right hand side as
$$
-\frac{1}{2}\, \int    \frac{B(x)}{x-z} \, d\mu(x)= -\frac{1}{2}\,\int_{|x-z|\geq r}    \frac{B(x)-B(z)}{x-z}  \, d\mu(x)- \frac{1}{2}\,  B(z)\int_{|x-z|\geq r}  \frac{1}{x-z}  \, d\mu(x).
$$
Thus,
$$
\lim_{r\to 0} \lim_{\varepsilon \to 0+} \int  \Phi'(x)h_\varepsilon (x;r) \, d\mu(x) = -\frac{1}{2}\,\left(  D_2(z)+  B(z)C^\mu(z) \right),
$$
where
 \begin{equation}\label{D1yD2}
D_2(z)= \int \frac{B(x)-B(z)}{x-z}  \, d\mu(x)
\end{equation}
is a polynomial of degree $\leq p-1$; it is $\equiv 0$ if $\varphi \equiv 0$.

Combining this last identity with \eqref{part1} and using \eqref{defFvar} we get that
\begin{equation}\label{limitInVariation}
\lim_{r\to 0} \lim_{\varepsilon \to 0+} f_\varphi(\mu;h_\varepsilon (\cdot ;r)) = D_1(z) - A(z) \, \left( C^\mu(z) \right)^2 +  D_2(z)+  B(z)C^\mu(z).
\end{equation}

Since \eqref{condicionCrit} is valid for each $\varepsilon >0$ and $r\in \Delta$, we obtain that the right hand side in \eqref{limitInVariation} is $0$. In consequence,
$$
\left( A (z) C^\mu(z) \right)^2 - A(z) B(z)\, C^\mu(z) +
B^2(z)/4=A(z)\, \left(D_1(z) + D_2(z)\right)+B^2/4(z)\,.
$$
Taking into account \eqref{Phiforcharges} we rewrite this condition as \eqref{charactMeasure}, with
$$
R(z) \isdef \frac{D_1(z) + D_2(z)}{A(z)}+\left( \Phi'(z)\right)^2\,,
$$
and clearly $R$ has a double pole at $z=a_j$ if and only if $\rho_j\neq 0$. Finally, the normalization condition \eqref{normalizationR} follows from considering \eqref{charactMeasure} as $z\to \infty$.
This establishes the assertion of the theorem.
\end{proof}

The next proposition is a slightly modified version of \cite[Lemma 4]{MR1909635} by T.~Bergkvist and H.~Rullg{\aa}rd. The original lemma considers only positive measures $\mu$ for which $(C^\mu)^k$, for certain $k\in \N$, is a reciprocal of a polynomial.

\begin{lemma}\label{rulgard} 
Assume that $\mu\in \MM_\R$ is a finite signed Borel measure on the plane whose Cauchy transform $C^\mu$ is such that that there exist rational functions $r$ and $R$ with possible poles at $\AA$ satisfying
\begin{equation}\label{squareofCauchyRulgards}
    \left( C^\mu + r  \right)^2(z) = R(z) \qquad \lebesgue_2-\text{a.e.}
\end{equation}
Then $\mu$ is supported on a union of analytic arcs, that are trajectories of the  quadratic differential $\varpi(z) = -R(z) dz^2$, with possible mass points at $\AA$.

If all poles of $r$ are simple and have real residues, then additionally $\varpi$ is a closed differential, and the number of connected components of $\supp(\mu)$ is finite.

Finally, if $\mu$ is a positive Borel measure, then the intersection of any  $\varpi$-rectangle, not containing the zeros or poles of $R$, with $\supp(\mu)$ is connected.
\end{lemma}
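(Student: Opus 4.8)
\emph{Setup and reduction.} Write $g \isdef C^\mu + r$. The basic property of the Cauchy transform \eqref{principalvalue} is that $\partial_{\bar z} C^\mu = -\pi\,\mu$ in the sense of distributions, where $\partial_{\bar z} = \tfrac12(\partial_x + i\partial_y)$; since $r$ is rational, $\partial_{\bar z} g = -\pi\,\mu$ as well. Let $N$ be the finite set consisting of $\AA$ together with the zeros and poles of $R$. Off $N$ the function $R$ admits locally two analytic square roots $\pm\sqrt R$, and by \eqref{squareofCauchyRulgards} we have $g^2=R$ a.e.; on each component of $\C\setminus(\supp\mu\cup N)$ the function $g$ is analytic, so $g^2=R$ holds identically there and $g$ equals one branch $\pm\sqrt R$. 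The plan is to extract from the a.e.\ identity $g=\pm\sqrt R$ the geometry of the set where the branch switches. Setting $\Omega_\pm\isdef\{g=\pm\sqrt R\}$, on the interior of each $g$ is analytic, hence $\mu=-\tfrac1\pi\partial_{\bar z}g=0$ there, and $\supp\mu\setminus N$ is contained in the common interface $\partial\Omega_+\cap\partial\Omega_-$. On a smooth piece $\gamma$ of this interface, with arc length $s$ and unit tangent $\tau=z'(s)$, the jump formula for $\partial_{\bar z}$ of a function discontinuous across $\gamma$ gives that $\mu\big|_\gamma$ has density (with respect to $ds$) proportional to $i\,[g]\,\tau=i\,(\epsilon_+-\epsilon_-)\sqrt R\,\tau$. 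Reality of $\mu$ forces $i\sqrt R\,\tau\in\R$, i.e.\ $-R(z)\,(dz)^2>0$ along $\gamma$, which is exactly the condition for $\gamma$ to be a horizontal trajectory of $\varpi(z)=-R(z)\,(dz)^2$. Since $R$ is analytic and nonvanishing off $N$, these trajectories are real-analytic arcs, and they terminate only at points of $N$ (at a regular point the branches on the two sides of $\gamma$ would be forced to agree, precluding a jump). The only possible atoms sit at $\AA$ and are read off from the simple poles of $g$ there.

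\emph{Main obstacle.} The delicate point is the passage from the measure-theoretic statement ``$g=\pm\sqrt R$ a.e.'' to the geometric statement that the branch switches across a \emph{rectifiable} interface (which the trajectory condition then upgrades to analytic). This is the technical core of \cite{MR1909635}, and I would argue as there: $g$ is locally bounded away from $N$ (being $\pm\sqrt R$) while $\partial_{\bar z}g=-\pi\mu$ is a locally finite measure, so $g$ is locally of bounded variation; the structure theorem for $BV$ functions makes $\partial\Omega_\pm$ rectifiable and legitimizes the jump computation above.

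\emph{Closedness and finiteness.} Assume now that all poles of $r$ are simple with real residues. At each $a_k$ the function $g=C^\mu+r$ has at most a simple pole with real coefficient $\beta_k$ (the residue of $r$ minus the possible real atom $\mu(\{a_k\})$), so $R=g^2$ has a double pole with coefficient $\beta_k^2$ and $\sqrt\varpi=\sqrt{-R}\,dz=i\sqrt R\,dz$ has residue $i\beta_k$ there; the period $\oint\sqrt\varpi=-2\pi\beta_k$ is \emph{real}. By the residue theorem on $\overline\C$ every period of $\sqrt\varpi$ is a sum of these, hence real (the period at $\infty$ being controlled by the real constant $\kappa$ of \eqref{normalizationR}). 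Therefore $\Im\int\sqrt\varpi$ is single-valued off $N$, its level curves $\Im\int\sqrt\varpi=\const$ (the horizontal trajectories) are globally defined and are either closed Jordan curves or critical trajectories; in particular no recurrent trajectories occur and $\varpi$ is closed. Since by the first step the jump arcs of $g$ terminate only at the finitely many points of $N$, the set $\supp\mu$ is a subgraph of the finite critical graph of $\varpi$, and hence has finitely many connected components.

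\emph{Positivity forces connectedness in a rectangle.} Let $Q$ be a $\varpi$-rectangle containing no zero or pole of $R$. There $\sqrt R$ has a single-valued branch and the natural parameter $\eta=\int\sqrt\varpi$ maps $Q$ conformally onto a Euclidean rectangle, horizontal trajectories going to horizontal segments. As $Q$ contains no critical point, every arc of $\supp\mu\cap Q$ is a full horizontal crossing; in the $\eta$-picture these become segments at heights $c_1<\dots<c_m$ cutting $Q$ into strips $S_0,\dots,S_m$, on each of which $g=\epsilon^{(j)}\sqrt R$ with $\epsilon^{(j)}\in\{\pm1\}$. A segment carries mass only if the branch flips across it, so necessarily $\epsilon^{(j)}=-\epsilon^{(j-1)}$ and the signs $\epsilon^{(j)}$ alternate with $j$. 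Orienting all segments in the direction of increasing $\Re\eta$ makes $i\sqrt R\,\tau=d\eta/ds>0$ uniformly, so the density computation of the first step shows that the mass on the segment at height $c_j$ has sign $\epsilon^{(j)}$ up to a fixed ($j$-independent) factor; consecutive segments thus carry masses of opposite sign. If $\mu\ge0$ this is impossible unless $m\le1$, whence $Q\cap\supp\mu$ is a single segment or empty, i.e.\ connected. This alternating-sign argument is the one genuinely new ingredient beyond \cite{MR1909635}.
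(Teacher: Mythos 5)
Your overall strategy is the same as the paper's (both are elaborations of the Bergkvist--Rullg{\aa}rd argument): pass to the function $\chi=g/\sqrt R\in\{\pm1\}$, locate $\supp\mu$ on the branch-switching set, use reality of $\mu$ to get the trajectory condition, reality of the residues for closedness, and positivity for the single-crossing property in a $\varpi$-rectangle. The difference is in how the technical core is handled. The paper works entirely in the distinguished parameter $\xi=\int\sqrt R$ and observes that $\partial\chi/\partial\overline\xi=-\pi\mu/|R|$ is a \emph{real} measure while $\chi$ is real-valued; taking imaginary parts gives $\partial_y\chi=0$, so $\chi$ is a $\pm1$-valued function of $\Re\xi$ alone. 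The jump set is then automatically a finite union of vertical lines in the $\xi$-plane (i.e.\ trajectories of $\varpi$), with no need for any rectifiability machinery, and the ``at most one sign change'' conclusion for positive $\mu$ is a one-variable monotonicity statement. This is strictly simpler than your route through the structure theorem for sets of finite perimeter, and you should be aware of it.

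Two concrete soft spots in your write-up. First, the inference ``$g$ locally bounded and $\partial_{\bar z}g$ a locally finite measure $\Rightarrow$ $g$ locally $BV$'' is not valid for a general complex-valued $g$: $\partial_{\bar z}g$ controls only one linear combination of $\partial_xg,\partial_yg$, and the Beurling transform does not map measures to measures. The conclusion does hold here, but only because $\chi=g/\sqrt R$ is \emph{real}-valued, so that $\partial_x\chi=2\Re(\partial_{\bar\xi}\chi)$ and $\partial_y\chi=2\Im(\partial_{\bar\xi}\chi)$ are both measures; you must run the $BV$ (or, better, the one-coordinate) argument on $\chi$, not on $g$. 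Second, in the closedness/finiteness step the claim that $\supp\mu$ ``is a subgraph of the finite critical graph'' because jump arcs ``terminate only at points of $N$'' overlooks closed non-critical trajectories, which terminate nowhere and can perfectly well carry mass (cf.\ Example \ref{example:1to1_1}, where $\supp\mu$ is a circle through no critical point). Finiteness of the number of components still holds -- each closed trajectory in $\supp\mu$ carries total variation equal to its $\varpi$-length, and these are bounded below within any ring domain -- but it needs this extra word, not the critical-graph claim. The closedness argument via real periods of $\sqrt\varpi$ and the alternating-sign argument for positivity are fine and equivalent to the paper's.
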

\begin{proof}
For the quadratic differential $\varpi$ consider a  $\varpi$-rectangle $D$ (see the definition in Section \ref{sec:qd}), disjoint with $\AA$ and not containing the zeros of $R$. We select in this rectangle a holomorphic branch of $\sqrt{R}$ and a distinguished parameter
\begin{equation}\label{definitionofxi}
     \xi =\xi (z)  =\int^z  \sqrt{R(t)}\, dt\,,
\end{equation}
which is a conformal mapping of $D$ onto $\widehat D \isdef \xi(D)$.

Let us define in $\widehat D$ the following function:
$$
\chi ( \xi) \isdef \sgn \left( \frac{C^\mu + r}{\sqrt{R}}(z(\xi))\right).
$$
Hence, $\chi$ takes only two values, $\pm 1$, and for $z\in D$,
\begin{equation}\label{rulgard1}
\left( C^\mu + r\right)(z) = \chi(\xi(z)) \sqrt{R}(z).
\end{equation}
We have that in the sense of generalized derivatives, for $z\in D$,
$$
\frac{\partial}{\partial \overline z}\, \left( C^\mu + r\right)(z) =\pi \mu(z), \qquad \frac{\partial}{\partial \overline z} = \frac{1}{2}\, \left(\frac{\partial}{\partial x} + i \frac{\partial}{\partial y}\right).
$$
Differentiating in \eqref{rulgard1} and using the chain rule, we get
$$
-\pi \mu(z) = \frac{\partial}{\partial \overline z}\, \left(\chi(\xi(z)) \sqrt{R}(z) \right) = \frac{\partial}{\partial \overline z}\, \left(\chi(\xi(z))\right) \sqrt{R}(z) = \frac{\partial \chi(\xi)}{\partial \overline \xi}\, \overline{\left(\frac{\partial  \xi (z)}{\partial z} \right)}   \sqrt{R}(z).
$$
Taking into account the definition of $\chi$ in \eqref{definitionofxi} we conclude that if $z\in D$ and $\xi = \xi(z)$, then
\begin{equation}\label{mainIdentityRulgard}
    \frac{\partial \chi(\xi)}{\partial \overline \xi} =-\frac{\pi \mu(z)}{|R(z)|}\,.
\end{equation}
In particular, the (generalized) partial derivative of $\chi(\xi)$ along the vertical axis is zero; if $\xi =x+iy$, this implies that $\chi(\xi)$ is equivalent to a function $g(x)$ which takes only values $+1$ and $-1$. 
Thus, the set
$$
\left\{\xi:\,  \frac{\partial \chi(\xi)}{\partial \overline \xi}\neq 0 \right\}
$$
is a union of vertical arcs in the $\xi$-plane. From \eqref{mainIdentityRulgard} it follows that the number of these arcs is finite. This means that the image of the support of $\mu$ in $D$ by \eqref{definitionofxi} is made of vertical lines, that is, $\supp (\mu) \cap D$ is a union of horizontal trajectories of $\varpi$. Moreover, if $\mu$ is positive, we get by \eqref{mainIdentityRulgard} that
$$
\frac{\partial \chi(\xi)}{\partial \overline \xi} \geq 0 \text{ in } \widehat D,
$$
so that $\chi(\xi)$ changes sign at most once in $\widehat D$. In other words, $\supp (\mu) \cap D$ contains at most one single analytic arc, which is a horizontal trajectory of $\varpi$.

Finally, if all residues of $r$ are real, then
$$
\Re \int^z \left( C^\mu + r\right)(t) dt =U^\mu(z)+ \Re \int^z   r (t) dt +\const
$$
is harmonic and single-valued in $\C\setminus \supp (\mu) $, which means that the trajectories of $\varpi$ are either critical or level curves of a harmonic function. Thus, $\varpi$ is closed.
\end{proof}
\begin{rmk}
See Figure \ref{fig:intersectionNegative} for an illustration of the statement about the connectedness of the intersection of a  $\varpi$-rectangle with $\supp(\mu)$. Obviously, in this assertion we can replace the $\varpi$-rectangle by any simply-connected domain that is mapped one-to-one by $\xi$ onto a convex set.
\end{rmk}
\begin{figure}[htb]
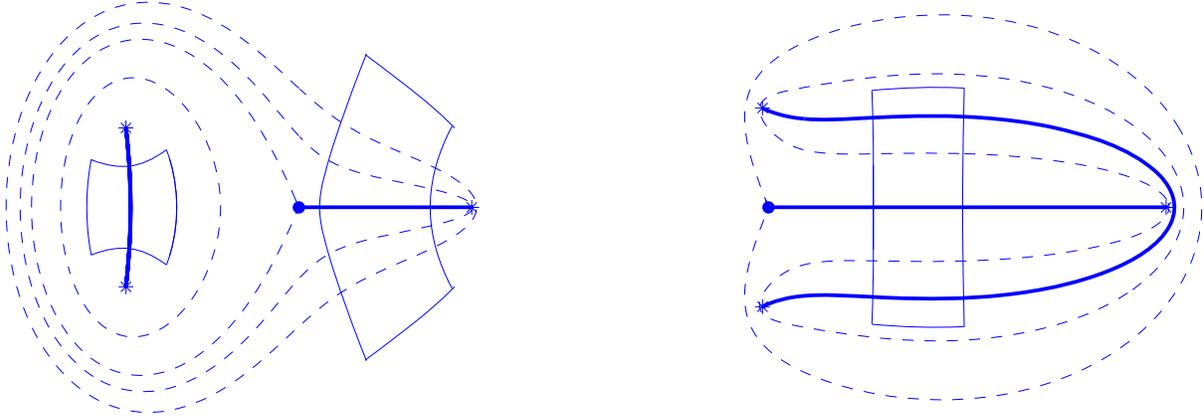

\centering \begin{tabular}{ll} \hspace{-1.8cm}\mbox{\begin{overpic}[scale=0.7]%
{intersectionPositive}%
\end{overpic}} &
\hspace{-1.5cm}
\mbox{\begin{overpic}[scale=0.7]%
{intersectionNegative}%
\end{overpic}}
\end{tabular}
\caption{$\varpi$-rectangles intersecting the support of a positive (left) and sign-changing measure (right); for further details, see Section \ref{sec:positiveAcritical}.}
\label{fig:intersectionNegative}
\end{figure}

\begin{rmk}
Observe that in Lemma \ref{rulgard} we do not assume a priori that $E_\varphi(\mu)<\infty$, so that mass points of $\mu$ are allowed.
\end{rmk}

An immediate consequence of Lemma \ref{thm:rakhmanov} and Lemma \ref{rulgard} is
\begin{corollary}\label{cor:mainthm}
If in \eqref{Phiforcharges}, $\{\rho_0, \dots, \rho_p\}\subset \R$, then in any connected component of $\C\setminus \supp(\mu )$  we can select a single-valued branch of $\sqrt{R }$ such that there the formula
\begin{equation}\label{potentialMeasure}
U^\mu(z)+\varphi(z)=-\Re \int^z \sqrt{R(t)} \, dt
\end{equation}
holds.
\end{corollary}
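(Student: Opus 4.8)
The plan is to obtain \eqref{potentialMeasure} as the integrated form of the identity $(C^\mu+\Phi')^2=R$ supplied by Lemma~\ref{thm:rakhmanov}, the reality of the charges entering only to keep both sides single-valued. I would fix a connected component $G$ of $\C\setminus\supp(\mu)$ and work inside it. There $C^\mu$ is holomorphic and $\Phi'=-B/(2A)$ is meromorphic (both genuinely single-valued), with poles of $\Phi'$ only at those $a_k\in G$ for which $\rho_k\neq0$.

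First I would record the differential link between potentials and Cauchy transform. From $2\,\partial_z\log|z-t|=1/(z-t)$ (the computation opening the proof of Proposition~\ref{characterizationHS}) one has $2\,\partial_z U^\mu=C^\mu$ off $\supp(\mu)$, and $2\,\partial_z\varphi=\Phi'$; moreover, since $\{\rho_k\}\subset\R$, the field $\varphi=\Re\Phi=-\sum_k(\rho_k/2)\log|z-a_k|$ is single-valued. Hence
$$
2\,\frac{\partial}{\partial z}\bigl(U^\mu(z)+\varphi(z)\bigr)=C^\mu(z)+\Phi'(z)\qquad\text{in }G.
$$

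Next I would choose the branch. Being single-valued in $G$ with square equal to $R$ (Lemma~\ref{thm:rakhmanov}), the function $C^\mu+\Phi'$ is itself a single-valued branch of $\sqrt{R}$ up to sign; I take the branch $\sqrt{R}=-(C^\mu+\Phi')$, so that $C^\mu+\Phi'=-\sqrt{R}$ in $G$. Integrating the displayed identity and taking real parts then yields $U^\mu(z)+\varphi(z)=-\Re\int^z\sqrt{R(t)}\,dt+\const$, which is \eqref{potentialMeasure} once the constant is absorbed into the base point of the integral.

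The step that genuinely uses the hypotheses---and is the payoff of Lemma~\ref{rulgard}---is the single-valuedness of $\Re\int^z\sqrt{R}\,dt$ in $G$; for complex $\rho_k$ the argument terms in $\varphi$ would spoil it. This is exactly the computation closing the proof of Lemma~\ref{rulgard}, where for real residues $\Re\int^z(C^\mu+\Phi')(t)\,dt=U^\mu(z)+\varphi(z)+\const$ was shown to be harmonic and single-valued in $\C\setminus\supp(\mu)$ (equivalently, $\varpi$ is closed); combined with $C^\mu+\Phi'=-\sqrt{R}$ this delivers the formula directly. A remaining check, which I expect to be the only fiddly point, is that a simple pole of $\sqrt{R}$ at some $a_k\in G$ with $\rho_k\neq0$ has real residue $\pm\rho_k/2$ and so contributes only a single-valued $\log|z-a_k|$ term to $\Re\int^z\sqrt{R}\,dt$, matching the logarithmic singularity of $\varphi$ there, so that the identity is consistent throughout $G$.
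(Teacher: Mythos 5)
Your argument is correct and is essentially the proof the paper intends: the corollary is stated as an immediate consequence of Lemma \ref{thm:rakhmanov} (which gives $(C^\mu+\Phi')^2=R$, so $C^\mu+\Phi'$ is a single-valued branch of $\pm\sqrt{R}$ in each component) and of the closing computation in the proof of Lemma \ref{rulgard} (where, for real residues, $\Re\int^z(C^\mu+\Phi')\,dt=U^\mu+\varphi+\const$ is shown to be harmonic and single-valued off $\supp(\mu)$). Your identification $2\,\partial_z(U^\mu+\varphi)=C^\mu+\Phi'$, the choice of branch $\sqrt{R}=-(C^\mu+\Phi')$, and the remark about the real residues $\pm\rho_k/2$ at points $a_k$ lying in the component all match the paper's reasoning.
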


\subsection{Critical and reflectionless measures}

By Theorem \ref{main_thm}, $\supp(\mu)$ is a union of analytic arcs.
\begin{definition}\label{def:regularpoint}
 We call a point $z\in \supp( \mu  )$ \emph{regular} if there exists a simply connected open neighborhood $B$ of $z$ such that $B\cap \supp(\mu )$ is a Jordan arc.
\end{definition}

\begin{lemma}\label{lemma:onthesup}
Let $\mu$ be an $(\AA, \varphi)$-critical measure. Then
the principal value of the Cauchy transform of $\mu$ \eqref{principalvalue} satisfies
\begin{equation}\label{zeroOnSupp}
    C^\mu(z)+\Phi'(z)=0, \quad z\in \supp(\mu )\setminus \AA.
\end{equation}
On any simple subarc of $\supp(\mu)$ measure $\mu$ is absolutely continuous with respect to the arc-length measure, and its density is given by
\begin{equation}\label{density}
d\mu(z)= \frac{1}{\pi }\, \left|   \sqrt{R(z)} \, dz \right| .
\end{equation}
\end{lemma}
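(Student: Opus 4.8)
The plan is to read off both assertions from the algebraic identity $\left(C^\mu+\Phi'\right)^2=R$ of Lemma~\ref{thm:rakhmanov} together with the trajectory structure of $\varpi(z)=-R(z)\,dz^2$ furnished by Lemma~\ref{rulgard}. Write $G\isdef C^\mu+\Phi'$. Since $C^\mu$ is holomorphic off $\supp(\mu)$ and $\Phi'$ is holomorphic off $\AA$, the identity \eqref{charactMeasure}, valid $\lebesgue_2$-a.e., propagates by analytic continuation to $G^2=R$ on all of $\C\setminus(\supp(\mu)\cup\AA)$. I fix a regular point $z_0\in\supp(\mu)\setminus\AA$ (Definition~\ref{def:regularpoint}) and a neighborhood $B$ in which $\supp(\mu)$ is a single analytic Jordan arc $\gamma$, a horizontal trajectory of $\varpi$ by Lemma~\ref{rulgard}. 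Then $G$ is holomorphic in $B\setminus\gamma$ and extends continuously to $\gamma$ from each side, with one-sided values $G_+$ and $G_-$; passing to the limit in $G^2=R$ gives $G_+^2=G_-^2=R$ on $\gamma$, hence $G_+=\pm G_-$ pointwise.

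The first step is to rule out $G_+\equiv G_-$ on $\gamma$. The factors $G_++G_-$ and $G_+-G_-$ are real-analytic in the arc parameter and their product $G_+^2-G_-^2$ vanishes identically; since the zero set of a nonvanishing real-analytic function is discrete, on the connected arc $\gamma$ (away from the isolated zeros of $R$) either $G_+\equiv G_-$ or $G_+\equiv -G_-$. In the first case $G$ continues analytically across $\gamma$, so $\partial_{\bar z}G\equiv0$ there and $\mu$ would charge no mass on $\gamma$, contradicting $z_0\in\supp(\mu)$. Hence $G_+=-G_-$ on $\gamma$, and choosing the branch with $G_+=\sqrt R$ we get $G_-=-\sqrt R$, so that $C^\mu_+-C^\mu_-=G_+-G_-=2\sqrt R$ across $\gamma$ (recall $\Phi'$ is continuous there).

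To obtain the density \eqref{density} I would combine this jump with the Sokhotski--Plemelj relation. Parametrizing $\gamma$ by arc length with $dz=e^{i\alpha}\,ds$ and writing $d\mu=\rho\,ds$, the jump of the Cauchy transform reads $C^\mu_+-C^\mu_-=2\pi i\,\rho\,e^{-i\alpha}$, whence $\rho=\dfrac{e^{i\alpha}\sqrt R}{\pi i}$. Because $\gamma$ is a horizontal trajectory of $\varpi=-R\,dz^2$, we have $R\,dz^2<0$ along $\gamma$, i.e.\ $R\,e^{2i\alpha}=-|R|$; therefore $\bigl(e^{i\alpha}\sqrt R\bigr)^2=-|R|$, so $e^{i\alpha}\sqrt R=\pm i\sqrt{|R|}$ is purely imaginary and $\rho=\pm\frac1\pi\sqrt{|R|}$ is real with $|\rho|\,ds=\frac1\pi\bigl|\sqrt R\,dz\bigr|$, which is exactly \eqref{density} (the sign of $\rho$ being locally constant by continuity). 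Absolute continuity on a simple subarc, and the absence of a singular atomic part away from $\AA$, follow from the fact that the one-sided boundary values $C^\mu_\pm=\pm\sqrt R-\Phi'$ are holomorphic along $\gamma$: an atom or a singular component would force a pole or unbounded behaviour in $C^\mu_\pm$, which is excluded here, atoms being permitted only at $\AA$ by Lemma~\ref{rulgard}. Alternatively, the same conclusion can be extracted directly from \eqref{rulgard1} and \eqref{mainIdentityRulgard}, where the jump of $\chi$ from $+1$ to $-1$ across $\gamma$ produces $\mu$ with precisely this density.

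Finally, \eqref{zeroOnSupp} follows once the density is known. At a regular point $z_0$ the principal value \eqref{principalvalue} exists and equals the average of the one-sided limits, $C^\mu(z_0)=\tfrac12\bigl(C^\mu_+(z_0)+C^\mu_-(z_0)\bigr)$; adding $\Phi'(z_0)$ (continuous across $\gamma$) gives $C^\mu(z_0)+\Phi'(z_0)=\tfrac12\bigl(G_++G_-\bigr)(z_0)=0$ since $G_+=-G_-$. As the non-regular points of $\supp(\mu)\setminus\AA$ form a discrete set (the finitely many finite critical points of $\varpi$, where $\sqrt R\to0$ anyway), \eqref{zeroOnSupp} extends to all of $\supp(\mu)\setminus\AA$ by continuity. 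I expect the main obstacle to be the careful bookkeeping of the Sokhotski--Plemelj constant and orientation in the jump relation, together with the mild logical ordering issue that the clean ``principal value $=$ average'' statement used for \eqref{zeroOnSupp} is most safely justified only after the absolute continuity and smoothness of the density on simple subarcs have already been established.
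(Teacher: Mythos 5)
Your proposal is correct and follows essentially the same route as the paper: combine the identity $(C^\mu+\Phi')^2=R$ from Lemma~\ref{thm:rakhmanov} with the Sokhotsky--Plemelj relations at a regular point, rule out the case of coinciding branches (which would force $\mu$ to vanish locally), deduce \eqref{zeroOnSupp} from the sum relation and \eqref{density} from the jump relation, and handle the non-regular points of $\supp(\mu)\setminus\AA$ separately as zeros of $R$ where the density vanishes like a square root. Your extra verifications (the real-analytic dichotomy $G_+\equiv\pm G_-$ and the check that the horizontal-trajectory condition makes the density real) only make explicit what the paper leaves implicit.
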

\begin{rmk}
We can reformulate \eqref{zeroOnSupp} as
$$
 C^\mu +\Phi' =0 \quad \text{$\mu$-a.e.\ on } \C.
$$
\end{rmk}
\begin{proof}
Assume first that $z\in \supp( \mu  )$ is regular, and let $B$ be a simply connected open neighborhood $B$ of $z$ such that $B\cap \supp(\mu )$ is an open analytic arc not containing $\AA$. By Lemma \ref{thm:rakhmanov}, 
$$
\left(C^\mu(\zeta ) + \Phi'(\zeta )
\right)^2=R(\zeta ), \quad \zeta \in \C\setminus \supp(\mu ),
$$
where the Cauchy transform is understood in the strong sense (ordinary integral),
so that
\begin{equation}\label{formulaforCmu}
    C^\mu(\zeta )=-\Phi'(\zeta ) +\sqrt{R(\zeta )}
\end{equation}
in each connected component of $B\setminus \supp(\mu)$, with an appropriate selection of the branch of the square root.

At a regular point $z$ the boundary values of $C^\mu$ from both sides of $\supp(\mu)$,
$$
C^\mu_\pm (z)\isdef \lim_{\zeta \to z^\pm, \, \zeta \in \C\setminus \supp(\mu)} C^\mu(\zeta ),
$$
are well defined and satisfy the Sokhotsky-Plemelj relations,
\begin{equation}\label{sokhotsky}
    C^\mu_+(z)-C^\mu_-(z)=2\pi i \mu'(z), \qquad C^\mu_+(z)+C^\mu_-(z)=2C^\mu(z).
\end{equation}
By \eqref{formulaforCmu}, if the branch of $\sqrt{R}$ coincides on both sides of $B\cap \supp(\mu)$, then $\mu\equiv 0$ there, which is impossible. Hence, with an appropriate selection of the branch of $\sqrt{R}$ in $B$,
$$
  C^\mu(z )=-\Phi'(z ) \pm \sqrt{R(z )}, \quad z\in B\cap \supp(\mu),
$$
and both \eqref{zeroOnSupp} and \eqref{density} follow from \eqref{sokhotsky}.

Finally, if $z\in \supp(\mu)\setminus \AA$ is not regular, it must coincide with a zero of the rational function $R$ in \eqref{charactMeasure}. Taking into account the expression for the density of the measure we see that it vanishes at $z$ at least as the square root of $(\zeta -z)$. Then, at this point \eqref{localconditionCauchy} holds, as well as formula \eqref{charactMeasure}. This concludes the proof of \eqref{zeroOnSupp}.
\end{proof}

Formula  \eqref{zeroOnSupp} is a direct continuous analogue of property \eqref{almostLame1} for the discrete critical measures, and it may be proved directly (independently from Theorem \ref{main_thm}) using local variations ($h\equiv 0$ outside of a small neighborhood of the singularity at $z$). The original proof of Theorem \ref{main_thm} by the second author (unpublished) was based on a combination of Lemmas \ref{thm:rakhmanov} and \ref{lemma:onthesup}. The technique from \cite{MR1909635} used in Lemma \ref{rulgard} above streamlines the arguments.

Observe that for $\varphi\equiv 0$ we obtain from Lemma \ref{lemma:onthesup} that for any $\AA$-critical measure $\mu$, and for any regular point $z\in \supp(\mu)$, $C^\mu(z)=0$, so that $C^\mu=0$ $\mu$-a.e. Measures with this property are called \emph{reflectionless}; see \cite{Melnikov08}, where they are treated in the context of the geometric function theory. 

\begin{rmk} 
\label{rmk:reflectionlessCMV}
For measures supported on the unit circle $\mathbb T\isdef \{z \in \C:\, |z|=1 \}$ we can find in literature an alternative definition of the ``reflectionlessness'' (see \cite{Gesztesy2006a}, \cite{Gesztesy2006}, \cite{Gesztesy2009}), characterized by vanishing of the sum of the boundary values of the Carath\'{e}odory function of $\mu $ and not of its Cauchy transform. For instance, the Lebesgue measure $\lebesgue _1$ on $\mathbb T$ is reflectionless in the sense of \cite{Gesztesy2006a}, but does not satisfy $\mathrm{v.p.}\, C^{\lebesgue _1}=0$ on $\mathbb T$. In this paper we give the ``reflectionless measure'' the meaning specified above.
\end{rmk}

Reflectionless measures on $\R$ have their origin in the spectral theory; they are in fact spectral measures of Schr\"{o}dinger self-adjoint operators with reflectionless potentials (see e.g.~\cite{Craig1989}) and of reflectionless Jacobi operators \cite{Teschl99}.

It is immediate to show that there are reflectionless measures with infinite energy. So, the following conjecture seems natural:
\begin{conjecture}
For any positive reflectionless measure $\mu$ with a finite energy we can find a polar set $\AA$ on $\C$ such that $\mu$ is $\AA$-critical.
\end{conjecture}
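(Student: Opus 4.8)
The plan is to run the argument behind Lemma~\ref{lemma:onthesup} in reverse: the reflectionless hypothesis already encodes the ``bulk'' Euler--Lagrange equation $C^\mu=0$ $\mu$-a.e., and the only possible obstruction to criticality is a boundary defect concentrated on the singular set of $\supp(\mu)$, which I will take to be $\AA$. Since $\varphi\equiv0$, Lemma~\ref{lemma:condCrit} reduces criticality with respect to $\AA$ to
$$
f(\mu;h)\isdef \iint \frac{h(x)-h(y)}{x-y}\,d\mu(x)\,d\mu(y)=0
$$
for every $h$ smooth on $\C\setminus\AA$ with $h|_\AA\equiv0$ (this is \eqref{defFvar} with $\Phi'\equiv0$). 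The naive symmetrization $f(\mu;h)=-2\int h\,C^\mu\,d\mu$ would yield $f\equiv0$ straight from reflectionlessness; but the arcsine (Robin) measure of $[-1,1]$, for which $h(z)=z$ gives $f(\mu;h)=\|\mu\|^2\neq0$, shows that the interchange of the principal value with $\int d\mu$ genuinely fails and that the defect is localized at the endpoints $\{-1,1\}$. Controlling this defect is the whole problem.

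First I would establish the analytic structure of $\supp(\mu)$ away from its singular set. Writing $R:=(C^\mu)^2$, which is holomorphic on $\C\setminus\supp(\mu)$ and satisfies $R(z)=\|\mu\|^2/z^2+\mathcal O(1/z^3)$ at infinity, the reflectionless relation $C^\mu_++C^\mu_-=0$ (the two-sided form of $\mathrm{p.v.}\,C^\mu=0$, cf.\ \eqref{sokhotsky}) forces $R_+=R_-$ across every subarc on which $\supp(\mu)$ is locally a Jordan arc, so $R$ continues analytically there. Let $\Sigma$ be the set of non-regular points of $\supp(\mu)$ (Definition~\ref{def:regularpoint}). Exactly as in Lemma~\ref{lemma:onthesup}, on each regular arc $\mu$ is absolutely continuous with analytic density $\frac1\pi|\sqrt{R}\,dz|$, where $R$ now extends holomorphically to $\C\setminus\Sigma$; moreover each point of $\Sigma$ must be a genuine (pole-type) singularity of $R$, since otherwise $R$ would extend to all of $\overline\C$ and vanish, forcing $\mu\equiv0$. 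Thus $\supp(\mu)\setminus\Sigma$ is a union of trajectories of $\varpi=-R\,dz^2$, and one hopes to identify $\Sigma$ with the finite critical points of $\varpi$.

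Next I would show that $h\mapsto f(\mu;h)$ is a distribution of order zero supported on $\Sigma$. For $h$ vanishing in a neighbourhood of $\Sigma$ the identity $f(\mu;h)=0$ follows by the residue/Sokhotsky--Plemelj computation of Lemma~\ref{lemma:onthesup} read backwards: on each regular arc the relation $C^\mu_++C^\mu_-=0$ together with \eqref{density} collapses the double integral to boundary terms that are pushed into $\Sigma$, which $h$ does not see. The delicate point is to upgrade this from ``$h\equiv0$ near $\Sigma$'' to ``$h|_\Sigma=0$'': one must prove that the defect produces only point evaluations of $h$ on $\Sigma$ and no derivatives, which I would extract from the square-root behaviour of the density near the singularities — the arcsine model, where a direct check gives $f(\mu;h)=\tfrac12\bigl(h(1)-h(-1)\bigr)$ with no derivative terms, is the template. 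Granting order zero, the condition $h|_\Sigma=0$ kills the defect and $\mu$ is $\Sigma$-critical.

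The remaining, and I expect principal, obstacle is to prove that $\Sigma$ is polar. In the critical case the analogue of $R$ is \emph{rational} (Lemma~\ref{thm:rakhmanov}, Theorem~\ref{main_thm}), so $\Sigma$ is automatically finite; here only reflectionlessness is assumed, and there is no a priori mechanism forcing $\Sigma$ to be finite or even of zero capacity. One must rule out, using only $E(\mu)<\infty$ and the holomorphy of $R$ on $\C\setminus\Sigma$, that the singularities of $R$ accumulate on a set of positive capacity — equivalently, that a reflectionless finite-energy measure cannot carry a ``fractal'' singular set. This is precisely the gap that keeps the statement a conjecture: the a.e.\ condition $\mathrm{p.v.}\,C^\mu=0$ is, by itself, too weak to control the global geometry of $\supp(\mu)$, and bridging it to a polar $\Sigma$ appears to require a genuinely new capacity/removability estimate rather than the variational machinery developed in the preceding sections.
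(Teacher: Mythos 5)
The statement you are asked about is explicitly labelled a \emph{Conjecture} in the paper: the authors offer no proof of it, and immediately after stating it they record only a weaker substitute, namely that the conclusion holds (with $\AA$ a \emph{finite} set) under the additional hypotheses that $\supp(\mu)$ is a finite union of analytic arcs and $\mu$ is absolutely continuous with respect to arc length; that weaker statement is what follows from Proposition \ref{rulgardAdd} together with the converse direction of Lemma \ref{lemma:Sconditions}. Your proposal is therefore correctly calibrated: you do not claim a proof, and the obstacle you isolate at the end --- showing that the singular set $\Sigma$ of $\supp(\mu)$ is polar, i.e.\ ruling out that the singularities of $R=(C^\mu)^2$ accumulate on a set of positive capacity --- is precisely why the statement remains a conjecture. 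Your outline of the provable portion (reflectionlessness gives $R_+=R_-$ across regular arcs, hence $R$ extends holomorphically off $\Sigma$; if $R$ is in addition rational one takes $\AA$ to be its pole set) reproduces in substance the mechanism of Lemma \ref{lemma:Sconditions} and Proposition \ref{rulgardAdd}.

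Two of your intermediate claims, however, are asserted with more confidence than the hypotheses warrant, and both feed into the same gap. First, that every point of $\Sigma$ is a ``genuine (pole-type) singularity'' of $R$ does not follow: without a priori structure on $\supp(\mu)$ the extended $R$ could have essential singularities, or $\Sigma$ could fail to be discrete, and the Liouville-type argument only shows that $\Sigma$ cannot be entirely removable. Second, the claim that $h\mapsto f(\mu;h)$ is a distribution of order zero supported on $\Sigma$ is extracted from the square-root vanishing of the density near singular points, but that vanishing is established in the paper (via \eqref{density} and Lemma \ref{lemma:onthesup}) only for measures already known to be $(\AA,\varphi)$-critical with $R$ rational; for a general finite-energy reflectionless measure it is part of what would need to be proved. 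So the proposal is an honest and well-oriented strategy, but it does not close the conjecture, and no argument in the paper does either.
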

However, a weaker statement follows from Proposition \ref{rulgardAdd} below: assume that $\mu$ is a reflectionless measure supported on a finite set of analytic arcs $\Gamma=\Gamma_1\cup \dots\cup \Gamma_k$ of $\C$, and that $\mu$ is absolutely continuous with respect to the arc-length measure. Then there exists a discrete finite set $\AA$ such that this measure is $\AA$-critical.

\subsection{Equilibrium conditions in terms of potentials, and the $S$-property} \label{sec:ChebotarevStahl}

The variational requirements defining a continuous $(\AA, \varphi)$-critical measure $\mu$ impose equilibrium conditions that we discuss next. We will see that the gradient of the total potential (that is, force) vanishes at any regular point of the support of the measure located in the conducting part of the plane. However, $\grad (U^\mu + \varphi)$ is not continuous across any arc in $\supp(\mu )$, and we have to consider separately the force acting on an element of charge from either side of $\supp(\mu)$. This leads to equality of the normal derivatives, the so called \emph{$S$-property}, see \eqref{Sprop} below.

\begin{lemma}
\label{lemma:Sconditions}
The total potential of an $(\AA, \varphi)$-critical measure $\mu$ satisfies the following properties:
\begin{enumerate}
\item[(i)] if $\supp(\mu )=\Gamma_1\cup \dots\cup \Gamma_k$, where $\Gamma_j$ are the connected components of $\supp(\mu )$, then
    $$
    U^\mu(z) + \varphi(z) = w_j=\const, \quad z\in \Gamma_j, \quad j=1, \dots, k.
    $$
\item[(ii)] at any regular point $z\in \supp(\mu )$,
\begin{equation}\label{Sprop}
    \frac{\partial   }{\partial n_+}\, \left( U^\mu +\varphi\right)(z)= \frac{\partial   }{\partial n_-}\, \left( U^\mu +\varphi\right)(z),
\end{equation}
where $n_\pm$ are the normal vectors to $\supp(\mu )$ at $z$ pointing in the opposite directions.

Additionally, if $z\in \supp(\mu )\setminus \AA$ is not regular, then
\begin{equation}\label{zeroatnotreg}
    \grad \left( U^\mu(z) + \varphi(z)\right)=0.
\end{equation}
\end{enumerate}

Furthermore, assume that a finite real measure $\mu$, whose support $\supp(\mu)$ consists of a union of a finite set of analytic arcs, $\supp(\mu)=\Gamma_1\cup \dots\cup \Gamma_k$, satisfies conditions (i) and (ii) above. Then $\mu$ 
satisfies an equation of the form \eqref{charactMeasure}, where $R$ is a rational function with possible poles at $\AA$ of order $\leq 2$.
\end{lemma}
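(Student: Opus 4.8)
The plan is to handle the direct statement (properties (i) and (ii)) and the converse (``Furthermore'') separately, using throughout the identities $2\,\partial_z U^\mu = C^\mu$ and $2\,\partial_z\varphi=\Phi'$ (the latter from \eqref{Phiforcharges}), together with the fact that for a real function $F$ one has $\grad F = 2\,\overline{\partial_z F}$. Thus the complex quantity $C^\mu+\Phi'$ is the conjugate of the field $\grad(U^\mu+\varphi)$, and all the assertions become statements about the boundary values of $C^\mu+\Phi'$ on $\supp(\mu)$.

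For (i), I would parametrise a subarc of $\supp(\mu)$ by arclength with unit tangent $T$; writing $F\isdef U^\mu+\varphi$, its tangential derivative is $\frac{d}{ds}F = 2\,\Re\!\big(\partial_z F\cdot T\big)=\Re\!\big((C^\mu+\Phi')\,T\big)$, which vanishes on $\supp(\mu)\setminus\AA$ by \eqref{zeroOnSupp} of Lemma \ref{lemma:onthesup}. Since each $\Gamma_j$ is connected and $F$ is continuous, $F$ is constant on $\Gamma_j$. For (ii) I would invoke \eqref{formulaforCmu} and the Sokhotski--Plemelj relations \eqref{sokhotsky}, which give the boundary values $(C^\mu+\Phi')_\pm=\pm\sqrt R$ at a regular point. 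Hence $(\grad F)_\pm=\pm\,\overline{\sqrt R}$, and computing the outward normal derivatives $\partial F/\partial n_\pm=\Re\!\big((\grad F)_\pm\,\overline{n_\pm}\big)$ with $n_-=-n_+$ shows that both equal $\Re(\sqrt R\,n_+)$, which is precisely \eqref{Sprop}. A non-regular point of $\supp(\mu)\setminus\AA$ is a zero of $R$ (as in the proof of Lemma \ref{lemma:onthesup}); there $\sqrt R=0$, so $C^\mu+\Phi'=0$ and $\grad F=0$, giving \eqref{zeroatnotreg}.

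For the converse I set $H\isdef C^\mu+\Phi'$, analytic in $\C\setminus(\supp(\mu)\cup\AA)$, and show that $G\isdef H^2$ is rational with poles only at $\AA$. The crux is to glue $G$ across the regular part of the arcs. At a regular point with tangent $T$ and normal $n_+=iT$, condition (i) yields vanishing tangential derivatives from both sides, $\Re(H_+T)=\Re(H_-T)=0$, while condition (ii), rewritten via $n_-=-n_+$ as $\Re(H_+n_+)=-\Re(H_-n_+)$, gives $\Re\!\big((H_++H_-)\,n_+\big)=0$; adding the two tangential relations also gives $\Re\!\big((H_++H_-)\,T\big)=0$. Since $T$ and $n_+=iT$ are orthogonal, the complex number $H_++H_-$ is annihilated by both $\Re(\,\cdot\,T)$ and $\Re(\,\cdot\,n_+)$, whence $H_+=-H_-$. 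Therefore $G_+=H_+^2=H_-^2=G_-$, so $G$ has no jump and continues analytically across every regular point of $\supp(\mu)$.

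It then remains to treat the finitely many exceptional points. At each $a_k\in\AA$ the factor $\Phi'$ has at most a simple pole and $C^\mu$ has no atom there, so $H$ has at most a simple pole and $G$ at most a double pole. At infinity $C^\mu(z)=-\mu(\C)/z+\mathcal O(z^{-2})$ and $\Phi'(z)=\mathcal O(z^{-1})$, hence $G(z)=\kappa^2/z^2+\mathcal O(z^{-3})$ with $\kappa$ as in \eqref{normalizationR}, so $G$ is regular at $\infty$. At every remaining endpoint or branch point $e\notin\AA$, $G$ is single-valued analytic in a punctured neighbourhood, and finiteness of $\mu$ (no atom at $e$) gives $(z-e)^2G(z)\to0$, so $G$ has at worst a simple pole; the non-regular clause \eqref{zeroatnotreg} then forces $\grad F(e)=0$, i.e.\ $H\to0$ and $G\to0$ at $e$, ruling out even that simple pole and making the singularity removable. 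Thus $G$ is meromorphic on $\overline\C$ with poles only at $\AA$ of order at most two, hence rational; setting $R\isdef G$ gives $(C^\mu+\Phi')^2=R$, which is \eqref{charactMeasure}. I expect this last point --- upgrading the a priori simple pole of $G$ at a non-regular endpoint to a removable singularity by means of \eqref{zeroatnotreg} --- to be the main obstacle, as it requires turning the pointwise vanishing of the field into genuine control of the one-sided boundary values of $H$ near $e$.
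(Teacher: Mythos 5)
Your proof is correct, and while it follows the same overall skeleton as the paper (direct part via the boundary values of $C^\mu+\Phi'$; converse by gluing $(C^\mu+\Phi')^2$ across the regular arcs and then classifying the finitely many exceptional singularities), the local mechanics differ at both key steps, so a comparison is worthwhile. For the direct part, the paper obtains \emph{(i)} from Corollary~\ref{cor:mainthm} --- $U^\mu+\varphi=-\Re\int^z\sqrt{R}$, constant along trajectories of $-R\,dz^2$ --- and gets \eqref{Sprop} from the identity $\partial_{\bar z}(U^\mu+\varphi)=\tfrac12\,\overline{C^\mu+\Phi'}$ together with \eqref{zeroOnSupp}, which makes the two one-sided $\bar\partial$-boundary values sum to zero, combined with the observation (from \emph{(i)}) that the gradient is normal to the support; you instead read off $(C^\mu+\Phi')_\pm=\pm\sqrt R$ from \eqref{formulaforCmu} and \eqref{sokhotsky} and compute the normal derivatives directly --- same ingredients, equally valid. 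The more genuine divergence is in the converse: the paper reflects the potential, setting $U=\pm\bigl(U^\mu+\varphi-w_j\bigr)$ on the two sides of a regular arc and $0$ on the arc, shows $U$ is harmonic in a full neighborhood by \emph{(i)} and \eqref{Sprop}, and concludes that $C=\partial_z U$, hence $(C^\mu+\Phi')^2$, is holomorphic across; you bypass the auxiliary harmonic function and prove the jump relation $H_+=-H_-$ by pairing the tangential relations $\Re(H_\pm T)=0$ (from \emph{(i)}) with the normal relation $\Re\bigl((H_++H_-)n_+\bigr)=0$ (from \emph{(ii)}), then glue $G=H^2$ by Morera. These are two phrasings of the same reflection principle; yours is more economical, while the paper's version absorbs the boundary-regularity bookkeeping into harmonicity. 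At the exceptional points your treatment matches the paper's almost verbatim: at $\AA$ a pole of $G$ of order at most two (one small imprecision: if $a_k$ is an endpoint of support arcs, $H$ itself need not be meromorphic there --- only $G$, after gluing, is analytic in the punctured neighborhood, and your growth bound on $H$ off the support still gives the double-pole bound for $G$); at $\infty$ the normalization \eqref{normalizationR}; and at irregular endpoints $e\notin\AA$ the paper, exactly like you, first invokes the sub-polar growth of $C^\mu$ for a finite measure (your $(z-e)^2G(z)\to 0$) and then \eqref{zeroatnotreg} together with \eqref{defCaux} to conclude that $R$ actually vanishes there. So the step you flag as the main obstacle is dispatched in the paper at the same level of brevity, with \eqref{zeroatnotreg} implicitly read as continuous vanishing of the boundary values of the field at $e$ --- no extra idea is hiding there, and your version of \eqref{charactMeasure} with $R=G$ is exactly the paper's conclusion.
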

\begin{remark}
As it follows from the last statement of this Lemma and Remark \ref{rem:sufficientcondition}, conditions \emph{(i)} and \emph{(ii)} are sufficient for $\mu$ being $(\AA,\varphi)$-critical.
\end{remark}
\begin{proof}
Let $\mu$ be an $(\AA, \varphi)$-critical measure. From Theorem \ref{main_thm} and Lemma \ref{lemma:onthesup} it follows that $\mu$ has an analytic density on the regular points of its support, made of analytic curves. Hence, $U^\mu$ is continuous up to the boundary, and \emph{(i)} is a direct consequence of Corollary \ref{cor:mainthm} and the fact that $\mu $ lives on trajectories of the quadratic differential $-R(z)(dz)^2$.

For any $z\in \C\setminus \supp(\mu)$ we have that
$$
\frac{\partial }{\partial \overline{z}}\, \left( U^\mu(z)+\varphi(z)\right)=\frac{1}{2}\, \overline{\left( C^\mu(z)+\Phi'(z)\right)},
$$
and this relation is inherited by the limit values on the Carath\'{e}odory boundary of $\C\setminus \supp(\mu)$. Using \eqref{zeroOnSupp} we conclude that on regular points of $\supp(\mu)\setminus \AA$,
$$
\frac{\partial }{\partial \overline{z}}  \left( U^\mu(z)+\varphi(z)\right)_+ + \frac{\partial }{\partial \overline{z}}  \left( U^\mu(z)+\varphi(z)\right)_- =0  .
$$
Observe that  for a real valued function $u$, $\partial u(z)/(\partial \overline{z})$ coincides, up to a factor $1/2$, with the (complex) gradient of $u$. From \emph{(i)} it follows that $\grad(U^\mu +\varphi)$ is normal to $\supp(\mu)$, so that
$$
\left\| \grad\left( U^\mu(z) +\varphi(z)\right)\right\|=\left| \frac{\partial }{\partial n_\pm}  \left( U^\mu(z) +\varphi(z)\right) \right|, \quad z\in \supp(\mu)\setminus \AA,
$$
and \eqref{Sprop} follows from the last two equalities.

Finally, the only possible non-regular points of $\supp(\mu)\setminus \AA$ are the zeros of $R$, where the density vanishes, and we conclude \eqref{zeroatnotreg}.

Let us prove the reciprocal. Assume that $\mu\in \mathcal M_\R$ has a bounded support comprised of a finite number of smooth linear connected components $\Gamma_j$. Fix a regular point $\zeta\in \supp(\mu)$ (without loss of generality, $\zeta \in \Gamma_j$), and let again $B$ be a simply connected open neighborhood  of $\zeta $ such that $B\cap \supp(\mu )$ is a Jordan arc. It splits $B$ into two disjoint domains, that we denote by $B^\pm$, so that $B\setminus \supp(\mu)=B^+ \cup B^-$. It follows from \emph{(i)} and \eqref{Sprop} that
$$
U(z)\isdef \begin{cases}
U^\mu(z)+\varphi(z)-w_j, & \text{if } z\in B^+, \\
-\left( U^\mu(z)+\varphi(z)-w_j\right), & \text{if } z\in B^-, \\
0, & \text{if } z\in B\cap \supp(\mu),
\end{cases}
$$
is harmonic in $B$. Equivalently, $C(z)\isdef  \partial U(z)/( \partial z)$ is holomorphic in $B$. But
\begin{equation}\label{defCaux}
    C(z)=\begin{cases}
C^\mu(z)+\Phi'(z) , & \text{if } z\in B^+, \\
-\left( C^\mu(z)+\Phi'(z) \right), & \text{if } z\in B^-,
\end{cases}
\end{equation}
is continuous in $B$. It implies that $R(z)\isdef (C^\mu+ \Phi')^2(z)$ is holomorphic in $B$, and in consequence, $R$ is analytic at any regular point of $\supp(\mu)$. Since $R$ is obviously analytic also in $\C\setminus \supp(\mu)$, we conclude that it is in fact holomorphic in $\C$, except for the set of irregular points of $\supp(\mu)$, that is, the endpoints of the arcs comprising $\supp(\mu)$. By \eqref{zeroatnotreg} and \eqref{defCaux}, $R$ vanishes also at the irregular points of $\supp(\mu)\setminus \AA$ and at infinity. Since $\mu$ is a finite measure with finite energy, $C^\mu$ has a sub-polar growth at any point of $\C$. Altogether it means that $R$ is a rational function with possible poles at $\AA$ (of order $\leq 2$). 
\end{proof}

\subsection{Correspondence of critical measures with closed quadratic differentials} \label{sec:correspondence}

We begin with some general remarks on $(\AA, \varphi)$-critical measures for the case \eqref{potentialDiscrete}, when all $\rho_k \in \R$ and the external field corresponds to the potential of a discrete signed \emph{real} measure supported on $\AA$. According to Theorem \ref{main_thm}, for any $(\AA, \varphi)$-critical measure $\mu$ there exists a closed quadratic differential $\varpi=-R(z)dz^2$ such that $\supp(\mu)$ consists of a finite union of its trajectories.

This is not a one-to-one correspondence: even in the class $\mathcal M$ the same quadratic differential may correspond to a whole family of critical measures.
\begin{example}
\label{example:1to1_1}
Let $p=0$, $a_0=0$, and $\varphi(z)= \frac{1}{2}\, \log|z|$ (generated by a charge $-1/2$ at the origin). Then for any $r>0$ the normalized angular (Lebesgue) measure $\lebesgue _1$ living on the circle $|z|=r  $ is $(\AA, \varphi)$-critical. Each such a measure is supported on a trajectory of the same quadratic differential $-(dz)^2 /z^2$ (for a discrete analogue of this statement, see Remark \ref{rmk:discreteunbounded}).
\end{example}
\begin{example}
\label{example:1to1_2}
In a more general situation we can consider an $\AA$-critical measure $\mu$ for an arbitrary configuration $\AA$ without external field ($\varphi\equiv 0$); the trajectories of the associated quadratic differential $\varpi$ near infinity are closed Jordan curves. Select any such a trajectory $\beta$ containing in the bounded component of its complement both $\AA$ and $\supp(\mu)$, and denote by $\widehat \mu$ the balayage of $\mu$ onto $\beta$ (see the definition in \cite[\S II.4]{Saff:97}). Then $\mu_1 \isdef 2\widehat \mu-\mu$ is another $\AA$-critical measure with the same total mass than $\mu$, and such that $\beta\subset \supp(\mu_1)$; observe that $\mu_1$ also corresponds to the same quadratic differential. The verification of this assertion is a simple exercise.
\end{example}
What these very basic examples have in common is that in each case when we were able to construct more than one $\AA$-critical measure associated with the same quadratic differential, closed trajectories in the support of measures were present. Moreover, we have seen that an infinite family of critical measures may correspond to the same quadratic differential. This is not the case if we restrict ourselves to critical measures with a connected complement to the support. However, even in this situation the quadratic differential can give rise to more than one (signed) critical measure, as the following example shows.
\begin{example}
\label{example:1to1_4}
Consider the quadratic diffferential $\varpi$ with 4 simple poles $a_{0}, \dots, a_{3}$ at the vertices of a rectangle, and two simple zeros $v_{1}, v_{2}$, situated symmetrically at the midpoints of the longest sides of this rectangle. The trajectories of such a differential are depicted in Figure~\ref{fig:several}. 
\begin{figure}[htb]
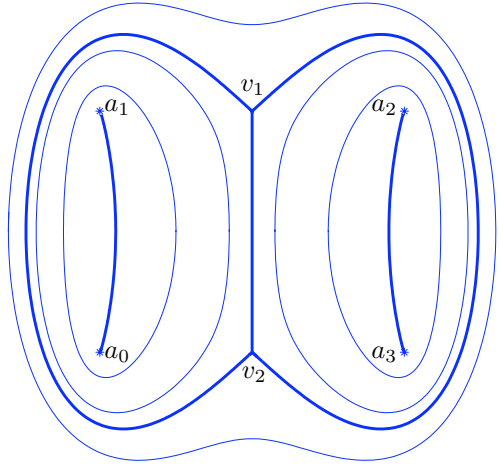

\centering \begin{overpic}[scale=0.55]%
{several}%
   \put(33,23){\small $a_0 $}
 \put(33,54.5){\small $a_1$}
    \put(67,23){\small $a_3 $}
 \put(67,54.5){\small $a_2$}
  \put(50.5,20.5){\small $v_2$}
   \put(50.3,57){\small $v_1$}
\end{overpic}
\caption{Trajectories of the quadratic differential $\varpi$ described in Example \ref{example:1to1_4}. Bold lines represent the critical graph of $\varpi$.}
\label{fig:several}
\end{figure}
We can associate to $\varpi$ three different $\AA$-critical measures with a connected complement of their supports. Indeed, although the critical trajectories joining poles will always belong to the support of any such a critical measure, for the third component of the support we can choose any of critical trajectories connecting both zeros $v_{j}$.
\end{example}

All these examples illustrate the general difficulty of the analysis of the correspondence between quadratic differentials and critical measures. Nevertheless, we will show below that in the class of \emph{positive} $(\AA, \varphi)$-critical measures $\mu$ corresponding to $\varphi$ generated by a \emph{positive} measure,  the mapping associating to such a $\mu$ the quadratic differential $\varpi$ described in Theorem \ref{main_thm} is an injection. Moreover, $\C \setminus \supp(\mu)$ is connected. The assumption of positivity of the mass giving rise to $\varphi$ is necessary, as the following example (first considered by Teichm\"{u}ller in his ``Habilitationsschrift'' \cite{Teichmuller1938}) shows.
\begin{example}
 \label{example:1to1_3}
 Let $\AA=\{0, 1\}$, $\sigma=-\alpha \delta_0$, $0<\alpha <1$ (negative ``attracting'' charge). Then there exists a unique positive critical measure $\mu$ with $\mu(\C)=1+\alpha $. The corresponding quadratic differential is
\begin{equation}\label{teichqd}
    -\frac{(z-c) \, (dz)^2}{z^2 (z-1)}, \quad c = c(\alpha )\in (0,1).
\end{equation}
The $\supp(\mu)$ is the whole critical graph of this differential, which consists of the segment $[c,1]$ and a closing loop passing through $c$ and enclosing the origin (see Figure \ref{fig:teich}).
\begin{figure}[htb]
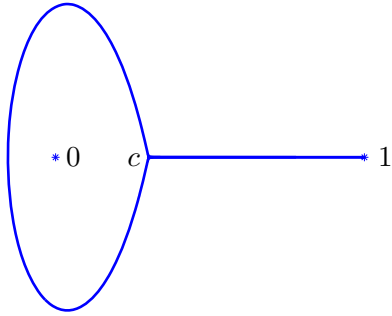

\centering \begin{overpic}[scale=0.5]%
{teich}%
\put(26,37.5){$0 $}
       \put(37,37.5){$c $}
   \put(82,37.5){$1 $}
\end{overpic}
\caption{Critical graph of the quadratic differential given in \eqref{teichqd}.}
\label{fig:teich}
\end{figure}
Thus, we do not have critical measures with a connected complement to $\supp(\mu)$. Moreover, if we now consider an  external field of an opposite sign, $\widetilde \varphi=U^{- \sigma}$ (positive ``repelling'' charge), then the corresponding $(\AA, \widetilde \varphi)$- critical measure $\widetilde \mu$ is associated with the same quadratic differential \eqref{teichqd}, but now $\supp(\widetilde \mu)=[c,1]$, and $\widetilde \mu(\C)=1-\alpha $.
We point out that this is not a mere artificial example: the whole variety of the critical measures in a similar situation appears in the asymptotic analysis of the Jacobi polynomials with varying non-standard parameters, see e.g.~\cite{MR2124460} and \cite{MR2142296}.
\end{example}

We present here a lemma that will allow us to isolate the cases that are of our interest.
\begin{lemma}
\label{lemma:closedsupp}
Let $\mu\in \mathcal M_\R$ be an $(\AA, \varphi)$-critical measure for $\varphi=U^\sigma$, $\sigma\in \mathcal M_\R$, and assume that $\beta$ is a   
closed contour contained entirely in $\supp(\mu)$, delimiting the bounded domain $\Omega$. Then
$$
-\mu(\beta) = 2(\mu + \sigma) (\Omega).
$$
\end{lemma}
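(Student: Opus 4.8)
The plan is to recast the defining property of the critical measure on its support as the vanishing of a single Cauchy transform, and then to read off the mass identity by integrating that Cauchy transform along $\beta$ from both sides. Set $\tau\isdef \mu+\sigma$. The first observation is that the external field derivative is itself a Cauchy transform: from \eqref{Phiforcharges} and $\sigma=\sum_k\frac{\rho_k}{2}\delta_{a_k}$ one computes $\Phi'(z)=-\sum_k\frac{\rho_k/2}{z-a_k}=\sum_k\frac{\rho_k/2}{a_k-z}=C^\sigma(z)$. Hence the support relation \eqref{zeroOnSupp} of Lemma \ref{lemma:onthesup} becomes $C^\tau(z)=C^\mu(z)+C^\sigma(z)=0$ for $z\in\supp(\mu)\setminus\AA$, and in particular the principal value $C^\tau$ vanishes on $\beta$ off the finite set $\beta\cap\AA$. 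Writing $C^\tau_i$, $C^\tau_e$ for the boundary values of $C^\tau$ on $\beta$ from the interior domain $\Omega$ and from the exterior, and using that the principal value equals the half-sum of the one-sided limits (Sokhotsky--Plemelj, \eqref{sokhotsky}), this is equivalent to $C^\tau_i+C^\tau_e=0$ on $\beta$.

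Next I would evaluate $\oint_\beta C^\tau\,dz$ (with $\beta$ oriented positively around $\Omega$) through each one-sided value, using the generalized Cauchy--Green formula $\oint_{\partial D}F\,dz=2i\iint_D \partial_{\bar z}F\,dA$ together with $\partial_{\bar z}C^\tau=-\pi\tau$ (the overall sign is immaterial, as it cancels below). Applied to the open domain $\Omega$ with boundary value $C^\tau_i$, only the mass in the \emph{open} interior is counted, giving $\oint_\beta C^\tau_i\,dz=-2\pi i\,\tau(\Omega)$. Applied to the exterior component, truncated by a large circle $C_R$ and letting $R\to\infty$, the expansion $C^\tau(z)=-\tau(\C)/z+\mathcal O(z^{-2})$ contributes $-2\pi i\,\tau(\C)$ over $C_R$, while the mass in the open exterior is $\tau(\C)-\tau(\Omega)-\tau(\beta)$; this yields $\oint_\beta C^\tau_e\,dz=-2\pi i\bigl(\tau(\Omega)+\tau(\beta)\bigr)$.

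Adding the two evaluations and invoking $C^\tau_i+C^\tau_e=0$ on $\beta$ gives $-2\pi i\bigl(2\tau(\Omega)+\tau(\beta)\bigr)=0$, that is, $2\tau(\Omega)+\tau(\beta)=0$. It then remains to identify $\tau(\beta)$ with $\mu(\beta)$, i.e.\ to show $\sigma(\beta)=0$. Since $\sigma$ is supported on those $a_k$ with $\rho_k\neq0$, which by Theorem \ref{main_thm} are \emph{double} poles of $R$, and since trajectories of $\varpi=-R\,dz^2$ neither pass through nor terminate at a double pole (their local structure there is radial, circular or spiral), no atom of $\sigma$ lies on the trajectory contour $\beta$. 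Consequently $\tau(\beta)=\mu(\beta)$, and $-\mu(\beta)=2\tau(\Omega)=2(\mu+\sigma)(\Omega)$, as claimed.

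The routine parts are the two Cauchy--Green evaluations; the delicate points, which I would treat with care, are twofold. First, the rigorous justification of the one-sided computations: existence and local integrability of the boundary values $C^\tau_i,C^\tau_e$ along $\beta$ (guaranteed by the analytic density produced in Lemma \ref{lemma:onthesup} via \eqref{density} and by the Plemelj relations \eqref{sokhotsky}), including the harmless finitely many points $\beta\cap\AA$ and the zeros of $R$ on $\beta$ where the density vanishes. Second, the correct bookkeeping of which masses belong to the open interior $\Omega$, to the curve $\beta$ itself, and to the open exterior---it is precisely the mass $\tau(\beta)$ carried by $\beta$ that registers as the jump between $C^\tau_i$ and $C^\tau_e$ and produces the factor separating $2\tau(\Omega)$ from $-\mu(\beta)$. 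I expect this mass-accounting, rather than any single estimate, to be the genuine crux of the argument.
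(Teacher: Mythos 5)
Your proof is correct, and the mass identity comes out with the right bookkeeping: the interior boundary value registers $\tau(\Omega)$ (open interior), the exterior one registers $\tau(\overline\Omega)=\tau(\Omega)+\tau(\beta)$, and the critical condition forces the sum to vanish. The route differs from the paper's in the tools invoked, though the underlying mechanism is the same. The paper integrates the \emph{normal derivative} of $U^{\mu+\sigma}$ along $\beta$ from each side, applies Gauss' theorem (which gives $(\mu+\sigma)(\beta\cup\Omega)$ from outside and $-(\mu+\sigma)(\Omega)$ from inside), and equates the two fluxes via the $S$-property \eqref{Sprop} from Lemma \ref{lemma:Sconditions}. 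You instead integrate the \emph{Cauchy transform} $C^{\mu+\sigma}$ along $\beta$ from each side via the Cauchy--Green formula and equate the half-sum of the boundary values to zero using \eqref{zeroOnSupp} from Lemma \ref{lemma:onthesup} together with Sokhotsky--Plemelj. Since $\partial_{\bar z}(U^{\mu}+\varphi)=\tfrac12\overline{(C^{\mu}+\Phi')}$ and the $S$-property is itself derived from \eqref{zeroOnSupp}, your argument is the complex-analytic dual of the paper's real-potential one; it works directly from the more primitive identity and bypasses the $S$-property and Gauss' theorem. What the paper's version buys is brevity, the $S$-property and Gauss' theorem being already on the table; what yours buys is an explicit justification of $\sigma(\beta)=0$ (via the local trajectory structure at double poles), which the paper asserts without comment, and a more transparent account of exactly where the mass carried by $\beta$ itself enters (as the jump between the two one-sided boundary values). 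The only points to keep an eye on are the ones you already flag: the limiting procedure justifying the two one-sided Cauchy--Green evaluations (exhaustion by nearby contours, using the analytic density from \eqref{density} and the square-root vanishing at zeros of $R$ on $\beta$), and, if one wants the lemma for a general $\sigma\in\MM_\R$ rather than the discrete $\sigma$ of \eqref{potentialDiscrete}, the observation that $\Phi'=C^\sigma$ and \eqref{zeroOnSupp} persist in that generality.
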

\begin{proof}
Let $n_-$ be the unit outer normal vector to $\beta$, and $n_+=-n_-$. By Gauss theorem (see e.g.~Theorem 1.1, \S II.1 of \cite{Saff:97}),  
\begin{align*}
 \frac{1}{2\pi}\, \oint_\beta \frac{\partial}{\partial n_-}\, \left( U^\mu + \varphi \right)(z) |dz| & =\frac{1}{2\pi}\, \oint_\beta \frac{\partial}{\partial n_-}\,   U^{\mu + \sigma}  (z) |dz|= (\mu + \sigma) (\beta \cup \Omega), \\
  \frac{1}{2\pi}\, \oint_\beta \frac{\partial}{\partial n_+}\, \left( U^\mu + \varphi \right)(z) |dz| & =\frac{1}{2\pi}\, \oint_\beta \frac{\partial}{\partial n_+}\,   U^{\mu + \sigma}  (z) |dz|=- (\mu + \sigma) (  \Omega).
\end{align*}
By the $S$-property \eqref{Sprop}, both integrals in the left hand side are equal, and the Lemma follows since $\sigma(\beta)=0$.
\end{proof}
\begin{proposition}
\label{uniquenessmeasure}
If $\varphi=U^\sigma$, with $\sigma\geq 0$, then
\begin{enumerate}
\item[(i)] the support of any positive $(\AA, \varphi)$-critical measure has a connected complement; 
\item[(ii)] the correspondence between the positive $(\AA, \varphi)$-critical measures and the associated quadratic differentials is injective.
\end{enumerate}
\end{proposition}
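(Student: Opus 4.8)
The plan is to treat the two assertions separately, deriving each from the structural results already established, with the positivity of both $\mu$ and $\sigma$ entering decisively.

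For (i) I would argue by contradiction. If $\C\setminus\supp(\mu)$ were disconnected it would possess a bounded component, and since by Theorem \ref{main_thm} the set $\supp(\mu)$ is a finite union of analytic arcs (a finite planar graph), a bounded complementary component forces this graph to contain a cycle, i.e.\ a Jordan curve $\beta\subset\supp(\mu)$ bounding a domain $\Omega$. Lemma \ref{lemma:closedsupp} then gives $-\mu(\beta)=2(\mu+\sigma)(\Omega)$. The right-hand side is nonnegative because $\mu\ge 0$ and $\sigma\ge 0$, whereas the left-hand side is $\le 0$; hence $\mu(\beta)=0$. But $\beta$ is a union of nondegenerate analytic subarcs of $\supp(\mu)$ on which, by \eqref{density}, $\mu$ has density $\tfrac1\pi|\sqrt{R}\,dz|$, strictly positive off the finitely many zeros of $R$; thus $\mu(\beta)>0$, a contradiction. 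Therefore the complement is connected.

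For (ii), suppose $\mu_1,\mu_2$ are positive $(\AA,\varphi)$-critical measures giving the same quadratic differential, i.e.\ the same $R$. Reading off \eqref{normalizationR} at infinity fixes $\kappa>0$, so $\mu_1(\C)=\mu_2(\C)$; moreover by \eqref{density} both measures carry the identical density $\tfrac1\pi|\sqrt{R}\,dz|$ on their respective supports. It therefore suffices to prove $\supp(\mu_1)=\supp(\mu_2)$, for then the measures coincide. Set $\nu=\mu_1-\mu_2$, a neutral signed measure, and put $g=C^{\mu_1}-C^{\mu_2}=C^{\nu}$ and $S=\supp(\mu_1)\cup\supp(\mu_2)$. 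In the unbounded component of $\C\setminus S$ the normalization forces both $C^{\mu_i}+\Phi'$ to equal the branch of $\sqrt R$ with $\sqrt R\sim-\kappa/z$, whence $g\equiv0$ there; since on the common arcs the two densities cancel, $\nu$ is supported on the symmetric difference $\Sigma=\supp(\mu_1)\,\triangle\,\supp(\mu_2)$, and $g$ is holomorphic off $\Sigma$. As $g$ vanishes on an open subset of the connected unbounded component $\Omega_\infty$ of $\C\setminus\Sigma$, it vanishes on all of $\Omega_\infty$; equivalently, since $dU^{\nu}=\Re(g\,dz)$ and $U^{\nu}\to 0$ at infinity, $u:=U^{\nu}\equiv 0$ on $\Omega_\infty$.

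To conclude $\Sigma=\emptyset$ I would argue that otherwise $\Sigma$ has an ``outermost'' arc $\gamma$, a regular trajectory of $\varpi$ having $\Omega_\infty$ on one side. Working in a small $\varpi$-rectangle $D$ meeting $S$ only along $\gamma$ (possible by the single-arc property in Lemma \ref{rulgard}), introduce the distinguished parameter $\xi=\int^z\sqrt R$, so that $\gamma=\{\Im\xi=\const\}$. On the side $D^+$ facing $\Omega_\infty$ one has $u\equiv 0$; crossing $\gamma$ flips the branch of $\sqrt R$ in $C^{\mu_1}+\Phi'$ by Lemma \ref{lemma:onthesup}, while $C^{\mu_2}+\Phi'$ stays on a single branch throughout $D$, so on the opposite side $g\,dz=-2\delta\,d\xi$ with $\delta=\pm1$, giving $u=-2\delta\,\Re\xi+\const$ on $D^-$. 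Continuity of the potential $u$ across $\gamma$ then requires $-2\delta\,\Re\xi+\const\equiv 0$ along $\gamma$, which is impossible since $\Re\xi$ is nonconstant on the horizontal trajectory $\gamma$ and $\delta\ne 0$. Hence $\Sigma=\emptyset$, so $\supp(\mu_1)=\supp(\mu_2)$ and, the densities being equal, $\mu_1=\mu_2$. The delicate points I expect to be the global step $g\equiv 0$ on $\Omega_\infty$ together with the branch bookkeeping at the outermost arc: one needs that the complement of $\Sigma$ is connected near infinity (guaranteed by part (i) applied to each $\mu_i$ and the single-valued branch of $\sqrt R$ from Theorem \ref{main_thm}), and that positivity prevents $\Sigma$ from being present without meeting $\Omega_\infty$. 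Everything else reduces to a routine matching of boundary values.
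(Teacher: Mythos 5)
Your argument for part \emph{(i)} is the paper's own argument (an application of Lemma \ref{lemma:closedsupp}), just with the details spelled out: a bounded complementary component yields a Jordan curve $\beta\subset\supp(\mu)$, the lemma forces $\mu(\beta)\le 0$, and the positive density \eqref{density} forces $\mu(\beta)>0$. For part \emph{(ii)}, however, you take a genuinely different route. The paper does not compare two candidate measures at all; it fixes the quadratic differential $\varpi$ and shows directly that the support of any member of the class $Crit(\varpi)$ of critical measures with connected complement is combinatorially determined: for a critical trajectory whose two sides lie in the same component of the complement of the critical graph, membership in the support is decided by whether the analytic continuation of $\sqrt R$ around the trajectory changes sign (parity of enclosed singular points), which in particular forces every trajectory emanating from a simple pole into the support; for a trajectory on the outer boundary of a ring domain, a flux argument using the positivity of $\mu+\sigma$ excludes it. Your proof instead takes two positive measures with the same $R$, observes via \eqref{normalizationR} and \eqref{density} that their difference $\nu$ is neutral and carried by the symmetric difference of the supports, propagates $C^\nu\equiv0$ from a neighborhood of infinity to the whole unbounded component of $\C\setminus\Sigma$, and derives a contradiction at an outermost arc of $\Sigma$ from the continuity of $U^\nu$ versus the strict monotonicity of $\Re\xi$ along a horizontal trajectory. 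This is correct: the branch bookkeeping (flip of $\sqrt R$ across an arc in exactly one of the two supports, single branch for the other measure) is exactly what Lemma \ref{lemma:onthesup} and the Plemelj relations give, and the case where both sides of the outermost arc lie in $\Omega_\infty$ is handled even more directly by the jump formula. The trade-off is that the paper's argument is constructive -- it tells you which trajectories constitute $\supp(\mu)$ -- whereas yours yields only uniqueness; on the other hand your proof avoids the somewhat delicate ring-domain/flux step. Two small points you share with the paper in leaving implicit: possible atoms of $\mu$ at points of $\AA$ (they are in fact also determined by the coefficient of the double pole of $R$ there, given positivity), and the existence of a regular sub-arc of $\partial\Omega_\infty$ with $\Omega_\infty$ on one side (clear since $\Sigma$ is a finite union of analytic arcs). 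Neither affects the validity of the argument.
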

\begin{proof}
Statement \emph{(i)} is an obvious consequence of Lemma~\ref{lemma:closedsupp}.

Let now $\mu$ be a positive $(\AA, \varphi)$-critical measure and let $\varpi=-R(z)dz^{2}$ be the quadratic differential whose critical trajectories support $\mu$ (see Theorem \ref{main_thm}). We have to prove that $\varpi$ determines $\mu$ uniquely.

Let $\Gamma$ be the critical graph of $\varpi$, and denote by $Crit(\varpi)$ the class of all (signed) $(\AA, \varphi)$-critical measures $\widetilde \mu$ corresponding to $\varpi$ and such that $\C\setminus \supp(\widetilde \mu)$ is connected; by \emph{(i)}, $\mu\in Crit(\varpi)$. If both sides of a critical trajectory $\gamma \subset \Gamma$ belong to the boundary of the same connected component of $\C\setminus \Gamma$, then either one of these disjoint possibilities holds:
\begin{itemize}
\item $\gamma$ belongs to the support of every measure from $  Crit(\varpi)$;
\item $\gamma$ is not contained in the support of any measure  from $ Crit(\varpi)$.
\end{itemize}
Indeed, by \eqref{squareofCauchyRulgards}, any measure $\widetilde \mu\in Crit(\varpi)$ is recovered from its support using the Sokhotsky-Plemelj formulas. Since both sides of $\gamma$ belong to a connected complement of  $\C\setminus \Gamma$, either possibility above is determined by the analytic continuation of $\sqrt{R}$: $\gamma\in \supp(\widetilde \mu)$ if and only if $\sqrt{R}$ will have opposite signs on both sides of $\gamma$. This is obviously the case if any closed curve, contained in this connected component and joining both sides of $\gamma$, encloses an odd number of singular points of $\varpi$. 

As a corollary, we conclude that any critical trajectory emanating from a simple pole of $\varpi$, must belong to the support of any $\widetilde \mu\in Crit(\varpi)$.

Let now $\gamma$ be a critical trajectory joining two different zeros of $\varpi$. Each side of $\gamma$ is a boundary of a ring domain filled with closed trajectories of $\varpi$. If both sides of $\gamma$ are in the boundary of the same ring domain, then considerations above apply. So,   it remains to consider the case when $\gamma$ is in the outer boundary of a ring domain  $\Omega$ (see e.g.~the middle arc joining $v_{1}$ and $v_{2}$ in Figure~\ref{fig:several}). Let $\widetilde \gamma$ be a closed trajectory from $\Omega$, and denote by $\lambda$ the restriction of $\mu+\sigma$ contained inside $\widetilde \gamma$. Since $\lambda$ is by assumption a positive measure, the gradient (flux) of the potential $U^{\lambda}$ on $\widetilde \gamma$ is directed inwards. By continuity, this also happens on $\gamma$, so the restriction of $\mu+\sigma$ to $\gamma$ cannot be positive. In conclusion, $\gamma$ does not belong to $\supp(\mu)$, and it finishes the proof of \emph{(ii)}.
\end{proof}

\begin{rmk}
In fact, the following stronger uniqueness property holds (formulated in the notation introduced in the proof above): if for a quadratic differential $\varpi$, the set $Crit(\varpi)$ contains more than one measure, then none of the measures in $Crit(\varpi)$ is positive. This is the case, for instance, of Example \ref{example:1to1_4}.
\end{rmk}

Combining Lemma~\ref{lemma:Sconditions} and Proposition \ref{uniquenessmeasure} and following the logic of Remark \ref{rem:sufficientcondition} we get an addendum to Lemma~\ref{rulgard}, which has an independent interest:
\begin{proposition}\label{rulgardAdd} 
Assume that $\mu\in \MM$ is a finite positive Borel measure on the plane whose Cauchy transform $C^\mu$ is such that that 
$$
    \left( C^\mu    \right)^2(z) = R(z) \qquad \lebesgue_2-\text{a.e.} 
$$
for a rational function $R$. Denote by $\AA$ the set of poles of $R$.  Then $\mu$ is an $\AA$-critical measure, and it is uniquely determined by $ R$.
\end{proposition}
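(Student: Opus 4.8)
The plan is to read the statement off from the structural machinery already in place, in three movements: first pin down the geometry of $\supp(\mu)$ via Lemma~\ref{rulgard}, then verify the equilibrium conditions \emph{(i)}--\emph{(ii)} of Lemma~\ref{lemma:Sconditions} to conclude criticality, and finally invoke Proposition~\ref{uniquenessmeasure} for uniqueness. Since there is no external field here, I set $r\equiv 0$ (equivalently $\varphi\equiv 0$, $\Phi'\equiv 0$), so that the hypothesis $(C^\mu)^2=R$ is simultaneously \eqref{squareofCauchyRulgards} and \eqref{charactMeasure} with $\Phi'=0$. The measure generating the field is then $\sigma=0\ge 0$, which is exactly what will make Proposition~\ref{uniquenessmeasure} applicable.

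First I would apply Lemma~\ref{rulgard} with $r\equiv 0$. Because the (empty) set of poles of $r$ vacuously consists of simple poles with real residues, the lemma gives that $\varpi(z)=-R(z)\,dz^2$ is a closed quadratic differential, that $\supp(\mu)$ is a finite union of analytic arcs each of which is a trajectory of $\varpi$ (with possible mass points at the poles $\AA$ of $R$), and, using positivity of $\mu$, that every $\varpi$-rectangle meets $\supp(\mu)$ in a connected set. This places $\mu$ squarely within the scope of the converse half of Lemma~\ref{lemma:Sconditions}: a finite real measure supported on finitely many analytic arcs.

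Next I would establish \emph{(i)} and \emph{(ii)}. On a regular subarc $\gamma\subset\supp(\mu)$ away from $\AA$ and from the zeros of $R$, the strong Cauchy transform extends analytically to each side with $C^\mu_\pm=\pm\sqrt{R}$ for a suitable branch; this is exactly the branch analysis in the proof of Lemma~\ref{lemma:onthesup}, now run from the hypothesis $(C^\mu)^2=R$ rather than from criticality. Since $\gamma$ is a horizontal trajectory of $\varpi$, the form $\sqrt{R}\,dz$ is purely imaginary along $\gamma$; hence the tangential derivative of $U^\mu$ (which equals $U^\mu+\varphi$) is $\Re\bigl(C^\mu_\pm\,dz\bigr)=\Re\bigl(\pm\sqrt{R}\,dz\bigr)=0$, giving constancy of $U^\mu$ on each connected component, i.e.\ \emph{(i)}. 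For \emph{(ii)}, the two boundary values satisfy $(C^\mu_\pm)^2=R$ and cannot coincide (otherwise the jump, hence the density, would vanish), so $C^\mu_+=-C^\mu_-$; as $\grad(U^\mu)$ is normal to $\supp(\mu)$ by \emph{(i)}, the equality $|C^\mu_+|=|C^\mu_-|$ yields the $S$-property \eqref{Sprop}, while at the non-regular points (zeros of $R$ on the support) the density vanishes and \eqref{zeroatnotreg} holds. With \emph{(i)} and \emph{(ii)} established, the remark following Lemma~\ref{lemma:Sconditions} (the last assertion of that lemma together with Remark~\ref{rem:sufficientcondition}) shows that $\mu$ is $\AA$-critical.

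For uniqueness, $\mu$ is now a positive $(\AA,\varphi)$-critical measure with $\varphi=U^\sigma$, $\sigma=0\ge 0$, so Proposition~\ref{uniquenessmeasure}\emph{(ii)} applies: the correspondence $\mu\mapsto\varpi=-R\,dz^2$ is injective, and since $R$ determines $\varpi$, at most one positive critical measure can correspond to a given $R$; as our $\mu$ is one such, it is determined by $R$. The main obstacle I anticipate is the middle step, namely passing rigorously from the $\lebesgue_2$-almost-everywhere identity $(C^\mu)^2=R$ to the \emph{pointwise} boundary-value statements $C^\mu_\pm=\pm\sqrt{R}$ on the one-dimensional set $\supp(\mu)$, including the coordination of the branch of $\sqrt{R}$ across components and the treatment of the finitely many exceptional points (zeros of $R$, and the poles $\AA$ where mass points may sit and where $h\equiv 0$ renders the variation harmless). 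This is precisely the delicate point that Remark~\ref{rem:sufficientcondition} defers in general; here it becomes manageable because Lemma~\ref{rulgard} has already reduced the support to finitely many analytic trajectory arcs.
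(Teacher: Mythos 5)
Your proposal is correct and follows essentially the same route as the paper, whose entire ``proof'' is the one-line remark that the statement follows by combining Lemma~\ref{lemma:Sconditions} and Proposition~\ref{uniquenessmeasure} with the logic of Remark~\ref{rem:sufficientcondition}; you assemble exactly these ingredients (plus Lemma~\ref{rulgard} for the support structure), and like the paper you ultimately lean on the sufficiency claim of Remark~\ref{rem:sufficientcondition}, which the authors themselves explicitly defer. The only minor quibble is that the genuinely deferred difficulty is the converse variational implication (from the algebraic identity to $f_\varphi(\mu;h)=0$ for all admissible $h$), not the boundary-value analysis you single out, which is already handled by the argument of Lemma~\ref{lemma:onthesup}.
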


\section{Critical measures and extremal problems} \label{sec:criticalmeasuresandextremal}

Critical measures are connected in an essential way with a class of extremal problems that lies on a crossroad of the geometric function theory, approximation theory, potential theory and some other topics that will be mentioned below. We start with one of the oldest problems of that kind.

\subsection{Chebotarev's continuum} \label{sec:Chebotarevcont}

For a finite set $\AA=\{a_0, \dots, a_p \}$ in $\C$ we are interested in the continuum of minimal capacity containing $\AA $. More precisely, if we denote by $\mathcal F$ the family of all continua $F\subset \C$ with $\AA\subset F$, we seek $\Gamma\in \mathcal F$ such that
\begin{equation}\label{minCapacity}
    \cp(\Gamma) =\min_{F\in \mathcal F} \cp(F).
\end{equation} 
This problem was raised by Chebotarev (alternative spelling, Tchebotar\"{o}v) in a letter to P\'{o}lya (see \cite{Polya:1929}). Gr\"otzsch \cite{Grotzsch1930a} and Lavrentiev (or Lavrentieff) \cite{Lavrentieff1930}, \cite{Lavrentieff1934} proved that there exists a unique $\Gamma^*=\Gamma^*(\AA)$ satisfying \eqref{minCapacity}, and that $\Gamma^*$ is a union of critical trajectories of a rational quadratic differential $-R(z)(dz)^2$, where $R=V^*/A$, with $A$ given by \eqref{defAandB}, and $V^*(z)=\prod_{j=1}^{p-1} (z-v_j^*)$. We call this $\Gamma ^*$ the \emph{Chebotarev's compact} or \emph{Chebotarev's continuum}.

The most recent account on the background of this problem and some of its applications can be found in \cite{Ortega-Cerda2008}. Here we describe briefly (and inductively) some basic facts about the geometry of $\Gamma^*$ that will be useful in the sequel.

The case $p=1$ (that is, when $\AA$ contains 2 points) is trivial: in this situation $\Gamma^*$ is the segment $[a_0, a_1]$ joining both points.  

For $p=2$ (the first non-trivial case), $V^*(z)=z-v^*$. If $a_0$, $a_1$ and $a_2$ are collinear, say $a_2\in [a_0, a_1]$, then $v^*=a_2$, and we are in the situation of $p=1$ considered above. Otherwise $\Gamma^*(\AA)$ is made of three analytic arcs, each emanating from the respective pole $a_j$ ($j=0, 1, 2$) of $R$, and all merging at $v^*$ (see Figure \ref{fig:Chebotarev}).  Point $v^*=v^*(\AA)$, as function of $\AA$, is uniquely defined, but its analytic representation is not known (not to speak of the zeros of $V^*$ for $p\geq 3$), see \cite{Kuzmina1980}.  In the totally symmetric case when $a_j$ lie at the vertices of the equilateral triangle, point $v^*$ is just its center, and the three curves coincide with the bisectors joining it with the vertices. For further geometric properties of $\Gamma^*$ and $v^*$ see \cite[Ch.~1]{Kuzmina1980}. 

\begin{figure}[htb]
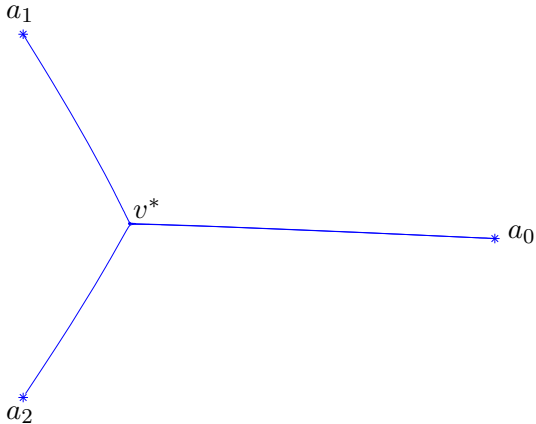

\centering \begin{overpic}[scale=0.6]%
{Chebotarev}%
       \put(33,38){$v^* $}
   \put(89,35){$a_0 $}
   \put(14,8){$a_2 $}
   \put(14,68){$a_1 $}
\end{overpic}
\caption{Chebotarev's compact for three points.}
\label{fig:Chebotarev}
\end{figure}

For $p=3$ (four poles), in a generic situation the zeros of $V^*$ are simple, and the Chebotarev's compact consists of 5 arcs; if these zeros coalesce forming a double zero, then $\Gamma^*(\AA)$ is made of 4 arcs (see Figure~\ref{fig:cheb4points}). When one or both zeros $v_j^*$ coincide with a pole from $\AA$ we are left in one of the cases previously considered.

The number of ``degenerate'' situations growth fast with $p$, so in the sequel we restrict our attention to a generic one, when the points from $\AA$ are in a general position. This notion of genericity (as opposed to some more special or coincidental cases that are possible) means in our case that all zeros of $V^*$ are simple and disjoint with $\AA$.

\begin{figure}[htb]
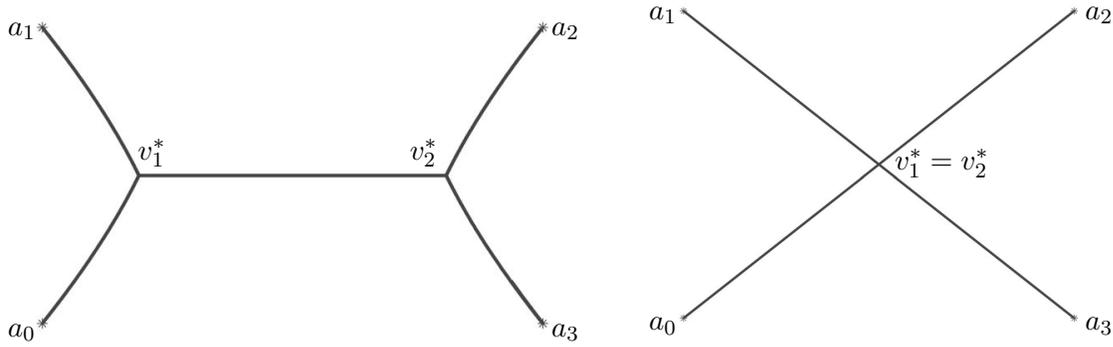

\centering \begin{tabular}{ll} \hspace{-1.8cm}\mbox{\begin{overpic}[scale=0.65]%
{Cheb4pts5arcsbis}%
       \put(24,34){$v^*_1 $}
   \put(70,34){$v^*_2 $}
\put(2,55){$a_1 $}
\put(2,4){$a_0 $}
\put(94,55){$a_2 $}
\put(94,4){$a_3 $}
\end{overpic}} &
\mbox{\begin{overpic}[scale=0.45]%
{Cheb4pts4arcsbis}%
     \put(55,38){$v^*_1=v^*_2 $}
\put(6,68){$a_1 $}
\put(6,6){$a_0 $}
\put(93,68){$a_2 $}
\put(93,6){$a_3 $}
\end{overpic}}
\end{tabular}
\caption{Chebotarev's compact for 4 points, when the zeros of $V^*$ are simple (left) or double.}
\label{fig:cheb4points}
\end{figure}

For $p=5$ (6 points) we can find essentially two different configurations: the linearly ordered set (Figure~\ref{fig:cheb6points}, left) and the branched tree (Figure~\ref{fig:cheb6points}, right). In general, a non-degenerate $\Gamma(\AA)$ consists of a set of linear branches, like in Figure~\ref{fig:cheb4points}, left, and Figure~\ref{fig:cheb6points}, left, and it may have a branch point at a zero of $V^*$ where three branches merge, like in Figure~\ref{fig:cheb6points}, right.

Thus, $\Gamma^*(\AA)$ is an analytic tree. It will be crucial for our study of families of positive $\AA$-critical measures below. In fact, $\Gamma^*(\AA)$ plays a role of the ``origin'' of a coordinate system on the parameter plane of the above mentioned families of measures.

\begin{figure}[htb]
\centering \begin{tabular}{ll} \hspace{0cm}\mbox{\begin{overpic}[scale=0.9]%
{traj6poles1}%
       \put(15,28){\small $v^*_1 $}
       \put(23,30){\small $v^*_2 $}
       \put(54,21){\small $v^*_3 $}
   \put(70,36){\small $v^*_4 $}
\end{overpic}} &
\hspace{0.5cm}
\mbox{\begin{overpic}[scale=0.9]%
{traj6poles2}%
 \put(22,37){\small $v^*_1 $}
 \put(34,39){\small $v^*_2 $}
  \put(46,67){\small $v^*_3 $}
   \put(61,33){\small $v^*_4 $}
\end{overpic}}
\end{tabular}
\caption{Chebotarev's compacts for 6 points.}
\label{fig:cheb6points}
\end{figure}

Observe finally that by Lemma \ref{lemma:Sconditions}, Proposition \ref{uniquenessmeasure} and the result of Lavrentiev, the Robin (or equilibrium) measure $\lambda _\Gamma $ of $\Gamma=\Gamma(\AA)$ is a (unit) positive $\AA$-critical measure with a connected support, and that $\lambda_\Gamma$ is the only (unit) positive $\AA$-critical measure with this property.

\subsection{Cuts of minimal capacity  and convergence of Pad\'{e} approximants}

For $\AA=\{a_0, \dots, a_p \}\subset \C$ we denote by $\mathcal U(\AA)$ the class of analytic germs $f(z)=\sum_{n\geq 0} f_n z^{-n}$ at infinity that admit an analytic continuation to $\overline{\C}\setminus \AA$. For a fixed germ  $f\in \mathcal U(\AA)$ consider the family $\mathfrak F(f)$ of cuts $F\subset \C$ which make $f$ single-valued in their complement:
$$
\mathfrak F(f) \isdef \{ F\subset \C:\, f \text{ holomorphic in } \overline{\C}\setminus F\}.
$$
We are interested in the set $\Gamma=\Gamma(f)\subset \mathfrak F(f) $ such that
\begin{equation}\label{minCapacityF}
    \cp(\Gamma) =\min_{F\in \mathfrak F(f)} \cp(F).
\end{equation}
Observe that this problem is a generalization of that considered in the previous section.

Approximately 40 years ago one of the hottest topics in approximation theory of analytic functions was the problem of convergence of diagonal Pad\'{e} approximants $[n/n]_f=P/Q$ for functions $f\in \mathcal U(\AA)$. In this connection J.~Nuttall made a basic conjecture that the sequence $[n/n]_f$ converges (in capacity) to $f\in \mathcal U(\AA)$ in $\overline{\C}\setminus \Gamma(f)$; he also proved his conjecture in some special cases (see review \cite{MR769985}). The problem was completely solved in 1986 by H.~Stahl, who proved Nuttall's conjecture in a striking generality: for closed sets $\AA$ with $\cp(\AA)=0$. In particular, he established that for any $f\in \mathcal U(\AA)$ there exists an essentially unique set $\Gamma(f)$ satisfying \eqref{minCapacityF}; he also characterized it in terms of the equilibrium conditions and the $S$-property (see Section \ref{sec:ChebotarevStahl}). We refer the interested reader to \cite{Stahl2008}, \cite{MR88d:30004a}, \cite{Stahl:86}, \cite{MR90i:30063}. 

We note that our concept of an $\AA$-critical measure can also be extended to compact sets $\AA$ of capacity zero, but in this paper we deal with finite sets $\AA$ only, assumption that we keep in the sequel.

For a finite set $\AA$ there obviously exists only a finite number of different possible solutions (sets of minimal capacity)  of \eqref{minCapacityF}, that we denote by $\Gamma_0, \dots, \Gamma_N$, in such a way that $\Gamma_0$ is the Chebotarev's continuum for $\AA$. In other words, for any $f\in \mathcal U(\AA)$, $\Gamma(f)\in \{ \Gamma_0, \dots, \Gamma_N\}$.

For any $k=0, \dots, N$, the equilibrium measure $\lambda=\lambda_k$ of $\Gamma=\Gamma_k$ is $\AA$-critical and satisfies
\begin{align} \label{eq1}
    & U^{\lambda } (z)    = \rho _k=\const, \quad z\in \Gamma =\supp(\lambda  ),  \\
  \label{eq2}
   & \frac{\partial   }{\partial n_+}\,   U^{\lambda }  (z) = \frac{\partial   }{\partial n_-}\,  U^{\lambda } (z), \quad z \text{ at regular points of } \Gamma,
\end{align}
where $n_\pm$ are the normal vectors to $\supp(\mu )$ at $z$ pointing in the opposite directions. Moreover, it follows from \cite{MR88d:30004a} that these conditions define $\lambda$ and $\Gamma$ uniquely in the given homotopic class (we spare details here).

A comparison with Lemma \ref{lemma:Sconditions} (for $\varphi\equiv 0$) shows that among all $\AA$-critical measures the Robin (or equilibrium) measures $\lambda_k$ are distinguished by the equality of the equilibrium constants: if $\Gamma_{k,1}, \dots, \Gamma _{k,M}$ are the connected components of $\Gamma =\Gamma _k$, then
$$
U\big|_{\Gamma_{k,1}}=\dots = U\big|_{\Gamma_{k,M}}.
$$
Chebotarev's continuum $\Gamma_0$ corresponds to $M=1$; the other $\Gamma_k$'s may be combinatorially characterized using the structure of $\Gamma_0$. We present an example explaining this statement.
\begin{example}
\label{example:mincuts}
Assume $p=3$, $a_0=-x-iy$ (with $x>y>0$), $a_1=\overline{a_0}$, $a_2=-a_0$, $a_3=-a_1$, like in Figure~\ref{fig:cheb4points}, left. Then there are exactly two different compact sets of minimal capacity. One is the Chebotarev's set $\Gamma_0$ (as the one depicted in Fig.~\ref{fig:cheb4points}, left), corresponding for instance to
$$
f(z)=\left( \frac{(z-a_0)(z-a_1)}{(z-a_2)(z-a_3)}\right)^{1/4} \in \mathcal U(\AA),
$$
and the other, $\Gamma_1$, with two components (as in Fig.~\ref{fig:Cheb4pts2parts}), corresponding to function
$$
f(z)=\left( \frac{(z-a_0)(z-a_1)}{(z-a_2)(z-a_3)}\right)^{1/2} \in \mathcal U(\AA).
$$
\begin{figure}[htb]
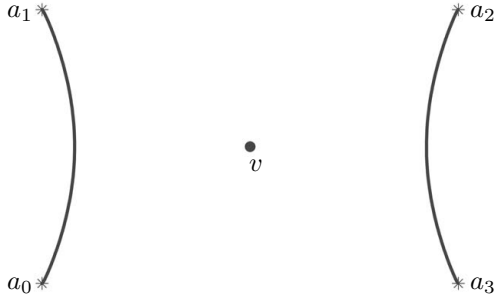

\centering \begin{overpic}[scale=0.6]%
{Cheb4pts2partsbis}%
     \put(3,6){\small $a_0 $}
\put(3,59){\small $a_1 $}
\put(93,59){\small $a_2 $}
\put(93,6){\small $a_3 $}
\put(50,29){\small $v $}
\end{overpic}
\caption{A minimal capacity set for 4 points.}
\label{fig:Cheb4pts2parts}
\end{figure}
Observe that $\Gamma _1$ is topologically equivalent to $\Gamma _0$ with the arc connecting $v_1^*$ and $v_2^*$ removed. The quadratic differential associated with $\lambda =\lambda_1$ has a double zero $v$ at the center of the rectangle, so that \eqref{potentialMeasure} takes the form
$$
w-U^\lambda(z)=\Re \int_{a_0}^z \frac{t-v}{\sqrt{A(t)}}\, dt,
$$
which is the Green function of the Riemann surface of the function $y^2=A(x)$ with logarithmic poles at $\infty_1$ and $\infty_2$. This is also an elliptic integral of the third kind.
\end{example}

For a general set $\AA$ with an arbitrary number of points consider the Chebotarev's continuum $\Gamma_0$ and select any arc $\beta $ of $\Gamma_{0}$ connecting two zeros of the corresponding quadratic differential (a ``zero--zero connection''). Then there exists a compact $\Gamma_1$ of minimal capacity with two connected components, which is topologically equivalent to $\Gamma_0\setminus \beta $, whose quadratic differential exhibits a double zero instead of the pair of zeros we have selected. 
We can repeat this operation with any other remaining zero--zero connections, until all these connections are gone. If $p=2m-1$, the maximal number of connected components of a set of minimal capacity is $m$. The unique $\Gamma$ with $m$ components is again the zero level of he Green function for the Riemann surface of $y^2=A(x)$.

The compact sets $\Gamma_k$ and their Robin measures $\lambda_k$ associated with $\AA$ play a central role in any investigation of the strong asymptotics for complex orthogonal polynomials -- denominators of the Pad\'{e} approximant of functions from $\mathcal U(\AA)$, see \cite{MR891770}.

\subsection{Further connections}

The central part in the Stahl's solution of the convergence problem for Pad\'{e} approximants was a new method of investigation (based directly on the $S$-property) of the $n$-th root asymptotics of polynomials satisfying complex non-hermitian orthogonality conditions, like those verified by the denominators of the Pad\'{e} approximants to functions from $\mathcal U(\AA)$. This method was further developed in \cite{Gonchar:87} in relation with the best rational approximations; the $n$-th root asymptotics was obtained for complex orthogonal polynomials with respect to varying weights (i.e.~depending on the degree of the polynomial). The existence of a varying weight motivates the appearance of an external field in the associated equilibrium problem. Accordingly, the $S$-property in the related existence problem should be modified to include the external field too (we omit here the non-essential details).

Let $\varphi$ be a harmonic function in a domain $\Omega\subset \C$. For a curve $\Gamma\subset \Omega$ let $\lambda=\lambda_{\Gamma,\varphi}$ be the unit equilibrium measure on $\Gamma$ in the external field $\varphi$, so that conditions \eqref{equilibriumSection6} hold. Recall that additionally $\Gamma$ has the $S$-property if at the regular points of $\supp(\lambda )$,
\begin{equation}\label{S-propertySect6}
 \frac{\partial   }{\partial n_+}\,  \left( U^\lambda+\varphi \right)(z)= \frac{\partial   }{\partial n_-}\, \left( U^\lambda+\varphi \right)(z),
\end{equation}
where $n_\pm$ are the normal vectors to $\supp(\mu )$ at $z$ pointing in the opposite directions (it is assumed that the set of irregular points of $\supp(\lambda )$ has capacity zero). We call the support of such a $\lambda$ an \emph{$S$-curve}. The cornerstone to any application is the problem of \emph{existence} of such a curve, that we discuss here briefly.

Given a harmonic function $\varphi$ in a domain $\Omega$ and a homotopic class $\mathfrak F$ of curves $F\subset \Omega$, we should find a curve $\Gamma\in \mathfrak F$ with the $S$-property.

A direct and constructive approach to the solution of this problem is based on the observation that if such a curve $\Gamma\in \mathfrak F$ exists, its equilibrium measure $\lambda =\lambda _{F,\varphi}$ is $(\AA, \varphi)$-critical for some set $\AA$ of fixed points, which depends on the definition of $\mathfrak F$ and on the singularities of $\varphi$. Then (see Theorem \ref{main_thm} and Lemma \ref{thm:rakhmanov}) we conclude that $(C^\lambda +\Phi')^2=R$, where $R$ is some function, meromorphic in $\Omega$ with (usually, known) poles at $\AA$ and (usually, unknown) zeros. These zeros are the main parameters of the problem; they must be found using a system of equations, typically in terms of periods of $\int^z \sqrt{R}\, dt$), that reflect all the given information (including the geometry of the class $\mathfrak F$). 

For $\varphi\equiv 0$ this is basically a classical method, which goes back to Abel and Riemann (abelian integrals, see the discussion below). The existence of an external field does not change the nature of the problem, but it posses additional technical difficulties. If $\varphi \not \equiv 0$, we generally have $\supp(\lambda ) \varsubsetneq \Gamma$ (compare with \eqref{eq1} and \eqref{eq2}), and finding the support of the equilibrium measure might turn out to be a formidable task even for a fixed $\Gamma$. We refer to \cite{Gonchar:87} for further details. Se also \cite{MR1203311}, as well as \cite{Bertola2007}, where the $S$-problem for $\Omega=\C$, and $\varphi=\Re (P)$, where $P$ is a polynomial, was considered.

Another way to prove the existence of the $S$-curve independently of its construction is based on the electrostatic interpretation of the critical measures, which yields the following extremal problem. Consider the equilibrium energy $E_\varphi(\cdot )$ (see \eqref{defEnergyContinuous}--\eqref{defWeightedEnergyCont}) of a curve $F\in \mathfrak F$ as a functional on $\mathfrak F$:
$$
E_\varphi[F] = E_\varphi(\lambda_{F, \varphi} ):\, \mathfrak F \mapsto \R.
$$
Under rather general assumptions it is possible to prove that if a curve $\Gamma \in \mathfrak F$, satisfying
\begin{equation}\label{minmax}
   E_\varphi[\Gamma]=\max_{F\in \mathfrak F} E_\varphi[F]
\end{equation}
exists, then it has the $S$-property. For $\varphi\equiv 0$ we have $E_\varphi[F]=-\log (\cp(F))$, and \eqref{minmax} is equivalent to the minimal capacity problem considered above. For $\varphi \not \equiv 0$, this is the weighted capacity \eqref{def_capacity} minimization (see e.g.~\cite{Saff:97}), and the method was outlined in \cite{Gonchar:87} in connection with the best rational approximation of $\exp(-x)$ on $[0, +\infty)$. The discrete analogue of problem \eqref{minmax} and its connection with Jacobi polynomials was discussed in \cite{Marcellan07}.

In a rather surprising twist, a completely different problem was reduced to the existence of an $S$-curve in \cite{Kamvissis2003}, where the semiclassical solution of the focusing nonlinear Schr\"{o}dinger equation was constructed using methods of the inverse scattering theory. The problem itself, as well as the methods of its solution, did not have a priori any visible connection with those examined in \cite{Gonchar:87}.
A partial explanation of the mystery is suggested by the connection of the orthogonal polynomials with the inverse scattering via the matrix Riemann-Hilbert (RH) problem (see e.g.~\cite{MR2000g:47048} and \cite{Fokas92}). Apparently, the corresponding RH problems considered in \cite{Gonchar:87} and \cite{Kamvissis2003} are similar. In connection with the theory of nonlinear partial differential equations we should mention also the seminal work of Lax and Levermore \cite{Lax:1983fk}, \cite{Lax:1983vr}, \cite{Lax:1983bw}, where the connection between a singular limit of the KdV equation and the energy problem with upper constraint was established.

The recently developed tools of asymptotic analysis of the RH problems of a certain class, such as the non-linear steepest descent method of Deift and Zhou (see \cite{MR2001f:42037}, \cite{MR2001g:42050},  \cite{MR2000g:47048}, as well as \cite{Bleher1999}) combined with the $\overline{\partial}$-problem (\cite{MR2219316}, \cite{McLaughlin/Miller:2008}), have become powerful weapons in the study of the strong asymptotics of polynomials of complex orthogonality. We can find multiple examples in a series of works of Aptekarev, Baik,  Deift, Kuijlaars, McLaughlin, and Miller, to mention a few (see e.g.~\cite{Aptekarev:02}, \cite{MR2000g:47048}, \cite{MR2124460}, \cite{Kuijlaars/Mclaughlin:04}, \cite{MR2087231}, \cite{Mclaughlin/Vartanian/Zhou06c}; this is necessarily a very partial list).   

 A very active topic of research that benefited greatly from these ideas is the random matrix theory (RMT), see \cite{MR2000g:47048}, as well as a closely connected field of random particle ensembles with non-intersecting paths and other determinantal point processes \cite{Borodin98}, \cite{Soshnikov00}. The detailed behavior (as the degree goes to infinity) of the related orthogonal polynomials allows to establish fine asymptotic results for the random matrices or non-intersecting paths ensembles, see e.g.~\cite{Aptekarev/Bleher/Kuijlaars}, \cite{Bertola:2002kx}, \cite{Bertola:2009uq}, \cite{Bleher2010},  \cite{MR2038771}, \cite{Bleher/Kuijlaars1},  \cite{Bleher/Kuijlaars3}, \cite{Duits2010}, \cite{Duits:2009zr}, \cite{Ercolani:2001vn},
 \cite{MR2470930} for again a very partial list.

One of the main ingredients of the solution of such kind of asymptotic problems (independently of the approach we follow) is the analysis of the $S$-property related to the concrete situation. The total potential of the corresponding critical (or equilibrium) measure is called the $g$-function (see \cite{MR2000g:47048}). This function typically accounts for the leading term of the asymptotics, and the support of the measure is the set where the essential oscillatory behavior takes place. Hence, the construction of the $g$-function or of the $S$-curve, or even establishing the existence of the latter without finding the parameters explicitly, is an important problem. In the presence of a significant external field it has been solved so far for some particular situations. The use of the critical measures in this context may present a new approach to the problem at large.

\section{Weak limit of zeros of Heine-Stieltjes polynomials} \label{sec:weakLimits}

We return to our original motivation, armed now with the tools developed so far, in order to analyze the possible weak limits of the polynomial solutions of \eqref{DifEq}. We formulate first a statement slightly more general than necessary for our problem.

We are given $\AA=\{a_0, a_1, \dots, a_p\}\subset \C$, and a sequence of external fields of the form
\begin{equation}\label{extFieldMore}
\varphi_n=\Re \Phi_n, \quad   \Phi_n(z)=-\sum_{k=0}^p \frac{\rho_k(n)}{2} \, \log (z-a_k )\,,
\end{equation}
where $\rho_k(n)\in \C$.

\begin{theorem}
\label{the:weakAsymptoticsWithField}
Let $\mu_n\in \mathfrak M_n$, $n\in \N$, be a discrete $(\AA, \varphi_n)$-critical measure corresponding to an external field \eqref{extFieldMore}. If for a subsequence $\mathcal N\subset \N$, limits
$$
\lim_{n\in \mathcal N } \frac{\rho _k(n)}{n} =\rho_k, \quad k=0, 1, \dots, p,
$$
exist, then any weak-* limit point $\mu$ of the normalized measures $\{\mu_n/n\}$, $n\in \mathcal N$, is a continuous $(\AA,\varphi)$-critical measure with respect to the external field $\varphi$ given by \eqref{Phiforcharges}.

In particular, if
\begin{equation}\label{conditionUnboundedCont}
    \Re \sum_{k=0}^p  \rho_k  > -\frac{1}{2},
\end{equation}
then $\mu$ is a unit continuous $(\AA,\varphi)$-critical measure.
\end{theorem}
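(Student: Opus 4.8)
The plan is to reduce the assertion to the variational criterion of Lemma~\ref{lemma:condCrit}, i.e.\ to show that the limit measure $\mu$ satisfies $f_\varphi(\mu;h)=0$ for every $h$ admissible in the sense of Definition~\ref{def:AcriticalCont} (smooth in $\Omega\setminus\AA$ with $h|_\AA\equiv 0$), where $f_\varphi$ is given by \eqref{defFvar} and $\Phi'$ by \eqref{Phiforcharges}. By inserting a smooth cut-off I may assume $h$ has compact support; this is harmless since, once the supports are known to be bounded (see below), $f_\varphi(\mu;h)$ depends only on the values of $h$ on a fixed ball containing $\supp(\mu)\cup\AA$. Writing $\mu_n=\sum_k\delta_{\zeta_k}$ and $\nu_n\isdef\mu_n/n$, the discrete criticality of $\mu_n$ is, by Remark~\ref{remarkdiscrete}, equivalent to the discrete counterpart of \eqref{condicionCrit}, namely
\[
\iint_{x\neq y}\frac{h(x)-h(y)}{x-y}\,d\mu_n(x)\,d\mu_n(y)-2\int\Phi_n'(x)\,h(x)\,d\mu_n(x)=0,
\]
obtained by multiplying the equations \eqref{almostLame1} by $h(\zeta_k)$, summing over $k$, and symmetrizing the resulting double sum; here $\iint_{x\neq y}$ denotes integration off the diagonal.

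Next I divide this identity by $n^2$ and let $n\to\infty$ along $\mathcal N$. The kernel $K(x,y)\isdef (h(x)-h(y))/(x-y)$, extended by $K(x,x)=h'(x)$, is bounded and continuous on $\C\times\C$; for compactly supported $h$ it even extends continuously to $\overline\C\times\overline\C$, vanishing whenever one argument is $\infty$. Hence $\iint K\,d\nu_n\,d\nu_n\to\iint K\,d\mu\,d\mu$ by the weak-$*$ convergence of the products $\nu_n\times\nu_n\to\mu\times\mu$, and this limit is exactly the first term of \eqref{defFvar}. The difference between the off-diagonal discrete sum and the full integral is the diagonal contribution $\tfrac1{n}\int h'\,d\nu_n=O(1/n)\to0$. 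For the field term, the hypothesis $\rho_k(n)/n\to\rho_k$ gives $\Phi_n'/n\to\Phi'$ uniformly on compact subsets of $\C\setminus\AA$, while the constraint $h|_\AA\equiv0$ makes each product $\Phi_n'h/n$ (and its limit $\Phi'h$) bounded and continuous across the simple poles at $\AA$; therefore $\tfrac1{n^2}\int\Phi_n'h\,d\mu_n=\int(\Phi_n'/n)\,h\,d\nu_n\to\int\Phi'h\,d\mu$. Collecting the three limits yields $f_\varphi(\mu;h)=0$, so $\mu$ is a continuous $(\AA,\varphi)$-critical measure by Lemma~\ref{lemma:condCrit}.

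Finally, under \eqref{conditionUnboundedCont} the subsequential limit of $\Re\sum_k\rho_k(n)/n$ equals $\Re\sum_k\rho_k>-1/2$, so \eqref{liminfcondition} holds along $\mathcal N$ and Proposition~\ref{prop:boundedness} shows that $\bigcup_{n\in\mathcal N}\supp(\mu_n)$ lies in a fixed compact set. The probability measures $\nu_n$ are then tight, their weak-$*$ limit points exist and carry no mass at infinity, whence $\mu(\C)=1$, so $\mu$ is a \emph{unit} continuous critical measure. I expect the main obstacle to lie in the passage to the limit of the double integral and the field term: one must verify that the regularized kernel $K$ is a legitimate continuous test function up to the diagonal and out to infinity, and, above all, that the simple poles of $\Phi_n'$ at $\AA$ do no harm even though a priori $\mu$ might accumulate mass arbitrarily close to $\AA$. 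This last point is precisely what the normalization $\Phi_n'/n\to\Phi'$ together with the condition $h|_\AA\equiv0$ is designed to control.
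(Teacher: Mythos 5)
Your argument is correct and follows essentially the same route as the paper: reduce via Remark \ref{remarkdiscrete} to the discrete analogue of \eqref{condicionCrit}, normalize by $n^2$, pass to the limit in the double integral (using continuity of the difference-quotient kernel) and in the field term (using $\Phi_n'/n\to\Phi'$ together with $h|_\AA\equiv 0$), and invoke Lemma \ref{lemma:condCrit}, with Proposition \ref{prop:boundedness} giving tightness and unit mass under \eqref{conditionUnboundedCont}. Your additional care with the cut-off, the extension of the kernel to the diagonal and to infinity, and the behavior of $\Phi_n'h$ near $\AA$ only tightens the same proof.
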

\begin{proof}
Without loss of generality, we assume that $\nu_n\isdef \mu_n/n  \weakto \mu$, $n\in \mathcal N$, where $\weakto$ means convergence in the weak-* sense. By Proposition \ref{prop:boundedness}, if \eqref{conditionUnboundedCont} holds then $\supp(\mu_n)$, $n\in \mathcal N$, are uniformly bounded, so that the set of normalized measures $\nu_n$ is weakly compact, and $\mu$ is a probability measure on $\C$.

By Remark \ref{remarkdiscrete}, it is sufficient to show that for smooth functions $h$, condition
\begin{equation}\label{variationdiscreteenergy}
    \frac{d}{dt}\, \EE_{\varphi_n}(\nu_n^t)\big|_{t=0} =0  , \qquad n\in \N,
\end{equation}
implies
$$
\frac{d}{dt}\, E_\varphi(\mu^t)\big|_{t=0} = 0.
$$
Let
$$
\mu_n =\sum_{k=1}^n \delta_{\zeta_k^{(n)}};
$$
reasoning as in the proof of Lemma \ref{lemma:condCrit} we conclude that \eqref{variationdiscreteenergy} is equivalent to the condition
\begin{equation}\label{conditionLocalVarDiscrete}
    \frac{1}{n^2}\, \sum_{i\neq j}  \frac{h(\zeta_i^{(n)})-h(\zeta_j^{(n)}) }{\zeta_i^{(n)} -\zeta_j^{(n)} }  - \frac{2}{n^2}\, \sum_{k=1}^n    \Phi'_n(\zeta_k^{(n)})\, h(\zeta_k^{(n)})=0 , \qquad n\in \N.
\end{equation}
Observe that
$$
\frac{1}{n^2}\, \sum_{i\neq j}  \frac{h(\zeta_i^{(n)})-h(\zeta_j^{(n)}) }{\zeta_i^{(n)} -\zeta_j^{(n)} }   = \lim_{\epsilon \to 0 }\iint_{|x-y|>\epsilon }  \frac{h(x)-h(y) }{x -y }   \, d\nu_n (x ) d\nu_n (y ).
$$
Since $(h(x)-h(y))/(x-y)$ is continuous, standard arguments show that
$$
\lim_{n\in \mathcal N} \frac{1}{n^2}\, \sum_{i\neq j}  \frac{h(\zeta_i^{(n)})-h(\zeta_j^{(n)}) }{\zeta_i^{(n)} -\zeta_j^{(n)} }   = \iint  \frac{h(x)-h(y) }{x -y }   \, d\mu (x ) d\mu (y ).
$$

On the other hand, $\varphi_n/n \to \varphi$, $n\in \mathcal N$, locally uniformly in $\C\setminus \AA$, with $\varphi$ given in \eqref{Phiforcharges}. Since $h$ vanishes on $\AA$, $\Phi'_n  h$ are continuous, and
$$
\lim_{n\in \mathcal N} \frac{1}{n^2}\, \sum_{k=1}^n    \Phi'(\zeta_k^{(n)})\, h(\zeta_k^{(n)})  = \lim_{n\in \mathcal N} \frac{1}{n}\, \int    \Phi'_n(x)\, h(x) \, d\nu_n(x) = \int    \Phi'(x)\, h(x) \, d\mu(x).
$$
In consequence,
$$
\lim_{n\in \mathcal N} \frac{1}{n^2}\, \left(  \sum_{i\neq j}  \frac{h(\zeta_i^{(n)})-h(\zeta_j^{(n)}) }{\zeta_i^{(n)} -\zeta_j^{(n)} }  - 2 \, \sum_{k=1}^n    \Phi'_n(\zeta_k^{(n)})\, h(\zeta_k^{(n)})\right)=f_\varphi(\mu; h),
$$
with $f_\varphi$ defined in \eqref{defFvar}. Using \eqref{conditionLocalVarDiscrete}, we conclude that  $f_\varphi(\mu; h) =0$, and it remains to apply Lemma \ref{lemma:condCrit}.
\end{proof}

We consider next a sequence of pairs $(Q_n, V_n)$ of Heine-Stieltjes polynomials $Q_n$ of degree $n$ and their corresponding Van Vleck polynomials $V_n$. One of the central results of this paper is a description of all the possible limits of the normalized zero-counting measures of the Heine-Stieltjes polynomials $Q_n$. Since the residues in \eqref{BoverA} are independent of $n$, by applying Theorem \ref{the:weakAsymptoticsWithField} with $\varphi\equiv 0$ we get:
\begin{corollary}
\label{the:weakAsymptoticsFirst}
Any weak-* limit point of the normalized zero counting measures $\nu(Q_n)/n$ of the Heine-Stieltjes polynomials is a unit continuous $\AA$-critical measure.
\end{corollary}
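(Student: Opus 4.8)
The plan is to obtain the Corollary as the special case $\varphi\equiv 0$ of Theorem \ref{the:weakAsymptoticsWithField}, the point being that the Lam\'e coefficients $A$ and $B$ in \eqref{DifEq} are fixed and do not depend on $n$. First I would apply Proposition \ref{characterizationHS}: for every $n$, the zero-counting measure $\nu(Q_n)=\sum_{k=1}^n\delta_{\zeta_k}$ of a Heine-Stieltjes polynomial $Q_n$ is a discrete $(\AA,\varphi)$-critical measure whose external field $\varphi=\Re\Phi$ is given by \eqref{Phiforcharges}, with residues $\rho_k$ read off from the fixed expansion $B/A=\sum_k \rho_k/(x-a_k)$ of \eqref{BoverA}. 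Thus $\nu(Q_n)$ fits the hypotheses of Theorem \ref{the:weakAsymptoticsWithField} with the constant choice $\rho_k(n)\equiv\rho_k$.

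The decisive observation is one of scaling. Because the residues do not grow with $n$, one has $\lim_{n}\rho_k(n)/n=0$ for each $k$, so the limiting residues fed into Theorem \ref{the:weakAsymptoticsWithField} all vanish and the limiting external field is $\varphi\equiv 0$. At the level of the variational identity this is transparent: in the normalized condition \eqref{conditionLocalVarDiscrete} the field term is a single sum $\tfrac{2}{n^2}\sum_{k}\Phi'(\zeta_k)h(\zeta_k)$ of $n$ bounded summands, hence $O(1/n)$, while the interaction term is a double sum over $\sim n^2$ pairs and survives the passage to the limit. Consequently any weak-* limit point $\mu$ is a continuous $(\AA,0)$-critical measure, which by the convention fixed after Definition \ref{def:AcriticalCont} is precisely an $\AA$-critical measure.

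Finally I would verify the normalization. Condition \eqref{conditionUnboundedCont} becomes $\Re\sum_k 0 = 0 > -\tfrac12$, and equivalently the hypothesis \eqref{liminfcondition} of Proposition \ref{prop:boundedness} holds, so the supports $\supp(\nu(Q_n))$ are uniformly bounded. The family $\{\nu(Q_n)/n\}$ is therefore weak-* relatively compact with every limit point a probability measure, and Theorem \ref{the:weakAsymptoticsWithField} delivers that each such limit point is a \emph{unit} continuous $\AA$-critical measure, as claimed.

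There is essentially no serious obstacle remaining, since the analytic heart of the matter—the uniform boundedness of the zeros and the convergence of the normalized discrete variation in \eqref{conditionLocalVarDiscrete} to the continuous form $f_\varphi(\mu;h)$ of \eqref{defFvar}—is already contained in Proposition \ref{prop:boundedness} and in the proof of Theorem \ref{the:weakAsymptoticsWithField}. The one subtlety worth flagging explicitly is that a fixed external field exerts no influence on the limit distribution; it is only fields whose charges grow linearly with $n$ (so that $\rho_k(n)/n$ has a nonzero limit) that leave a trace in the limiting $\AA$-critical measure.
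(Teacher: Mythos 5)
Your proposal is correct and follows exactly the paper's route: the residues in \eqref{BoverA} are independent of $n$, so $\rho_k(n)/n\to 0$, the limiting field in Theorem \ref{the:weakAsymptoticsWithField} is $\varphi\equiv 0$, and condition \eqref{conditionUnboundedCont} holds trivially, yielding a unit continuous $\AA$-critical measure. The paper states this in a single sentence; your write-up simply spells out the same specialization in more detail.
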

\begin{rmk}
Taking into account Proposition \ref{characterizationHS}, we could restate the last result in terms of the zero-counting measures of Heine-Stieltjes polynomials corresponding to a generalized Lam\'{e} equation with coefficients depending on $n$. An analogue of Theorem \ref{the:weakAsymptoticsWithField}  has been used in \cite{MR2003j:33031} for the study of the weak-* limits of the normalized zero counting measures of the Heine-Stieltjes polynomials with varying positive residues and $\AA\subset \R$.
\end{rmk}

By \cite{Shapiro2008a}, the zeros of Van Vleck polynomials accumulate on the convex hull of $\AA$, so that the set of all Van Vleck polynomials is bounded (say, in the component-wise metrics). Hence, in our consideration of a sequence of pairs $(Q_n, V_n)$ of Heine-Stieltjes polynomials $Q_n$ of degree $n$ and their corresponding Van Vleck polynomials $V_n$ we suppose without loss of generality that there exists a monic polynomial $V$ of degree $p-1$ such that
\begin{equation}\label{convergenceC}
    \lim_{n\to \infty} V_n = V.
\end{equation}
\begin{theorem}
\label{the:weakAsymptotics}
Under assumption \eqref{convergenceC}, the normalized zero counting measure $\nu(Q_n)/n$ converges (in a weak-* sense) to an $\AA$-critical measure $\mu  \in \MM_1$; furthermore, the quadratic differential
$$
\varpi = -\frac{V}{A}(z) \, dz^2
$$
is closed, the support $\Gamma =\supp(\mu )$ consists of critical trajectories of $\varpi$, $\C\setminus \Gamma$ is connected, and we can fix the single valued branch of $\sqrt{V/A}$ there by $\lim_{z\to\infty} z \sqrt{V(z)/  A(z) }=1$. With this convention,
\begin{equation}\label{asymptNthRoot}
    \lim_n \left| Q_n(z)\right|^{1/n} =\exp\left( \Re \int^z \sqrt{\frac{V}{A}}(t) \, dt \right)
\end{equation}
locally uniformly in $\C\setminus \Gamma$, where a proper normalization of the integral in the right hand side is chosen, so that
$$
\lim_{z\to\infty} \left( \Re \int^z \sqrt{\frac{V}{A}}(t) \, dt- \log |z|\right)=0.
$$
\end{theorem}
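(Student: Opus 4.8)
The plan is to combine the Lam\'e equation \eqref{DifEq} with the structural results of Section \ref{sec:qdandcriticalmeasures}, and to let positivity of the zero counting measures pin the limit down uniquely. Write $\nu_n\isdef \nu(Q_n)/n$. Since the residues in \eqref{BoverA} are fixed, $\rho_k(n)/n\to 0$ and \eqref{conditionUnboundedCont} holds with the limiting field $\varphi\equiv 0$; hence by Proposition \ref{prop:boundedness} the supports $\supp(\nu_n)$ lie in a fixed compact set (the convex hull of $\AA$), the family $\{\nu_n\}$ is weakly-$*$ compact, and by Corollary \ref{the:weakAsymptoticsFirst} every weak-$*$ limit point $\mu$ is a positive unit $\AA$-critical measure. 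By Lemma \ref{thm:rakhmanov}, each such $\mu$ satisfies $(C^\mu)^2=R$ for a rational $R$ with poles in $\AA$ and the normalization \eqref{normalizationR} with $\kappa=1$.

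First I would identify $R$ explicitly as $V/A$. Put $w_n\isdef Q_n'/Q_n$, so that $w_n/n=-C^{\nu_n}$; dividing \eqref{DifEq} by $n^2 A\, Q_n$ yields
\begin{equation*}
\frac{w_n'}{n^2}+\left(\frac{w_n}{n}\right)^2+\frac{B}{n^2 A}\,w_n=\frac{n+\alpha-1}{n}\,\frac{V_n}{A}.
\end{equation*}
Evaluating at points outside the convex hull of $\AA$, where $C^{\nu_n}\to C^\mu$ and $(C^{\nu_n})'\to(C^\mu)'$ follow directly from weak-$*$ convergence, the first and third terms on the left vanish in the limit while $V_n\to V$ by \eqref{convergenceC}; hence $(C^\mu)^2=V/A$ there. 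Both sides being rational, this identity is global, so $R=V/A$ and the associated quadratic differential is $\varpi=-\frac{V}{A}(z)\,dz^2$, as claimed.

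The decisive step is uniqueness of the limit. Every subsequential limit $\mu$ is a positive unit $\AA$-critical measure whose quadratic differential is the \emph{same} $\varpi=-\frac{V}{A}\,dz^2$, because $V$ is fixed by \eqref{convergenceC}. Since $\varphi\equiv 0=U^0$ with $\sigma=0\ge0$, Proposition \ref{uniquenessmeasure} applies: the correspondence between positive $\AA$-critical measures and their quadratic differentials is injective, and their supports have connected complement. Therefore all subsequential limits coincide, $\nu_n$ converges to a single $\mu\in\MM_1$, and $\C\setminus\Gamma$ is connected for $\Gamma\isdef\supp(\mu)$. Positivity of $\nu_n$ is essential here: it is exactly what excludes the non-uniqueness displayed in Examples \ref{example:1to1_1}--\ref{example:1to1_4}. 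That $\varpi$ is closed and that $\Gamma$ is a finite union of critical trajectories then follow from Theorem \ref{main_thm} (all $\rho_j=0\in\R$).

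Finally, for the $n$-th root asymptotics I would use $\frac1n\log|Q_n(z)|=-U^{\nu_n}(z)$: since $\nu_n\weakto\mu$ with uniformly bounded supports, the logarithmic potentials converge, $-U^{\nu_n}\to -U^\mu$, locally uniformly in $\C\setminus\Gamma$, which I would obtain from the principle of descent together with the harmonicity of $U^\mu$ off $\Gamma$ and the now-established uniqueness of $\mu$ (a normal-families argument upgrading quasi-everywhere convergence to locally uniform convergence away from $\Gamma$). Corollary \ref{cor:mainthm} then gives $U^\mu(z)=-\Re\int^z\sqrt{V/A}\,dt$ in the connected complement, for the branch normalized by $z\sqrt{V/A}\to 1$ — precisely the branch forced by $C^\mu=-\sqrt{R}\sim -1/z$; combining the two identities yields \eqref{asymptNthRoot}, the constant of integration being fixed by $Q_n$ monic, i.e.\ $\frac1n\log|Q_n(z)|-\log|z|\to0$ as $z\to\infty$. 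The main obstacle is the uniqueness argument of the third paragraph: the quadratic differential alone does not determine the critical measure, and it is only the positivity of the zero-counting measures, fed into Proposition \ref{uniquenessmeasure}, that forces a single weak-$*$ limit; the delicate upgrade to locally uniform potential convergence in the last paragraph is the accompanying technical point.
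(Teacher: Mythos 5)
Your argument is correct and follows essentially the same route as the paper's own proof: rewrite \eqref{DifEq} in Riccati form for $w_n=Q_n'/Q_n$ and pass to the limit using \eqref{convergenceC} to identify $R=V/A$ in \eqref{charactMeasure}, invoke Lemma \ref{thm:rakhmanov}/Theorem \ref{main_thm} for the trajectory structure and Proposition \ref{uniquenessmeasure} for uniqueness of the positive limit and connectedness of $\C\setminus\Gamma$, and deduce \eqref{asymptNthRoot} from $\frac1n\log|Q_n|=-U^{\nu_n}$ together with Corollary \ref{cor:mainthm}. You simply spell out the steps (boundedness via Proposition \ref{prop:boundedness}, positivity feeding into the injectivity argument, the descent/normal-families upgrade of the potential convergence) that the paper leaves implicit.
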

In other words, weak-* limits of the normalized zero counting measures of $Q_n$'s are unit positive $\AA$-critical measures. 
The inverse inclusion (that any unit positive $\AA$-critical measure is a weak-* limit of the normalized zero counting measures of Heine-Stieltjes polynomials) is also valid, but it cannot be established using methods of this paper. We plan to present the proof in a subsequent publication related to the strong asymptotics of Heine-Stieltjes polynomials.
However, in the rest of the paper we identify both sets of measures.
\begin{proof}
Let $\mu$ be any weak-* accumulation point of $\{\nu(Q_n)/n\}$. By Lemma \ref{thm:rakhmanov}, the Cauchy transform of $\mu$ must satisfy an equation of the form \eqref{charactMeasure}. Rewriting \eqref{DifEq} in the Riccati form and taking limits as $n\to \infty$ (with account of \eqref{convergenceC}) we conclude that
$$
\left(C^\mu(z) \right)^2=\frac{V}{A}(z)\,,  \qquad z\notin \supp (\mu)\,,
$$
and $\Gamma=\supp (\mu)$ is a union of critical trajectories of $\varpi$. The rest of assertions about $\varpi$ follows from Proposition~\ref{uniquenessmeasure}.

Finally, \eqref{asymptNthRoot} is the straightforward consequence of \eqref{charactMeasure} and the fact that for any monic polynomial $P$, $\log 1/|P(z)|$ is the potential of its zero-counting measure.
\end{proof}

Theorem \ref{the:weakAsymptotics} provides an analytic description of the weak-* limits of the zero counting measures associated to the Heine-Stieltjes polynomials. However, this description is in a certain sense implicit, since it depends on the limit $V$ of the Van Vleck polynomials $V_n$, that constitute therefore the main parameters of the problem. We must complement this description with the study of the set of all possible limits $V$. As it follows from Theorem \ref{the:weakAsymptotics}, this can be done in two steps:
\begin{enumerate}
\item[\emph{i)}] describing the global structure of the trajectories of closed rational quadratic differentials with fixed denominators on the Riemann sphere, and the corresponding parameters (numerators). This problem has an independent interest;
\item[\emph{ii)}] extracting from this set the subset giving rise to positive unit $\AA$-critical measures.
\end{enumerate}
It was mentioned in Section \ref{sec:qd} that problem \emph{i)} is in general very difficult. In particular, we can find critical trajectories of any homotopic type. We have seen in Section \ref{sec:correspondence} that not all of them will correspond to the support of an $\AA$-critical measure, and elucidating this relation is the main step towards the complete description of the weak-* limits of the zero counting measures of Heine-Stieltjes polynomials. We start with a detailed discussion in the next Section of the simplest non-trivial case of three poles ($p=2$), corresponding to Heun's differential equation. For a general $p$ the geometry becomes so complex, that in this paper we just outline the main results (see Section \ref{sec:generalP}).

\section{Heun's differential equation ($p=2$)} \label{sec:p=2}

In this section we concentrate on the differential equation \eqref{DifEq} with $A(z)=(z-a_0)(z-a_1)(z-a_2)$ and $V(z)=z-v$. Our goal is to illustrate all previously established general results in the simplest non-trivial situation.

Observe that the Van Vleck polynomials constitute now a 1-parameter family, that makes the whole analysis much easier. So we introduce the quadratic differential
\begin{equation}\label{qdp2}
    \varpi_v= \frac{v-z}{A(z)}\, dz^2
\end{equation}
and two sets,
$$
\mathcal V \isdef \left\{ v\in \C:\, \varpi_v \text{ is closed} \right\},
$$
as well as the \emph{Van Vleck set}
$$
\mathcal V_+ \isdef \left\{ v\in \C:\, v \text{ is an accumulation point of the zeros of Van Vleck polynomials} \right\}.
$$
A direct consequence of Theorem \ref{the:weakAsymptotics} is that
$$
\mathcal V_+ \subset \mathcal V.
$$
This inclusion is proper. Obviously, our main purpose, motivated by the analysis of the Heine-Stieltjes and Van Vleck polynomials, is to study $\mathcal V_+$; along this path related questions will be dealt with, such as the structure of the closed quadratic differentials of the trivial homotopic type, and the set of positive critical measures. On the other hand, general quadratic differentials and signed critical measures have an independent interest, and some results will be presented below.

Namely, we will show that:
\begin{itemize}
\item the set of closed quadratic differentials \eqref{qdp2} is parametrized by a family of analytic arcs, dense on the plane and joining the Chebotarev's center of $\AA$ (see its definition below) with infinity;
\item each arc of this family represents an individual homotopic type of the quadratic differential, so that two values of the parameter $v$ in \eqref{qdp2} corresponding to different curves yield homotopic classes of critical trajectories non reducible to each other;
\item the trivial homotopic type of closed quadratic differentials corresponds only to the union of three subarcs joining the set $\AA$ with its Chebotarev's center. This star-shaped set is isomorphic to the set of \emph{positive} $\AA$-critical measures, and coincides with $\mathcal V_+$;
\item for each $v\in \mathcal V_+$, the zeros of the corresponding Heine-Stieltjes polynomials accumulate on the two critical trajectories of $\varpi_v$ in \eqref{qdp2}.
\end{itemize}

\subsection{Global structure of trajectories} \label{sec:globalStruc2}

Along this section we denote by $\Gamma_v$ the \emph{critical graph} of $\varpi_v$, that is, the set of critical trajectories of $\varpi_v$ together with their endpoints (critical points of $\varpi_v$).

There exists a unique $v^*=v^*(\AA)\in \C$ such that the critical graph  $\Gamma_{v^*}$ is a connected set; $\Gamma_{v^*}$ coincides with the Chebotarev's compact $\Gamma^*$ associated with $\AA$ (see Section \ref{sec:Chebotarevcont}). To simplify terminology, in the context of three poles we call $v^*$ the \emph{Chebotarev's center} for $\AA$.

Since the value $v^*$ is in many senses exceptional, along with the poles $a_i$, we will introduce the notation $\mathcal A^*\isdef \mathcal A \cup \{v^*\}$ for the ``exceptional set''.

We have mentioned in Section \ref{sec:qd} that the global structure of the trajectories of a quadratic differential can be extremely complicated. For the quadratic differential \eqref{qdp2} a certain order is imposed by the double pole at infinity with a negative residue.
\begin{proposition} \label{thm:existenceclosedtrajectoryp2}
Let $A(z)=(z-a_0)(z-a_1)(z-a_2)$ be a polynomial with simple roots in $\C$ and $v\in \C\setminus  \AA^*$. Then the quadratic differential \eqref{qdp2} has a closed critical trajectory $\beta$ containing $v$. Let $\Omega$ be the bounded domain delimited by $\beta $. Then $\Omega$ contains at least two points from $\AA$.
\end{proposition}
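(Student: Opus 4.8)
The plan is to read off the global trajectory structure of $\varpi_v$ from its local behaviour at the two relevant singular points and then to separate the two assertions — the \emph{existence} of the loop $\beta$ and the \emph{count} of poles inside it. First I would record the local pictures. At $z=\infty$ the differential \eqref{qdp2} has a double pole whose residue is negative (equal to $-1$), so by the local normal form recalled in Section~\ref{sec:qd} all trajectories in a neighbourhood of $\infty$ are closed Jordan curves encircling $\infty$; the maximal family of such closed trajectories sweeps out a ring domain $\mathcal R$ whose loops enclose every finite critical point. At the simple zero $v$ exactly three critical rays emanate, dividing a neighbourhood of $v$ into three $\varpi$-sectors, each a half-plane of $\varpi$-angle $\pi$. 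I will also use the fact, stated just above the proposition, that the critical graph $\Gamma_v$ is connected if and only if $v=v^*$.

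For existence I would argue combinatorially on $\Gamma_v$. Regarded as a graph, its vertices are the critical points: each simple pole $a_i$ is a vertex of degree $1$ (a single incident critical trajectory), and $v$ is the only vertex of higher degree, namely $3$. A connected graph of this type is forced to be a tree — a three-pointed star centred at $v$ with the poles as leaves — which by the cited characterisation occurs precisely when $v=v^*$. Hence for $v\in\C\setminus\AA^*$ the graph $\Gamma_v$ is disconnected, and the three rays issuing from $v$ cannot all terminate at distinct poles; since a simple pole receives only one trajectory, two of these rays must instead return to $v$, and together they form a closed critical trajectory $\beta$ through $v$. Let $\Omega$ be the bounded component of $\C\setminus\beta$.

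For the count I would pass to the flat conformal metric $\left|\frac{z-v}{A(z)}\right|^{1/2}|dz|$ associated with $\varpi_v$, in which trajectories are geodesics and a critical point of order $k$ is a cone point of angle $(k+2)\pi$, hence of curvature concentration $-k\pi$. Applying Gauss--Bonnet to the disc $\Omega$: the geodesic boundary $\beta$ contributes no geodesic curvature, and its only corner is at $v$, where the third ray of $v$ leaves $\Omega$ and runs to the remaining pole, so $\Omega$ subtends a single $\varpi$-sector at $v$, the interior angle is $\pi$, and the exterior angle vanishes; $\Omega$ contains no zero, and each enclosed simple pole (order $-1$) contributes $+\pi$. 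Thus if $m$ poles lie in $\Omega$ then $m\pi=2\pi$, giving $m=2$. In particular $\Omega$ contains at least two points of $\AA$.

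The step I expect to be the real obstacle is the hidden assumption in the existence argument that every ray from $v$ (and from the poles) actually terminates at a critical point, rather than being recurrent: for general complex $v$ the differential $\varpi_v$ need not be closed, so recurrent trajectories are a priori possible, and without excluding them the degree count that produces $\beta$ breaks down. I would remove this gap by ruling out minimal (recurrent) domains using the scarcity of finite critical points — the boundary of a minimal domain must be assembled from critical trajectories, but simple poles are degree-one dead ends and the lone simple zero $v$ cannot on its own bound such a domain — together with the constraint imposed by the closed-type double pole at $\infty$. Making this no-recurrence statement precise (equivalently, invoking the global structure theorem for rational quadratic differentials on $\overline\C$ with these singularities) is the crux; once it is secured, the combinatorial and Gauss--Bonnet steps above are routine.
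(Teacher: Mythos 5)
You have correctly located the crux --- the possibility of recurrent rays --- but your proposed repair rests on a false premise, and this is a genuine gap. For the differential \eqref{qdp2} recurrent trajectories cannot be ruled out: for every $v$ outside the countable union of analytic arcs that forms $\mathcal V$ the differential is not closed, and (as described in Section \ref{sec:globalStruc2} as the ``interior recurrent configuration'') the third ray from $v$, and indeed every trajectory inside $\Omega$, is recurrent and dense in $\Omega$. In particular your claim that ``the lone simple zero $v$ cannot on its own bound such a domain'' is exactly wrong: the boundary of that dense domain is precisely the loop $\beta$ made of two rays that leave and return to $v$. The correct move (and the one the paper makes) is not to exclude recurrence but to localize it: the double pole at infinity is surrounded by a maximal ring domain $\mathcal R_\infty$ swept by closed trajectories (Strebel, Theorem~9.4); its inner boundary $\beta^*$ is compact, consists of critical points and critical arcs, and cannot be assembled from simple poles alone (each emits a single ray), so $v\in\beta^*$ and at least two of the three rays from $v$ lie on $\beta^*$; these two either terminate at distinct poles (forcing $\beta^*=\Gamma^*$, i.e.\ $v=v^*$) or close up into the loop $\beta$. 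Recurrence, if present, is then confined to the interior of $\beta$ and does not threaten its existence. Without some such argument your combinatorial step ``two of these rays must return to $v$'' does not follow from disconnectedness of $\Gamma_v$ alone: a priori one ray could reach a pole while the other two are recurrent, leaving no loop.

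Your Gauss--Bonnet count is an attractive alternative to the paper's argument for the second assertion (the paper instead observes that $\beta^*$ can contain at most one pole, namely the possible endpoint of the third ray, so at least two poles lie inside $\beta$), but as written it treats only one of the two cases. You assume the third ray leaves $\Omega$, so that the interior angle of $\beta$ at $v$ is a single $\varpi$-sector of angle $\pi$, the corner contributes nothing, and you get $m=2$. If the third ray enters $\Omega$ (the interior configurations, which do occur), the interior angle at $v$ is $2\pi$, the corner contributes $-\pi$, and the same computation gives $m=3$. Either way $m\ge 2$, so the conclusion survives, but you may not assert $m=2$ unconditionally, nor assume without proof that the third ray exits $\Omega$ and terminates at a pole.
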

\begin{proof}
Due to the local structure of the trajectories, $\varpi_v$ has a closed trajectory freely homotopic to infinity (in other words, it is topologically identical to a circle  and contains all the finite critical points of $\varpi_v$ in the bounded component of its complement). According to Theorem 9.4 of \cite{Strebel:84}, this closed trajectory is embedded in a uniquely determined maximal ring domain $\mathcal R_\infty$, swept out by homotopic closed trajectories of $\varpi_v$.
We denote by $\beta^*$ the bounded connected component of the boundary of $\mathcal R_\infty$.

Obviously, $\beta^*$ contains at least one critical point of $\varpi_v$, and no finite critical points of $\varpi_v$ lie in $\mathcal R_\infty$ (that is, in the unbounded component of $\C\setminus \beta^*$).  We conclude that all finite critical points of $\varpi_v$ lie either on $\beta^*$ or in the bounded component of its complement. If this component is empty, it means that $\beta^*$ contains all the critical points, and it is the Chebotarev compact for $\AA$. Otherwise, the bounded component of $\C\setminus \beta^*$ contains an interior point, and from the local structure of the trajectories of $\varpi_v$ at simple poles we infer that $\beta^*$ cannot contain only poles of $\varpi_v$. Hence, $v\in \beta^*$, and at least two of the three trajectory arcs emanating from $v$ belong to $\beta^*$. It is easy to see that either they end up at respective poles (and then again $\beta ^*=\Gamma^*$), or they form a closed loop, that we call $\beta$. We call $\Omega$ the bounded domain delimited by $\beta $. There is only one trajectory emanating from $v$ remaining, that either is recurrent or ends at a pole from $\AA$. Hence, $\beta^*$ cannot contain more than one pole, so that $\Omega$ contains at least two poles. This concludes the proof.
\end{proof}
\begin{rmk}
An examination of the proof of Proposition~\ref{thm:existenceclosedtrajectoryp2} shows that the existence of the extremal trajectory $\beta^*$ containing a zero of $\varpi$ y such that all trajectories outside $\beta^*$ are closed and homotopic to infinity is a fact, valid for an arbitrary quadratic differential of the form \eqref{quadDiff}.
\end{rmk}

Let $v\in \C \setminus \AA^* $. By the theorem on the local structure of the trajectories of $\varpi_v$, there are three trajectories originating at $v$. From Proposition \ref{thm:existenceclosedtrajectoryp2} it follows that two of them form a single closed loop $\beta $ that splits $\C$ into two domains: the bounded component of the complement, $\Omega$, and the unbounded one, that we denote by $\mathcal D$. Let us denote by $\gamma $ the remaining trajectory emanating from $v$. Observe that $\beta$ and $\gamma $ have a single common point, $v$; according to the relative position of $\gamma $ with respect to $\beta $ we can establish the following basic classification for the critical graph $\Gamma_v$ of $\varpi_v$:
\begin{enumerate}
\item[1)] \emph{Exterior configuration:} $\gamma \setminus \{v\}$ belongs to the unbounded component $\mathcal D$ of the complement of $\beta $ (see Fig.~\ref{fig:weirdtrajectories}, left). In this case $v\in \mathcal V$ (the quadratic differential is closed).

 One of our main results, which we discuss later, is that $v\in \mathcal V_+$ if and only if $\Gamma_v$ has an exterior configuration.

\item[2)] \emph{Interior closed configuration:} $\gamma \setminus \{v\}$ belongs to the bounded component $\Omega$ of the complement of $\beta $, and $\gamma $ is finite (see Fig.~\ref{fig:weirdtrajectories}, right or Fig.~\ref{fig:recurrent}, right). In this case $\gamma $ is a critical trajectory joining $v$ with one of the poles $a_j$, and $v\in \mathcal V$ (the quadratic differential is closed).

\item[3)]  \emph{Interior recurrent configuration:} $\gamma \setminus \{v\}$ belongs to the bounded component $\Omega$ of the complement of $\beta $, and $\gamma $ is not finite (see Fig.~\ref{fig:recurrent}, left). In this case $\gamma $ (in fact, all trajectories in $\Omega$) is a recurrent trajectory, dense in $\Omega$, and $v\notin \mathcal V$.

\end{enumerate}

\begin{figure}[htb]
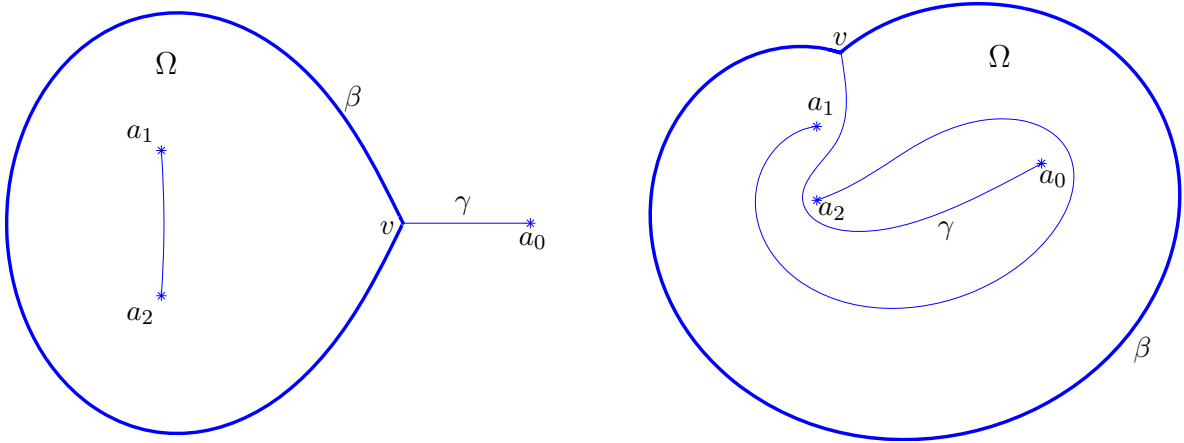

\centering \begin{tabular}{ll} \hspace{-1.3cm}\mbox{\begin{overpic}[scale=0.64]%
{exterior}%
\put(38,60){\large $\Omega $}
     \put(69.5,37.5){$v $}
        \put(34,51){$a_1$}
           \put(64.5,55.5){$\beta $}
      \put(34,25.5){$a_2 $}
     \put(89,36){$a_0$}
 \put(80,41){$\gamma $}
\end{overpic}} &
\hspace{-1cm}\mbox{\begin{overpic}[scale=0.65]%
{weirdtrajectories}%
\put(60,60){\large $\Omega $}
     \put(38.5,63){$v $}
        \put(35,54){$a_1$}
      \put(36.5,39.5){$a_2 $}
       \put(80,20){$\beta $}
     \put(67,44){$a_0$}
 \put(53,37){$\gamma $}
\end{overpic}}
\end{tabular}
\caption{Critical graph of a quadratic differential in an exterior (left) and closed interior configurations.}
\label{fig:weirdtrajectories}
\end{figure}

In order to prove that our classification exhausts all the possibilities it is convenient to single out the following simple statement:
\begin{proposition}
\label{prop:onetrajectory}
The quadratic differential $\varpi_v$ is closed if and only if there exists one critical trajectory of $\varpi_v$.
\end{proposition}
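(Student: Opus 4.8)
The plan is to treat the two implications separately, reducing everything to the dichotomy \emph{closed versus recurrent} and then using the ring--domain picture already supplied by Proposition~\ref{thm:existenceclosedtrajectoryp2}. I keep the notation of that proposition: $\beta$ is the closed critical loop through the zero $v$, it bounds the maximal ring domain $\mathcal{R}_\infty$ of closed trajectories encircling $\infty$, $\Omega$ is the bounded component of $\C\setminus\beta$ and $\mathcal{D}$ the unbounded one, and $\gamma$ is the single remaining critical trajectory issuing from $v$. The first remark is that the only infinite critical point of $\varpi_v$ is the double pole at $\infty$, whose local trajectories are closed Jordan curves, while each $a_k$ is a simple pole from which exactly one separatrix issues (Section~\ref{sec:qd}). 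Hence a trajectory of $\varpi_v$ can fail to have finite $\varpi$-length only by being recurrent, so that $\varpi_v$ is closed if and only if it carries no recurrent trajectory.

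The forward implication is then immediate. If $\varpi_v$ is closed, every trajectory is closed or critical; the separatrix issuing from a simple pole $a_k$ cannot be a closed Jordan curve (it terminates at $a_k$), so it is finite and ends at another finite critical point, i.e.\ it is a critical trajectory. In particular the remaining separatrix $\gamma$ at $v$ is finite, which is the one critical trajectory asserted to exist.

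For the converse I would show that the finiteness of $\gamma$ excludes recurrence. Assume, to the contrary, that some trajectory is recurrent; then it is dense in a minimal domain $G$, whose boundary is a finite union of critical trajectories and whose interior contains no singular point. Since every trajectory in $\mathcal{R}_\infty$ is closed, necessarily $G\subset\Omega$. The crucial point is that such a minimal domain must be \emph{fed} by a separatrix emanating from the zero $v$: the hyperelliptic double cover on which $\sqrt{(v-z)/A}$ becomes single valued is branched exactly at the four points $a_0,a_1,a_2,v$ and is therefore an elliptic curve, and a subregion carrying only the simple poles $a_k$ but no separatrix of the zero has a genus-zero branched cover and decomposes into ring domains (equivalently, by an Euler--Poincar\'e index count on $\overline G$, a region containing only first-order poles and no zero cannot support a minimal foliation, its separatrices being forced to pair into saddle connections). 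Consequently recurrence would force the free separatrix $\gamma$ to enter $G$ and be itself recurrent, contradicting its finiteness. Thus no recurrent trajectory exists and $\varpi_v$ is closed; together with the first paragraph this gives the equivalence, the ``one critical trajectory'' being precisely the finite $\gamma$.

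The delicate step, and the one I expect to be the main obstacle, is exactly this exclusion of recurrence from the finiteness of a single separatrix: one must rule out that the pole separatrices trapped in $\Omega$ recur on their own while $\gamma$ stays finite. Note that the mere presence of \emph{some} saddle connection does not suffice, since $\beta$ is always such a connection yet bounds the ring domain $\mathcal{R}_\infty$ and coexists with recurrence in the interior configuration. The honest argument must therefore distinguish the ``boundary'' saddle connection $\beta$ from a ``core'' one: after the closed ring domains around the two points over $\infty$ peel off, the remaining genus-one core obeys the clean \emph{minimal-or-periodic} alternative, and the finite $\gamma$ supplies a saddle connection \emph{inside} that core, pinning its direction to be completely periodic. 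I would carry this out either through the index obstruction on $\overline G$ sketched above or through this genus-one dichotomy on the elliptic cover (with due care that $\sqrt{\varpi_v}$ is meromorphic, with simple poles of purely imaginary residue over $\infty$), citing the structure theory in \cite{Strebel:84}.
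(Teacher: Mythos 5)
Your strategy for the sufficiency direction --- reduce closedness to the absence of recurrence, then argue that a density domain would force a separatrix of the zero $v$ to be recurrent --- is genuinely different from the paper's. The paper argues analytically: if $\gamma$ joins $v$ to, say, $a_1$, then $\Re\oint\sqrt{(t-v)/A(t)}\,dt$ vanishes both on a loop around $\gamma$ (it is a trajectory) and on a loop around $\infty$ (the residue is $\pm 1$), so by the residue theorem the period around the remaining pair of poles is purely imaginary; a Gonchar--Rakhmanov level-curve argument on the genus-zero double cover branched at those two poles then produces a \emph{second} critical trajectory joining them, after which every separatrix of every finite critical point is finite and closedness follows from the structure theory. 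Your proof never produces this second saddle connection, and that is exactly where the gap sits. The configuration you must exclude is: $\gamma$ finite from $v$ to $a_1$, $\beta$ finite (it always is, by Proposition~\ref{thm:existenceclosedtrajectoryp2}), and the separatrices of the two remaining poles $a_0,a_2$ recurrent and dense in $G=\Omega\setminus\gamma$. Neither of your two mechanisms rules this out. The Euler--Poincar\'e/index count is satisfied: $\overline G$ is a flat disk with two interior cone points of angle $\pi$ (the poles) and three boundary corners of angle $\pi$ (at $v$, $v$ and $a_1$), whose double is the pillowcase --- a flat torus quotient that certainly admits minimal directions --- and Gauss--Bonnet balances exactly. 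And the genus-one ``minimal-or-periodic'' alternative pinned by a single saddle connection is false in the presence of the singularity over $v$: a torus slit along a segment in a minimal direction carries a saddle connection in that very direction while remaining minimal; here the lift of $\gamma$ together with the (purely imaginary) residues over $\infty$ controls only one of the two independent periods of the elliptic cover, so the second period is a priori free to have nonzero real part.

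Your approach can be repaired, and then it becomes a legitimate alternative to the paper's, but by an argument you gesture at without making: in the dangerous configuration every boundary corner of $G$ has angle exactly $\pi$ and the interior of $G$ contains only simple poles, so the double cover of $G$ branched at those poles is a compact foliated \emph{annulus without singularities}, to which Poincar\'e--Bendixson applies and which therefore carries no dense leaf (equivalently, the leaves at small transverse distance from $\partial G$ close up after one circuit, which is incompatible with density). Carrying this out requires the short case analysis showing that a density domain must in fact be $\Omega\setminus\gamma$, so that its branched cover really is planar; as written, the decisive step is asserted rather than proved. One point in your favour: you correctly observe that the statement is vacuous if read literally, since $\beta$ is always a critical trajectory; the hypothesis actually used is that a critical trajectory other than $\beta$ --- in practice the third separatrix $\gamma$, or a pole-to-pole connection --- is finite, which is also how the paper's own proof reads it.
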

\begin{proof}
Obviously, we only need to prove sufficiency. Assume that $\gamma$ is a critical trajectory joining for instance $v$ and $a_1$. In this case, taking into account the residue at infinity, we conclude that
$$
\Re \int_{a_2} ^{a_3} \sqrt{\frac{t-v}{A(t)}}\, dt =0
$$
if we integrate along a simple arc in $\Omega \setminus \gamma $ joining both poles. This means that
$$
\Re \int_{a_2} ^{z} \sqrt{\frac{t-v}{A(t)}}\, dt
$$
is a harmonic function on the Riemann surface $\mathcal R$ of $y^2=(x-a_2)(x-a_3)$ with two cuts along the lift of $\gamma $ to $\mathcal R$. Reasoning as in \cite{Gonchar:87} we conclude that a zero level curve of this function connects $a_2$ and $a_3$, and its projection on $\C$ constitutes the second critical trajectory of $\varpi_v$. Since the critical graph $\Gamma _v$ of $\varpi_v$ is a compact set, the differential is closed.

The remaining case is analyzed in a similar way, and this concludes the proof.
\end{proof}

Recall that by construction (see Proposition \ref{thm:existenceclosedtrajectoryp2}), $\beta $ is part of the boundary $\beta^*$ of the maximal ring domain $\mathcal R_\infty$ swept out by homotopic closed trajectories of $\varpi_v$. Hence, in the case of an exterior configuration, $\gamma\subset \beta ^*$ is a critical trajectory. By Proposition \ref{prop:onetrajectory}, $v\in \mathcal V$. An analogous conclusion is obtained if $\gamma \setminus \{v\} \subset \Omega$ is critical.

Finally, assume that $\gamma \setminus \{v\} \subset \Omega$  is recurrent. According to Corollaries (1) and (2) of Theorem 11.2 in \cite{Strebel:84}, its limit set is a domain bounded by the closure of a critical trajectory. Since in this case no critical trajectories can exist in $\Omega$, we conclude that the closure of the limit set of $\gamma$ is $\overline \Omega$, which concludes the proof of the statements above.

\begin{figure}[htb]
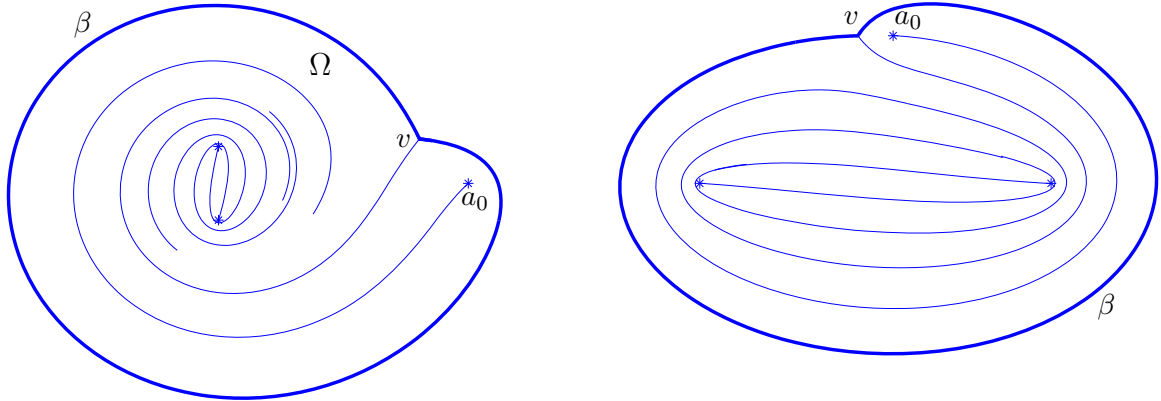

\centering \begin{tabular}{ll} \hspace{-1.8cm}\mbox{\begin{overpic}[scale=0.65]%
{recurrent}%
     \put(60,54){\large $\Omega $}
       \put(72,44){$v $}
   \put(81,36){$a_0 $}
   \put(27.5,60){$\beta $}
\end{overpic}} &
\hspace{-1.5cm}\mbox{\begin{overpic}[scale=0.65]%
{double}%
     \put(45,61){$v $}
        \put(52,61){$a_0$}
       \put(80,21){$\beta $}
\end{overpic}}
\end{tabular}
\caption{Trajectories of a quadratic differential in an interior recurrent (left) and  interior closed configurations.}
\label{fig:recurrent}
\end{figure}

Let us summarize part of our conclusions in the following statement:
\begin{theorem}
\label{thm:structuretrajectories}
Let  $v\in \mathcal V \setminus \AA^* $. Then the critical graph $\Gamma_v$ of $\varpi_v$ is the union of three critical trajectories,
$$
\Gamma_v=\alpha \cup \beta \cup \gamma ,
$$
such that $\beta=\beta (v) $ is a closed loop that contains $v$, $\alpha=\alpha (v) $ is an arc in the bounded component $\Omega$ of $\C\setminus \beta$ joining two poles from $\AA$, and $\gamma =\gamma (v)$ connects the remaining pole with the zero $v$.

The set $\C\setminus \Gamma_v$ is the union of two disjoint domains:
\begin{itemize}
\item the bounded component $\Omega \setminus \Gamma_v$ of $\C\setminus \Gamma_v$ is a ring domain bounded either by $\alpha $ and $\beta $ (for an exterior configuration) or by $\Gamma_v$ (for an interior configuration);
\item the unbounded component $\mathcal D \setminus \Gamma_v =\mathcal R_\infty$ of $\C\setminus \Gamma_v$ is a disc domain in $\overline \C$, bounded either by $\beta $ and $\gamma $ (for an exterior configuration) or by $\beta$ (for an interior configuration).
\end{itemize}
\end{theorem}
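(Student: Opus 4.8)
The plan is to reduce the whole statement to the structural facts already assembled in Proposition~\ref{thm:existenceclosedtrajectoryp2} and the classification preceding the theorem, and then to settle the count of critical trajectories and of complementary domains by a prong-counting argument together with Euler's formula. For $v\in\mathcal V\setminus\AA^*$, Proposition~\ref{thm:existenceclosedtrajectoryp2} already supplies the closed loop $\beta$ through the simple zero $v$, built from two of the three trajectory arcs at $v$ and splitting $\C$ into the bounded domain $\Omega$ and the unbounded domain $\mathcal D$, together with the maximal ring domain $\mathcal R_\infty$ of closed trajectories homotopic to infinity, whose finite boundary component is $\beta^*$. Let $\gamma$ be the remaining separatrix at $v$. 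Since $v\in\mathcal V$, the differential $\varpi_v$ is closed, so no trajectory is recurrent and the interior recurrent configuration is excluded; we are in the exterior or the interior closed case.

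To identify $\Gamma_v$ I would count prongs at the finite critical points. By the local structure of trajectories (Section~\ref{sec:qd}) the simple zero $v$ emits $3$ arcs and each of the three simple poles $a_0,a_1,a_2$ emits exactly one, giving $6$ prongs that must pair off into $3$ critical trajectories (closedness forces every separatrix to be a finite critical trajectory terminating at a finite critical point, as $\infty$ is surrounded by closed trajectories). Two prongs at $v$ are consumed by $\beta$; the third, carrying $\gamma$, cannot form a second loop at $v$ (that would need two free prongs), so $\gamma$ must end at a pole. The two remaining pole-prongs then have no option but to join each other, producing an arc $\alpha$ between the other two poles. Since trajectories cannot cross $\beta$ except at its critical point $v$, the endpoints of $\alpha$ lie on a single side of $\beta$, and Proposition~\ref{thm:existenceclosedtrajectoryp2} (at least two poles in $\Omega$) places them in $\Omega$. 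Hence $\Gamma_v=\alpha\cup\beta\cup\gamma$ of exactly the asserted types, with no further critical trajectories.

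For the complementary domains I would argue topologically on $\overline\C$. As an embedded graph, $\Gamma_v$ has vertices $v,a_0,a_1,a_2$ and edges $\beta$ (a loop at $v$), $\gamma$, $\alpha$, so $V=4$, $E=3$, and two connected components ($\beta\cup\gamma$ and $\alpha$), $C=2$. Euler's formula for the number of complementary regions of a graph on the sphere, $F=E-V+1+C$, yields $F=2$; since $\infty$ lies in the interior of one region, $\C\setminus\Gamma_v$ likewise has two components. One is $\mathcal D\setminus\Gamma_v=\mathcal R_\infty$: it contains the double pole at infinity, around which trajectories are closed Jordan curves, so it is a disc domain in $\overline\C$, and its boundary $\beta^*$ equals $\beta\cup\gamma$ in the exterior case ($\gamma\subset\mathcal D$) and $\beta$ in the interior case ($\gamma\subset\Omega$). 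The other is $\Omega\setminus\Gamma_v$; being closed, $\varpi_v$ fills it with closed trajectories encircling the critical slit it contains, so it is a ring domain, bounded by $\alpha$ and $\beta$ in the exterior case and by all of $\Gamma_v$ in the interior case.

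The main obstacle is the ring-domain analysis of $\Omega\setminus\Gamma_v$: one must confirm that the closed trajectories filling it are mutually homotopic, so that the region is genuinely doubly connected with the critical slit as its degenerate core rather than breaking into several ring domains. The model $\C\setminus[a_1,a_2]$, whose horizontal trajectories are the confocal ellipses with foci $a_1,a_2$, shows that two simple poles joined by a critical arc do act as such a degenerate inner boundary; combined with the single-face count of the previous paragraph this pins down the topology in both configurations. By comparison the prong count is routine once closedness has ruled out recurrence.
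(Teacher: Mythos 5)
Your proposal is correct, and it reaches the conclusion by a route that differs from the paper's in its two key steps. The paper obtains the theorem as a summary of the discussion preceding it: the loop $\beta$ and the trichotomy (exterior / interior closed / interior recurrent) come from Proposition~\ref{thm:existenceclosedtrajectoryp2}, recurrence is excluded for $v\in\mathcal V$ via Strebel's Theorem~11.2, and --- crucially --- the existence of the second critical trajectory $\alpha$ joining the remaining two poles is extracted from the proof of Proposition~\ref{prop:onetrajectory}, where a period vanishes because of the residue at infinity and a harmonic function on the elliptic surface of $y^2=(x-v)A(x)$ is shown (reasoning as in Gonchar--Rakhmanov) to have a zero level curve connecting the two poles; the structure of the complement is then read off from the maximal ring domain $\mathcal R_\infty$. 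You instead exclude recurrence directly from closedness (every separatrix must be critical, since no trajectory can escape to the double pole at infinity), obtain $\alpha$ and the completeness of $\Gamma_v$ by pairing the $3+3\cdot 1=6$ prong-ends into exactly three critical trajectories, and count the complementary components with Euler's formula $F=E-V+C+1=2$. Your combinatorial argument is more elementary and self-contained for the statement at hand, and your Euler count in fact disposes of the worry you raise at the end (the bounded complementary region is a single face, hence a single ring domain by the structure theory of closed differentials, as it contains no double pole); what it does not give you, and what the paper's analytic argument buys, is the stronger Proposition~\ref{prop:onetrajectory} itself --- that a \emph{single} critical trajectory already forces $\varpi_v$ to be closed --- which the paper needs elsewhere but which you may legitimately bypass here since $v\in\mathcal V$ is hypothesis. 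The only place where you should be slightly more explicit is the location of $\alpha$: the argument is that $\alpha$ cannot meet $\beta$ (the only critical point on $\beta$ is $v$, whose prongs are all accounted for), so both its endpoints lie on one side of $\beta$, and the "at least two poles in $\Omega$" clause of Proposition~\ref{thm:existenceclosedtrajectoryp2} then forces that side to be $\Omega$ in both configurations; you state this, and it is correct as written.
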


For what follows we will fix an  orientation of both arcs $\alpha $ and $\gamma $. For instance, we can agree that $\alpha$ goes from $a_i$ to $a_j$ if $i<j$, and $\gamma $ always goes from a pole to $v$.

\subsection{Homotopic type of a closed differential} \label{sec:homotopicType2}

The partition of the plane by the critical graph $\Gamma_v$, associated to the closed quadratic differential $\varpi_v$ and described in Theorem \ref{thm:structuretrajectories}, allows us to introduce a geometric characterization of the trajectories of $\varpi$ ``at large''.

For $v\in \mathcal V$ we define the homotopic type of $\varpi_v$ as the free homotopic class in $\C\setminus \AA$ of any of its closed trajectories in $\Omega\setminus \Gamma_v$. Any such a trajectory is a closed Jordan curve in $\C\setminus \AA$ containing exactly two points from $\AA$ (say, $a_1$ and $a_2$) in its interior, and leaving in the exterior the third pole, $a_0$. We can further think that the (Carath\'{e}odory) boundary component given by the ``two-sided $\alpha $'' belongs to this homotopy class (we think of the two-sided $\alpha $  as the closed curve with $a_1$ and $a_2$ in its interior). Then, without loss of geometric information, we can identify the homotopic class of the two-sided $\alpha $ with the homotopic class of  $\alpha $ itself, considered now as an arc with fixed endpoints $a_1$ and $a_2$ in $\C\setminus \{a_0\}$.

Thus, for $v\in \mathcal V\setminus \AA^*$ the homotopic type of $\varpi_v$ has a combinatorial component (namely, which two of the three poles from $\AA$ are joined by $\alpha $) and the geometric one (given by the homotopy of $\alpha $ in the punctured plane with the third pole removed).

It follows from the general theory that there exists a one-(real) parametric family of closed differentials $\varpi_v$ with the prescribed homotopic type. One standard way to parametrize such a family is by the $\varpi_v$-lengths of the trajectories in $\Omega \setminus \Gamma_v$; another one is based on the $\varpi_v$-lengths of the conjugate (orthogonal) trajectories. More precisely, recall from Section \ref{sec:qd} that the $\varpi_v$-length of a curve $\tau$ is
$$
\|\tau \|_{\varpi_v }= \frac{1}{\pi}\, \int_{\tau}
\sqrt{\left|\frac{t-v}{A(t)}\right| } \, |dt|\,.
$$
By definition of the (horizontal) arc, given $v\in \mathcal V$, all closed trajectories of $\varpi_v$ in $\Omega \setminus \Gamma_v$ have the same $\varpi_v$-length, equal to
$$
 l_v \isdef 2 \, \|\alpha  \|_{\varpi_v }.
$$
Moreover, $l_v$ is the minimum of the $\varpi_v$-length of a closed Jordan curve separating the boundary components of $\Omega\setminus \Gamma_v$; it is called the \emph{length of the circumferences} of the cylinder associated with $\Omega \setminus \Gamma_v$ (see \cite[Chapter VI]{Strebel:84}).

The conjugate value
$$
 h_v \isdef \inf \left\{ \|\tau  \|_{\varpi_v }:\, \tau \text{ connects boundary components of } \Omega\setminus \Gamma_v \right\}
$$
is called the \emph{height of the cylinder} associated with $\Omega \setminus \Gamma_v$. Again, it coincides with the $\varpi_v$-length of any arc of orthogonal trajectory connecting the  boundary components of $\Omega\setminus \Gamma_v$.

\begin{lemma}
\label{lemma:homotopicTypes}
Let $\alpha $ be a Jordan arc lying in $\C\setminus \AA$ (except for its endpoints) and connecting two poles from $\AA$. Then for any value $h>0$ there exists a unique $v\in \mathcal V$ such that $\varpi_v$ has the homotopic type $\alpha  $, and $h_v = h$.
\end{lemma}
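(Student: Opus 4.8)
\emph{Plan.} Fixing the homotopic type $\alpha$ amounts to fixing which two poles are joined by the arc $\alpha$ and the free homotopy class of the core trajectory of the cylinder $\Omega\setminus\Gamma_v$; within $\mathcal V$ this isolates a locus $\mathcal V_\alpha$, and the goal reduces to showing that $v\mapsto h_v$ is a strictly monotone homeomorphism of $\mathcal V_\alpha$ onto $(0,\infty)$. The plan is to realize both $l_v$ and $h_v$ as periods of $\sqrt{\varpi_v}$ on the elliptic curve attached to $\varpi_v$, and then to differentiate those periods in $v$. Introduce the genus-one surface $\mathcal R_v:\ y^2=(v-z)A(z)$, with branch points $v,a_0,a_1,a_2$ and $\infty$ unramified, on which $\sqrt{\varpi_v}$ becomes the single-valued meromorphic differential $\eta=\sqrt{(v-z)/A(z)}\,dz=(y/A)\,dz$. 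A local computation at the two points over $\infty$ shows that $\eta$ has there only simple poles, with opposite purely imaginary residues $\pm i$; hence any period of $\eta$ is well defined modulo $2\pi i\cdot(\pm i)=\mp 2\pi\in\R$, so that $\Im\oint\eta$ depends only on the homology class. Let $\mathfrak a$ be the cycle carried by the core of the cylinder $\Omega\setminus\Gamma_v$ (it encircles the two poles joined by $\alpha$) and $\mathfrak b$ a transversal meeting it once, so that $\{\mathfrak a,\mathfrak b\}$ is a symplectic basis of $H_1(\mathcal R_v)$. Because the core is a horizontal trajectory while $\mathfrak b$ crosses the cylinder in the orthogonal direction, Theorem \ref{thm:structuretrajectories} and the definition of the distinguished parameter give
\[
\Pi_{\mathfrak a}(v)\isdef\oint_{\mathfrak a}\eta\in\R,\qquad l_v=\frac{1}{\pi}\,\bigl|\Pi_{\mathfrak a}(v)\bigr|,\qquad h_v=\frac{1}{\pi}\,\bigl|\Im\Pi_{\mathfrak b}(v)\bigr|,\qquad \Pi_{\mathfrak b}(v)\isdef\oint_{\mathfrak b}\eta,
\]
the reality of $\Pi_{\mathfrak a}$ being exactly the closedness of $\varpi_v$ (Proposition \ref{prop:onetrajectory}).

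The key observation is that differentiation in the branch point $v$ lowers the order of the radical:
\[
\partial_v\eta=\tfrac12\,\frac{dz}{\sqrt{(v-z)A(z)}}=\tfrac12\,\frac{dz}{y}=\tfrac12\,\omega,
\]
where $\omega=dz/y$ is the (unique up to scale) \emph{holomorphic} differential on $\mathcal R_v$. Consequently $\Pi_{\mathfrak c}'(v)=\tfrac12\,\omega_{\mathfrak c}$ for $\mathfrak c\in\{\mathfrak a,\mathfrak b\}$, with $\omega_{\mathfrak c}=\oint_{\mathfrak c}\omega$. Since $\Pi_{\mathfrak a}$ is holomorphic in $v$ with $\Pi_{\mathfrak a}'=\tfrac12\omega_{\mathfrak a}\neq 0$ (a nonzero holomorphic differential on a torus has nonvanishing periods), the equation $\Im\Pi_{\mathfrak a}(v)=0$ has nonvanishing differential, so $\mathcal V_\alpha$ is a smooth real-analytic arc. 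Parametrizing it by a real parameter $s$ normalized so that $\omega_{\mathfrak a}\,\dot v=1$ (admissible because $\Im(\omega_{\mathfrak a}\dot v)=0$ holds along $\mathcal V_\alpha$ and $\omega_{\mathfrak a}\neq0$), we obtain
\[
\frac{d}{ds}\,\Im\Pi_{\mathfrak b}(v)=\Im\!\Bigl(\tfrac12\,\omega_{\mathfrak b}\,\dot v\Bigr)=\tfrac12\,\Im\frac{\omega_{\mathfrak b}}{\omega_{\mathfrak a}}=\tfrac12\,\Im\tau,
\]
where $\tau=\omega_{\mathfrak b}/\omega_{\mathfrak a}$ is the modulus of $\mathcal R_v$. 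By the Riemann bilinear relations $\Im\tau$ never vanishes and keeps a constant sign, so $\Im\Pi_{\mathfrak b}$ is strictly monotone along $\mathcal V_\alpha$; as it vanishes only at the Chebotarev end (see below) it has constant sign on $\mathcal V_\alpha$, whence $h_v=\tfrac1\pi|\Im\Pi_{\mathfrak b}|$ is \emph{strictly monotone}. This already yields uniqueness.

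It remains to identify the range. As $v\to v^*$ the critical graph becomes the connected Chebotarev compact, so by Proposition \ref{prop:onetrajectory} the two independent periods of $\eta$ are simultaneously real; hence $\Im\Pi_{\mathfrak b}(v)\to0$ and $h_v\to0$. As $v\to\infty$ one has $\eta\sim v^{1/2}\,dz/\sqrt{A(z)}$, so that $|\Im\Pi_{\mathfrak b}(v)|\sim|v|^{1/2}\,\bigl|\Im\oint_{\mathfrak b}dz/\sqrt{A}\bigr|\to\infty$ and $h_v\to\infty$. Being continuous, strictly monotone, and tending to $0$ and to $\infty$ at the two ends of the arc $\mathcal V_\alpha$, the map $h\colon\mathcal V_\alpha\to(0,\infty)$ is a bijection, which proves both the existence and the uniqueness of $v$.

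\emph{Main obstacle.} The monotonicity engine $\Im\tau\neq0$ is immediate once the set-up is in place; the genuine work lies in the bookkeeping that precedes it. One must verify that $\mathfrak a,\mathfrak b$ form a symplectic basis and that $l_v,h_v$ are precisely $\tfrac1\pi|\Pi_{\mathfrak a}|,\tfrac1\pi|\Im\Pi_{\mathfrak b}|$—this rests on the detailed trajectory picture of Theorem \ref{thm:structuretrajectories}, in particular on how the transversal $\mathfrak b$ closes up through the disc domain at $\infty$. The two degenerations are the other delicate points: at $v\to v^*$ the curve $\mathcal R_v$ stays smooth, so $h_v\to0$ must be read off from the \emph{coalescence} of the two real periods at the connected Chebotarev configuration rather than from any collapse of $\mathcal R_v$, while at $v\to\infty$ one must check that the limiting imaginary period does not vanish so that the $|v|^{1/2}$ growth genuinely forces $h_v\to\infty$.
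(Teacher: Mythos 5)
Your route is genuinely different from the paper's: the paper disposes of this lemma in two sentences by invoking Strebel's Theorem 21.1 on the existence of finite quadratic differentials with prescribed heights, together with an (unproven) approximation of $\varpi_v$ by finite differentials to handle the double pole at infinity. You instead work directly with the period map of $\eta=\sqrt{(v-z)/A(z)}\,dz$ on the elliptic curve $y^2=(v-z)A(z)$, differentiate the periods in the branch point $v$ to land on the holomorphic differential, and extract strict monotonicity of $h_v$ from $\Im\tau\neq0$. This is very much in the spirit of the machinery the paper develops elsewhere (the function $w$ in \eqref{def_Wnew}, the mappings $\mathcal P$ and $\mathfrak P$ of Section \ref{sec:mappings}, and Lemma \ref{lemma:springer}), and the local/monotonicity part of your argument is correct, including the identification $h_v=\frac1\pi|\Im\Pi_{\mathfrak b}|$ (which needs, and can be given, the justification that the boundary of the circle domain at infinity is a single level set of $\Im\xi$, so the portion of $\mathfrak b$ outside the cylinder contributes nothing to $\Im\Pi_{\mathfrak b}$).

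The genuine gap is in the existence half. Your argument only runs once you know that $\mathcal V_\alpha$ is a \emph{nonempty, connected} arc whose two ends are $v^*$ and $\infty$; you assume all three properties. Nonemptiness for an \emph{arbitrary} homotopic type $\alpha$ (a Jordan arc winding arbitrarily many times around the third pole) is precisely the hard global existence statement — it is the reason the paper reaches for Strebel's moduli theorem, and it does not follow from the implicit function theorem applied to $\Im\Pi_{\mathfrak a}=0$, which is purely local. Connectedness of $\mathcal V_\alpha$ is equally essential: strict monotonicity of $h_v$ on each component does not preclude two components carrying the same value of $h$, so uniqueness is not established without it. Both facts are essentially the content of Theorem \ref{cor:structureV} (each homotopic type corresponds to exactly one arc $\ell_k$ joining $v^*$ to $\infty$), which is proved later by a separate global argument (level curves of $w$ confined to sectors) and is not available to you here without importing it explicitly. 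Two smaller points: the assertion that $\Im\Pi_{\mathfrak b}$ vanishes on $\mathcal V_\alpha$ only at $v^*$ needs the fact that ``both periods of $\eta$ real'' characterizes the Chebotarev configuration uniquely; and at $v\to\infty$ the estimate $|\Im\Pi_{\mathfrak b}|\sim|v|^{1/2}\,|\Im\oint_{\mathfrak b}dz/\sqrt A|$ is not correct as written, since $\Im(v^{1/2}c)\neq|v|^{1/2}|\Im c|$ — one must use the constraint $\Im\Pi_{\mathfrak a}=0$ to pin down $\arg v$ asymptotically and then appeal to the bilinear relation $\Im\bigl(\overline{\oint_{\mathfrak a}dz/\sqrt A}\,\oint_{\mathfrak b}dz/\sqrt A\bigr)\neq0$ for the degenerate curve $y^2=A(z)$ to conclude $h_v\to\infty$.
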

This Lemma may be proved by reduction to a general Theorem 21.1 from \cite[\S 21]{Strebel:84} on the existence of finite differentials. Our quadratic differentials $\varpi_v$ are not finite (due to the double pole at infinity), but they can be approximated by finite differentials in a way preserving essential characteristics.

The same family of differentials of the homotopic type $\alpha $ may be parametrized alternatively by the length $l_v$ of the circumferences  of the cylinder associated with $\Omega \setminus \Gamma_v$. In this case each homotopic type has a minimal (strictly positive) admissible length:
\begin{lemma}
\label{lemma:lengths}
Let $\alpha $ be a Jordan arc lying in $\C\setminus \AA$ (except for its endpoints) and connecting two poles from $\AA$. Define
$$
L =L(\alpha) \isdef \inf \{l_v:\, v\in \mathcal V \text{ and $\varpi_v$ has the homotopic type $\alpha  $}  \}.
$$
Then $L>0$ and for any $l>L$ there exists a unique $v\in \mathcal V$ such that $\varpi_v$ has the homotopic type $\alpha  $, and $l_v = l$.
\end{lemma}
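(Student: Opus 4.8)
The plan is to reduce the statement to its companion Lemma~\ref{lemma:homotopicTypes} by reparametrizing the family of closed differentials of a fixed homotopic type, and then to study the circumference $l_v$ as a function of that parameter. By Lemma~\ref{lemma:homotopicTypes} the set $\{v\in\mathcal V:\ \varpi_v\text{ has homotopic type }\alpha\}$ is an analytic arc, parametrized bijectively by the height $h=h_v\in(0,\infty)$; write $v=v(h)$ and $l(h)\isdef l_{v(h)}$. The lemma then follows once I show that $l$ is a continuous, strictly increasing bijection of $(0,\infty)$ onto $(L,\infty)$ with $L=\inf_h l(h)>0$: continuity together with the boundary limits below gives existence for every $l>L$ by the intermediate value theorem, and strict monotonicity gives uniqueness.

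The main quantitative input is a conservation law forced by the double pole at infinity. Since $\varpi_v$ has residue $-1$ there, every closed trajectory encircling $\infty$ has $\varpi_v$-length exactly $2$; in particular the unbounded ring domain $\mathcal R_\infty$ of Theorem~\ref{thm:structuretrajectories} has circumference $2$. Reading off $\partial\mathcal R_\infty$ in the two admissible configurations (exterior, where $\partial\mathcal R_\infty=\beta\cup\gamma$ with $\gamma$ a slit, and interior closed, where $\partial\mathcal R_\infty=\beta$) and comparing with the inner boundary $2\|\alpha\|_{\varpi_v}$ of the bounded cylinder $\Omega\setminus\Gamma_v$, I obtain
\[
l_v=2\|\alpha\|_{\varpi_v}=2+2\varepsilon\,\|\gamma\|_{\varpi_v},\qquad
\varepsilon=\begin{cases}-1,&\text{exterior},\\ +1,&\text{interior}.\end{cases}
\]
From this the boundary behaviour is transparent. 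As $h\to\infty$ one has $v\to\infty$, the configuration is interior, and $\|\gamma\|_{\varpi_v}\to\infty$ (the $\varpi_v$-length of the arc joining a fixed pole to the receding zero $v$ is unbounded, as one checks from the growth of the underlying elliptic integral), so $l(h)\to\infty$. As $h\to0$ one has $v\to v^*$, the cylinder $\Omega\setminus\Gamma_v$ pinches, and $\gamma$ tends to the arc of the Chebotarev compact issuing from $v^*$ toward the third pole; hence $l(h)\to L=2\|\alpha\|_{\varpi_{v^*}}$, the limiting $\alpha$ being the union of the two Chebotarev arcs from $v^*$ to the poles that $\alpha$ joins. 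Since the three Chebotarev arcs have total $\varpi_{v^*}$-length $1$ (again the residue normalization) and are nondegenerate, $L=2\bigl(1-\|\gamma\|_{\varpi_{v^*}}\bigr)>0$, which proves $L>0$ and explains its dependence on $\alpha$.

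Continuity of $l(h)$ follows from the continuous dependence of Jenkins--Strebel differentials, and of their critical graphs and trajectory metrics, on the defining parameters, exactly as in the proof of Lemma~\ref{lemma:homotopicTypes}: one approximates $\varpi_v$ by finite differentials and invokes the stability results of \cite{Strebel:84}. The delicate point, and the step I expect to be the main obstacle, is the strict monotonicity of $l$ in $h$, which is what yields uniqueness. The sign change of $\varepsilon$ across the exterior/interior transition (where $\|\gamma\|_{\varpi_v}\to0$ and $l_v\to2$) shows that $l$ passes smoothly through the value $2$, so it suffices to prove strict monotonicity on each branch separately.

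I plan to argue variationally on the elliptic curve $w^2=(v-z)A(z)$, on which $\eta\isdef\sqrt{\varpi_v}\,dz$ is a differential of the third kind with purely imaginary residues at the two points over infinity; here $l_v$ and $h_v$ are read off from the real and the imaginary parts of the periods of $\eta$, and the closedness locus $\mathcal V$ is cut out by the vanishing of the real part of one period (Proposition~\ref{prop:onetrajectory}). A first-variation computation of these periods in $v$, restricted to $\mathcal V$, should produce a definite sign for $dl/dh$ on each branch. Alternatively, and likely more robustly, uniqueness can be extracted from the uniqueness clause of Strebel's Theorem~21.1 applied to the finite-differential approximants, now prescribing the circumference rather than the height, the admissible circumferences being precisely those exceeding the threshold $L=L(\alpha)$.
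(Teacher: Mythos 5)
Your overall strategy---parametrize the fixed homotopic class by the height $h$ via Lemma~\ref{lemma:homotopicTypes} and study the circumference $l$ as a function of $h$---is exactly the reduction the paper intends (its own ``proof'' is the single sentence that the lemma may be reduced to Lemma~\ref{lemma:homotopicTypes} or to the moduli theory in \cite[\S 21]{Strebel:84}), and your conservation law $l_v=2+2\varepsilon\|\gamma\|_{\varpi_v}$ coming from the residue $-1$ at infinity is correct and consistent with Lemma~\ref{lemma:anotherparametr}, since $l_v=2\mu_v(\alpha)=2(1-\mu_v(\gamma))$ and $\mu_v(\gamma)=-\varepsilon\|\gamma\|_{\varpi_v}$ according to the configuration. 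The decisive gap is uniqueness: strict monotonicity of $l$ along the arc is precisely the content of the lemma that goes beyond Lemma~\ref{lemma:homotopicTypes}, and ``a first-variation computation \dots should produce a definite sign'' is a plan, not a proof; likewise, the uniqueness clause of Strebel's Theorem~21.1 is formulated for prescribed heights, and transferring it to prescribed circumferences is exactly the monotonicity you are missing. The step can be closed with the period map $w$ of \eqref{def_Wnew}: by Proposition~\ref{prop:localstructureV} the arc of $\mathcal V$ carrying the homotopic type $\alpha$ is a level curve $\{\Re w=0\}$ of an analytic function with $w'\neq 0$ (see \eqref{def:Derw0}), while on that arc $w=\pi i\,\mu_v(\alpha)$, so $l_v=2\mu_v(\alpha)=\frac{2}{\pi}\Im w(v)$. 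Since $w$ is locally injective, $\Im w$ is strictly monotone along any level arc of $\Re w$, hence $l_v$ is continuous and strictly monotone along the connected arc; together with your endpoint limits and the intermediate value theorem this yields existence and uniqueness for every $l>L$.

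A second, smaller, inaccuracy: your analysis of the $h\to 0$ endpoint silently assumes the trivial homotopic type. For a nondistinguished arc of $\mathcal V$ (Theorem~\ref{cor:structureV}) the configuration is interior for \emph{all} $v$ on the arc, and as $v\to v^*$ both $\alpha(v)$ and $\gamma(v)$ degenerate onto the Chebotarev continuum covered several times; your formula $L=2\bigl(1-\|\gamma\|_{\varpi_{v^*}}\bigr)$ is an exterior-configuration identity and would there give a wrong (possibly negative) value, the correct one being $L=2+2\lim\|\gamma\|_{\varpi_v}\geq 2$. The conclusion $L>0$ survives in every case, because $L=2\lim\|\alpha(v)\|_{\varpi_v}$ is twice the $\varpi_{v^*}$-length (counted with multiplicity) of a limiting curve joining two distinct poles and is therefore bounded below by twice their $\varpi_{v^*}$-distance; but as written your computation of $L$ only covers the three distinguished arcs of Theorem~\ref{thm:structureVPlus2}.
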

Lemma \ref{lemma:lengths} may be reduced to Lemma \ref{lemma:homotopicTypes} or derived from the general existence theorems related to the moduli problem, see \cite[\S 21]{Strebel:84}.

\subsection{Correspondence between closed differentials and $\AA$-critical measures}

We have proved in Section \ref{sec:qdandcriticalmeasures} that for any signed $\AA$-critical measure $\mu$ with $\mu(\C)=1$ there exists a closed quadratic differential $\varpi_v$ in terms of which the measure and its potential may be analytically expressed. In general, this is not a bijection, since many critical measures correspond to the same quadratic differential. It follows from the proof of Proposition~\ref{uniquenessmeasure} that for $p=2$ a one-to-one correspondence between closed differentials and signed critical measures is restored if we consider the $\AA$-critical measures $\mu $ with the additional property that $\C\setminus \supp(\mu )$ is connected. This subclass is the most important for applications. 

Let $v\in \mathcal V\setminus \AA^*$. Obviously, $\sqrt{V/A}$, with $V(z)=z-v$, has a single-valued branch in $\C\setminus \overline{\alpha \cup \gamma }$; the critical trajectories $\alpha =\alpha (v)$ and $\gamma =\gamma (v)$ were introduced in Theorem \ref{thm:structuretrajectories}. We fix the branch by requiring that $\lim_{z\to \infty} z \sqrt{V/A}(z)=1$. Next we choose the positive (anti-clockwise) orientation in a neighborhood of infinity, that induces orientation on each side of  $\alpha$ and $\gamma$. We denote by the subindex ``$+$'' the boundary value of a function at $\alpha $ and $\gamma $ from the side where the induced orientation matches the given orientation of each arc (see the remark after Theorem \ref{thm:structuretrajectories}).

With this convention, and taking into account that  $\alpha$ and $\gamma$ are trajectories of $\varpi_v$, we conclude that
\begin{equation}\label{measureMuv}
    d\mu_v(z) \isdef \frac{1}{\pi i} \, \left(\sqrt{\frac{z-v}{A(z)}}\right)_+ \, dz,
\end{equation}
 defines a signed real measure on $ \overline{\alpha \cup \gamma }$. Moreover, taking into account the residue at infinity, we see that $\int d\mu_v=1$. Hence, we have proved the following
\begin{proposition}
\label{prop:uniquemapping}
For any $v\in \mathcal V\setminus \AA^*$ there exits a unique signed $\AA$-critical measure $\mu_v$ with $\mu_v(\C)=1$, such that $\supp (\mu_v)= \overline{\alpha \cup \gamma }$. Furthermore, $\mu_v$ is absolutely continuous on $\supp (\mu_v)$ with respect to the arc-length measure, and formula \eqref{measureMuv} holds.
\end{proposition}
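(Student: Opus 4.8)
The plan is to collect the structural facts already assembled before the statement and to supply the two points that still need an argument: the identification of the Cauchy transform of $\mu_v$, and the uniqueness. Most of the picture is in place. By Theorem \ref{thm:structuretrajectories} the arcs $\alpha=\alpha(v)$ and $\gamma=\gamma(v)$ are critical trajectories of $\varpi_v$ that pair up the four branch points $a_0,a_1,a_2,v$ of $\sqrt{(z-v)/A}$, so this square root has a single-valued branch $g$ in the connected set $\C\setminus\overline{\alpha\cup\gamma}$, normalized by $z\,g(z)\to1$ at infinity. Because $\alpha$ and $\gamma$ are horizontal trajectories, along them $\frac{v-z}{A}\,(dz)^2>0$; hence $g_+\,dz$ is purely imaginary and the density in \eqref{measureMuv} is real. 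The singularities of $g$ at the endpoints are of square-root type, hence integrable, so $\mu_v$ is a finite real measure, absolutely continuous with respect to arc length, with an analytic density that vanishes only at $v$; consequently $\supp(\mu_v)=\overline{\alpha\cup\gamma}$.

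First I would identify the Cauchy transform. Since $g$ is holomorphic in $\C\setminus\overline{\alpha\cup\gamma}$, vanishes at infinity, and satisfies $g_-=-g_+$ across the cuts, the Sokhotsky--Plemelj reconstruction of $g$ from its jump gives, with the chosen orientation,
$$
C^{\mu_v}(z)=\frac{1}{\pi i}\int_{\alpha\cup\gamma}\frac{g_+(x)}{x-z}\,dx=-\sqrt{\frac{z-v}{A(z)}}\,,\qquad z\notin\overline{\alpha\cup\gamma}\,.
$$
Squaring, $\left(C^{\mu_v}\right)^2=V/A$ holds $\lebesgue_2$-almost everywhere, which is exactly \eqref{charactMeasure} with $\Phi'\equiv0$ and $R=V/A$; by Remark \ref{rem:sufficientcondition} this makes $\mu_v$ an $\AA$-critical measure. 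For the total mass, deforming the contour $\alpha\cup\gamma$ out to a large circle and using $z\,g(z)\to1$ yields $\int d\mu_v=\pm1$, the value $+1$ being the one selected by the orientation convention; this is the residue computation already indicated before the statement.

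For uniqueness, let $\widetilde\mu$ be any signed $\AA$-critical measure with $\widetilde\mu(\C)=1$ and $\supp(\widetilde\mu)=\overline{\alpha\cup\gamma}$. By Lemma \ref{thm:rakhmanov} there is a rational $R$ with $\left(C^{\widetilde\mu}\right)^2=R$; since $\varphi\equiv0$ (all $\rho_j=0$) the last clause of Theorem \ref{main_thm} permits $R$ at most simple poles at $\AA$, so $R=W/A$ with $W$ a polynomial, while the normalization \eqref{normalizationR} gives $\kappa=\widetilde\mu(\C)=1$ and hence $W$ monic of degree $1$. The unique finite zero of $R$ is a finite critical point of $\varpi$ lying on $\supp(\widetilde\mu)$, and the only non-pole endpoint of the arcs is $v$; therefore $W=z-v$ and $R=V/A$. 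Since the branch $-\sqrt{R}$ has only integrable singularities at $\AA$, the measure $\widetilde\mu$ carries no atoms; and because $\C\setminus\overline{\alpha\cup\gamma}$ is connected, $C^{\widetilde\mu}$ is the single-valued branch of $\pm\sqrt{R}$ fixed by its behaviour at infinity, so $C^{\widetilde\mu}=C^{\mu_v}$. Recovering $\widetilde\mu$ from the jump of its Cauchy transform via \eqref{sokhotsky} then forces $\widetilde\mu=\mu_v$. This is the $p=2$ instance of the uniqueness mechanism in the proof of Proposition \ref{uniquenessmeasure}.

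The individual steps are routine given the machinery of Section \ref{sec:qdandcriticalmeasures}; the one point I expect to demand genuine care is the bookkeeping of the orientation of the two cuts and the branch of $g$, since both the sign in $C^{\mu_v}=-\sqrt{V/A}$ and the normalization $\mu_v(\C)=1$ hinge on it. I anticipate that this, rather than any conceptual difficulty, will be the main thing to get right.
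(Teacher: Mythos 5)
Your proposal is correct and follows essentially the same route as the paper, which proves this proposition by the very construction in the paragraph preceding it: realness of the density from the trajectory property, total mass from the residue at infinity, criticality via the identity $(C^{\mu_v})^2=V/A$ together with the sufficiency noted in Remark \ref{rem:sufficientcondition}, and uniqueness by recovering the measure from the jump of the (uniquely determined) branch of $\sqrt{V/A}$ across the support, as in the proof of Proposition \ref{uniquenessmeasure}. You have merely written out in full the details the paper leaves implicit, including the correct identification of $W=z-v$ from the fact that a trajectory arc in the support cannot terminate at a regular point of the differential.
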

\begin{rmk}
This construction can be extended in a natural way to the Chebotarev's compact ($v=v^*$) and to the degenerate cases when $v\in \AA$; in these situations, measure $\mu_{v^*}$ is positive. Hence, $v \mapsto \mu_v$ is a mapping from $\mathcal V$ into the set of signed unit measures on $\C$.
\end{rmk}

We introduce next an analytic function that will allow us to study the structure of the set $\mathcal V$ of points $v\in \C$ such that $\varpi_v $ is closed.

Let $v_0\in \mathcal V \setminus \{ v^*\}$; recall that $\alpha =\alpha(v_0)$ joins the two poles of $\varpi_{v_0}$ not connected with $v_0$ by a critical trajectory. In a simply connected neighborhood of $v_0$, disjoint with  $\alpha$,
\begin{equation}\label{def_Wnew}
    w(v)\isdef \int_{ \alpha   }  \left(  \sqrt{\frac{t-v}{A(t)}}\right)_+\, dt
\end{equation}
is analytic in $v$, single-valued, and
\begin{equation}\label{def:Derw0}
    w' (v_0 )= i \int_{\alpha  }  \frac{1}{ \sqrt{(t-v_0) A(t)}}\, dt \neq 0,
\end{equation}
since this is a period of a holomorphic differential on the elliptic Riemann surface of the algebraic function $y^2=(t-v_0) A(t)$. This construction  defines an analytic and multi-valued function  $w$ in $\C$, with $w'\neq 0$; however, formula \eqref{def_Wnew} allows to specify a single-valued branch of $w$ in a neighborhood of a point only in $\C\setminus \{ v^*\}$.

\begin{proposition}
\label{prop:localstructureV}
For every $v_0\in \mathcal V \setminus \{ v^*\}$ there exists a neighborhood $B $ of $v_0$ such that $\mathcal V\cap B $ contains an analytic arc $\ell $ passing through $v_0$, and such that the homotopic class of the trajectories of $\varpi_v$ for $v\in \ell $ is invariant.
\end{proposition}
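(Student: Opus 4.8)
The plan is to reduce the local structure of $\mathcal V$ near $v_0$ to the single fact, recorded in \eqref{def:Derw0}, that the analytic function $w$ of \eqref{def_Wnew} satisfies $w'(v_0)\neq 0$. After shrinking the neighborhood $B$ of $v_0$ I may therefore assume that $w$ is a conformal homeomorphism of $B$ onto a neighborhood of $w(v_0)$. The whole argument then amounts to identifying $\mathcal V\cap B$ (near $v_0$) with the $w$-preimage of a straight line: since the preimage of a line under a conformal map is a real-analytic arc, the proposition follows at once, and the invariance of the homotopic type is built into the construction because the arc $\alpha$ fixing the type stays the same throughout.

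First I would pin down which line is relevant. Because $\alpha=\alpha(v_0)$ is a \emph{horizontal} trajectory of $\varpi_{v_0}=-\frac{t-v_0}{A(t)}(dt)^2$, along $\alpha$ one has $\varpi_{v_0}>0$, which says precisely that $\sqrt{(t-v_0)/A(t)}\,dt$ is purely imaginary on $\alpha$; integrating gives $w(v_0)\in i\R$, i.e.\ $\Re w(v_0)=0$. For a nearby $v$ the form $\sqrt{(t-v)/A(t)}\,dt$ is holomorphic in $t$ away from the branch points, so the value of its integral over a path joining the two relevant poles depends only on the homotopy class of that path; hence if $\varpi_v$ has a critical trajectory in the class of $\alpha$, the same computation along that trajectory shows its period equals $w(v)$ and is purely imaginary, forcing $\Re w(v)=0$, while by Proposition~\ref{prop:onetrajectory} the existence of one such critical trajectory already makes $\varpi_v$ closed. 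Conversely, when $\Re w(v)=0$ with $v$ near $v_0$, the period $\int_\alpha\sqrt{(t-v)/A(t)}\,dt$ is purely imaginary and of modulus close to $|w(v_0)|$, so by Lemma~\ref{lemma:homotopicTypes} (together with the identity $\|\alpha\|_{\varpi_v}=\frac1\pi\,|w(v)|$, valid because the integrand has constant argument on a trajectory) there is a genuine closed $\varpi_v$ of homotopic type $\alpha$ realizing this height. Thus, locally, $\mathcal V\cap B=\{v\in B:\Re w(v)=0\}=w^{-1}(i\R)=:\ell$, an analytic arc through $v_0$ on which the homotopic type is the fixed class of $\alpha$; by Theorem~\ref{thm:structuretrajectories} the remaining critical trajectories of $\Gamma_v$ are then determined by this invariant data.

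The step I expect to require the most care is the matching of $\mathcal V$ with the level set $\{\Re w=0\}$ in \emph{both} directions. The necessity direction needs that for $v$ near $v_0$ no competing homotopic type can appear, so that $\mathcal V\cap B$ is captured by the single branch of $w$ attached to $\alpha$; this is a structural-stability statement for the decomposition of Theorem~\ref{thm:structuretrajectories} under small perturbations of $v$. The sufficiency direction leans on the existence result Lemma~\ref{lemma:homotopicTypes} (or Lemma~\ref{lemma:lengths}) and on checking that it produces points of $\mathcal V$ accumulating at $v_0$ from both sides, so that $\ell$ is a genuine two-sided arc rather than a one-sided germ; here it is essential that the heights (equivalently $|\Im w(v)|$) near $v_0$ lie strictly above the threshold $L$ of Lemma~\ref{lemma:lengths}. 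Both issues are purely local and are comfortably controlled because $v_0\neq v^*$ keeps us away from the degenerate connected critical graph, after which the conformality $w'(v_0)\neq 0$ from \eqref{def:Derw0} does the rest.
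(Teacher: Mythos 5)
Your setup coincides with the paper's: since $w'(v_0)\neq 0$ by \eqref{def:Derw0}, the level set $\ell=\{v\in B:\ \Re w(v)=0\}$ is an analytic arc through $v_0$, and the implication ``$v\in\mathcal V\cap B$ with the homotopic type of $\varpi_{v_0}$ $\Rightarrow$ $\Re w(v)=0$'' is the same period computation the paper dispatches in one line. The gap is in the converse, which is the only implication the proposition actually needs (namely $\ell\subset\mathcal V$). Lemma \ref{lemma:homotopicTypes} (or Lemma \ref{lemma:lengths}) produces, for each admissible height $h$ (or length $l$), \emph{some} $v'\in\mathcal V$ of homotopic type $\alpha$ with the prescribed modulus; it does not assert that this $v'$ equals your given point $v\in\ell$, nor even that $v'\in B$. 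To close the loop you would need continuity of the moduli map $h\mapsto v'(h)$ together with an argument that its image sweeps out a full neighborhood of $v_0$ on $\ell$; neither is contained in the cited lemmas, and your fallback of ``points of $\mathcal V$ accumulating at $v_0$ from both sides'' would in any case not show that \emph{every} point of $\ell$ near $v_0$ lies in $\mathcal V$. There is also a circularity in the identity $\|\alpha\|_{\varpi_v}=\frac{1}{\pi}|w(v)|$: it holds only once $\alpha$ (or a homotopic arc) is known to be a trajectory of $\varpi_v$, which is precisely what is to be proved at that stage.

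The paper closes this direction by a purely local trajectory argument that you should substitute for the appeal to the existence lemmas: for $v\in\ell\cap B$, consider the trajectory of $\varpi_v$ emanating from the same pole as $\gamma(v_0)$. By continuity of the level curves of $\Re\int^z\sqrt{(t-v)/A(t)}\,dt$ under variation of $v$, this trajectory is either critical --- in which case Proposition \ref{prop:onetrajectory} makes $\varpi_v$ closed, and the invariance of the homotopic type follows by the same continuity --- or recurrent; but a recurrent trajectory is dense in a two-dimensional domain and must cross the orthogonal trajectory issued from $v$, which forces $\Re w(v)\neq 0$, a contradiction. This critical-or-recurrent dichotomy is where the actual content of the proposition lies, and without it (or an equivalent continuity argument for the moduli problem) your proof of $\ell\subset\mathcal V$ is incomplete.
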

\begin{proof}
In a small and simply-connected neighborhood $B$ if $v_0$ consider the branch of $w$ given by formula \eqref{def_Wnew}, so that $w(v_0)=\pi i (1-\mu(\gamma (v_0)))$. Since $w'(v_0)\neq 0$, the level curve
 $$
 \ell\isdef \{v:\, \Re w (v)= 0\}
 $$
is well defined in   $B$, and constitutes an analytic arc passing through $v_0$.
Clearly, if $v \in B \cap \mathcal V$ is such that  the homotopic class of the trajectories of $\varpi_v$ and $\varpi_{v_0}$ are the same, then necessarily $v\in \ell$. 
Reciprocally, assume $v\in \ell \cap B$, and consider the trajectory of $\varpi_v$ that emanates from the same pole as $\gamma (v_0)$. Due to the continuity of the level curves of
$$
\Re \int ^{z} \sqrt{\frac{t-v}{A(t)}}\, dt
$$
with respect to a variation of $v$, this trajectory is either critical (and then the proposition is proved) or recurrent. In the latter case it must intersect the orthogonal trajectory of $\varpi_v$ starting from $v$ (see \cite[\S 11]{Strebel:84}), which contradicts the hypothesis that $\Re w (v)= 0$.
\end{proof}

Now we can describe completely the structure of the set $\mathcal V$:
\begin{theorem} \label{cor:structureV}
The set $\mathcal V$ is a union of a countable number of analytic arcs $\ell_k$, $k\in \Z$, each connecting $v^*$ and $\infty$.

Two arcs from $\mathcal V$ are either identical or have $v^*$ as the only finite common point. The homotopic type of the critical trajectories of $\varpi_v$ in $\C\setminus  \AA $ remains invariant on each arc $\ell_k\setminus \AA^*$.

There are three distinguished arcs $\ell_k$, $k\in \{0, 1, 2\}$, such that
\begin{enumerate}
\item[(i)] $\ell_k$ connects $v^*$ with infinity and passes through $a_k$;
\item[(ii)] for every $v\in \ell_k$ the homotopic class of trajectories of the closed quadratic differential $\varpi_v$ is trivial;
\item[(iii)] Function $\mu_v(\gamma(v))$ is monotonically decreasing  from $\mu_{v^*}(\gamma(v^*))$ to $- \infty$ as $v$ travels $\ell_k$ from $v^*$ to $\infty$.
\end{enumerate}
\end{theorem}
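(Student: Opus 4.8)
The plan is to organize $\mathcal V$ by the homotopic type of $\varpi_v$, to use Proposition~\ref{prop:localstructureV} for the local picture, Lemmas~\ref{lemma:homotopicTypes} and \ref{lemma:lengths} for the global parametrization of each homotopic class, and the total mass $\mu_v(\gamma(v))$ of the measure \eqref{measureMuv} as the coordinate along each arc. First I would show that $\mathcal V\setminus\{v^*\}$ is a one--dimensional real--analytic submanifold of $\C$ along which the homotopic type is locally constant. Proposition~\ref{prop:localstructureV} already produces, near each $v_0\in\mathcal V\setminus\{v^*\}$, an analytic arc $\ell\subset\mathcal V$ through $v_0$ of fixed homotopic type; for the reverse inclusion I would note that if $v\in\mathcal V$ is close to $v_0$, then by continuity of the critical graph $\Gamma_v$ (the poles are fixed, the zero $v$ moves continuously) the discrete homotopic data agree with those at $v_0$, so $\mu_v$ is real, $w(v)$ in \eqref{def_Wnew} is purely imaginary, and $v$ lies on $\{\Re w=0\}=\ell$. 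Since $w$ is analytic with $w'(v_0)\neq0$ by \eqref{def:Derw0}, this level set is a single analytic arc in a small ball, so $\mathcal V$ coincides with $\ell$ there. Hence $\mathcal V\setminus\{v^*\}$ decomposes into maximal analytic arcs of constant homotopic type.

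Next I would globalize each maximal arc and locate its endpoints. Fixing a homotopic type represented by a Jordan arc $\alpha$ joining two poles, Lemma~\ref{lemma:lengths} gives a bijection $v\mapsto l_v$ between the $v\in\mathcal V$ of that type and the interval $(L,\infty)$ (equivalently Lemma~\ref{lemma:homotopicTypes} parametrizes it by $h_v\in(0,\infty)$); thus each maximal arc is an open analytic curve with exactly two ends. Because $\alpha$ joins two fixed poles, the estimate $l_v=\tfrac{2}{\pi}\int_\alpha|\sqrt{(t-v)/A(t)}|\,|dt|\sim c\,|v|^{1/2}$ shows $l_v\to\infty$ iff $v\to\infty$, identifying one end with $\infty$. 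The other end, $l_v\to L^+$ (equivalently $h_v\to0^+$), is where the ring domain $\Omega\setminus\Gamma_v$ degenerates and the critical graph becomes connected, which forces $v\to v^*$, since by Proposition~\ref{thm:existenceclosedtrajectoryp2} and the definition of the Chebotarev center $v^*$ is the unique point of $\mathcal V$ with connected critical graph. This yields the first assertion: $\mathcal V$ is a union of analytic arcs, each joining $v^*$ to $\infty$; distinct homotopic types give disjoint arcs on $\mathcal V\setminus\{v^*\}$, so two arcs are identical or meet only at $v^*$. Countability follows since the homotopic classes of arcs joining a pair of poles in $\C\setminus\AA$ form a countable set and there are three choices of pair.

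For the three distinguished arcs and the monotonicity I would use the explicit density \eqref{measureMuv}. Along the trajectory $\alpha$ one has $(z-v)/A(z)\,(dz)^2<0$, so $\sqrt{(t-v)/A(t)}\,dt$ is purely imaginary; comparing \eqref{measureMuv} with $l_v=2\|\alpha\|_{\varpi_v}$ gives $\mu_v(\alpha)=\tfrac12 l_v$ for the natural orientation, and since $\mu_v(\C)=1$ with $\supp(\mu_v)=\overline{\alpha\cup\gamma}$ this yields $\mu_v(\gamma(v))=1-\tfrac12 l_v$. Monotonicity (iii) is then immediate from the bijectivity of $v\mapsto l_v$ onto $[L,\infty)$: as $v$ runs from $v^*$ to $\infty$, $l_v$ increases from $L$ to $\infty$, so $\mu_v(\gamma(v))$ decreases from $\mu_{v^*}(\gamma(v^*))=1-\tfrac12 L$ to $-\infty$. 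The trivial homotopic type occurs for exactly three arcs, one per choice of the pole joined to $v$ by $\gamma$; each such arc passes through the corresponding $a_k$, where $\varpi_{a_k}$ in \eqref{qdp2} degenerates ($\gamma$ collapses as $v\to a_k$) to the segment joining the other two poles, and the arc continues analytically through this degenerate point. The subarc from $v^*$ to $a_k$ is precisely the Chebotarev star $\mathcal V_+$ of positive critical measures, matching the earlier bullet claims.

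The main obstacle will be the endpoint analysis at the minimal--length end: proving rigorously that $l_v\to L^+$ (equivalently $h_v\to0^+$) forces $v\to v^*$, so that every maximal arc accumulates at $v^*$ and $v^*$ is the common endpoint of all arcs. This is exactly the degeneration in which the ring domain $\Omega\setminus\Gamma_v$ collapses and two of the three trajectory arcs of $\Gamma_v$ merge into a connected critical graph; controlling this limit, and excluding that an arc terminates at some other boundary point of its homotopy region, relies on the continuity and compactness properties of the moduli parametrization underlying Lemmas~\ref{lemma:homotopicTypes} and \ref{lemma:lengths} rather than on the purely local Proposition~\ref{prop:localstructureV}.
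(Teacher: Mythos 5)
Your route---slicing $\mathcal V$ by homotopic type and parametrizing each slice by the modulus $l_v$ (or $h_v$) via Lemmas~\ref{lemma:homotopicTypes} and \ref{lemma:lengths}---is genuinely different from the paper's, which never uses those lemmas in the proof: it studies the level curves $\{\Re w=0\}$ of the period \eqref{def_Wnew} directly, first in the half-plane bounded by the line through $a_1,a_2$ (producing the distinguished arc through $a_0$, which meets that line exactly once and diverges to infinity at its other end), and then in each infinite sector cut out by two contiguous distinguished arcs, where $w$ has a single-valued branch and the level curve through any $v_0\in\mathcal V$ can leave the sector only at $v^*$ or at $\infty$. Your version, however, has two genuine gaps. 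First, the claim that near $v_0\in\mathcal V\setminus\{v^*\}$ the whole set $\mathcal V$ coincides with the single arc $\ell=\{\Re w=0\}$ is false: $\mathcal V$ is dense in the plane (arcs of infinitely many homotopic types accumulate at every point), so ``continuity of the critical graph'' cannot force nearby points of $\mathcal V$ to share the homotopic data of $v_0$; the critical graph of $\varpi_v$ does not vary continuously as one jumps between different arcs of $\mathcal V$. What is true, and what you actually need, is that the subset of $\mathcal V$ of a \emph{fixed} homotopic type is locally the arc $\ell$; this can be extracted from Proposition~\ref{prop:localstructureV} combined with the uniqueness part of Lemma~\ref{lemma:lengths}, but that is not the argument you give.

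Second, and more seriously, the identification of $v^*$ as the common finite endpoint of every arc is exactly the step you defer, and it does not follow from anything you have written. As $l_v\to L^+$ along a slice you must show that $v$ converges at all (rather than oscillating or escaping to infinity at that end too), that the limit point again gives a closed differential, that it is not a pole $a_k$ (for the trivial type the arc passes \emph{through} $a_k$ and continues, so poles are interior points of arcs, not endpoints---this itself requires an argument), and that the degenerating configuration must have connected critical graph even for non-trivial homotopic types, where $\supp(\mu_v)$ tends to the Chebotarev continuum covered several times. The paper's sector argument is built precisely to avoid this compactness analysis: a level curve of the harmonic function $\Re w$ can neither terminate at an interior point nor close up, so it runs from boundary to boundary of the sector, and the only admissible finite boundary point is $v^*$. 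Your derivation of (iii) from $\mu_v(\gamma(v))=1-\tfrac12 l_v$ is fine once the arc structure is in place, and your treatment of the end at infinity is essentially correct.
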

\begin{proof}
Assume that $a_j$'s are not collinear (the analysis of the collinear situation is simpler). For $v_0=a_0\in \mathcal V$, the arc $\gamma (v_0)$ vanishes, and $\alpha (v_0)=[a_1, a_2]$ is the straight segment joining $a_1$ and $a_2$, so that we can fix the single-valued branch of $w$ in a neighborhood of $a_0$ by
\begin{equation}\label{branchOfW}
    w(a_0)=   \int_{ [a_1, a_2]   }   \frac{1}{\left( \sqrt{(t-a_1)(t-a_2)}\right)_+}\, dt= \pi i.
\end{equation}

Denote by $\mathcal H$ the half plane containing $a_0$ and determined by the straight line passing through $a_1$ and $a_2$.
Then
$$
 w(v)= \int_{[a_1, a_2]    }  \left(  \sqrt{\frac{t-v}{A(t)}}\right)_+\, dt,
$$
and \eqref{branchOfW} determines the single-valued branch of $w$ in $\mathcal H $. Let $\Gamma_0$ be the level curve
$$
\Gamma_0\isdef \{ z\in \overline {\mathcal H} :\, \Re w(z)=0\}.
$$
We have that
$$
w(v^*)=\pi i\,  \mu_{v^*}  \left( \Gamma^* \setminus \gamma (v^*)\right)
$$
($\Gamma^* \setminus \gamma (v^*)$ is the union of two arcs of the Chebotarev compact joining $v^*$ with $a_1$ and $a_2$). Hence, $v^*\in \Gamma_0$.

Since the rotations and translations of the plane do not affect the character of the level curves of $w$, we can assume that both $a_1, a_2 \in \R$.  Then it is immediate to see that $\Gamma _0$ can intersect  $\R$ at a single point (which belongs to the segment $[a_1, a_2]$). The other end of $\Gamma_0$ must diverge to infinity. This establishes the existence and properties of the distinguished arcs $\ell_k$, $k\in \{0, 1, 2\}$, described above.

Consider now the analytic function $w$ in the infinite sector delimited by two contiguous distinguished arcs $\ell_k$, $k\in \{0, 1, 2\}$. Fix there a $v_0\in \mathcal V$ and take  the single-valued branch determined by the condition
$$
w(v_0) =   \int_{ \alpha(v_0)   }  \left(  \sqrt{\frac{t-v_0}{A(t)}}\right)_+\, dt= \pi i \mu_{v_0}(\alpha(v_0) ).
$$
Then the level curve $\ell=\{\Re w(v)=0\}$ is an analytic curve passing through $v_0$, that can intersect the boundary of the sector only at $v^*$. Hence, $\ell$ joins $v^*$ with $\infty$. The number of different curves $\ell$ is given by the number of different homotopic types of closed trajectories, which is countable. This concludes the proof.
\end{proof}

\begin{rmk}
As $v$ approaches $ v^*$ along an arc $\ell_k \subset \mathcal V$, the support $\supp(\mu_v)=\overline{\alpha \cup \gamma }$ tends to the Chebotarev set $\Gamma^*$, but possibly covered several times, in accordance with the homotopy class of $\varpi_v$ on $\ell_k$.
\end{rmk}

Finally, it is convenient to consider another independent parametrization of measures $\mu_v$, $v\in \mathcal V$, in order to connect the characteristics of their logarithmic potential $U^{\mu_{v}}$ with the geometrically defined values of the corresponding quadratic differential $\varpi_v$. Applying Lemma \ref{lemma:Sconditions} we get
\begin{lemma}
\label{lemma:anotherparametr}
Any measure $\mu_v$, $v\in \mathcal V$, is characterized by the following property: $U^{\mu_{v}}$ is constant on each connected component of $\supp (\mu)$,
$$
U^{\mu_{v}}(z)\equiv c_\alpha \text{ for } z\in \alpha, \qquad U^{\mu_{v}}(z)\equiv c_\gamma  \text{ for } z\in \gamma ,
$$
and at any regular point of $\supp(\mu _v)$,
 \begin{equation}\label{SpropertyCritical}
\frac{\partial U^{\mu_{v}}(z)}{\partial n_+}= \frac{\partial U^{\mu_{v}}(z)}{\partial n_-},
 \end{equation}
where $n_\pm$ are the normal vectors to $\supp(\mu _v)$ pointing in the opposite directions. Moreover, we have the following relations:
$$
l_v= 2\mu_v(\alpha ) = 2\, (1- \mu_v(\gamma  ))= 2 \pi i w(v), \quad h_v= \pi |c_\alpha -c_\gamma | =\pi (c_\alpha -c_\gamma).
$$
\end{lemma}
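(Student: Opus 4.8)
The plan is to obtain the characterization (i)--(ii) essentially for free from Lemma~\ref{lemma:Sconditions}, and then to translate the two geometric moduli of the ring domain $\Omega\setminus\Gamma_v$ --- the circumference length $l_v$ and the height $h_v$ --- into, respectively, a mass of $\mu_v$ and a potential drop, using the explicit density \eqref{measureMuv} and the potential representation of Corollary~\ref{cor:mainthm}.

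First I would note that $\mu_v$ is a (signed) $\AA$-critical measure for $\varphi\equiv 0$ by Proposition~\ref{prop:uniquemapping}, and that its support $\overline{\alpha\cup\gamma}$ has exactly the two connected components $\alpha$ and $\gamma$ of Theorem~\ref{thm:structuretrajectories}; these are disjoint, since $\alpha$ joins two of the poles while $\gamma$ joins the third pole to $v$, so they share no endpoint and $v\notin\alpha$. Then Lemma~\ref{lemma:Sconditions}(i) applied with $\varphi\equiv 0$ gives immediately $U^{\mu_v}\equiv c_\alpha$ on $\alpha$ and $U^{\mu_v}\equiv c_\gamma$ on $\gamma$, and part (ii) of the same Lemma is exactly the $S$-property \eqref{SpropertyCritical} at the regular points of $\supp(\mu_v)$. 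This disposes of the qualitative part of the statement.

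For the mass/length identities I would use that $\alpha$ is a horizontal trajectory of $\varpi_v$, so that the boundary value $\bigl(\sqrt{(z-v)/A(z)}\bigr)_+$ is purely imaginary and of constant argument along $\alpha$; hence the density \eqref{measureMuv} keeps a fixed sign there and, as in \eqref{density}, $|d\mu_v|$ coincides with the $\varpi_v$-arclength element of Section~\ref{sec:qd}. After checking that the chosen orientation makes $\mu_v|_\alpha$ positive, this yields $\mu_v(\alpha)=\|\alpha\|_{\varpi_v}$ and therefore $l_v=2\|\alpha\|_{\varpi_v}=2\mu_v(\alpha)$. The total mass $\mu_v(\C)=1$ (the residue of $\varpi_v$ at infinity, Proposition~\ref{prop:uniquemapping}) together with $\supp(\mu_v)=\alpha\sqcup\gamma$ gives $\mu_v(\alpha)=1-\mu_v(\gamma)$; and comparing the integrand of \eqref{measureMuv} over $\alpha$ with the definition \eqref{def_Wnew} identifies $\mu_v(\alpha)$ with the period $w(v)$, the proportionality constant being the $1/(\pi i)$ normalization built into \eqref{measureMuv}.

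Finally, for the height I would invoke Corollary~\ref{cor:mainthm}, which represents $U^{\mu_v}(z)=-\Re\int^z\sqrt{(t-v)/A(t)}\,dt$ on each component of $\C\setminus\supp(\mu_v)$; this function is constant along horizontal trajectories of $\varpi_v$ and varies only across vertical ones. Realizing $h_v$ as the $\varpi_v$-length of a vertical (orthogonal) trajectory arc $\tau$ joining the two boundary components of the ring domain $\Omega\setminus\Gamma_v$, I would use that $\sqrt{(t-v)/A(t)}\,dt$ is real along $\tau$, so that the potential increment across $\tau$ equals $\|\tau\|_{\varpi_v}=h_v$ up to the fixed normalizing constant of the $\varpi_v$-metric. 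The one genuinely delicate point --- and the main obstacle --- is to show that the potential carried by the \emph{outer} boundary $\beta$ of this ring domain equals $c_\gamma$ and not some third constant: this holds because $\beta$ is a closed horizontal trajectory passing through the zero $v=\overline\beta\cap\overline\gamma$, hence $U^{\mu_v}$ is constant on $\beta$, and by continuity at $v\in\overline\gamma$ that constant is $c_\gamma$. Combining the increment computation across $\tau$ with $U^{\mu_v}|_\alpha=c_\alpha$ and $U^{\mu_v}|_\beta=c_\gamma$ gives the stated $h_v=\pi|c_\alpha-c_\gamma|$, and a sign inspection (the potential decreases as one moves off the pole-joining arc $\alpha$ toward $\beta$) lets one drop the absolute value.
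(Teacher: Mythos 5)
The paper gives no proof of this lemma beyond the phrase ``Applying Lemma~\ref{lemma:Sconditions} we get'', and your argument is a correct elaboration along exactly those lines: parts (i)--(ii) come from Lemma~\ref{lemma:Sconditions} with $\varphi\equiv 0$, the mass/length identities from the fact that the density \eqref{measureMuv} equals the $\varpi_v$-length element on the horizontal arc $\alpha$ (together with $\mu_v(\C)=1$ and the definition \eqref{def_Wnew}), and the height from the potential drop across an orthogonal arc, with the outer boundary value correctly pinned to $c_\gamma$ by constancy of $U^{\mu_v}$ on $\beta$ and continuity at $v$. The only point left implicit is the converse (``characterized by'') direction, which follows from the final assertion of Lemma~\ref{lemma:Sconditions} combined with Proposition~\ref{prop:uniquemapping}, and which the paper does not spell out either.
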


\subsection{Positive $\AA$-critical measures} \label{sec:positiveAcritical}

Theorem \ref{cor:structureV} gives a complete description of the set $\mathcal V$ of points $v$ that make the quadratic differential $\varpi_v$ in \eqref{qdp2} closed. By Proposition \ref{prop:uniquemapping}, to every $v\in \mathcal V$ it corresponds a unique \emph{signed} $\AA$-critical measure $\mu_v$, given by formula \eqref{measureMuv}. Our next goal is to isolate the subset
\begin{equation}\label{defVhatPlus}
    \widehat {\mathcal V}_+\isdef  \{v\in \mathcal V:\, \mu_v \text{ is positive}\}.
\end{equation}

\begin{proposition}
\label{prop:positivity}
Let $v\in \mathcal V$. Measure $\mu_v$ is positive if and only if either $v=v^*$ or $v$ is in an exterior configuration.
\end{proposition}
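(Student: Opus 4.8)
The plan is to reduce the positivity of $\mu_v$ to the \emph{transverse} behaviour of the total potential $U^{\mu_v}$ along its support, and then read that behaviour off the global trajectory picture of $\varpi_v$ supplied by Theorem~\ref{thm:structuretrajectories}. By Lemma~\ref{lemma:onthesup} the density of $\mu_v$ with respect to arclength has constant sign on each of the two smooth arcs $\alpha$ and $\gamma$ (it can vanish only at the simple zero $v$ and at the poles), so $\mu_v\ge 0$ if and only if this density is positive on both $\alpha$ and $\gamma$. Combining the $S$-property of Lemma~\ref{lemma:Sconditions}(ii) with the single-layer jump relation $\partial_{n_+}U^{\mu_v}+\partial_{n_-}U^{\mu_v}=-2\pi\mu_v'$, the density at a regular point equals $-\tfrac1\pi\,\partial_{n}U^{\mu_v}$ for either outward normal; hence it is positive on an arc exactly when $U^{\mu_v}$ \emph{decreases} as one leaves the arc transversally, i.e.\ when the arc is a ridge of $U^{\mu_v}$. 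By Corollary~\ref{cor:mainthm}, $U^{\mu_v}=-\Re\int^{z}\sqrt{R}\,dt+\const$ is constant on each trajectory of $\varpi_v$ and strictly monotone along each orthogonal trajectory lying in a domain free of critical points. In particular, since $\beta$ and $\gamma$ both issue from the zero $v$, continuity forces $c_\beta=c_\gamma=U^{\mu_v}(v)=:c_0$, while $U^{\mu_v}\equiv c_\alpha$ on $\alpha$. The boundary case $v=v^{*}$ is separate and already settled: there $\mu_{v^{*}}$ is the Robin measure of the Chebotarev continuum, hence positive (see the remark after Proposition~\ref{prop:uniquemapping} and the end of Section~\ref{sec:Chebotarevcont}).

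For $v$ in an \emph{exterior} configuration I would show that both arcs are ridges. Near infinity the trajectories of $\varpi_v$ are topological circles (double pole with negative residue), so every orthogonal trajectory of the disc domain $\mathcal R_\infty=\mathcal D\setminus\gamma$ runs out to $\infty$, where $U^{\mu_v}\to-\infty$; by the monotonicity above $U^{\mu_v}$ strictly decreases along each. Since in this configuration $\gamma$ is a slit both of whose sides lie in $\mathcal R_\infty$, moving off $\gamma$ to either side means moving into $\mathcal R_\infty$, so $U^{\mu_v}$ decreases and $\gamma$ is a ridge. The same computation at $\beta$ shows $U^{\mu_v}$ decreases across $\beta$ into $\mathcal R_\infty$; as $\beta\not\subset\supp(\mu_v)$, the potential is $C^{1}$ across $\beta$, the outward normal derivative is inherited on the inner side, and therefore $U^{\mu_v}$ increases from $\beta$ towards $\alpha$, giving $c_\alpha>c_0$. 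As $\alpha$ is a slit both of whose sides face the ring domain bounded by $\alpha$ and $\beta$, on which $U^{\mu_v}$ decreases from $c_\alpha$ to $c_0$, the arc $\alpha$ is also a ridge. Thus $\mu_v\ge 0$.

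For $v$ in an \emph{interior} (closed) configuration the analysis of $\mathcal R_\infty=\mathcal D$, now bounded only by $\beta$, again gives $U^{\mu_v}<c_0$ outside $\beta$ and, by $C^{1}$-continuity across $\beta$, $U^{\mu_v}>c_0$ just inside $\Omega$; along the cylinder of closed trajectories encircling $\alpha$ (whose inner boundary is $\alpha$ and whose outer boundary is $\beta\cup\gamma$, connected at $v$) the potential increases monotonically from the outer level $c_0$ to the inner level $c_\alpha$, so again $c_\alpha>c_0$. But now $\gamma$ lies on the \emph{outer} boundary of that cylinder and both of its sides are swept by the interior closed trajectories; hence leaving $\gamma$ to either side \emph{raises} $U^{\mu_v}$ towards $c_\alpha$. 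The arc $\gamma$ is thus a valley, its density is negative, and $\mu_v$ is not positive. This is exactly the flux/continuity mechanism used in the proof of Proposition~\ref{uniquenessmeasure}(ii) for a zero–zero connection on the outer boundary of a ring domain.

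The main obstacle is not the local ridge/valley computation, which is routine, but the global-trajectory input behind the monotonicity claims: that $\mathcal R_\infty$, the ring between $\alpha$ and $\beta$, and the cylinder around $\alpha$ contain no recurrent orthogonal trajectories, that every orthogonal trajectory of $\mathcal R_\infty$ indeed terminates at $\infty$, and that the two sides of $\gamma$ face precisely the domains asserted in each configuration. These are the facts governed by Strebel's structure theorems for closed differentials \cite{Strebel:84}, together with Proposition~\ref{thm:existenceclosedtrajectoryp2} and Theorem~\ref{thm:structuretrajectories}, and pinning them down rigorously is where the real work lies. It then remains only to record the degenerate collinear cases and the points $v\in\AA$, where $\gamma$ or $\alpha$ collapses and the picture reduces to the Chebotarev/Robin situation, in agreement with the monotonicity of $\mu_v(\gamma)$ in Theorem~\ref{cor:structureV}(iii).
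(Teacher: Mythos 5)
Your proof is correct and follows essentially the same route as the paper's: both determine the sign of the density on $\alpha$ and $\gamma$ from the sign of the normal derivative of the total potential (equivalently of $u=\Re\int_v^z\sqrt{R}$), read off from the global level-curve structure of Theorem~\ref{thm:structuretrajectories}, concluding that $\mu_v|_\alpha>0$ always while $\mu_v|_\gamma>0$ exactly in the exterior configuration. The only difference is cosmetic: the paper converts $\partial u/\partial n$ into the density via the Cauchy--Riemann equations applied to \eqref{measureMuv}, whereas you use the single-layer jump relation combined with the $S$-property --- an equivalent computation.
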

See Section \ref{sec:globalStruc2} for the definition of the exterior configuration.
\begin{proof}
The case $v=v^*$ is trivial, so let us assume that $v\neq v^*$. Consider
$$
u(z)=\Re \int_v^z \sqrt{\frac{t-v}{A(t)}}\, dt\,, \quad \widetilde u(z)=\Im \int_v^z \sqrt{\frac{t-v}{A(t)}}\, dt\,.
$$
Since $\alpha=\alpha (v) $ and $\gamma=\gamma (v) $ are trajectories of $\varpi_v$, function $u$ is   single-valued and harmonic in $\C\setminus \Gamma$, continuous up to the boundary, and the closed trajectory $\beta $ (see Proposition \ref{thm:existenceclosedtrajectoryp2}) is its zero level curve. By the selection of the branch of the square root, $u(z)\sim \log|z|$ as $z\to \infty$, and we see that $u(z)>0$ for $z\in \C\setminus \left( \overline{\Omega} \cup \gamma \right)$, and $u(z)<0$ for $z\in \Omega  \setminus \Gamma$. In consequence,
$$
\frac{\partial }{\partial n} \, u(z)>0 \quad \text{ on }\alpha ,
$$
and on $\gamma$,
$$
\frac{\partial }{\partial n}\, u(z)\begin{cases}
>0, & \text{if $v$ is in an exterior configuration,} \\
<0, & \text{if $v$ is in an interior configuration,} \\
\end{cases}
$$
where $\partial/\partial n$ denotes the derivative in the sense of the outer normals. By the Cauchy-Riemann equations,
$$
\frac{\partial }{\partial s}\, \widetilde u(z) = \frac{\partial }{\partial n} \, u(z),
$$
where $\partial/\partial s$ is the derivative along each shore of the cuts $\alpha $ and $\gamma $ in the direction of the induced orientation. Hence, we conclude that $\mu_v\big|_{\alpha}$ is always positive, while $\mu_v\big|_{\gamma}$ is negative if and only if $v$ is in an exterior configuration.
\end{proof}
\begin{rmk} \label{rmk:positivity}
Observe that we have proved that always $\mu_v\big|_{\alpha}>0$. For $v\neq v^*$, by construction $\mu_v(\alpha )+\mu_v(\gamma )=1$, and $\mu$ does not change sign on each connected component of $\Gamma$, so that $\mu_v$ is positive if and only if $\mu_v(\alpha)\leq 1$. An equivalent condition can be stated in terms of the $\varpi_v$-length of the critical trajectories $\alpha $ and $\gamma $:
 \begin{equation}\label{muPositive}
    \mu_v\geq 0 \quad \Leftrightarrow \quad \| \alpha \|_{\varpi_v} + \| \gamma  \|_{\varpi_v} =1.
 \end{equation}
\end{rmk}
\begin{rmk}
Figure \ref{fig:intersectionNegative} illustrates that only in the exterior configuration the $\varpi_v$-rectangles intersect the support of $\mu_v$ only once (cf.~Lemma \ref{rulgard}).
\end{rmk}

Proposition \ref{prop:positivity} provides an ``implicit'' geometric description of the set $\widehat{\mathcal V}_+$. Our main result of this section describes this set completely:
\begin{theorem} \label{thm:structureVPlus2}
Let $\ell_k$, $k\in \{0, 1, 2\}$, be the distinguished arcs in $\mathcal V$ described in Theorem  \ref{cor:structureV}. The set  $\widehat{\mathcal V}_+$ is the union of the sub-arcs $\ell_k^+$ of each $\ell_k$, $k\in \{0, 1, 2\}$, connecting $a_k$ with the Chebotarev's center $v^*$ (and lying in the convex hull of $\AA$).

Furthermore, let us denote $m_k \isdef \mu_{v^*}(\Gamma_k^*)$, $k\in \{ 0, 1, 2\}$, where $\Gamma_k^*$ is the arc of the Chebotarev compact $\Gamma^*$ connecting $v^*$ with $a_k$ ($ m _0+m _1+m _2=1$). If  $v\in \ell_k \cap \widehat{\mathcal V}_+$, $k\in \{ 0, 1, 2\}$, then the trajectory $\gamma(v)$ connects $v$ with the pole $a_k$ and
\begin{equation}\label{inequalityMu}
    0\leq \mu_v(\gamma(v)) \leq m_k.
\end{equation}
In this case both trajectories $\gamma(v)$ and $\alpha (v)$ are homotopic to a segment. The bijection $\mu_v(\gamma(v)) \leftrightarrow v$ is a parametrization of the set $\ell_k \cap \widehat{\mathcal V}_+$ by points of the interval $[0, m _k]$.
\end{theorem}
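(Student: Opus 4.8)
The plan is to combine the pointwise characterization of positivity from Proposition \ref{prop:positivity} and Remark \ref{rmk:positivity} with the monotonicity of $v\mapsto \mu_v(\gamma(v))$ along the distinguished arcs supplied by Theorem \ref{cor:structureV}. Recall from Remark \ref{rmk:positivity} that $\mu_v|_\alpha>0$ always, and that since $\mu_v(\alpha)+\mu_v(\gamma)=1$ and $\mu_v$ has constant sign on each connected component of its support, positivity of $\mu_v$ is \emph{equivalent} to $\mu_v(\gamma(v))\geq 0$. Thus $\widehat{\mathcal V}_+$ is exactly the locus $\{v^*\}\cup\{v\in\mathcal V:\ \mu_v(\gamma(v))\geq 0\}$, which by Proposition \ref{prop:positivity} is precisely the set of $v$ in exterior configuration together with $v^*$. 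The whole argument then consists of (a) confining this locus to the three distinguished arcs, and (b) cutting each arc at the right place.

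First I would show $\widehat{\mathcal V}_+\subset \ell_0\cup\ell_1\cup\ell_2$. The point is that an exterior configuration forces the \emph{trivial} homotopic type: when $\gamma\setminus\{v\}$ lies in the unbounded component $\mathcal D$, the closed trajectories filling the ring domain $\Omega\setminus\Gamma_v$ are simple Jordan curves each enclosing precisely the two poles joined by $\alpha$, so the free homotopy class of the trajectories of $\varpi_v$ in $\C\setminus\AA$ is trivial. By Theorem \ref{cor:structureV} the trivial homotopic type is realized only on the three distinguished arcs $\ell_0,\ell_1,\ell_2$, whence every positive $\mu_v$ other than $\mu_{v^*}$ has its parameter on some $\ell_k$.

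Next I would fix $k\in\{0,1,2\}$ and work along $\ell_k$. By Theorem \ref{cor:structureV} the arc $\ell_k$ passes through $a_k$ and $v\mapsto\mu_v(\gamma(v))$ is continuous and (strictly) decreasing from its value at $v^*$ to $-\infty$ as $v$ runs from $v^*$ to $\infty$. As $v\to v^*$ along $\ell_k$ the arc $\gamma(v)$ tends to the Chebotarev arc $\Gamma_k^*$, so the initial value is $\mu_{v^*}(\Gamma_k^*)=m_k$. At $v=a_k$ the zero of $\varpi_v$ collides with the pole $a_k$, the trajectory $\gamma(v)$ degenerates to a point, and hence $\mu_v(\gamma(v))=0$; by strict monotonicity $a_k$ is the \emph{unique} zero of $v\mapsto\mu_v(\gamma(v))$ on $\ell_k$. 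Consequently $\{v\in\ell_k:\ \mu_v(\gamma(v))\geq 0\}$ is exactly the sub-arc $\ell_k^+$ joining $v^*$ to $a_k$, on which $0\leq\mu_v(\gamma(v))\leq m_k$; this yields \eqref{inequalityMu} and, together with the previous step, the identity $\widehat{\mathcal V}_+=\ell_0^+\cup\ell_1^+\cup\ell_2^+$. The restriction of $v\mapsto\mu_v(\gamma(v))$ to $\ell_k^+$ is then a continuous strictly monotone surjection onto $[0,m_k]$, hence a bijection, giving the claimed parametrization. That $\gamma(v)$ joins $v$ to $a_k$ and that $\gamma(v),\alpha(v)$ are homotopic to segments is read off from the trivial homotopic type of the exterior configuration, while $\ell_k^+\subset\conv(\AA)$ follows from the fact that a positive $\AA$-critical measure is supported in $\conv(\AA)$ (so its parameter $v\in\overline{\gamma(v)}\subset\supp(\mu_v)$ lies there).

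I expect the main obstacle to be the first step: rigorously tying the exterior configuration to the trivial homotopic type, and thereby confining $\widehat{\mathcal V}_+$ to the three distinguished arcs, together with the careful treatment of the degenerate endpoint $v=a_k$, where $\gamma(v)$ collapses and one must check continuity of $\mu_v(\gamma(v))$ through the collision of the zero with the pole. Once these are in place, the monotonicity, the bounds \eqref{inequalityMu}, and the bijection are essentially formal consequences of Theorem \ref{cor:structureV}.
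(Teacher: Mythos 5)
Your analysis along the distinguished arcs (the monotone decrease of $\mu_v(\gamma(v))$ from $m_k$ at $v^*$ through $0$ at $a_k$, the resulting cut at $\ell_k^+$, and the bijection with $[0,m_k]$) is essentially the paper's own argument and is fine. The genuine gap is in your confinement step $\widehat{\mathcal V}_+\subset\ell_0\cup\ell_1\cup\ell_2$. You argue that an exterior configuration forces the trivial homotopic type because the closed trajectories in $\Omega\setminus\Gamma_v$ are simple Jordan curves enclosing precisely the two poles joined by $\alpha$. But by Theorem \ref{thm:structuretrajectories} this property holds for \emph{every} $v\in\mathcal V\setminus\AA^*$, exterior or interior, on distinguished and non-distinguished arcs alike; if your implication were valid it would show that every closed $\varpi_v$ has trivial homotopic type, contradicting Theorem \ref{cor:structureV}, which exhibits countably many distinct types. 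The underlying topological claim is false: in $\C\setminus\AA$ (a four-times-punctured sphere once $\infty$ is counted) there are infinitely many free homotopy classes of simple closed curves separating $\{a_1,a_2\}$ from $\{a_0,\infty\}$, so knowing which poles a Jordan curve encloses does not determine its class. The implication ``exterior configuration $\Rightarrow$ trivial type'' is true, but only as a consequence of the theorem itself; it cannot be extracted from this one-line observation.

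The paper sidesteps the issue by staying with the quantitative criterion of Remark \ref{rmk:positivity}: $\mu_v\geq 0$ if and only if $\mu_v(\alpha(v))\leq 1$. One then uses that $v\mapsto\mu_v(\alpha(v))$ is continuous and unbounded on every arc of $\mathcal V$, is always positive, and equals $1$ exactly when $\gamma(v)$ degenerates, i.e.\ exactly when $v\in\AA$. Since the points of $\AA$ lie only on the three distinguished arcs, on every other arc this function never takes the value $1$ and, being unbounded above, must exceed $1$ identically by the intermediate value theorem; hence no positive measures arise there. Substituting this for your homotopy argument closes the gap, and the remainder of your proof then goes through.
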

\begin{proof}
Straightforward estimates show that $v\to \mu_v(\alpha (v))$ is unbounded on each arc $\ell_k\subset \mathcal V$. Furthermore,
$$
\mu_v(\alpha (v)) =1 \quad \Leftrightarrow \quad \mu_v(\gamma (v)) =0 \quad \Leftrightarrow \quad  v\in \AA.
$$
Since always $\mu_v(\alpha (v))>0$ (see Remark \ref{rmk:positivity}), we conclude that $\mu_v(\alpha (v))$ takes values in $(0,1)$ only on the portions of the distinguished arcs $\ell_k$, $k\in \{ 0, 1, 2\}$, joining the Chebotarev center $v^*$ with each pole. For any other arc $\ell_k$, $\mu_v(\alpha (v))>1$, and $\mu_v$ is not positive.
\end{proof}

\begin{figure}[h!]
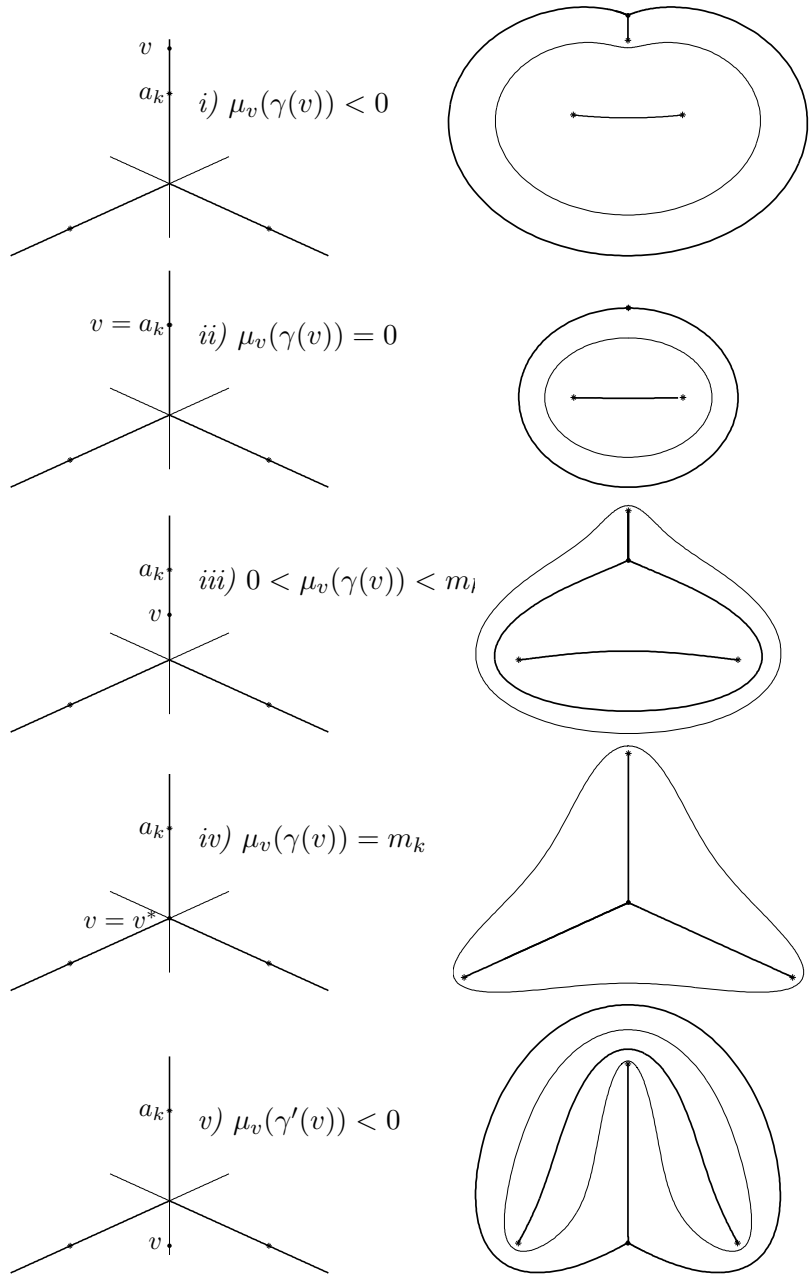

\centering \begin{tabular}{l@{\hspace{15mm}}c} 
\mbox{\begin{overpic}[scale=0.29]%
{VV4a}%
\put(58,45){\emph{i)} $\mu_v(\gamma(v))<0$}
      \put(40,49){\small $a_k $}
      \put(40,63){\small $v $}
\end{overpic}} &
\mbox{\begin{overpic}[scale=0.32]%
{fig4a}%
\end{overpic}}
\\ 
\mbox{\begin{overpic}[scale=0.29]%
{VV3a}%
\put(58,45){\emph{ii)} $ \mu_v(\gamma(v))=0$}
      \put(25,49){\small $v=a_k $}
\end{overpic}} &
\mbox{\begin{overpic}[scale=0.32]%
{fig3a}%
\end{overpic}} \\ 
\mbox{\begin{overpic}[scale=0.29]%
{VV2a}%
\put(58,45){\emph{iii)} $0<\mu_v(\gamma(v))<m_k$}
      \put(40,49){\small $a_k $}
      \put(43,35){\small $v $}
\end{overpic}}
 &
\mbox{\begin{overpic}[scale=0.32]%
{fig2a}%
\end{overpic}}
 \\ 
\mbox{\begin{overpic}[scale=0.29]%
{VV1a}%
\put(58,45){\emph{iv)} $\mu_v(\gamma(v))=m_k$}
      \put(40,49){\small $a_k $}
      \put(23,20){\small $v=v^* $}
\end{overpic}} &
\mbox{\begin{overpic}[scale=0.32]%
{fig1a}%
\end{overpic}} \\ 
\mbox{\begin{overpic}[scale=0.29]%
{VV5a}%
\put(58,45){\emph{v)} $\mu_v(\gamma'(v))<0$}
      \put(40,49){\small $a_k $}
      \put(43,8){\small $v $}
\end{overpic}} &
\mbox{\begin{overpic}[scale=0.32]%
{fig5a}%
\end{overpic}}
\end{tabular}
\caption{Position of $v$ on $\ell_0 \cup \ell_1 \cup \ell_2$ (left) and the corresponding trajectories of the differential $\varpi_v$ in \eqref{qdp2}.}
\label{fig:cases}
\end{figure}

See illustration of the correspondence between the position of $v$ on $\mathcal V$ and the trajectories of $\varpi_v$ in Figure \ref{fig:cases}.

\begin{corollary}
Any positive $\AA$-critical measure $\mu_v$ has the trivial homotopy type.
\end{corollary}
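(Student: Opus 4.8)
The plan is to deduce this corollary immediately from the two structural results just established, so essentially no new work is required. First I would recall that, among the family $\{\mu_v\}_{v\in\mathcal V}$ produced by Proposition \ref{prop:uniquemapping}, a \emph{positive} $\AA$-critical measure is by definition one whose parameter lies in the set $\widehat{\mathcal V}_+$ introduced in \eqref{defVhatPlus}. Then I would invoke Theorem \ref{thm:structureVPlus2}, which identifies $\widehat{\mathcal V}_+$ with the union of the three sub-arcs $\ell_k^+\subset\ell_k$, $k\in\{0,1,2\}$, each joining the pole $a_k$ to the Chebotarev center $v^*$. In particular, the parameter $v$ of any positive $\mu_v$ necessarily lies on one of the three \emph{distinguished} arcs $\ell_0,\ell_1,\ell_2$.

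Next I would appeal to item (ii) of Theorem \ref{cor:structureV}, which states precisely that for every $v\in\ell_k$ with $k\in\{0,1,2\}$ the homotopic class of the trajectories of the closed quadratic differential $\varpi_v$ is trivial. Chaining the two inclusions $\widehat{\mathcal V}_+\subset\ell_0\cup\ell_1\cup\ell_2$ and the triviality of the homotopy type on each $\ell_k$ then yields the assertion. Equivalently, one may simply read off the conclusion from the last sentence of Theorem \ref{thm:structureVPlus2}, where it is recorded that for $v\in\ell_k\cap\widehat{\mathcal V}_+$ both critical trajectories $\gamma(v)$ and $\alpha(v)$ are homotopic to a segment.

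Since both ingredients are already proved, there is no genuine obstacle; the only point demanding care is the matching of definitions. I would verify explicitly that the notion of ``trivial homotopy type'' used in the statement coincides with the one set up in Section \ref{sec:homotopicType2}, namely the free homotopy class in $\C\setminus\AA$ of the closed trajectories in $\Omega\setminus\Gamma_v$, equivalently the homotopy class of the arc $\alpha$ (with endpoints at two of the poles) in the plane punctured at the third pole. Triviality means $\alpha$ is isotopic to the straight segment joining those two poles, which is exactly the geometric property attached to the distinguished arcs. Hence the corollary follows with no further computation.

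\begin{proof}
By definition, a positive $\AA$-critical measure of the form $\mu_v$ has its parameter $v$ in the set $\widehat{\mathcal V}_+$ of \eqref{defVhatPlus}. By Theorem \ref{thm:structureVPlus2}, $\widehat{\mathcal V}_+$ is the union of the sub-arcs $\ell_k^+\subset\ell_k$, $k\in\{0,1,2\}$, so in particular $v$ lies on one of the three distinguished arcs $\ell_0,\ell_1,\ell_2$. By item (ii) of Theorem \ref{cor:structureV}, for every such $v$ the homotopic class of the trajectories of $\varpi_v$ is trivial. Therefore $\mu_v$ has the trivial homotopy type.
\end{proof}
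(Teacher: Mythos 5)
Your proposal is correct and is exactly the argument the paper intends: the corollary is stated without proof immediately after Theorem \ref{thm:structureVPlus2} precisely because it follows by combining that theorem's identification of $\widehat{\mathcal V}_+$ with the sub-arcs $\ell_k^+$ of the distinguished arcs and item (ii) of Theorem \ref{cor:structureV}. Your care in matching the definition of ``trivial homotopy type'' from Section \ref{sec:homotopicType2} with the statement is appropriate but introduces no new content beyond the paper's implicit reasoning.
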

\begin{rmk}
Taking into account this Corollary and the definition of $\mu_v$ we can rewrite \eqref{inequalityMu} is the following equivalent form:
$$
\frac{1}{\pi i} \, \int_{\alpha (v)} \left(\sqrt{\frac{z-v}{A(z)}}\right)_+ \, dz = \frac{1}{\pi i} \, \int_{a_{j_1}}^{a_{j_2}} \left(\sqrt{\frac{z-v}{A(z)}}\right)_+ \, dz \in [1-m_k, 1], \quad k\in \{0,1,2\},
$$
where $j_1 =\min (\{0,1,2\}\setminus \{k\})$, $j_2 =\max (\{0,1,2\}\setminus \{k\})$, and we integrate along the straight segment joining $a_{j_1}$ and $a_{j_2}$. This system of equation defines the set $\widehat{\mathcal V}_+$ completely.
\end{rmk}

Finally, in relation with our primary goal of the description of the weak-* asymptotics of the zeros of Heine-Stieltjes and Van Vleck polynomials we state the following important result, which however will not be proved completely in this paper.
\begin{theorem}
\label{thm:identification}
$\mathcal V_+= \widehat{\mathcal V}_+$.
\end{theorem}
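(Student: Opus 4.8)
The plan is to establish the two inclusions $\mathcal V_+\subseteq\widehat{\mathcal V}_+$ and $\widehat{\mathcal V}_+\subseteq\mathcal V_+$ separately. The first follows directly from the machinery already developed in Sections~\ref{sec:weakLimits} and~\ref{sec:positiveAcritical}; the second is the genuine inverse problem and can only be reduced here to a statement whose proof requires the strong asymptotics of Heine--Stieltjes polynomials, a tool lying beyond the potential-theoretic methods of this paper.

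For $\mathcal V_+\subseteq\widehat{\mathcal V}_+$, I would take an arbitrary $v\in\mathcal V_+$ together with a sequence of Van Vleck polynomials $V_n=z-v_n$ with $v_n\to v$ along a subsequence $\mathcal N\subseteq\N$. Since the residues are fixed, Proposition~\ref{prop:boundedness} shows that the supports of the zero counting measures $\nu(Q_n)$ of the corresponding Heine--Stieltjes polynomials are uniformly bounded, so $\{\nu(Q_n)/n\}$ is weak-$*$ precompact. Because $V_n\to z-v=:V$, assumption \eqref{convergenceC} holds along $\mathcal N$, and Theorem~\ref{the:weakAsymptotics} applies: the limit $\mu$ is a unit positive $\AA$-critical measure, $\C\setminus\supp(\mu)$ is connected, and $\supp(\mu)$ is carried by the critical trajectories of the closed differential $\varpi_v$. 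By the uniqueness in Proposition~\ref{uniquenessmeasure}, equivalently Proposition~\ref{prop:uniquemapping}, this $\mu$ coincides with $\mu_v$. Since $\mu=\mu_v$ is positive, $v\in\widehat{\mathcal V}_+$ by \eqref{defVhatPlus}.

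For the reverse inclusion $\widehat{\mathcal V}_+\subseteq\mathcal V_+$, the natural route is a density-plus-closedness argument along the arcs $\ell_k^+$ furnished by Theorem~\ref{thm:structureVPlus2}. The set $\mathcal V_+$ is closed, being an accumulation set, so it suffices to realize a dense subset of each $\ell_k^+$. Using the parametrization of $\ell_k\cap\widehat{\mathcal V}_+$ by $\theta=\mu_v(\gamma(v))\in[0,m_k]$ from \eqref{inequalityMu}, I would fix a rational $\theta$ and, for $n$ with $n\theta\in\Z_+$, seek a Heine--Stieltjes polynomial $Q_n$ whose $n$ zeros split into $n\theta$ points accumulating on $\gamma(v)$ and the remaining $n(1-\theta)$ points on $\alpha(v)$. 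By Theorem~\ref{the:weakAsymptotics} the associated Van Vleck parameter $v_n$ would then satisfy $v_n\to v$, placing $v$ in $\mathcal V_+$; density of rational $\theta$ together with closedness of $\mathcal V_+$ would then recover the whole arc $\ell_k^+$.

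The main obstacle is precisely the existence-and-selection step of this last paragraph: one must guarantee that for each admissible splitting there is a genuine Heine--Stieltjes solution, and that its Van Vleck zeros converge to the predicted $v$ rather than to some other point of $\widehat{\mathcal V}_+$. This is a quantization statement, controlled by the periods of $\sqrt{(t-v)/A(t)}$ exactly as for orthogonal polynomials with complex non-Hermitian weights, and it can be proved only through the strong (Riemann--Hilbert) asymptotics of the pair $(Q_n,V_n)$. Since that analysis is unavailable with the tools of the present paper, I would state this inclusion as reduced to the companion result on strong asymptotics and defer its complete proof, exactly as announced after Theorem~\ref{the:weakAsymptotics}.
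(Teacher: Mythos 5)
Your proposal matches the paper's treatment: the inclusion $\mathcal V_+\subseteq\widehat{\mathcal V}_+$ is obtained exactly as in the paper from Theorem \ref{the:weakAsymptotics} (weak-* limits of the normalized zero-counting measures of Heine--Stieltjes polynomials are unit positive $\AA$-critical measures with connected complement of the support) combined with the uniqueness of the measure associated with $\varpi_v$, while the reverse inclusion is, in the paper too, only asserted and explicitly deferred to a forthcoming publication on strong asymptotics. Your additional sketch of a quantization/density argument for the deferred direction goes slightly beyond what the paper records, but the overall approach and its acknowledged limitation are the same.
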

The inclusion $\mathcal V_+ \subset \widehat{\mathcal V}_+$ follows from the definition of $\mathcal V_+$ and Theorem \ref{the:weakAsymptotics}: if $v\in \mathcal V_+$, then $\mu_v$ is a limit distribution of the zero counting measures of the Heine-Stieltjes polynomials, so that $\mu_v\geq 0$. The inverse inclusion ($\mu_v\geq 0 \Rightarrow v\in \mathcal V_+$) is also valid, but it cannot be established using methods of this paper. We plan to present the proof in a subsequent publication related to the strong asymptotics of Heine-Stieltjes polynomials.
However, as the consequence of Theorem \ref{thm:identification}, we identify the set of accumulation points of the zeros of Van Vleck polynomials $\mathcal V_+$ with the values of $ v\in \mathcal V$ making $\mu_v\geq 0$.

\begin{rmk}
Although $\mathcal V_+$ and the Chebotarev's continuum $\Gamma^*$ are topologically identical and, according to numerical experiments carried out by B.~Shapiro, metrically very close, they are not the same (an in consequence, the conjecture made in \cite{Shapiro2008b} is false). For simplicity, take $a_0=0$, $a_1=1$, $\Im a_2>0$, and define for $z$ in the upper half plane close to the origin 
$$
f_1(z)=\int_{0}^z \sqrt{\frac{z-t}{A(t)}}\, dt, \qquad f_2(z)=\int_{0}^z \sqrt{\frac{v^*-t}{A(t)}}\, dt,
$$
where we integrate along segments joining $0$ with $z$. The change of variables $t\mapsto z u$ in the integrand of $f_1$ yields the asymptotic expansion 
$$
f_1(z)=  \frac{\pi }{2\sqrt{a_2}} \,  z \, \left( 1 +     \frac{a_2+1 }{8 a_2 }     z  + \mathcal O(z^2)\right), \quad z\to 0.
$$
On the other hand, using the asymptotic expansion of the integrand of $f_2$ we get
$$
f_2(z)= 2 \sqrt{\frac{v^* z}{a_2}} \, \left( 1 + \frac{1}{6 } \left( 1+ \frac{1 }{a_2 } -\frac{1 }{ v^* }\right)  z + \mathcal O(z^2) \right), \quad z\to 0,
$$
where we take the main branch of $\sqrt{z}$.

Observe that $\Im f_1=0$ defines locally the set $\mathcal V$, while $\Im f_2 =0$ corresponds to the  Chebotarev compact of $\AA$. Assuming that both curves are tangent at the origin we conclude that $v^*/\sqrt{a_2}>0$, so that $v^*$ lies on the bisector of the interior angle formed by $1$ and $a_2$ at the origin. In order to check the second order tangency, we can invert the mapping $y=f_1(z)$ and analyze  $F(y)=f_2(f_1^{-1}(y))$ that maps the real line into itself at the origin. Setting $v^*=s (1+a_2)$, $s>0$, we get
$$
F(y)= \left(\frac{8}{\pi} \frac{v^*}{\sqrt{a_2}}\, y \right)^{1/2}  +
 \frac{1}{6}\, \left( \frac{1}{2 \pi^3  } \frac{\sqrt{a_2}}{  v^*} \right)^{1/2} \left( \frac{ 5 (a_2+1) v^* }{a_2}-8\right)\, y^{3/2}+ \mathcal O(y^{5/2}).
$$
Setting $v^*=s \sqrt{a_2}$, $s>0$, we get
$$
\frac{   (a_2+1) v^* }{a_2} =\left( \sqrt{a_2}+\frac{1}{\sqrt{a_2}}\right) s,
$$
which is real only if  $|a_2|=1$. This shows that at $a_0$ both curves $\mathcal V$ and $\Gamma^*$, however close, are not identical, at least when the triangle with vertices at $\AA$ is not isosceles.

\end{rmk}

\section{General families of $\AA$-critical measures} \label{sec:generalP}

Most of the arguments presented in Section \ref{sec:p=2} for the case $p=2$ may be carried over to the case of an arbitrary $p$ with minor modifications. However, in a certain sense the multidimensional case is significantly more complicated. The volume of this paper does not allow to develop the whole theory, covering both signed $\AA$-critical measures and closed quadratic differentials of an arbitrary homotopic type. In turn, without such a theory it is more complicated to separate positive $\AA$-critical measures from the signed ones. So, we restrict ourselves here to a less ambitious goal allowing a shorter treatment: we put forward a constructive characterization of the positive $\AA$-critical measures. We prove that the constructed measures are indeed positive, but the complete proof of the fact that there is no other positive $\AA$-critical measures is matter of a forthcoming paper.

\subsection{Mappings generated by periods of a rational quadratic differentials} \label{sec:mappings}

Let us recall the notation. We have the fixed set $\AA=\{a_0, a_1, \dots, a_p\}$ of distinct points on $\C$, $A(z)=\prod_{j=0}^p (z-a_j)$,
\begin{equation}\label{quadDiffBis}
V(z)\isdef \prod_{j=1}^{p-1} (z-v_j), \qquad R(z)\isdef \frac{V(z)}{  A(z) },  \quad \text{and} \quad \varpi  (z)=- R(z)\, (dz)^2
\end{equation}
is a rational quadratic differential on the Riemann sphere $\overline \C$. Zeros $v_j$'s of $V$ are not necessarily simple, and we denote $\vt{v}\isdef \{v_1, \dots, v_{p-1} \}\in \C^{p-1}$  with account of their multiplicity. Let also
$$
\mathcal V \isdef \{ \vt{v}:\, \varpi \text{ is closed}\}.
$$
Occasionally, it is convenient to consider $\varpi$ as a differential form $\frac{1}{i} \sqrt{R}(z) dz$ on the Riemann surface $\mathcal R$ of $\sqrt{R}$, or equivalently, on the hyperelliptic surface of genus $p-1$ given by $w^2=A(z)V(z)$.

Let $\Gamma=\gamma _1 \cup \dots \cup \gamma _p$ be a set consisting of $p$ disjoint arcs $\gamma _k$, each one connecting a pair of points from $\AA\cup \vt v$ in such a way that $\overline \C\setminus \Gamma $ is connected and $\sqrt{R}$ is holomorphic in $\overline \C\setminus \Gamma $. The Carath\'{e}odory boundary of $\overline \C\setminus \Gamma $ consists of $p$ components $\widehat{\gamma}_k \isdef \gamma _k^+ \cup \gamma _k^-$, with a positive orientation with respect to $\overline \C\setminus \Gamma $. We can consider $\widehat{\gamma}_k$ as cycles in $\overline{\C}\setminus \Gamma$ enclosing the endpoints of $\gamma _k$. Part of $\mathcal R$ over $\overline{\C}\setminus \Gamma$ splits into two disjoint sheets, so we may consider $\widehat{\gamma}_k$ as cycles on $\mathcal R$.

Let us define
\begin{equation}\label{defW_k9}
    w_k(\vt v)=w_k (\vt v, \Gamma ) \isdef \frac{1}{2\pi i}\, \oint_{\widehat{\gamma}_k} \sqrt{R(z)}dz, \quad k=1, \dots, p,
\end{equation}
where $\sqrt{R}\big|_{\widehat{\gamma}_k}$ are the boundary values of the branch of $\sqrt{R}$ in $\overline{\C}\setminus \Gamma $ defined by $\lim_{z\to \infty} z \sqrt{R(z)} =1$. Clearly, the boundary values $(\sqrt{R})_\pm$ on $\gamma_k^\pm$ are opposite in sign. Therefore, with any choice of orientation of $\gamma _k$ and a proper choice of $\sqrt{R}=(\sqrt{R})_+$ on $\gamma_k$, we will have
\begin{equation}\label{periods9}
    w_k(\vt v)=\frac{1}{\pi i}\, \int_{\gamma _k} \sqrt{R(z)} dz, \quad k=1, \dots, p.
\end{equation}
By the Cauchy residue theorem we have that $w_1+\dots+w_p=1$ for any $\vt v\in \C^{p-1}$. Thus, we can restrict the mapping $\vt v\mapsto \vt w$ to $p-1$ components of $\vt w \isdef (w_1, \dots, w_{p-1})\in \C^{p-1}$. In this way, we have defined the mapping
\begin{equation}\label{mapping9}
  \mathcal P(\cdot, \Gamma ):\, \C^{p-1} \to  \C^{p-1} \quad \text{such that} \quad  \mathcal P(\vt v, \Gamma ) = \vt w(\vt v, \Gamma ) .
\end{equation}
Each component function $w_j(v_1, \dots, v_{p-1})$ is analytic in each coordinate $v_k$ (even if $v_k$ is at one of the endpoints of $\gamma _k$). Once defined by the integral in \eqref{periods9}, this analytic germ allows an analytic continuation along any curve in $\C\setminus \AA$. Arcs $\gamma _k$ are not an obstacle for the continuation since the integral in \eqref{periods9} depends only on the homotopic class of $\Gamma$  in $\C\setminus (\AA\cup \vt v)$. The homotopy of $\Gamma $ is a continuous modification of all components simultaneously in such a way that they remain disjoint in all intermediate positions. Under this assumption we can continuously modify the selected branch of $\sqrt{R}$ in $\overline{\C}\setminus \Gamma $ along with the motion of $\Gamma $.

We note that this notion of homotopy is different from the concept of homotopic class based on a choice of a collection of Jordan contours in $\C\setminus \AA$, which is standardly used to classify closed differentials (see \cite{Strebel:84}).

\begin{figure}[htb]
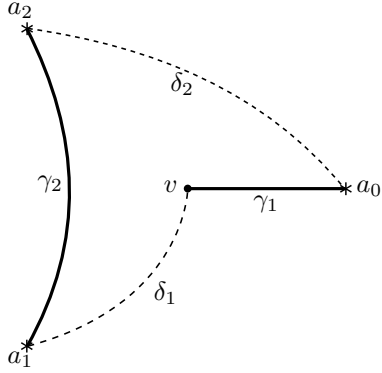

\centering \begin{overpic}[scale=0.75]%
{Homology}%
   \put(-4,-3){\small $a_1 $}
 \put(-4,102){\small $a_2$}
 \put(101,48.5){\small $a_0$} 
  \put(43,48.5){\small $v $}
 \put(70,44){\small $\gamma_1$}
   \put(5,50){\small $\gamma_2$}
  \put(40,16){\small $\delta_{1}$}
   \put(45,79){\small $\delta_{2}$}
\end{overpic}
\caption{An homology basis for $\mathcal R$.}
\label{fig:homology}
\end{figure}

The homology basis $\{\widehat{\gamma }_j \}_{j=1}^{p-1}$ of the domain $\overline{\C}\setminus \Gamma \subset \mathcal R$ defined above may be completed (in several ways) to form a homology basis $\widehat{\gamma }_1, \dots \widehat{\gamma }_{p-1}, \widehat{\delta  }_1, \dots, \widehat{\delta  }_{p-1}$ for $\mathcal R$. We can select the cycles $\widehat{\delta }_j$ as a lifting to $\mathcal R$ of a collection of arcs $\delta _j\subset \overline \C\setminus \Gamma $, each connecting new (different) pairs of points from $\AA\cup \vt v$ (see Figure~\ref{fig:homology}). 
We denote by $\Delta \isdef (\delta _1, \dots , \delta _{p-1})$, $\widehat \Delta  \isdef (\widehat{\delta} _1, \dots , \widehat{\delta} _{p-1})$. Accordingly, we define mappings
\begin{align}
\label{wtilde1_9} \widetilde{w}_k(\vt v) =\widetilde{w}_k(\vt v, \Delta )\isdef \frac{1}{2\pi i}\, \oint_{\widehat{\delta} _k}\sqrt{R(z)}\, dz, \quad k=1, \dots, p-1, \\
\label{wtilde2_9} \widetilde{\mathcal P}(\cdot, \Delta ):\C^{p-1}\mapsto \C^{p-1} \quad \text{such that} \quad  \widetilde{\mathcal P}(\vt v, \Delta )= (\widetilde{w}_1(v, \Delta ), \dots \widetilde{w}_{p-1}(v, \Delta )).
\end{align}
An important new mapping associated with the complete basis of homology on $\mathcal R$ is
$$
 \mathfrak P (\cdot, \Gamma , \Delta ):\, \C^{p-1} \longrightarrow \R^{2p-2}
$$
given by
\begin{equation}\label{newmapping9}
   \mathfrak  P (\vt v, \Gamma , \Delta )=  (\Im w_1, \dots, \Im w_{p-1}, \Im \widetilde{w}_1, \dots, \Im \widetilde{w}_{p-1})=(\Im (\mathcal P(\vt v, \Gamma )), \Im(\widetilde{\mathcal P}(\vt v, \Delta ))).
\end{equation}
In order to prove that both mappings, $\mathcal P$ and $\mathfrak  P $, are locally invertible we need the following lemma, which is a standard fact of the theory of the Riemann surfaces, see e.g.~\cite[Chapter 10]{Springer1957} or \cite[Chapter 5]{Jost06}.
\begin{lemma}
\label{lemma:springer}
Let $W_1, \dots, W_{p-1}$ be a basis of holomorphic differentials on $\mathcal R$ (a cohomology basis). Then
\begin{enumerate}
\item[(i)] $p-1$ vectors
$$
\left( \int_{\gamma _j} W_1, \dots, \int_{\gamma _j} W_{p-1}\right) \in \C^{p-1}, \quad j=1, \dots, p-1,
$$
are linearly independent over $\C$.
\item[(ii)] the system of $2p-2$ vectors, which consists of $p-1$ vectors in (i) above plus $p-1$ vectors
$$
\left( \int_{\delta  _j} W_1, \dots, \int_{\delta  _j} W_{p-1}\right) \in \C^{p-1}, \quad j=1, \dots, p-1,
$$
are linearly independent over $\R$.
\end{enumerate}
\end{lemma}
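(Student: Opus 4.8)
The plan is to treat $\mathcal{R}$ as what it is: a compact Riemann surface of genus $g=p-1$ (the hyperelliptic surface $w^2=A(z)V(z)$), on which the holomorphic differentials $W_1,\dots,W_{p-1}$ form a basis of the $(p-1)$-dimensional space $H^{1,0}(\mathcal{R})$, and on which the cycles $\widehat{\gamma}_1,\dots,\widehat{\gamma}_{p-1},\widehat{\delta}_1,\dots,\widehat{\delta}_{p-1}$ form a canonical (symplectic) homology basis, with the $\widehat{\gamma}_k$ playing the role of $A$-cycles (they enclose the endpoints of $\gamma_k$, i.e.\ pairs of branch points of $\mathcal{R}$) and the $\widehat{\delta}_k$ of $B$-cycles. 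Since the integrals in the statement depend only on homology classes, both assertions concern the period matrix of $W_1,\dots,W_{p-1}$, and I would derive \emph{(i)} from Riemann's bilinear relations and \emph{(ii)} from de Rham duality.

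For \emph{(i)}, linear independence over $\C$ of the $p-1$ row vectors is equivalent to the nonsingularity of the $A$-period matrix $M=\bigl(\int_{\gamma_j}W_k\bigr)_{j,k=1}^{p-1}$, and hence, for a square matrix, to the linear independence of its columns. The latter says precisely that no nonzero holomorphic differential $\omega=\sum_k d_k W_k$ has all of its $\widehat{\gamma}_j$-periods equal to zero. I would rule this out by the classical positivity argument: writing $\omega=f\,dz$ locally one has $\frac{i}{2}\iint_{\mathcal{R}}\omega\wedge\bar\omega=\iint|f|^2\,dx\,dy>0$ whenever $\omega\neq 0$, while Riemann's bilinear relation for the symplectic basis gives
\[
\frac{i}{2}\iint_{\mathcal{R}}\omega\wedge\bar\omega=-\sum_{k=1}^{p-1}\Im\!\left(\int_{\widehat{\gamma}_k}\omega\;\overline{\int_{\widehat{\delta}_k}\omega}\right).
\]
If every $A$-period $\int_{\widehat{\gamma}_k}\omega$ vanishes, the right-hand side is $0$, contradicting positivity; hence $\omega=0$, so all $d_k=0$ and $M$ is nonsingular.

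For \emph{(ii)}, I would argue by contradiction through de Rham duality. Suppose there are reals $\lambda_j,\mu_j$, not all zero, yielding a vanishing $\R$-linear combination of the $2p-2$ vectors. Read coordinate by coordinate, this says $\int_C W_k=0$ for every $k$, where $C\isdef\sum_j\lambda_j\widehat{\gamma}_j+\sum_j\mu_j\widehat{\delta}_j$ is a \emph{real} $1$-cycle. Since $C$ is real, conjugation gives $\int_C\bar W_k=\overline{\int_C W_k}=0$ as well. As $\{W_k\}$ and $\{\bar W_k\}$ together span $H^1_{\mathrm{dR}}(\mathcal{R},\C)=H^{1,0}\oplus H^{0,1}$, the cycle $C$ pairs to zero against every class in $H^1_{\mathrm{dR}}(\mathcal{R},\C)$, in particular against every real class; by nondegeneracy of the de Rham pairing, $C=0$ in $H_1(\mathcal{R},\R)$. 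Because $\{\widehat{\gamma}_j,\widehat{\delta}_j\}$ is a basis of $H_1(\mathcal{R},\R)$, all $\lambda_j$ and $\mu_j$ vanish, contradicting nontriviality.

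The only genuinely substantive points, and thus where I would take the most care, are structural rather than computational: verifying that the constructed collection $\{\widehat{\gamma}_j,\widehat{\delta}_j\}$ is a full homology basis of $\mathcal{R}$ (used throughout, and, for \emph{(i)}, that it is symplectic with the $\widehat{\gamma}_j$ as $A$-cycles), and invoking the correct form of the bilinear relation. Both are guaranteed by the hyperelliptic model $w^2=A(z)V(z)$ together with the geometric description of the cycles $\widehat{\gamma}_k$ (enclosing the cut $\gamma_k$ between two branch points) and $\widehat{\delta}_k$ (crossing between cuts); the analytic engine is the positivity of the area form for \emph{(i)} and the perfectness of the de Rham pairing for \emph{(ii)}. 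No delicate estimates arise.
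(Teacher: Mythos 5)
Your proof is correct. The paper does not actually prove Lemma \ref{lemma:springer}: it is invoked as ``a standard fact of the theory of the Riemann surfaces'' with pointers to Springer (Ch.~10) and Jost (Ch.~5), and the argument you supply --- Riemann's bilinear relations plus positivity of $\frac{i}{2}\iint_{\mathcal R}\omega\wedge\bar\omega$ for \emph{(i)}, and perfectness of the de Rham pairing together with $H^1_{\mathrm{dR}}(\mathcal R,\C)=H^{1,0}\oplus\overline{H^{1,0}}$ for \emph{(ii)} --- is precisely the classical proof those references contain, so there is nothing to compare against beyond that. The one place where a little more care is warranted is the point you yourself flag at the end: the bilinear identity you use in \emph{(i)} holds for a \emph{canonical} (symplectic) homology basis, whereas the paper only asserts that $\{\widehat{\gamma}_j,\widehat{\delta}_j\}$ is \emph{some} homology basis, the $\widehat{\delta}_j$ being chosen with considerable freedom; it is not automatic that the intersection matrix is the standard one. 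What rescues the argument is that the cycles $\widehat{\gamma}_1,\dots,\widehat{\gamma}_{p-1}$ encircle pairwise disjoint cuts and hence have zero mutual intersection numbers, so they span a Lagrangian (maximal isotropic) subspace of $H_1(\mathcal R,\Z)$; one may therefore complete them to a genuinely canonical basis (not necessarily by the given $\widehat{\delta}_j$) and run your positivity argument verbatim, since \emph{(i)} only involves the $\widehat{\gamma}_j$-periods. Part \emph{(ii)} uses nothing beyond the fact that $\{\widehat{\gamma}_j,\widehat{\delta}_j\}$ is a basis of $H_1(\mathcal R,\R)$ and that a real cycle satisfies $\int_C\bar W_k=\overline{\int_C W_k}$, so it needs no such adjustment.
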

\begin{proposition}
\label{prop:9_1}
Both mappings $w=\mathcal P(\vt v)$ and $(\Im w, \Im \widetilde w)= \mathfrak P  (\vt v)$ are locally invertible at any $\vt v(v_1, \dots, v_{p-1})\in (\C\setminus \AA)^{p-1}$ with $v_i\neq v_j$ for $i\neq j$.
\end{proposition}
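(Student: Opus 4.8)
The plan is to prove both statements via the inverse function theorem, so the whole problem reduces to showing that the relevant Jacobians are nonsingular. The engine behind this is the identification of the partial derivatives of the integrand with a basis of holomorphic differentials on $\mathcal R$. Since $V(z)=\prod_{k=1}^{p-1}(z-v_k)$, a direct computation gives
\[
\frac{\partial}{\partial v_j}\,\sqrt{R(z)}=-\frac12\,\frac{\sqrt{R(z)}}{z-v_j}.
\]
Writing $\sqrt R=V/w$ on $\mathcal R$, where $w^2=A(z)V(z)$, this equals $-\tfrac12\,\prod_{k\neq j}(z-v_k)/w$, so that
\[
\Omega_j\isdef \frac{\partial}{\partial v_j}\bigl(\sqrt R\,dz\bigr)=-\frac12\,\frac{\prod_{k\neq j}(z-v_k)}{w}\,dz.
\]
As $\mathcal R$ is hyperelliptic of genus $p-1$ and the numerator is a polynomial of degree $p-2$, each $\Omega_j$ is a \emph{holomorphic} differential on $\mathcal R$. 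Moreover, because the $v_j$ are distinct, the polynomials $\prod_{k\neq j}(z-v_k)$ form a Lagrange basis of the space of polynomials of degree $\leq p-2$, so $\{\Omega_j\}_{j=1}^{p-1}$ is a basis of the holomorphic differentials on $\mathcal R$.

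For the mapping $\mathcal P$, I would use that each $w_k$ is analytic in $v_j$ (and that $\widehat\gamma_k$ may be held fixed away from the branch points under small perturbations) to differentiate under the integral sign, obtaining $\partial w_k/\partial v_j=\tfrac{1}{2\pi i}\oint_{\widehat\gamma_k}\Omega_j$. Thus the complex Jacobian of $\mathcal P$ is the period matrix of the basis $\{\Omega_j\}$ over the cycles $\widehat\gamma_1,\dots,\widehat\gamma_{p-1}$. Expressing $\Omega_j$ through the abstract basis $W_i$ of Lemma \ref{lemma:springer} via an invertible change of basis, and using $\oint_{\widehat\gamma_k}=2\int_{\gamma_k}$ (from \eqref{periods9}), the nonsingularity of this matrix follows at once from part (i) of Lemma \ref{lemma:springer}. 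Hence $\mathcal P$ has invertible complex Jacobian and is locally biholomorphic.

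For $\mathfrak P=\Im\circ\Psi$, with $\Psi=(w_1,\dots,w_{p-1},\widetilde w_1,\dots,\widetilde w_{p-1})$ holomorphic, the real differential at $\vt v$ sends $\vt h\mapsto \Im(D\Psi\,\vt h)$. If $\vt h$ lies in its kernel, then $D\Psi\,\vt h$ is real; by the same computation this says precisely that the holomorphic differential $\omega\isdef\sum_j h_j\Omega_j$ has \emph{all} of its periods over the full homology basis $\widehat\gamma_1,\dots,\widehat\gamma_{p-1},\widehat\delta_1,\dots,\widehat\delta_{p-1}$ with vanishing real part. By Lemma \ref{lemma:springer}(ii) this forces $\vt h=0$ (equivalently, $\Re\int^z\omega$ would then be a single-valued harmonic function on the compact surface $\mathcal R$, hence constant, whence $\omega\equiv0$). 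Therefore the real Jacobian of $\mathfrak P$ is nonsingular and $\mathfrak P$ is locally invertible.

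The genuinely delicate points, rather than the formal application of the inverse function theorem, are twofold. First, one must justify differentiation under the integral when an endpoint $v_j$ of $\gamma_k$ is enclosed by the cycle $\widehat\gamma_k$ and is itself being varied; here the integrand $\sqrt R=V/w$ is analytic in $v_j$ for $z$ on a fixed contour avoiding $v_j$, so the argument goes through, but it should be stated carefully. Second, and this is the main conceptual step, translating the nonsingularity of the \emph{real} Jacobian of $\mathfrak P$ into Lemma \ref{lemma:springer}(ii) requires observing that the real-period pairing $\vt g\mapsto\bigl(\Re\oint_{c}\omega\bigr)_{c}$ is governed by the nondegenerate symmetric $\R$-bilinear form $B(\vt g,\vt\pi)=\Re(\vt g\cdot\vt\pi)$ on $\C^{p-1}\cong\R^{2p-2}$; nondegeneracy of $B$ makes the $\R$-linear independence of the $2p-2$ period vectors exactly equivalent to the triviality of the kernel, which is what closes the argument.
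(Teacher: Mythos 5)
Your proof is correct and follows essentially the same route as the paper: identify $\partial(\sqrt{R}\,dz)/\partial v_j$ with a basis of holomorphic differentials on the hyperelliptic surface $w^2=A(z)V(z)$ (using simplicity of the zeros of $V$), recognize the Jacobians of $\mathcal P$ and $\mathfrak P$ as period matrices over $\widehat\Gamma$ and over the full basis $\{\widehat\Gamma,\widehat\Delta\}$ respectively, and invoke parts (i) and (ii) of Lemma \ref{lemma:springer}. The only difference is that you spell out two points the paper leaves implicit -- the justification for differentiating under the integral sign and the linear-algebra step converting $\R$-independence of the $2p-2$ period vectors into triviality of the kernel of the real differential of $\mathfrak P$ -- and both are handled correctly.
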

\begin{proof}
We have that for any $j, k \in \{1, \dots, p-1\}$,
$$
\frac{\partial w_j}{\partial v_k}=\frac{1}{2\pi i}\, \int_{\gamma _j} \sqrt{R(t)}\, \frac{dt}{t-v_k}=\frac{1}{4\pi i}\, \oint_{\widehat{\gamma }_j}   \frac{V_k(t)}{\sqrt{A(t) V(t)}}\, dt=\frac{1}{2}\, \oint_{\widehat{\gamma }_j}   W_k,
$$
where
$$
V_k(t)\isdef \frac{V(t)}{t-v_k} \quad \text{and} \quad W_k \isdef \frac{1}{2\pi i}\, \frac{V_k(z)}{\sqrt{A(z) V(z)}}\, dz, \quad k=1, \dots, p-1.
$$
Since $V_k$ are linearly independent polynomials of degree $p-2$ ($V$ has simple roots), $\{W_k\}_{k=1}^{p-1}$ is a basis of holomorphic differentials on $\mathcal R$. By Lemma \ref{lemma:springer} \emph{(i)}, vectors $\{\partial w /\partial v_k\}_{k=1}^{p-1}$ are independent; this means that the Jacobian of $\mathcal P$ does not vanish and $\mathcal P$ is invertible. In a similar fashion, by Lemma \ref{lemma:springer} \emph{(ii)}, vectors $\{\partial w /\partial v_k\}_{k=1}^{p-1}$  and $\{\partial \widetilde{w} /\partial v_k\}_{k=1}^{p-1}$, $k=1, \dots, p-1$, are linearly independent over $\R$. The matrix of the linear mapping
$$
\left\{ \Re \vt v, \Im \vt v \right\}\in \R^{2p-2} \longrightarrow \mathfrak P(\vt v, \Gamma , \Delta ) \in \R^{2p-2}
$$
is therefore nonsingular, and $ \mathfrak P $ is locally invertible.
\end{proof}

The following result is a multidimensional version of Proposition \ref{prop:localstructureV}.
\begin{proposition}
\label{prop:9_2}
Let $\mu^0$ be an $\AA$-critical measure such that $\Gamma=\supp(\mu ^0)$ has connected components $\gamma _1^0,  \dots , \gamma _{p}^0$, and $\overline{\C}\setminus \Gamma$ is connected. Let $\varpi_0 =R_0(z)(dz)^2$ be the quadratic differential associated with $\mu^0$, where $R_0=V_0/A$, and $\vt v^0=(v_1^0, \dots, v_{p-1}^0)$ is the vector of zeros of $V_0$. Assume that $\varpi_0$ is in a general position (that is, all $v_k$'s are pairwise distinct and disjoint with $\AA$). Then for an $\varepsilon >0$ and any $m_j \in\R$, $j\in \{1, \dots, p-1\}$, satisfying
$$
|m_j - \mu ^0(\gamma _j^0)|<\varepsilon , \quad j=1, \dots, p-1,
$$
there exists a unique solution $\vt v \in \mathcal V$ of the system
\begin{equation}\label{systemforMu9}
    w_j(\vt v,\Gamma )=m_j, , \quad j=1, \dots, p-1.
\end{equation}
The quadratic differential $\varpi = -R(z)(dz)^2$, $R(z)=\prod_{k=1}^{p-1} (z-v_k)/A(z)$, is closed. The associated $\AA$-critical measure $\mu $ with $\supp(\mu)$ homotopic to $\supp(\mu_{0})$ satisfies $\mu (\gamma _j)=m_j$, $j=1, \dots, p-1$, where $\supp(\mu )=\gamma _1\cup \dots \cup \gamma _{p }$ and $\supp(\mu)$ is homotopic to $\supp(\mu^0 )$.
\end{proposition}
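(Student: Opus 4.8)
The plan is to read \eqref{systemforMu9} as the equation $\mathcal P(\vt v,\Gamma)=\vt m$ with $\vt m=(m_1,\dots,m_{p-1})$, and to invert it locally about $\vt v^0$ by the holomorphic inverse function theorem. First I would record the value at the base point: since $\varpi_0$ is closed and $\overline{\mathbb C}\setminus\Gamma$ is connected, Lemma \ref{lemma:onthesup} gives $d\mu^0=\frac1{\pi i}(\sqrt{R_0})_+\,dz$ on each $\gamma_j^0$ (the analogue of \eqref{measureMuv}), so the period \eqref{periods9} equals the mass $\mu^0(\gamma_j^0)$, which is real. By Proposition \ref{prop:9_1} the holomorphic map $\mathcal P(\cdot,\Gamma)$ has nonvanishing Jacobian at $\vt v^0$; hence there are $\varepsilon>0$ and a neighborhood $B$ of $\vt v^0$ so that for every $\vt m$ with $|m_j-\mu^0(\gamma_j^0)|<\varepsilon$ there is a unique $\vt v\in B$ with $w_j(\vt v,\Gamma)=m_j$, $j=1,\dots,p-1$. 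This already yields existence and uniqueness of the solution of \eqref{systemforMu9}; restricting to real $\vt m$ is exactly what is asked.

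Next I would verify that such a $\vt v$ lies in $\mathcal V$. When all $m_j$ are real, the remaining period $w_p=1-(m_1+\dots+m_{p-1})$ is real too, so every $w_k(\vt v,\Gamma)$ is real. Consequently $\Re\oint_{\widehat\gamma_k}\sqrt R\,dz=\Re(2\pi i\,w_k)=0$ for the cycles generating $H_1(\overline{\mathbb C}\setminus\Gamma)$, while the only remaining period, around $\infty$, equals $2\pi i$ and also has vanishing real part. Therefore $u(z)\isdef\Re\int^z\sqrt R\,dt$ is single-valued and harmonic in $\overline{\mathbb C}\setminus\Gamma$. This is precisely the hypothesis invoked in the last paragraph of the proof of Lemma \ref{rulgard}: the trajectories of $\varpi=-R(z)\,dz^2$ are then either critical or level curves of the harmonic conjugate of $u$, so $\varpi$ is closed and $\vt v\in\mathcal V$.

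It remains to produce the measure and read off its masses. For $\varepsilon$ small the continuous dependence of $\vt v$ on $\vt m$ keeps $\varpi$ in general position and, by the continuity of trajectories with respect to the parameters, keeps the combinatorial and homotopic type of the critical graph equal to that of $\varpi_0$; in particular the critical graph consists of $p$ critical trajectories $\gamma_1,\dots,\gamma_p$ homotopic to $\gamma_1^0,\dots,\gamma_p^0$. Deforming the auxiliary cuts $\Gamma$ onto these trajectories does not change the homotopy-invariant periods \eqref{periods9}, so along the genuine trajectories $\frac1{\pi i}(\sqrt R)_+\,dz$ is real and defines through \eqref{measureMuv} a real signed measure $\mu$ with $C^\mu=\sqrt R$ off its support. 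By Lemma \ref{rulgard} together with the sufficiency of \eqref{charactMeasure} (Remark \ref{rem:sufficientcondition}, or the reciprocal part of Lemma \ref{lemma:Sconditions}), $\mu$ is an $\AA$-critical measure, and by construction $\mu(\gamma_j)=w_j(\vt v,\Gamma)=m_j$ for $j=1,\dots,p-1$, with $\supp(\mu)$ homotopic to $\supp(\mu^0)$. Uniqueness of $\mu$ follows from the uniqueness of $\vt v$ and Proposition \ref{uniquenessmeasure}.

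The hard part will be the global step: passing from the algebraic normalization of the periods (the inversion step, essentially formal once Proposition \ref{prop:9_1} is granted) to the assertion that the \emph{global} trajectory structure of $\varpi$ survives the perturbation. One must rule out the emergence of recurrent trajectories and the breaking of a \emph{zero--zero} (or zero--pole) connection, either of which would destroy the identification of $\supp(\mu)$ with the prescribed homotopy class; this is exactly where the general-position hypothesis on $\varpi_0$ and the stability of closed differentials under small deformations (the multidimensional analogue of Proposition \ref{prop:localstructureV}) are indispensable. I would isolate this in a separate continuity lemma rather than fold it into the inversion argument.
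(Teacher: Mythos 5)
Your setup is the right one and coincides with the paper's: the existence and uniqueness of $\vt v$ near $\vt v^0$ is exactly the local inversion of $\mathcal P(\cdot,\Gamma)$ furnished by Proposition \ref{prop:9_1}, using that $w_j(\vt v^0,\Gamma)=\mu^0(\gamma_j^0)\in\R$ by \eqref{measureMuv} and Lemma \ref{lemma:onthesup}. The gap is in the step you yourself flag and postpone to a ``separate continuity lemma'': that lemma \emph{is} the proof of the proposition, and the substitute argument you offer for closedness does not close the circle. The function $u(z)=\Re\int^z\sqrt R\,dt$ is indeed single-valued and harmonic in $\C\setminus\Gamma$ when all $w_k$ are real, but $\Gamma$ here is the auxiliary cut system, which is \emph{not} made of trajectories of $\varpi$; trajectories may cross it, and $u$ in general jumps across each cut (one has $u_++u_-=\const$ along a cut, not $u_+=u_-$), so a trajectory need not stay on a single level of $u$ and recurrence is not excluded by this argument alone. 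The last paragraph of the proof of Lemma \ref{rulgard} avoids this difficulty only because there the cuts coincide with $\supp(\mu)$, already known to consist of trajectories. Your subsequent step --- ``deforming the auxiliary cuts onto these trajectories'' --- presupposes that the critical graph of $\varpi$ still consists of $p$ short trajectories homotopic to $\gamma_1^0,\dots,\gamma_p^0$, which is precisely what must be proved.

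The missing argument, which is the paper's actual proof, is short: by continuity of the trajectories in $\vt v$, for $\varepsilon$ small a trajectory of $\varpi$ leaves $v_1$ in a direction close to that of $\gamma_j^0$ at $v_1^0$ and therefore shadows $\gamma_j^0$, passing arbitrarily close to $v_2$. If it fails to terminate at $v_2$, one reaches $v_2$ from a nearby point of that trajectory by a short transversal arc along which $\Re\int\sqrt R\,dt$ has a nonzero increment; since $\sqrt R\,dz$ is purely imaginary along the trajectory itself, this forces $\frac1{\pi i}\int_{v_1}^{v_2}\sqrt R\,dt\notin\R$, contradicting $w_j(\vt v,\Gamma)=m_j\in\R$ in \eqref{systemforMu9}. (Pole--pole and pole--zero connections are handled the same way, even more simply.) Once all $p$ short trajectories are recovered, the cuts may legitimately be taken to be the critical graph, your harmonic-function argument becomes valid, closedness follows, and your reconstruction of $\mu$ via \eqref{measureMuv} and Lemma \ref{rulgard} goes through. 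So the skeleton is correct and you have located the difficulty accurately, but the decisive ``reality of the period forces the short trajectory to survive'' step must be supplied, not deferred.
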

\begin{proof}
We use the continuity of the dependence of short or critical trajectories of $\varpi$ from the zeros $\vt v=(v_1, \dots, v_{p-1})$ of $V$. Let, for instance, $\gamma ^0$ be an arc of $\supp(\mu ^0)$ connecting two $v^0$-points $v_1^0$ and $v_2^0$. For $\varepsilon >0$ small enough the solution of \eqref{systemforMu9} has two points, $v_1$ and $v_2$, close to $v_1^0$ and $v_2^0$, respectively; moreover, one of the trajectories of $\varpi$ comes out of $v_1$ in the direction close the the direction of $\gamma ^0$ at $v_1^0$. This trajectory will be close to $\gamma $ and will pass near $v_2$. If it does not hit $v_0$ then $\frac{1}{\pi i} \int_{v_1}^{v_2} \sqrt{R}dt\notin \R$, in contradiction with \eqref{systemforMu9}. Same (even simpler) arguments apply to any trajectories connecting two points from $\AA$ or a point from $\AA$ with another from $\mathcal V$.
\end{proof}

This result introduces a topology on the set of $\AA$-critical measures in a general position. We will call a \emph{cell} any connected component of this topological space. By Proposition \ref{prop:9_2}, a cell is a manifold of real dimension $p-1$. We can use as local coordinates either the vector $(\mu _1=\mu(\gamma _1) , \dots, \mu _{p-1}=\mu(\gamma _{p-1}))$ or $\vt v =(v_1, \dots, v_{p-1})$.
\begin{example}
 Consider the case $p=2$, studied in detail in Section \ref{sec:p=2}. In this situation the real dimension of each cell is $1$. The image of each non-distinguished cell in the $v$-plane is an analytic are connecting the Chebotarev's center $v^*$ and $\infty$. The unit positive measures, parametrized by the set $\widehat{\mathcal V}_+$ (see \eqref{defVhatPlus}), are represented on the $v$-plane as a union of three cells -- arcs $\ell_j^+$, $j=0,1,2$; their boundaries are points from $\AA^*= \{ a_0, a_1, a_2, v\}$, see Theorem~\ref{thm:structureVPlus2}.
\end{example}
For an arbitrary $p$, the boundary of a cell consists of pieces of manifolds of dimension $<p$. We come to a boundary point of a cell if either:
\begin{enumerate}
\item[\emph{(i)}] one of the components -- arcs $\gamma _j\subset \supp(\mu )$, --  degenerates to a point; this case is in turn subdivided into two subcases:
    \begin{enumerate}
    \item[\emph{(a)}] coalescence of a zero and a pole joined by an arc; reduction of $p$;
    \item[\emph{(b)}] coalescence of two zeros joined by an arc;  the polynomial $V$ gains a double zero.
    \end{enumerate}
\item[\emph{(ii)}] two components (arcs) of $\supp(\mu )$ meet (at a zero of $V$).
\end{enumerate}

We need to take a closer look at the case \emph{(ii)}. Again, a simple but important example is $p=2$, Section \ref{sec:p=2}. An arc $\ell_1^+ \in \widehat{\mathcal V}_+$ is defined by (see Theorem \ref{thm:structureVPlus2})
$$
w_1(v)=\frac{1}{\pi i}\, \int_{a_1}^v \sqrt{R(t)}\, dt =\mu_1 \in (0, m_1)\subset \R,
$$
with a selection of the proper branch of the function. This is a $\mathcal P$-parametrization of the cell $\ell_1^+$, which uses the coordinates $\mu_1=w(v,\Gamma )$, $\Gamma =[a_1, v]$. The extremal values $\mu _1=0$ and $\mu _1=m_1$ represent the boundary of the cell. Function $w(v)$ is analytic at both points; it is convenient to analyze the reconstruction of $\mu _v$ near $\mu _1=0$, that is, $v=a_1$. The boundary point $\mu _1=m_1$, corresponding to $v=v^*$, is better seen from the point of view of the $\mathfrak P$-mapping.

Let $\delta $ be an arc from $a_2$ to $v$ (homotopic to the arc $[a_2,v^*]\subset \Gamma ^*$ for $v$ close to $v^*$). Together with $w(v)=w(v,\gamma )$ we consider function
$$
\widetilde w(v)\isdef \frac{1}{\pi i}\, \int_\delta \sqrt{R(t)}\, dt .
$$
Then $v=v^*$ is uniquely defined by equations
$$
\Im w(v)=\Im \widetilde w(v)=0,
$$
and moreover, a nearby point $v$ on the arc $\Im w(v)=0$ is uniquely determined by a coordinate $h(v)=\Im \widetilde w(v)\in \R$. It is important that $h(v^*)=0$, and $h(v)$ changes sign when $v$ crosses $v^*$ along the arc (see again Figure~\ref{fig:cases}, where the reconstruction of $\supp(\mu_v)$ in dependence of $v$ is illustrated).

At this moment it is not really important to determine which sign of $h(v)$ corresponds to $\ell_1^+$. It is more convenient to introduce a number $s=\pm 1$ (depending on the branches of $w$ and $\widetilde w$) such that
$$
\ell_1^+=\{ v\in \C:\, \Im w(v)=0, \; \Im \widetilde w(v)=sh, \; h>0\}.
$$
This parametrization introduced originally around $v=v^*$, may be then extended to the whole arc $\ell_1^+$. We note that $h$ is the ``height of the cylinder'' (see \cite{Strebel:84}) and $\ell =2\mu=\Re w(v)$ is the ``length of the circumferences'', discussed in more details in Section \ref{sec:homotopicType2}.

\subsection{Structure of the set of positive $\AA$-critical measures}

Let $\Gamma ^*=\Gamma ^*(\AA)$ be the Chebotarev's continuum associated with $\AA=\{ a_0, \dots, a_p\}$, see Section \ref{sec:Chebotarevcont}; it consists of critical trajectories of $\varpi= -R^* dz^2$, $R^*=V^*/A$. We will assume again a general position for the set $\AA$, that is, $A$ and $V^*$ do not have common zeros and $V^*(z)=\prod_{k=1}^{p-1} (z-v_k^*)$ does not have multiple zeros. Thus, the critical set $\AA^*= \AA \cup \{v^*_1, \dots, v_{p-1}^* \}$ consists of $2p$ different points, and $\Gamma ^*$ is comprised of $2p-1$ arcs, that are critical trajectories of $\varpi$. Each trajectory joins two different points from $\AA^*$.

We begin the construction of positive critical measures by introducing local $\vt{v}$-coordinates. As above (see Section \ref{sec:mappings}), we identify measures $\mu \in \widehat{\mathcal V}_+$ with the corresponding polynomials $V(z)=\prod_{k=1}^{p-1} (z-v_k)$; furthermore, we define $V$ by the vector $\vt{v}=(v_1, \dots, v_{p-1}) \in \C^{p-1}$ of its zeros (numeration is not important).
Then each cell in $\widehat{\mathcal V}_+$ is a subspace of $\C^{p-1} \simeq \R^{2p-2}$, which is a manifold of the real dimension $p-1$, defined by $p-1$ real equations of the form
\begin{equation}\label{Takemura9}
    \Im w_j(\vt v)=\Im \left(\frac{1}{2\pi i}\, \int_{\widehat \gamma _j} \sqrt{R(t)}\, dt\right) =0, \quad j=1, \dots, p-1,
\end{equation}
where $\widehat{\Gamma }=\widehat{\gamma }_1\cup \dots \cup \widehat{\gamma }_{p-1} \cup \widehat{\gamma }_p$ is a union of Jordan contours $\widehat{\gamma }_k$ on $\mathcal R$ (double arcs) depending on $\vt v$, but mutually homotopically equivalent for values of $\vt v$ from the same cell. Practically, any $\vt v_0\in G$ has a neighborhood of $\vt v$ satisfying \eqref{Takemura9} with constant $\widehat{\gamma }_k$'s.

Now we come to the procedure of selection of combinatorial (rather than homotopic) types of cells; once the combinatorial type is fixed, the homotopic one will be determined from the Chebotarev's continuum, as described next.  We start with the Chebotarev's continuum $\Gamma ^*$ and the corresponding polynomial $V^*(z)=\prod_{k=1}^{p-1}(z-v_k^*)$. Each zero $v^*=v^*_k$, $k=1, \dots, p-1$, is connected by component arcs of $\Gamma ^*$ with three other points, say $a_1^*, a_2^*, a_3^*\in \AA^*$. We select one of these three arcs (for definiteness, $[v^*,a_1^*]$) and join two other arcs to make a single arc $[a_2^*, a_3^*]$, bypassing $v^*$ (we think that the arc $[a_2^*,a_3^*]$ still follows the two arcs from $\Gamma ^*$, but without touching $v^*$, instead passing infinitely close to it). This procedure, carried out at each zeros $v_k^*$ of $V^*$, creates a compact set $\Gamma $, and consequently, a cell $G(\Gamma )$ of corresponding measures $\mu\in \widehat{\mathcal V}_+$.

The selection of $\Gamma $, and hence, of the cell $G(\Gamma )$, is made by choosing one of the three connections for each $v^*_k$; there are $3^{p-1}$ ways to make the choice. Any choice splits $\Gamma ^*$ into $p$ ``disjoint'' arcs $\Gamma ^*=\gamma _1\cup \dots\cup \gamma _p$; out of them we select $p-1$ arcs to make an homology basis for $\C\setminus \Gamma ^*$, say $\gamma_{1}, \dots, \gamma_{p-1}$, and then consider the corresponding cycles $\widehat{\gamma }_k$, as described in Section \ref{sec:mappings}.

Next, together with the link $\gamma _k$ connecting $v_k^*$ to one of its neighbors from $\AA^*$ we will mark one more arc $\delta _k$ connecting $v_k^*$ with a different neighbor (i.e., a different point from $\AA^{*}$ connected by a branch to $v_{k}^{*}$, see Section~\ref{sec:Chebotarevcont}). The choice of $\gamma _k$ was arbitrary for each $k$; the choice of $\{ \delta _k\}_{k=1}^{p-1}$ has to be made in such a way that $p-1$ arcs $\gamma _k$ and $p-1$ arcs $\delta _k$ are all different. Then the corresponding cycles $\{\widehat{\Gamma }, \widehat{\Delta }\}=\{ \widehat{\gamma }_1, \dots, \widehat{\gamma }_{p-1}, \widehat{\delta  }_1, \dots, \widehat{\delta  }_{p-1} \}$ form an homology basis on $\mathcal R$ and may serve to define the mappings $\mathcal P$ and $\mathfrak P$ as in Section \ref{sec:mappings}.

We will mention first the description of the cell $G(\Gamma )$ in terms of the mapping $\mathcal P$. This way of parametrization is equivalent (this equivalence is, however, not completely on the surface) to the ``length of the circumferences'' parametrization of the closed differentials (see Section \ref{sec:p=2} for the case $p=2$). Let $w=w(\vt v)=\mathcal P(v, \widehat{\Gamma })$. We claim that the cell $G(\Gamma )$ is completely defined by the system
\begin{equation}\label{6of9}
    w_j(\vt v)=t _j \in \R_+, \quad j=1, \dots, p-1;
\end{equation}
more precisely, there exists a domain $M(\Gamma )=\{ (t _1, \dots, t _{p-1})\in \R_+^{p-1} \}$ such that for any  point $(t _1, \dots, t _{p-1})\in M(\Gamma )$ system \eqref{6of9} has a unique solution $\vt v\in \C^{p-1}$. Moreover, the corresponding measure $\mu=\mu _{\vt v}$ satisfies $\mu (\gamma _j)=t _j$, and $\supp(\mu )=\gamma _1\cup \dots \cup \gamma _{p}=\Gamma _{\vt v}$ is homotopic to $\Gamma $.

Summarizing, a rough description of the set $\widehat{\mathcal V}_+$ of unit positive $\AA$-critical measures may be made as follows. The set $\widehat{\mathcal V}_+$ is a union of $3^{p-1}$ of closed bounded cells $\overline{G}(\Gamma )$ ($\Gamma =\gamma_1\cup \dots \cup \gamma _p$ may be selected in $3^{p-1}$ ways). The interior $G(\Gamma )$ of each cell consists of measures $\mu$ in general position with $\supp(\mu )$ homotopic to $\Gamma $. Interiors of different cells are disjoint. Chebotarev's measure $\mu ^*$ (Robin measure of $\Gamma ^*$) is the only common point of all boundaries: $\mu ^*=\bigcap_\Gamma  \partial G(\Gamma )$.

The detailed proof of the assertions above and further analysis is beyond the scope of this paper. We will prove only that there exists a cell $G(\Gamma )$ with the homotopic type $\Gamma $ consisting of positive $\AA$-critical measures. It is easier to do it using the $\widetilde{\mathcal P}$- mapping (equivalent to the ``height of cylinders'' parametrization of the closed differentials).

We consider the mapping $\{\Im w, \Im \widetilde w\}=\mathfrak P(v; \Gamma , \Delta )$, described in Section~\ref{sec:mappings}, which is invertible in a neighborhood of $v^*$. We select a vector $(s_1, \dots, s_{p-1})$ of signs: each $s_j\in \{-1, +1\}$, and consider
\begin{equation}\label{7of9}
    \Im w_j(\vt v)=0, \quad \Im \widetilde{w}_j(\vt v)=s_j h_j, \quad h_{j }\in \R, \quad j=1, \dots, p-1.
\end{equation}

For $h_j=0$, system \eqref{7of9} has a unique solution $\vt v^*=(v_1^*, \dots, v_{p-1}^*)$. For sufficiently small $h_j>0$ this system is still uniquely solvable. Equations $\Im w_j(\vt v)=0$ imply that differential $\varpi$ in \eqref{quadDiffBis} is closed, the associated measure $\mu $ is $\AA$-critical, and $\supp(\mu )=\Gamma _{\vt v}=\gamma _{1,\vt v}\cup \dots \cup \gamma _{p,\vt v}$.

The homotopic type and signs of the components of $\mu $ depend on the behavior of trajectories of $\varpi$, which are originated at the points $a^*\subset \AA^*$ and close to trajectories $\delta _j$. Any such a trajectory will hit the corresponding point $v_j$ if $h_j=0$. If $h_j>0$, then it will pass from the left of $v_{j}$ or from the right of $v_{j}$, see Figure \ref{fig:ZeroEvolution}.

\begin{figure}[htb]
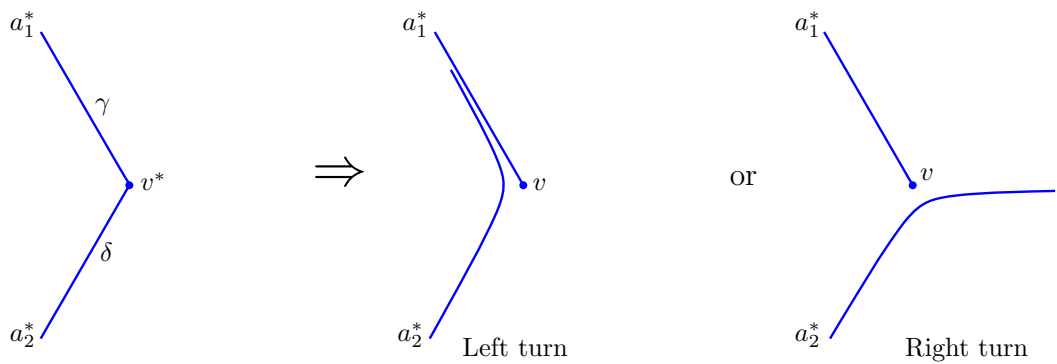

\centering \begin{tabular}{lll} \hspace{-2cm}\mbox{\begin{overpic}[scale=0.45]%
{ZeroEvolution1}%
      \put(28,70){\small $a^*_1 $}
            \put(28,8){\small $a^*_2 $}
\put(54,38){\small $v^* $}
\put(45,54){\small $\gamma  $}
\put(46,24){\small $\delta $}
\end{overpic}} &
\hspace{-2.0cm}
\mbox{\begin{overpic}[scale=0.45]%
{ZeroEvolution2}%
      \put(28,70){\small $a^*_1 $}
            \put(27,8){\small $a^*_2 $}
\put(54,38){\small $v  $}
\put(40,5){\small Left turn}
\put(10,39){\huge $\Rightarrow $}
\end{overpic}}&
\hspace{-2.0cm}
\mbox{\begin{overpic}[scale=0.45]%
{ZeroEvolution3}%
      \put(28,70){\small $a^*_1 $}
            \put(28,8){\small $a^*_2 $}
\put(53,40){\small $v  $}
\put(50,5){\small Right turn}
\put(15,39){\large or}
\end{overpic}}
\end{tabular}
\caption{Left and right turns.}
\label{fig:ZeroEvolution}
\end{figure}

A change from $s_j$ to $-s_j$ will change the direction of the turn. Therefore, there is a unique selection of vectors $(s_1, \dots, s_{p-1})$ such that all turns are right. Then the branch of $\sqrt{R}$ in $\overline{\C}\setminus \Gamma _{\vt v}$ will be close to the branch of $\sqrt{R^*}$ in $\overline{\C}\setminus \Gamma ^*$, and therefore the corresponding measure $\mu $ will be positive. In this sense, the cell we entered contains some positive measures. Therefore, they are positive, since $\supp(\mu )$ are all homotopic.

\section*{Acknowledgments} 
 
We are indebted to B.~Shapiro for interesting discussions and for providing us with the early version of his manuscripts \cite{Shapiro2008a} and \cite{Shapiro2008b}; after the first version of this paper was made public in the arxiv, we learned about a work in preparation of B.~Shapiro and collaborators, which has some overlappings with this paper. Fortunately, the methods and the paths we follow are very different. 

We also gratefully acknowledge many helpful conversations with H.~Stahl and A.~Vasil$'$ev, as well as useful remarks from M.~Yattselev concerning the first version of this manuscript. The software for computing the parameters of Chebotarev's compacts, provided by the authors of \cite{Ortega-Cerda2008} and freely available at their web site, was also useful for gaining some additional insight.

AMF is partially supported by Junta de Andaluc\'{\i}a, grants FQM-229, P06-FQM-01735, and P09-FQM-4643, as well as by the
research project MTM2008-06689-C02-01 from the Ministry of Science and Innovation of Spain and the European Regional Development Fund (ERDF).

EAR is partially supported by  the NSF grant DMS-9801677.


\def\cprime{$'$}

\end{document}